\definecolor{my-blue}{rgb}{0.0,0.0,0.6}
\definecolor{my-red}{rgb}{0.5,0.0,0.0}
\definecolor{my-green}{rgb}{0.0,0.5,0.0}
\definecolor{nicos-red}{rgb}{0.75,0.0,0.0}
\definecolor{nicos-green}{rgb}{0.0,0.75,0.0}
\definecolor{light-gray}{gray}{0.6}
\definecolor{really-light-gray}{gray}{0.8}
\definecolor{sussexg}{rgb}{0.0,0.5,0.5}
\definecolor{sussexp}{rgb}{0.5,0.0,0.5}
\newtheorem{theorem}{\color{my-red}{\sc Theorem}}[section]
\newtheorem{lemma}[theorem]{\color{my-red} \sc Lemma}
\newtheorem{proposition}[theorem]{\color{my-red} \sc Proposition}
\newtheorem{corollary}[theorem]{\color{my-red} \sc Corollary}
\newtheorem{conjecture}[theorem]{\color{my-red} \sc Conjecture}
\numberwithin{equation}{section}
\theoremstyle{remark}
\newtheorem{remark}[theorem]{\color{my-red} Remark}
\newcommand{\be}{\begin{equation}}
\newcommand{\ee}{\end{equation}}
\providecommand{\abs}[1]{\vert#1\vert}
\newcommand{\eone}{\textup{e}_1}
\newcommand{\etwo}{\textup{e}_2}
\newcommand{\TV}[1]{{\lVert #1 \rVert}_{\normalfont
\text{TV}}}
\def\mix{\textup{mix}}
\def\bE{\mathbb{E}}
\def\bN{\mathbb{N}}
\def\bP{\mathbb{P}}
\def\bR{\mathbb{R}}
\def\bZ{\mathbb{Z}}
\def\cL{\mathcal{L}}
\def\c{\textup{c}} % index ordering
\def\g{\textup{g}} % index ordering
 \def\Z{\bZ} 
\def\R{\bR}
\def\N{\bN}
\def\P{\bP}
\DeclareMathOperator{\Var}{Var}
\def\E{\bE}
\def\P{\bP} %% environment measure 
\definecolor{partcolor1}{rgb}{0.0,0.5,0.0}
\definecolor{partcolor2}{rgb}{0.0,0.5,0.0}
\definecolor{darkgreen}{rgb}{0.0,0.5,0.0}
\definecolor{darkblue}{rgb}{0.5,0.1,0.5}
\definecolor{deepblue}{rgb}{0.25,0.41,0.88}
\definecolor{nicosred}{rgb}{0.65,0.1,0.1}
\definecolor{light-gray}{gray}{0.7}
\begin{document}
\usdate
\title[Mixing times for the TASEP in the maximal current phase]
{Mixing times for the TASEP in the maximal current phase}
\author{Dominik Schmid}
\address{Dominik Schmid, University of Bonn, Germany}
\email{d.schmid@uni-bonn.de}
\keywords{totally asymmetric simple exclusion process, mixing times, second class particles, corner growth model, last-passage times, competition interface}
\subjclass[2010]{Primary: 60K35; Secondary: 60K37, 60J27} 
\date{\today}
\begin{abstract}
We study mixing times for the totally asymmetric simple exclusion process (TASEP) on a segment of size $N$ with open boundaries. We focus on the maximal current phase, and prove that the mixing time is of order $N^{3/2}$,  up to logarithmic corrections. In the triple point, where the TASEP with open boundaries approaches the Uniform distribution on the state space, we show that the mixing time is precisely of order $N^{3/2}$. This is conjectured to be the correct order of the mixing time for a wide range of particle systems with maximal current. Our arguments rely on a connection to last-passage percolation, and recent results on moderate deviations of last-passage times.
\end{abstract}
\maketitle
\vspace*{-0.35cm}

\section{Introduction} \label{sec:Introduction}

Over the last decades, the one-dimensional totally asymmetric simple exclusion process (TASEP) is among the most investigated particle systems, with motivations and applications from statistical mechanics, probability theory and combinatorics \cite{BSV:SlowBond,BE:Nonequilibrium,CW:TableauxCombinatorics,C:KPZReview,L:Book2}. The model has the following simple description: a collection of indistinguishable particles are placed on different integer sites. Each site is endowed with a Poisson clock, independently of all others, which rings at rate 1. Whenever the clock at an occupied site rings, we move its particle to the right, provided that the target site is vacant. This last condition is called the exclusion rule. For the TASEP with open boundaries, we restrict the  dynamics to a segment, and let in addition particles enter at the left-hand side boundary at rate $\alpha$, and exit at the right-hand side boundary at rate $\beta$. \\ 

Despite its simple construction, the TASEP serves as a powerful model to describe traffic or the motions of particles in gases.
The TASEP was first introduced in the mathematical literature as an interacting particle system; see \cite{S:InteractionMP}, and \cite{L:Book2} for a classical introduction. Since the discovery that the TASEP has a description as a corner growth model (CGM) on $\Z^2$, and that it is an exactly solvable model in the Kardar-Parisi-Zhang universality class, the set of available techniques rapidly developed; see \cite{CS:OpenASEPWeakly,J:KPZ,R:PDEresult} for seminal work, and \cite{C:KPZReview} for a survey on the KPZ universality class. Today many precise results, for example on scaling limits for last passage times or sharp bounds on the transversal fluctuations of geodesics, are known \cite{BG:TimeCorrelation,BGZ:TemporalCorrelation,BSS:Coalescence,BC:PSConjecture}. Many of these results have in common that they benefit from exact expressions for various observables which come from integrable probability. These expressions are then combined with probabilistic techniques. \\

In recent years, mixing times for exclusion processes gained more attention, and sharp bounds in terms of  cutoff  and determining limit profiles were achieved; see for example \cite{BN:CutoffASEP, GNS:MixingOpen,  LL:CutoffASEP,L:CutoffSEP, W:MixingLoz}. 
However, the variety of results available from integrable probability was so far not exploited. In this paper, we intertwine the different perspectives on the TASEP with open boundaries  --- as an interacting particle system and as a corner growth model. We focus on mixing times in  the maximal current phase $\alpha,\beta \geq \frac{1}{2}$, which is from a physical perspective of particular interest due to its relation to the KPZ universality class; see \cite{CK:StationaryKPZ,CS:OpenASEPWeakly}. To our best knowledge, this is the first time a mixing time of order $N^{3/2}$, involving the exponent $2/3$ of the KPZ relaxation scale, is shown for an interacting particle system which belongs to the KPZ universality class. 

\subsection{Model and results}  \label{sec:ModelResults}

\begin{figure} 
\centering
\begin{tikzpicture}[scale=1]

\def\spiral[#1](#2)(#3:#4:#5){% \spiral[draw options](placement)(end angle:revolutions:final radius)
\pgfmathsetmacro{\domain}{pi*#3/180+#4*2*pi}
\draw [#1,
       shift={(#2)},
       domain=0:\domain,
       variable=\t,
       smooth,
       samples=int(\domain/0.08)] plot ({\t r}: {#5*\t/\domain})
}

\def\particles(#1)(#2){

  \draw[black,thick](-3.9+#1,0.55-0.075+#2) -- (-4.9+#1,0.55-0.075+#2) -- (-4.9+#1,-0.4-0.075+#2) -- (-3.9+#1,-0.4-0.075+#2) -- (-3.9+#1,0.55-0.075+#2);
  
  	\node[shape=circle,scale=0.6,fill=nicos-red] (Y1) at (-4.15+#1,0.2-0.075+#2) {};
  	\node[shape=circle,scale=0.6,fill=nicos-red] (Y2) at (-4.6+#1,0.35-0.075+#2) {};
  	\node[shape=circle,scale=0.6,fill=nicos-red] (Y3) at (-4.2+#1,-0.2-0.075+#2) {};
   	\node[shape=circle,scale=0.6,fill=nicos-red] (Y4) at (-4.45+#1,0.05-0.075+#2) {};
  	\node[shape=circle,scale=0.6,fill=nicos-red] (Y5) at (-4.65+#1,-0.15-0.075+#2) {}; }

  \def\annhil(#1)(#2){	  \spiral[black,thick](9.0+#1,0.09+#2)(0:3:0.42);
  \draw[black,thick](8.5+#1,0.55+#2) -- (9.5+#1,0.55+#2) -- (9.5+#1,-0.4+#2) -- (8.5+#1,-0.4+#2) -- (8.5+#1,0.55+#2); }

	\node[shape=circle,scale=1.5,draw] (Z) at (-3,0){} ;
    \node[shape=circle,scale=1.5,draw] (A) at (-1,0){} ;
    \node[shape=circle,scale=1.5,draw] (B1) at (1,0){} ;
	\node[shape=circle,scale=1.5,draw] (B2) at (3,0){} ;
	\node[shape=circle,scale=1.5,draw] (C) at (5,0) {};
 	\node[shape=circle,scale=1.5,draw] (D) at (7,0){} ; 

 		\node[shape=circle,scale=1.2,fill=nicos-red] (YZ2) at (3,0) {};
 	
	\node[shape=circle,scale=1.2,fill=nicos-red] (YZ) at (-1,0) {};

	\node[shape=circle,scale=1.2,fill=nicos-red] (YZ3) at (5,0) {};

		\draw[thick] (Z) to (A);	
	\draw[thick] (A) to (B1);	
	\draw[thick] (B1) to (B2);		
		\draw[thick] (B2) to (C);	
  \draw[thick] (C) to (D);

\particles(0)(0);
\particles(6.9+4.9+1)(0);

%\annhil(0)(0.7);
%\annhil(-13.4)(-0.7);
\draw [->,line width=1pt]  (A) to [bend right,in=135,out=45,->] (B1);

%\draw [->,line width=1pt]  (B2) to [bend right,in=135,out=45,->] (C);
  
  % \draw [->,line width=1pt] (B2) to [bend right,in=-135,out=-45] (C);
  % \draw [->,line width=1pt] (B2) to [bend right,in=-135,out=-45] (A);
 %     \draw [->,line width=1pt] (A) to [bend right,in=-135,out=-45] (Z2);
    \node (text1) at (0,0.8){$1$} ;    
 
	\node (text3) at (-2.5-1,0.8){$\alpha$}; 
	\node (text4) at (7.1+0.4,0.8){$\beta$};

    \node[scale=0.9] (text1) at (-3,-0.7){$1$} ;    
    \node[scale=0.9] (text1) at (7,-0.7){$N$} ;   
  	
  \draw [->,line width=1pt] (-3.9,0.475) to [bend right,in=135,out=45] (Z);
%   \draw [->,line width=1pt] (Z) to [bend right,in=135,out=45] (-3.9,-0.475); 
  % \draw [->,line width=1pt] (6.9+1.6,-0.475) to [bend right,in=135,out=45] (D);	
   \draw [->,line width=1pt] (D) to [bend right,in=135,out=45] (6.9+1,0.475);

	\end{tikzpicture}	
\caption{\label{fig:BDEP}The TASEP with open boundaries for parameters $\alpha,\beta >0$.}
 \end{figure}
We consider the TASEP with open boundaries on a segment $[N]:=\{1,\dots,N\}$ for some $N \in \N$, and  parameters $\alpha,\beta>0$. It is  the continuous-time Markov chain $(\eta_t)_{t\geq 0}$ on the state space $\Omega_N :=\{ 0,1\}^N$ with generator
\begin{align*}
\cL f(\eta) &=  \sum_{x =1}^{N-1}  \eta(x)(1-\eta(x+1))\left[ f(\eta^{x,x+1})-f(\eta) \right] \\
 &+ \alpha (1-\eta(1)) \left[ f(\eta^{1})-f(\eta) \right] \hspace{2pt}  + \beta \eta(N)\left[ f(\eta^{N})-f(\eta) \right] 
\end{align*}
 for all cylinder functions $f$. Here, we use the standard notation
\begin{equation*} 
\eta^{x,y} (z) = \begin{cases} 
 \eta (z) & \textrm{ for } z \neq x,y\\
 \eta(x) &  \textrm{ for } z = y\\
 \eta(y) &  \textrm{ for } z = x \, ,
 \end{cases}
 \quad
 \text{and}
 \quad
 \eta^{x} (z) = \begin{cases} 
 \eta (z) & \textrm{ for } z \neq x\\
1-\eta(z) &  \textrm{ for } z = x
 \end{cases}
\end{equation*}
to denote swapping and flipping of values in a configuration $\eta \in \Omega_N$ at sites $x,y \in [N]$. We say that site $x$ is \textbf{occupied} if $\eta(x)=1$, and \textbf{vacant} otherwise. A visualization of the TASEP with open boundaries is given in Figure \ref{fig:BDEP}. Note that the TASEP with open boundaries is irreducible, and thus has a unique stationary distribution which we denote by $\mu=\mu_N$. We are interested in the convergence to  $\mu$, which we quantify in terms of total-variation mixing times. To do so, we let for a probability measure $\nu$ on $\Omega_N$ 
\begin{equation}\label{def:TVDistance}
\TV{ \nu - \mu } := \frac{1}{2}\sum_{x \in \Omega_N} \abs{\nu(x)-\mu(x)} = \max_{A \subseteq \Omega_N} \left(\nu(A)-\mu(A)\right) 
\end{equation} be the \textbf{total-variation distance} of $\nu$ and $\mu$. We define the $\boldsymbol\varepsilon$\textbf{-mixing time} of $(\eta_t)_{t \geq 0}$ by
\begin{equation}\label{def:MixingTime}
t^N_{\text{\normalfont mix}}(\varepsilon) := \inf\left\lbrace t\geq 0 \ \colon \max_{\eta \in \Omega_{N}} \TV{\P\left( \eta_t \in \cdot \ \right | \eta_0 = \eta) - \mu_N} < \varepsilon \right\rbrace
\end{equation} for all $\varepsilon \in (0,1)$. Our goal is to study the order of $t^N_{\text{\normalfont mix}}(\varepsilon)$ when $N$ goes to infinity. \\

Depending on the boundary parameters $\alpha$ and $\beta$, we distinguish three different phases: For $\alpha < \min( \beta,\frac{1}{2})$ we are in the \textbf{low density phase}, and for $\beta < \min(\alpha,\frac{1}{2})$ we are in the \textbf{high  density phase}. The remaining case $\alpha,\beta \geq \frac{1}{2}$ is called the \textbf{maximal current phase}. 
These names are justified when studying the invariant measure $\mu$; see Section \ref{sec:InvariantAndCurrent} for a more detailed discussion. In short, the average density in $\mu$ stays above $\frac{1}{2}$ in the high density phase, it stays below $\frac{1}{2}$ in the low density phase, and it is close to $\frac{1}{2}$ in the maximal current phase, creating the largest possible flow of particles entering the segment. \\

In the following, we focus on the maximal current phase and provide bounds on the corresponding mixing times. Our first result is an upper bound on the mixing time for the TASEP with open boundaries in the maximal current phase.
\begin{theorem}\label{thm:MaxCurrent}
For all $\alpha,\beta \geq \frac{1}{2}$, and for all $\varepsilon \in (0,1)$, the mixing time of the TASEP with open boundaries satisfies
\begin{equation}
\limsup_{N \rightarrow \infty} \frac{t^N_\mix(\varepsilon)}{N^{3/2}\log(N)} \leq C_1
\end{equation} for some universal constant $C_1>0$.
\end{theorem}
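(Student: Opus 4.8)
The plan is to reduce Theorem~\ref{thm:MaxCurrent} to a coalescence estimate for two extremal copies of the chain run under the basic coupling, and then to control the coalescence time via the last-passage percolation representation of the TASEP with open boundaries together with the moderate-deviation bounds for last-passage times.

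\smallskip
\noindent\emph{Reduction to the extremal coupling.} The TASEP with open boundaries is attractive, and the basic coupling --- in which all copies share the bulk Poisson clocks as well as the entrance clock at rate $\alpha$ and the exit clock at rate $\beta$ --- preserves the coordinatewise partial order on $\Omega_N$. Run under this coupling the copy $(\eta^{\mathbf 1}_t)_{t\ge0}$ started from the all-occupied configuration, the copy $(\eta^{\mathbf 0}_t)_{t\ge0}$ started from the all-vacant configuration, and a stationary copy $(\eta^\mu_t)_{t\ge0}$. Then $\eta^{\mathbf 0}_t\le\eta^\eta_t\le\eta^{\mathbf 1}_t$ and $\eta^{\mathbf 0}_t\le\eta^\mu_t\le\eta^{\mathbf 1}_t$ for every starting configuration $\eta$, so on $\{\eta^{\mathbf 1}_t=\eta^{\mathbf 0}_t\}$ all copies agree; this event is moreover absorbing, since under the basic coupling no disagreement is produced out of agreement. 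As $t\mapsto\max_{\eta\in\Omega_N}\TV{\P(\eta_t\in\cdot\mid\eta_0=\eta)-\mu_N}$ is non-increasing, it is enough to find $C_1>0$ with
\[
\P\big(\eta^{\mathbf 1}_t\neq\eta^{\mathbf 0}_t\big)\ \longrightarrow\ 0 \quad\text{as } N\to\infty, \qquad\text{whenever } t\ge C_1 N^{3/2}\log N .
\]
Since $\eta^{\mathbf 1}\ge\eta^{\mathbf 0}$, the sites of disagreement carry second class particles (all of the same sign), whose number is non-increasing in time, so the statement amounts to showing that all of them have been absorbed at the two boundaries by time $C_1 N^{3/2}\log N$.

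\smallskip
\noindent\emph{Last-passage representation.} I would realise both copies as deterministic functionals of a single environment of independent exponential weights --- a bulk family together with two boundary families encoding the entrances at rate $\alpha$ and the exits at rate $\beta$ --- via the corner growth / last-passage description of the open TASEP. Writing $\eta_t(x)$ through the integrated currents and these through last-passage values, the two initial conditions $\mathbf 1$ and $\mathbf 0$ enter only as two different boundary profiles for the last-passage function, and $\eta^{\mathbf 1}_t=\eta^{\mathbf 0}_t$ is equivalent to the two resulting families of last-passage values differing by exactly the linear profile forced by particle conservation. Geometrically, a given site of $[N]$ has coalesced once the geodesics realising its last-passage value have separated from the portion of space-time encoding the initial slice; the boundary between the sites still influenced by the two differing profiles is a competition interface, and the task is to show this interface has swept across the whole segment.

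\smallskip
\noindent\emph{Time scale and moderate deviations.} In the maximal current regime $\alpha,\beta\ge\tfrac12$ the hydrodynamic density profile is the flat value $\tfrac12$, for which the $1\!:\!2\!:\!3$ KPZ scaling applies: the relevant last-passage values at time $t$ have deterministic part of order $t$, fluctuations of order $t^{1/3}$, and geodesics with transversal fluctuations of order $t^{2/3}$. Hence on a time scale $t\asymp N^{3/2}$ the transversal scale is comparable to $N$, exactly what is needed for the geodesics to separate from the initial slice uniformly across $[N]$; equivalently, the KPZ dynamic exponent $z=\tfrac32$ makes $N^{3/2}$ the relaxation time over a segment of length $N$. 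To make this quantitative, one checks that for a fixed site $x$ the event $\{\eta^{\mathbf 1}_t(x)\neq\eta^{\mathbf 0}_t(x)\}$ forces two competing last-passage values to deviate atypically far from their typical positions, and the moderate-deviation upper and lower tail bounds for last-passage times bound its probability by $\exp\!\big(-c\,(t/N^{3/2})^{\gamma}\big)$ for suitable $c,\gamma>0$, up to polynomial corrections. Taking $t=C_1 N^{3/2}\log N$ with $C_1$ large makes each such probability at most $N^{-2}$, and a union bound over $x\in[N]$ yields $\P(\eta^{\mathbf 1}_t\neq\eta^{\mathbf 0}_t)\le N^{-1}\to0$. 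The logarithmic correction in the statement is precisely the cost of this union bound over the $\Theta(N)$ sites.

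\smallskip
\noindent\emph{Main obstacle.} The delicate part is the last-passage analysis near the two boundaries: one must verify that in the regime $\alpha,\beta\ge\tfrac12$ the boundary weights do not generate a faster, ballistic relaxation, that the flat $\tfrac12$-profile together with the $\tfrac13$/$\tfrac23$ exponents really governs the last-passage values \emph{uniformly over all $x\in[N]$} --- including $x$ within $o(N)$ of the endpoints $1$ and $N$ --- and that the moderate-deviation estimates can be applied with enough uniformity to survive the union bound. Treating the source at rate $\alpha$ and the sink at rate $\beta$ simultaneously, rather than a single half-space boundary, is where the analysis genuinely departs from the classical corner growth picture; once these uniform last-passage estimates are in place, the monotone-coupling reduction and the remaining bookkeeping are routine.
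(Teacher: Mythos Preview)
Your high-level architecture --- monotone coupling of the extremal starts, LPP/competition-interface representation, and a union bound over $N$ objects producing the $\log N$ --- matches the paper. But the proposal does not identify the actual mechanism that turns moderate deviations into the required exponential bound, and the way you set up the union bound is not the one that works.

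Two concrete gaps. First, the union bound: the paper does \emph{not} control $\P(\eta^{\mathbf 1}_t(x)\neq\eta^{\mathbf 0}_t(x))$ site by site. With $N$ second class particles present simultaneously, the event that a given site is a disagreement at time $t$ has no clean LPP translation. Instead, the paper interpolates between $\mathbf 1$ and $\mathbf 0$ through the $N$ step configurations $\eta_0^i(j)=\mathds{1}_{j>i}$, uses the triangle inequality for total variation, and thereby reduces to $N$ disagreement processes each containing a \emph{single} second class particle. Only then does the competition-interface picture apply cleanly. Second, the central estimate: your sentence ``the event $\{\eta^{\mathbf 1}_t(x)\neq\eta^{\mathbf 0}_t(x)\}$ forces two competing last-passage values to deviate atypically far'' is not how the bound is obtained. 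The paper shows that if all geodesics between two anti-diagonals $\mathbb L_N$ and $\mathbb L_{N+k}$ coalesce, then $\mathbb L_{N+k}$ is monochromatic, hence the competition interface --- and thus the single second class particle --- has exited by time $G(\gamma_0,\mathbb L_{N+k})$. The quantitative input is Proposition~\ref{pro:CoalescenceTimes}: coalescence fails with probability at most $\exp(-Cm)$ when $k=mcN^{3/2}$. Its proof is where the moderate-deviation bounds actually enter, and the way they enter is via the BKR inequality for disjoint occurrence of heavy paths --- a step your outline does not mention.

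Your final paragraph correctly flags the boundary analysis as the delicate point; in the paper this is Section~\ref{sec:VarianceUpperBound}, where paths are decomposed into excursions touching neither boundary, excursions crossing from one boundary to the other, and excursions staying near one boundary, the last type being handled via a comparison with the half-space stationary CGM. So the ``main obstacle'' diagnosis is right, but the missing ingredients upstream --- the step-configuration decomposition and the coalescence-via-BKR argument --- are what make the proof go through.
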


In all other cases of $\alpha,\beta>0$ with $\alpha \neq \beta$, the mixing time of the TASEP with open boundaries is known to be of linear order \cite{GNS:MixingOpen}. When $\alpha=\beta=\frac{1}{2}$, called the \textbf{triple point} as the maximum current phase coincides with the high density phase and  the low density phase of the TASEP with open boundaries, the above result can be refined.

\begin{theorem}\label{thm:Triple}
For $\alpha=\beta = \frac{1}{2}$, and for all $\varepsilon \in (0,1)$,  the mixing time of the TASEP with open boundaries satisfies
\begin{equation}
\limsup_{N \rightarrow \infty} \frac{t^N_\mix(\varepsilon)}{N^{3/2}} \leq C_2
\end{equation} for some constant $C_2=C_2(\varepsilon)>0$.
\end{theorem}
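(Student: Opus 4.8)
\emph{Proof strategy.} The plan is to remove the logarithmic factor of Theorem~\ref{thm:MaxCurrent} by exploiting that at the triple point the stationary measure $\mu_N$ is comparable to the Uniform measure on $\Omega_N$ and, correspondingly, the last-passage percolation attached to the model has exactly solvable boundary weights. We keep the monotone-coupling reduction. The open TASEP is attractive, so the basic coupling is order preserving on $\Omega_N$; denote by $(\eta^{\mathbf 1}_t)_{t\ge 0}$ and $(\eta^{\mathbf 0}_t)_{t\ge 0}$ the chains started from the full configuration $\mathbf 1=(1,\dots,1)$ and the empty configuration $\mathbf 0=(0,\dots,0)$, and let $(\eta^{\mu}_t)_{t\ge 0}$ be a stationary copy, all driven by the same clocks. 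Then $\eta^{\mathbf 0}_t\le\eta^{\mu}_t\le\eta^{\mathbf 1}_t$ for every $t$, and on the event $\{\eta^{\mathbf 0}_t=\eta^{\mathbf 1}_t\}$ every sandwiched chain agrees with the stationary one. Hence, with $\tau_N:=\inf\{t\ge 0:\eta^{\mathbf 1}_t=\eta^{\mathbf 0}_t\}$,
\begin{equation*}
\max_{\eta\in\Omega_N}\TV{\P(\eta_t\in\cdot\mid \eta_0=\eta)-\mu_N}\le \P(\eta^{\mathbf 1}_t\neq\eta^{\mathbf 0}_t)=\P(\tau_N>t),
\end{equation*}
so it suffices to show that for every $\varepsilon\in(0,1)$ there is $C_2=C_2(\varepsilon)$ with $\P(\tau_N>C_2 N^{3/2})<\varepsilon$ for all large $N$.

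Next I would realise the coupled pair on a single corner growth model. The clocks of the basic coupling become one field of exponential weights (bulk weights $\mathrm{Exp}(1)$ and, crucially, boundary weights $\mathrm{Exp}(\tfrac12)$ at the triple point), and the two extremal chains correspond to the two boundary data ``full'' and ``empty''. The sites where $\eta^{\mathbf 1}_t>\eta^{\mathbf 0}_t$ are exactly those still influenced by the difference in the initial data; tracking them amounts to following second-class particles, i.e.\ the competition interface between the region of the CGM whose last-passage value comes from the left boundary and the region whose value comes from the right. Coalescence, $\tau_N\le t$, is then equivalent to the last-passage landscapes built from the two boundary data agreeing on all of $[N]$. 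The gain at the triple point is that this whole-segment event is controlled by only $O(1)$ last-passage comparisons: using the nesting/FKG structure of the competition interface together with the stationarity (Burke-type property) of the $\mathrm{Exp}(\tfrac12)$ boundary, $\{\tau_N>t\}$ is contained in an event of the shape $\{\Lambda^{\mathrm{left}}_N>t\}\cup\{\Lambda^{\mathrm{right}}_N>t\}$, where $\Lambda^{\mathrm{left}}_N$ is the last-passage time governing how long the influence of the left boundary needs to reach the far end of the segment, and symmetrically on the right. This replaces the union bound over the $N$ bulk sites that forced the extra $\log N$ in Theorem~\ref{thm:MaxCurrent}.

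It then remains to bound $\P(\Lambda^{\mathrm{left}}_N>C N^{3/2})$. Since the bulk density is $\tfrac12$, the characteristic speed vanishes, so the left boundary spreads its influence only on the KPZ scale: a distance of order $N$ is covered in last-passage time of order $N^{3/2}$, the exponent $3/2$ coming from the relaxation exponent $2/3$ of the transversal fluctuations. At the triple point the relevant last-passage model is exactly solvable, so a moderate/large deviation bound $\P(\Lambda^{\mathrm{left}}_N>C N^{3/2})\le g(C)$ holds for a function $g$ with $g(C)\to 0$ as $C\to\infty$, uniformly in $N$. Choosing $C_2=C_2(\varepsilon)$ with $g(C_2)<\varepsilon/2$, and adding the symmetric bound for $\Lambda^{\mathrm{right}}_N$, gives $\P(\tau_N>C_2 N^{3/2})<\varepsilon$ and hence the theorem.

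The step I expect to be the main obstacle is the reduction in the second paragraph: showing, uniformly in $N$ and at the critical boundary parameter $\alpha=\beta=\tfrac12$, that full coalescence on the whole segment follows from just the two boundary-influence last-passage times being of order $N^{3/2}$, so that the moderate deviation estimate is applied $O(1)$ times rather than $N$ times. This is precisely where the special structure of the triple point --- the $\mathrm{Exp}(\tfrac12)$ boundary weights and the resulting near-Uniformity of $\mu_N$ --- is genuinely needed, and where the argument must go beyond the proof of Theorem~\ref{thm:MaxCurrent}.
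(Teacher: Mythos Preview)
Your monotone-coupling reduction to $\P(\tau_N>t)$ is correct and matches the paper. The genuine gap is exactly where you flag it: the second paragraph's reduction to two events $\{\Lambda^{\mathrm{left}}_N>t\}\cup\{\Lambda^{\mathrm{right}}_N>t\}$ is asserted but not constructed. Invoking ``nesting/FKG structure together with stationarity'' does not by itself explain why coalescence of $\eta^{\mathbf 1}$ and $\eta^{\mathbf 0}$ on the \emph{entire} segment follows from just two last-passage comparisons; in fact no such direct left/right decomposition of the coalescence event appears in the paper, and it is not clear how to make one work. The difficulty is that the $N$ second-class particles in the disagreement process are not a priori governed by a single competition interface, and FKG alone does not collapse them.

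The paper's mechanism is rather different and more delicate. It introduces an auxiliary event $A_T$: the existence of a tag $(X_t)$ which passes successively through particle labels $n,n+1,\dots$ so that the currently tagged particle never sees an occupied site to its right, and eventually reaches site $N$ before time $T$. One checks directly that $A_T$ forces all second-class particles out by time $T$. The event $A_T$ is then shown to hold whenever the \emph{semi-infinite geodesic} from the lower-left corner of the strip first touches the upper boundary and then returns to the lower boundary within passage time $T$; this uses the backwards equations for geodesics. Finally, the hitting behaviour of the semi-infinite geodesic is controlled via a duality: in the reversed stationary CGM on the strip (which exists precisely because $\alpha=\beta=\tfrac12$ gives i.i.d.\ Exp$(\tfrac12)$ increments along down-right paths), the semi-infinite geodesic becomes a competition interface, whose exit from the strip is then bounded by the coalescence estimate of Proposition~\ref{pro:CoalescenceTimes}. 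This chain --- tagged particle $\Rightarrow$ semi-infinite geodesic touching both boundaries $\Rightarrow$ duality to competition interface $\Rightarrow$ coalescence bound --- is the substance your outline is missing; the Burke property enters not to decorrelate a left/right split but to make the time-reversal a legitimate CGM.
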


Previously, the best upper bound on the mixing time of the TASEP in the triple point was of order $N^{3}$; see Theorem 1.6 in \cite{GNS:MixingOpen}.
We note that the upper bound on the mixing time in Theorem \ref{thm:Triple} is expected to give the correct order for many interacting particle systems; see Conjecture \ref{conj:Milton}. The next result states that the TASEP with open boundaries in the maximal current phase  has a mixing time of order at least $N^{3/2}$.
\begin{theorem}\label{thm:LowerBound}
For all $\alpha,\beta \geq \frac{1}{2}$,  and for all $\varepsilon \in (0,1)$, we have that the mixing time of the TASEP with open boundaries satisfies
\begin{equation}
\liminf_{N \rightarrow \infty} \frac{t^N_\mix(\varepsilon)}{N^{3/2}} \geq C_3
\end{equation} for some constant $C_3=C_3(\varepsilon)>0$.
\end{theorem}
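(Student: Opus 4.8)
The plan is to exhibit an initial configuration and a distinguishing event that takes time of order $N^{3/2}$ to equilibrate. The natural candidate is the fully packed configuration $\eta_0 \equiv \mathbf{1}$ (all sites occupied), or alternatively the empty configuration $\eta_0 \equiv \mathbf 0$; by the particle-hole symmetry of the TASEP these behave symmetrically, and since $\alpha,\beta\ge\frac12$ the stationary measure $\mu_N$ has mean density close to $\frac12$, so starting from all-ones is far from equilibrium. Concretely, I would track a macroscopic observable such as the total number of particles $\mathcal N_t := \sum_{x=1}^N \eta_t(x)$, or better a localized version like the occupation of a window $[1,\delta N]$ near the left boundary, and show that under $\mu_N$ this observable concentrates around $\tfrac12\delta N$ with fluctuations $O(N^{1/3})$ (this is exactly the kind of statement the maximal-current/KPZ picture and the last-passage connection in the earlier sections provide), whereas started from $\mathbf 1$ it stays well above that value until time $\gtrsim N^{3/2}$.

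The key step is the hydrodynamic/geometric control of how fast the all-ones profile relaxes. Using the coupling with the corner growth model and last-passage percolation set up earlier in the paper, the occupation of site $x$ at time $t$, starting from $\mathbf 1$, is governed by last-passage times; the relevant "service" has had time to propagate into the bulk only a distance of order $t^{2/3}$ from the right boundary (this is precisely the $2/3$ KPZ exponent, and is where moderate-deviation estimates for last-passage times enter). More carefully: I would argue that for $t = c N^{3/2}$ with $c$ small, there is still a block of sites — say the left third $[1,N/3]$ — that is entirely occupied with probability bounded below, or at least whose particle count exceeds $\tfrac12\cdot\tfrac N3 + KN^{2/3}$ with probability bounded away from $0$. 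Let $A$ be the event $\{\sum_{x\le N/3}\eta(x) \ge \tfrac N6 + \tfrac12 KN^{2/3}\}$. Then $\mu_N(A)$ is small (by stationary concentration), while $\P(\eta_t\in A\mid \eta_0=\mathbf 1)$ is bounded below, so the total-variation distance exceeds a fixed constant, forcing $t^N_{\mathrm{mix}}(\varepsilon) \ge cN^{3/2}$ for $\varepsilon$ below that constant. To upgrade to all $\varepsilon\in(0,1)$ one uses the standard submultiplicativity of the worst-case TV distance: running for $kcN^{3/2}$ can only reduce the gap geometrically, so for any fixed $\varepsilon$ the mixing time is still $\gtrsim N^{3/2}$ with a constant depending on $\varepsilon$.

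I expect the main obstacle to be the quantitative relaxation estimate started from $\mathbf 1$: one needs a lower bound on how many sites remain occupied (or a lower bound on the particle count in a macroscopic window) after time $cN^{3/2}$, uniformly in $N$, and this requires showing the last-passage/competition-interface mechanism cannot clear particles out of the bulk faster than the diffusive-with-KPZ-corrections rate. The cleanest route is probably to compare with the TASEP on the half-line or on $\mathbb Z$ with a step initial condition: by attaching a reservoir one dominates the open system, and the position of the relevant characteristic (or the second-class particle / competition interface tracked in the earlier sections) moves ballistically only on the hydrodynamic timescale $N$ but the fluctuation window it controls is $O(t^{2/3})$, so at $t\asymp N^{3/2}$ the perturbation has penetrated only $O(N)$ sites with the right constant — one must check the constant is genuinely $<1$ times the segment length, which is where the precise moderate-deviation bounds for last-passage times (cited in the abstract) do the work. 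A secondary technical point is making sure the boundary rates $\alpha,\beta\ge\frac12$ do not inject particles fast enough to spoil the argument; since in the maximal current phase the boundary is "transparent" and the current is the universal value $\tfrac14$, the net flux across the window is $O(t)$, negligible compared to the $N$-sized discrepancy we are exploiting when $t = cN^{3/2}$ with $c$ small, so this should be routine once the bulk estimate is in hand.
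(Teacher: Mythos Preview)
Your overall strategy---start from an extreme configuration, track the total particle count (or a localized version), and show it has not reached its stationary range by time $cN^{3/2}$---is exactly the one the paper uses. But several details are off and one step is a genuine gap.

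First, the fluctuation scales are wrong. Under $\mu_N$ the total number of particles (or the count in any macroscopic window) has fluctuations of order $\sqrt{N}$, not $N^{1/3}$; this is the content of Lemma~\ref{lem:invariant}. Correspondingly, the distinguishing event should use a threshold of order $\sqrt{N}$, not $N^{2/3}$. The paper takes $E_\theta=\{\sum_{j}(\eta(j)-\tfrac12)\ge -\theta\sqrt{N}\}$ and starts from $\mathbf{0}$.

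Second, and more seriously, your ``submultiplicativity'' upgrade to all $\varepsilon\in(0,1)$ does not work. Submultiplicativity gives \emph{upper} bounds on $d(kt)$; it says nothing about $d(t/k)$, and knowing $d(cN^{3/2})\ge\eta_0$ only yields $t_{\mix}^N(\varepsilon)\ge cN^{3/2}$ for $\varepsilon<\eta_0$, not for $\varepsilon$ close to~$1$. The paper instead proves directly that for \emph{every} $\varepsilon\in(0,1)$ one can choose $\delta=\delta(\varepsilon)$ small so that $d(4\delta N^{3/2})\ge 1-\varepsilon$; the constant $C_3(\varepsilon)$ in the theorem is this $\delta$. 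To get that, you need the probability of the distinguishing event from $\mathbf{0}$ (or $\mathbf{1}$) to be at least $1-\varepsilon/2$, not merely bounded away from~$0$.

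Third, your proposed mechanism---``the perturbation has penetrated only $O(t^{2/3})$ sites''---is not the argument that actually works, and as stated is unclear. The paper's route is simpler and more direct: via the last-passage correspondence (Lemma~\ref{lem:CurrentVsGeodesic}), the number of particles that have \emph{entered} by time $t_N=4\delta N^{3/2}-d\sqrt{N}$ is at most $\delta N^{3/2}$ on the event $\{G_p(0,\delta N^{3/2})>t_N\}$, and the number that have \emph{exited} is at least $\delta N^{3/2}+\theta\sqrt{N}-N/2$ on a complementary last-passage event. Both events have probability $\ge 1-\varepsilon/4$ by the moderate-deviation bounds developed in Section~\ref{sec:VarianceUpperBound} (specifically Corollary~\ref{cor:LowerBoundLPP}, which controls point-to-point passage times between the two boundaries of the strip). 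Subtracting gives fewer than $N/2-\theta\sqrt{N}$ particles in the segment, so $\eta_{t_N}\notin E_\theta$ with probability $\ge 1-\varepsilon/2$, while $\mu_N(E_\theta)\ge 1-\varepsilon/2$, and the total-variation gap is $\ge 1-\varepsilon$.
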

We remark that a lower bound of order $N^{3/2}$ may be deduced using results on the relaxation time of the TASEP with open boundaries from the physics literature; see \cite{GE:ExactSpectralGap,GE:BetheAnsatzPASEP}, and a well-known connection between relaxation and mixing times; see Theorem 12.5 in \cite{LPW:markov-mixing}.  However, we provide a proof of Theorem \ref{thm:LowerBound} which does not rely on spectral techniques.

\subsection{Related work} \label{sec:RelatedWork}

Mixing times for exclusion processes are a topic of lots of recent interest; see Chapter 23 in \cite{LPW:markov-mixing} for a general introduction. In \cite{L:CutoffSEP},  Lacoin studies the symmetric simple exclusion process on the segment, in which particles perform simple random walks under the exclusion rule. He determines the first-order mixing time of the symmetric simple exclusion process and proves cutoff, a sharp transition for the mixing time. A lower bound giving the correct first-order term was previously shown by Wilson in \cite{W:MixingLoz}. For the asymmetric simple exclusion process, including the TASEP with closed boundaries $\alpha=\beta=0$, Labbé and Lacoin prove cutoff in \cite{LL:CutoffASEP}. Moreover, for several instances of exclusion processes, the limit profiles of the mixing time are studied; see  \cite{BN:CutoffASEP,LL:CutoffASEP,L:CycleDiffusiveWindow}. Very recently, mixing times are investigated for simple exclusion processes with a size-dependent bias, and in random environments \cite{LL:CutoffWeakly,LY:RandomEnvironment,S:MixingBallistic}.  \\

In general, many techniques to obtain sharp bounds on the mixing times of exclusion processes rely on reversibility of the process. In the absences of reversibility, mixing times for the asymmetric simple exclusion process on the circle are investigated in \cite{F:EVBoundsSEP}. Further, in \cite{GNS:MixingOpen}, mixing times for the simple exclusion processes with open boundaries are studied, and several different regimes of mixing times are identified. In particular, the mixing time in the high density phase and in the low density phase is linear in the size of the segment. Note that when allowing for open boundaries, the invariant measure of the simple exclusion process has, in general, not a simple closed form. During the last decades, elaborate tools like the Matrix product ansatz and staircase tableaux were developed to provide useful representations of the invariant measure \cite{BE:Nonequilibrium,CW:TableauxCombinatorics,DEHP:ASEPCombinatorics,M:TASEPCombinatorics}. \\

When restricting our attention to a totally asymmetric simple exclusion process, the set of available techniques increases drastically due to an alternative representation of the one-dimensional TASEP as an exponential corner growth model on $\Z^2$. Historically, the formulation as a CGM was a crucial tool in the seminal work by Rost to derive hydrodynamic limits, in the sense of a rigorous connection to the Burgers equation \cite{R:PDEresult}. 
This correspondence allows to equivalently study last passage times and geodesics in the exponential CGM to gain insights on the  interacting particle system.  Note that many observables and properties have equivalent expressions; see also Sections \ref{sec:TASEPInteracting} and \ref{sec:TASEPLPP}. \\

The current in the TASEP can be phrased using last passage times, for which sharp asymptotic results, in terms of Tracy-Widom limit laws, are known \cite{J:KPZ,PS:CurrentFluctuations,S:CouplingMovingInterface}. Second class particles in the TASEP have a natural interpretation as competition interfaces in the CGM representation. This correspondence was first observed by Ferrari and Pimentel in~\cite{FP:CompetitionInterface} for the TASEP in the rarefaction fan, and later extended  \cite{CP:BusemannSecondClass,FMP:CompetitionInterface,GRS:GeodesicCompetition}. 
In particular, this equivalence allows to prove a law of large numbers for second class particles using asymptotic results for Busemann functions \cite{CP:BusemannSecondClass,S:ExistenceGeodesics,W:Semiinfinite}. When starting the TASEP from an invariant product measure, a version of Burke’s property gives rise to a stationary corner growth model \cite{BCS:CubeRoot,S:LectureNotes}. Note that all of these correspondences are usually described between the TASEP on the full line and the CGM on $\Z^2$ with i.i.d.\ Exponential-$1$-weights. A  stationary TASEP on $\N$ with an open boundary is investigated \cite{PS:CurrentFluctuations}, and more recently in \cite{BBCS:Halfspace,BFO:HalfspaceStationary}.  In Section \ref{sec:TASEPLPP}, we provide the above equivalences for the TASEP on a segment with open boundaries; see also \cite{KT:DIrectedPolymerWall} for similar ideas in the physics literature.  \\

A huge benefit of the corner growth representation is the fact that it can be analyzed with tools from integrable probability. This means that we are endowed with exact formulas for many observables, for example 
scaling limits, involving the exponent $2/3$ of the KPZ relaxation scale, for the current of the TASEP on the integers when starting from a shock, or for the TASEP on the torus  \cite{BL:TASEPring,BC:PSConjecture,FS:SpaceTimeCovariance,GS:SixVertex,L:HeightOnRing}. In particular, the TASEP is one of few models which is provably in the KPZ universality class. Further, it is shown in a seminal work by Corwin and Shen that the KPZ equation arises for the TASEP in the triple point under a suitable weakly asymmetric scaling \cite{CS:OpenASEPWeakly}. However, note that many of these exact expressions for the TASEP tend to fail under minor changes of the setup.
This led to a series of works, including \cite{BG:TimeCorrelation,BGH:AreaConstraint,BGHH:Watermelon,
BGZ:TemporalCorrelation,BSS:Invariant,BSS:Coalescence,P:Duality,Z:OptimalCoalescence},  where for example the effect of changing one rate in the TASEP, known as the \textit{slow bond problem}, or the correlation of last passage times and the coalescence of geodesics are studied. The articles have in common that they use results from integrable probability and combine them with probabilistic concepts; see also \cite{BBS:NonBiinfinite,SS:Coalescence} for similar results avoiding the connection to integrable probability entirely.

\subsection{Intuition for the $N^{3/2}$-mixing time using the TASEP on the integers}\label{sec:Heuristics}

Before presenting the proof of our main results, let us give some heuristics for a mixing time of order $N^{3/2}$ in the maximal current phase. One possibility is to argue with the scaling exponent in the KPZ regime, but this  requires more prerequisites. Instead, we present two direct approaches comparing the TASEP with open boundaries to the TASEP on the integers. \\

Intuitively, we can view the segment $[N]$ as a part of the integer lattice, and extend the exclusion dynamics. A canonical way of extending the stationary TASEP in the maximal current phase to the integers is to start from a Bernoulli-$\frac{1}{2}$-product measure. This is  justified by Lemma \ref{lem:invariant} where we discuss properties of the invariant measure in the maximal current phase. On the integers, let us add a perturbation at the origin. Formally, this corresponds to placing a second class particle at site $0$.  Using the results by Prähofer and Spohn for the TASEP, and by Balázs and Seppäläinen for a general asymmetric setup, one sees that the time for a perturbation to travel a distance $n$ will be of order $n^{3/2}$  \cite{BS:OrderCurrent,PS:CurrentFluctuations}. This suggests that on the segment, the expected time for a perturbation to exit is at most of order $N^{3/2}$. Since the segment contains at most $N$ perturbations, respectively second class particles, this indicates a mixing time of order $N^{3/2}$ up to logarithmic corrections. \\ 

To see that we expect the leading order to be precisely $N^{3/2}$, we consider a different embedding using the TASEP speed process on $\Z$; see \cite{AAV:TASEPSpeed}, and \cite{BB:ColorPosition} for a recent approach using Hecke algebras. In this process, we start with all sites being occupied. A particle initially at site $x\in \Z$ receives label $x$. The particles now perform the TASEP dynamics, i.e., a particle at site $y$ with label $x$ swaps at rate $1$ with the particle at site $y+1$ of label $z$ whenever $x<z$. 
Treating the particles with labels $1$ to $N$ as perturbations of the system, we have to wait until these particles are at sites $>N$ or $<1$ to mix. Using a shift-invariance argument, it suffices to follow the speed of the particle with label $1$. It turns out that the particle has almost surely a speed $U_1$ which is uniformly chosen from $[-1,1]$. Naively, one may expect that the speeds are independent and that it takes a time of order $N^2$ until all perturbations exit. However, this is \textit{not} the case. As shown by Amir et al.\ in Theorem 1.8 of \cite{AAV:TASEPSpeed}, the labels $C_1 \subseteq \N$ of particles with the same speed $U_1$ as particle $1$ have the same law as the times of last increase of a random walk conditioned to remain positive with increments distribution
\begin{equation}
\P(X=1)=\P(X=-1)=\frac{1-U^2_1}{4} \quad \text{ and } \quad \P(X=0)= \frac{1+U^2_1}{2} \, .
\end{equation} In particular, the probability that $x\in C_1$ for some $x\in \N$ is of order $x^{-1/2}$. Hence, there are order $\sqrt{N}$ many particles among the particles of labels $2$ to $N$ with the same speed as the particle of label $1$. 
Using \cite{FP:CompetitionInterface} to coupling the motion of particle $1$ with a competition interface in last passage percolation, we expect that the position $X_t^1$ at time $t$ of the particle with label $1$ is of order $U_1 t$ and satisfies
\begin{equation}\label{eq:ClaimSpeed1}
 \Var(X_t^{1}) \sim c t^{4/3} 
\end{equation} for all $t \geq 0$, and some constant $c>0$. We formally introduce and discuss competition interfaces in Section \ref{sec:CompetitionInterfaces} for the TASEP with open boundaries. Moreover, conditioning on the speed $U_1$ of the particle of label $1$, we expect that a particle of label $y$ with speed $U_1$ follows the same competition interface as particle $1$, implying that the position $X_t^{y}$ at time $t$ of the particle with label $y$ satisfies
\begin{equation}\label{eq:ClaimSpeedY}
\E[X_t^{y}]  \sim  y+ U_1 t \quad \text{and} \quad  \Var(X_t^{y}) \sim c t^{4/3} 
\end{equation}
for all $t>0$, and some constant $c>0$. Thus, it suffices to consider one particle in each cohort of the same speed, and bound the time until it is at sites $>c^{\prime}N$ or $<-c^{\prime}N$ for sufficiently large constant $c^{\prime}>0$. Recall that each such cohort has a size of order $\sqrt{N}$. Since the speed is uniform on $[-1,1]$, there are at most order $x$ many cohorts of a speed in $[-(x+1)N^{-1/2},-xN^{-1/2}] \cup [xN^{-1/2},(x+1)N^{-1/2}]$. The claim that all particles with labels $1$ to $N$ are at sites $>N$ or $<1$ after order $N^{3/2}$ many steps follows from \eqref{eq:ClaimSpeed1} and \eqref{eq:ClaimSpeedY}, together with a union bound over $x \in [N^{1/2}]$.

\subsection{Outline of the paper} \label{sec:OutlinePaper}

This paper is structured as follows. In the remainder of this section, we state open questions on mixing times for exclusion processes. In Section~\ref{sec:TASEPInteracting}, we collect basic properties of the TASEP with open boundaries, seen as an interacting particle system. In Section~\ref{sec:TASEPLPP}, we give a representation of the TASEP with open boundaries as a corner growth model on the strip and describe how well-known concepts for corner growth models transfer to our setup.
In particular, we express second class particles by competition interfaces; see \cite{FMP:CompetitionInterface,FP:CompetitionInterface}. In Section \ref{sec:MixingTimesMaxCurrent}, we prove Theorem \ref{thm:MaxCurrent} by arguing that the exit time of the competition interface can be bounded by the time it takes for certain geodesics in last passage percolation (LPP) on the strip to coalesce. To estimate the coalescence times, we use a path decomposition together with moderate deviation estimates by Basu et al.\  for LPP on the strip and a connection to the stationary CGM on the half-quadrant; see~ \cite{BBCS:Halfspace,BG:TimeCorrelation,BGZ:TemporalCorrelation,BSV:SlowBond}. We show that every path with a large passage time can with high probability be modified such that we obtain a heavy path between any  pair of sites close to its endpoints. Following the ideas in \cite{DJP:CriticalStripe}, we see that every pair of these heavy paths, and hence also the geodesics, must with high probability intersect. In Section \ref{sec:MixingTimesTriplePoint}, we show Theorem~\ref{thm:Triple} on mixing times in the triple point.
We use a form of duality for the stationary CGM on the strip, which allows us to bound the hitting time of the boundary for semi-infinite geodesics using exit times for competition interfaces. 
In Section \ref{sec:LowerBounds}, we give a short proof of the lower bounds on the mixing time in Theorem \ref{thm:LowerBound} using the results of Section~\ref{sec:MixingTimesMaxCurrent}.

\subsection{Open questions}

We conclude the introduction by highlighting two open problems on mixing times for exclusion processes. The first open question concerns the asymmetric simple exclusion process with open boundaries. In this model, the particles on the segment have a drift, but may move to both directions within the segment, and may enter and exit at both endpoints. Again, one can define a range of parameters such that the exclusion process is in the maximal current phase. The following open question is Conjecture 1.7 in \cite{GNS:MixingOpen}, and we refer to Section 1.2 of \cite{GNS:MixingOpen} for a more comprehensive discussion of open problems on the simple exclusion process with open boundaries.
\begin{conjecture} The mixing time of the asymmetric simple exclusion process in the maximal current phase is of order $N^{3/2}$, and the cutoff phenomenon does not occur.
\end{conjecture}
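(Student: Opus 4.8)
The plan is to follow the architecture of the present paper --- a soft $N^{3/2}\log N$ upper bound, a sharper $N^{3/2}$ upper bound, and a matching lower bound --- while replacing each appeal to the exponential corner growth model by its genuinely asymmetric, and hence only partially integrable, counterpart. First I would reduce mixing to coupling: the open ASEP in the maximal current phase is still attractive for the coordinatewise order on $\{0,1\}^N$, so by the standard monotone-coupling argument it suffices to run the basic coupling from the all-occupied and the all-vacant configurations and bound the time until the two copies agree. The sites of disagreement evolve as a system of at most $N$ second class particles, so everything reduces to controlling the time for a single second class particle, inserted into a near-stationary bulk of density $\tfrac12$, to leave the segment. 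In the maximal current phase the bulk profile is flat at density $\tfrac12$, where the hydrodynamic characteristic speed $1-2\rho$ vanishes, so the second class particle has zero drift; it should move superdiffusively with displacement of order $t^{2/3}$, so that it exits a segment of length $N$ within time $O(N^{3/2})$ with probability bounded below. The precise input here is the Bal\'azs--Sepp\"al\"ainen control of the current variance and diffusivity for the general asymmetric exclusion process \cite{BS:OrderCurrent}, together with the accompanying $t^{4/3}$-order fluctuations of the second class particle position. Combining this with attractivity, a restart argument, and a union bound over the $\le N$ discrepancies gives $t^N_{\mix}(\varepsilon)=O(N^{3/2}\log N)$, exactly as in Theorem \ref{thm:MaxCurrent}.

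Removing the logarithmic correction --- the ``order $N^{3/2}$'' part of the conjecture --- is the main obstacle, and it is here that essentially all of the new work lies. For the TASEP triple point the logarithm is eliminated via the last-passage representation on the strip, moderate-deviation estimates for LPP, and a duality for the stationary corner growth model on the half-quadrant; none of these structures survives verbatim when $p\neq q$. I see two possible routes. One is to replace the LPP layer by the stochastic six-vertex model in a quadrant with a reflecting boundary together with the KPZ-scale fluctuation theory for open ASEP --- even the weakly asymmetric version of this already requires the delicate analysis of Corwin and Shen \cite{CS:OpenASEPWeakly} --- and then to re-run the path-decomposition and coalescence arguments of Sections \ref{sec:MixingTimesMaxCurrent}--\ref{sec:MixingTimesTriplePoint} in that setting. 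The other is to establish directly the sharp statement, implicit in \eqref{eq:ClaimSpeed1}, that the second class particle follows a competition interface with position variance exactly of order $t^{4/3}$, and to feed this into a censoring/restart scheme that dispenses with the union bound; this would require an ASEP analogue of the exact second-class-particle and competition-interface correspondences, for which at present only partial results (e.g.\ via the colored ASEP) are available.

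The lower bound $t^N_{\mix}(\varepsilon)\ge C\,N^{3/2}$ and the absence of cutoff should, by contrast, be robust. For the lower bound I would adapt the proof of Theorem \ref{thm:LowerBound}: comparing the chain started from two configurations differing by a single second class particle placed near the middle of the segment, the \emph{zero drift} together with an \emph{upper} bound of order $t^{4/3}$ on the position variance give, via Chebyshev, that the particle has not reached either endpoint by time $cN^{3/2}$ for small $c$, so the two evolutions cannot have coupled and a mesoscopic boundary statistic witnesses a total-variation separation bounded away from $0$; alternatively, the relaxation time of the open ASEP in the maximal current phase is of order $N^{3/2}$, as follows from the Bethe-ansatz analysis of de Gier and Essler \cite{GE:BetheAnsatzPASEP}, and Theorem 12.5 in \cite{LPW:markov-mixing} then yields the bound at once. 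Finally, the two matching facts $t_{\mathrm{rel}}\asymp N^{3/2}$ and $t^N_{\mix}(\varepsilon)\asymp N^{3/2}$ rule out cutoff, since the product condition $t_{\mathrm{rel}}=o(t_{\mix})$ is necessary for a cutoff to occur; making this quantitative --- exhibiting $0<\varepsilon'<\varepsilon<1$ with $\liminf_N t^N_{\mix}(\varepsilon)/t^N_{\mix}(\varepsilon')>1$ --- amounts to tracking the tolerance-dependence of the constants in the matching bounds, which reflects that the governing slow mode relaxes diffusively rather than abruptly, and is cleanest to obtain from a nontrivial limiting total-variation profile along the $N^{3/2}$-scale.
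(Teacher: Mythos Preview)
The statement you are attempting to prove is presented in the paper as an \emph{open conjecture} (it is Conjecture~1.7 of \cite{GNS:MixingOpen}, restated in the paper's ``Open questions'' subsection). The paper contains no proof of it; the results of the paper are restricted to the \emph{totally} asymmetric case, precisely because the corner growth / last-passage representation that drives every argument in Sections~\ref{sec:TASEPLPP}--\ref{sec:LowerBounds} is unavailable once particles can jump in both directions. So there is nothing to compare your proposal against.

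Your write-up is not a proof but a research outline, and you are candid about this: you describe the removal of the logarithm as ``the main obstacle'' and offer two ``possible routes'' without claiming either can be carried out. That honesty is appropriate, but a few of the steps you present as routine are not. For the $N^{3/2}\log N$ upper bound, the Bal\'azs--Sepp\"al\"ainen input \cite{BS:OrderCurrent} controls the second class particle for ASEP on $\Z$ in stationarity; transferring those bounds to a segment with open boundaries, with only $N$ discrepancies and a non-stationary start, is exactly the kind of step the paper has to work hard for even in the TASEP case (Sections~\ref{sec:Coalescence}--\ref{sec:VarianceUpperBound}), and no ASEP substitute is currently available. For the lower bound, your Chebyshev argument again presupposes the $t^{4/3}$ variance bound on the segment; the only rigorous route at present is the one you mention second, via the de Gier--Essler relaxation time \cite{GE:BetheAnsatzPASEP} and the general bound $t^N_{\mix}(\varepsilon)\ge (t^N_{\mathrm{rel}}-1)\log(1/2\varepsilon)$. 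For the absence of cutoff, note that the ``product condition is necessary for cutoff'' statement in \cite{LPW:markov-mixing} is proved for reversible chains; the open ASEP with $\alpha,\beta,\gamma,\delta$ not satisfying the detailed-balance constraint is non-reversible, so this step also needs an argument.

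In short: the paper does not prove this statement, and your proposal identifies reasonable lines of attack but does not close any of the gaps that make it a conjecture.
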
  More generally, it is believed that the mixing time of order $N^{3/2}$ is present in a much broader setup. The following conjecture is due to Milton Jara (personal communication):
\begin{conjecture}\label{conj:Milton} 
Consider a particle conserving system on the ring, which has only (nice) local interactions, and assume that the second derivative of the average current does not vanish. Then the mixing time is of order $N^{3/2}$,   up to poly-logarithmic corrections.
\end{conjecture}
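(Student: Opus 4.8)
Since this statement is Jara's conjecture rather than a theorem proved in the paper, the following is a strategy one would attempt, and — as explained at the end — it is genuinely open because it rests on a KPZ-universality input that is currently available only for integrable models. The plan splits into a matching upper and lower bound, and in both the ring (periodic) geometry is exploited through particle conservation. I will assume that ``nice local interactions'' is made precise as: a finite-range, translation-invariant, attractive conservative dynamics admitting a monotone basic coupling and a one-parameter family $\{\nu_\rho\}$ of extremal translation-invariant stationary measures with smooth current $\rho\mapsto j(\rho)$, so that on hyperbolic scales the empirical density obeys $\partial_t\rho+\partial_x j(\rho)=0$ in the sense of Rost \cite{R:PDEresult}, and $j''(\rho)\neq 0$ places the density fluctuations in the KPZ class.

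For the \textbf{upper bound} $t^N_\mix(\varepsilon)\le C N^{3/2}(\log N)^{O(1)}$, I would run the monotone basic coupling of two copies $(\eta_t,\eta'_t)$ started from arbitrary configurations with the same (fixed) particle number, and track the discrepancy field $\xi_t=\eta_t-\eta'_t$. Particle conservation forces $|\xi_t^+|=|\xi_t^-|$, and attractiveness makes the total number of discrepancies non-increasing; the mixing time is then the time until all discrepancies annihilate. The engine is the motion of a single second-class particle — a $\pm$ discrepancy pair: in the frame moving at the characteristic speed $v=j'(\rho)$, its displacement $Y_t$ should satisfy $Y_t-vt=O(t^{2/3+o(1)})$ with stretched-exponential tails on deviations of that order, the analogue for general systems of the TASEP estimates of Prähofer--Spohn and Balázs--Seppäläinen \cite{PS:CurrentFluctuations,BS:OrderCurrent}. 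Granting such a moderate-deviation bound, two discrepancies at ring-distance $\le N$ acquire a relative displacement of order $t^{2/3}$; a monotonicity/ordering argument (opposite-sign discrepancies annihilate on meeting, same-sign ones keep their cyclic order) then forces every pair to coalesce once $t^{2/3}\gtrsim N$, i.e. by time $\asymp N^{3/2}$. A union bound over the $O(N)$ relevant pairs, fed by the tail estimate, is what produces the poly-logarithmic loss. One also needs this uniformly in the initial condition even when the discrepancy density is non-constant; I expect this to require a local-equilibrium / entropy input in addition to the coupling, so that after an $O(N)$ burn-in the discrepancy field looks locally like a superposition of second-class particles in a $\nu_\rho$ background.

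For the \textbf{lower bound} $t^N_\mix(\varepsilon)\ge c N^{3/2}$, I would reuse the paper's segment strategy in periodic form: take two starting configurations differing only by a single adjacent particle--hole swap at a fixed site — both legitimate states of the conservative chain — so that their basic coupling carries exactly one $\pm$ discrepancy pair. For $t=c N^{3/2}$ with $c$ small, the displacement of this pair relative to its deterministic drift $vt\bmod N$ is concentrated in a window of width $O\!\big((cN^{3/2})^{2/3}\big)=O(c^{2/3}N)\ll N$, so the two chains are distinguishable by which window contains the discrepancy, and since $\mu_N$ is their common limit, $\max$ over the two of $\TV{\P(\eta_t\in\cdot)-\mu_N}$ stays bounded below. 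This needs only an \emph{upper} bound on the second-class particle fluctuations, so it is somewhat less demanding than the upper-bound input. As an independent route one may invoke the spectral gap: the relaxation time of such systems is conjectured, and for integrable cases on the ring known \cite{GE:ExactSpectralGap,GE:BetheAnsatzPASEP,BL:TASEPring}, to be $\Theta(N^{3/2})$, and then $t_\mix(\varepsilon)\ge (t_{\mathrm{rel}}-1)\log(1/(2\varepsilon))$ by Theorem 12.5 of \cite{LPW:markov-mixing} — but for non-reversible dynamics the gap-to-mixing inequality requires an extra argument, which is exactly why the paper prefers the direct second-class-particle proof of Theorem~\ref{thm:LowerBound}.

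The \textbf{main obstacle} is the KPZ input to the upper bound: establishing that a second-class particle in a \emph{generic} conservative system with $j''\neq 0$ has $O(t^{2/3+o(1)})$ transversal fluctuations with good tails. At present this is accessible only through the exact-formula machinery used elsewhere in this paper — TASEP, ASEP, the stochastic six-vertex model, a handful of zero-range/misanthrope processes — and proving it at the stated level of generality is essentially the universality problem for the KPZ fixed point, which is wide open. Secondary difficulties are definitional (identifying which local dynamics actually guarantee attractiveness, a clean family $\{\nu_\rho\}$, and a well-posed conservation-law hydrodynamic limit) and technical (upgrading the heuristic ``coalescence by time $N^{3/2}$'' to a statement uniform over all initial conditions, via local equilibrium). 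In short, the scaffolding — coupling reduction, ring geometry, union bound, the lower bound via discrepancy localization — is soft and robust; the single hard ingredient is a universal KPZ moderate-deviation estimate that the field does not yet possess.
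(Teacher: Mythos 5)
This is Conjecture~\ref{conj:Milton}, which the paper attributes to Milton Jara and explicitly leaves open: it is stated, not proved, so there is no argument of the paper to compare your proposal against, and you correctly flag this. Your proposed scaffolding---reduction to annihilation of $\pm$ discrepancies in the basic coupling, KPZ-scale $t^{2/3}$ moderate deviations for second-class particles forcing coalescence by time $\asymp N^{3/2}$, a union bound producing the poly-logarithmic loss, and a single localized discrepancy pair for the lower bound---is precisely the heuristic the paper gives in Section~\ref{sec:Heuristics} and the strategy it actually executes in the TASEP case (Theorems~\ref{thm:MaxCurrent}--\ref{thm:LowerBound}), there made rigorous through the corner-growth/LPP representation and competition interfaces. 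Your diagnosis of the single hard ingredient is also the right one: the paper's TASEP proof leans entirely on integrable LPP inputs (moderate deviations for last-passage times on the strip, the half-quadrant CGM of~\cite{BBCS:Halfspace}, the stationary CGM estimate of~\cite{B:ModerateDeviationsStationary}), and no analogue of these is known at the generality of the conjecture; supplying one is essentially the KPZ universality problem for conservative lattice gases, which is why the statement remains a conjecture rather than a theorem.
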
 Note that the TASEP $(\eta_t)_{t\geq 0}$ with open boundaries can be thought of as a particle conserving system on a ring by considering the process $(\eta_t,\mathbf{1}-\eta_t)_{t\geq0}$; see Section 3 in \cite{GNS:MixingOpen}.

\section{The TASEP with open boundaries as an interacting particle system} \label{sec:TASEPInteracting}
In the following, we study the TASEP with open boundaries as an interacting particle system; see also \cite{L:Book2} for a general introduction. We collect basic definitions and properties of the TASEP with open boundaries from probability theory, statistical mechanics and combinatorics. This includes the canonical coupling, second class particles, invariant measures and the current. 
\subsection{Canonical coupling and second class particles} \label{sec:CanonicalCoupling}

We start with a joint coupling for the TASEP with open boundaries for all possible initial states and boundary parameters. Let $(\eta_t)_{t \geq 0}$ and $(\zeta_t)_{t \geq 0}$ be two TASEPs with open boundaries on a segment of size $N$ with respect to parameters $\alpha,\beta>0$ and $\alpha^{\prime},\beta^{\prime}>0$. We assume that $\alpha\geq \alpha^{\prime}$ and $\beta \leq\beta^{\prime}$ holds, as the construction of the coupling  in the other cases is similar. 
We choose the same rate $1$ Poisson clocks for each site $x\in [N-1]$ in the segment. Whenever a clock at some site $x$ rings, we attempt to move a particle from $x$ to $x+1$ in both processes under the exclusion constraint. At the boundaries, we consider independent rate $\alpha^{\prime}$ and rate $\beta$ Poisson clocks. 
Whenever the rate $\alpha^{\prime}$ Poisson clock rings, we place a particle in both processes at site $1$. 
Whenever the rate $\beta$ Poisson clock rings, we place in both processes an empty site at site $N$. 
If $\alpha\neq \alpha^{\prime}$, consider in addition an independent rate $(\alpha-\alpha^{\prime})$ Poisson clock, and place a particle at site $1$ in $(\eta_t)_{t \geq 0}$ whenever this clock rings. Similarly, if $\beta\neq \beta^{\prime}$, we consider an independent rate $(\beta^{\prime}-\beta)$ Poisson clock and place an empty site in $(\zeta_t)_{t \geq 0}$ at site $N$ whenever this clock rings. \\

 Let $\tilde{\mathbf{P}}$ denote the law of this coupling, called the \textbf{canonical coupling}. The following lemma is immediate from this construction; see also Lemma 2.1 in \cite{GNS:MixingOpen} for a similar construction for general simple exclusion processes with open boundaries.
\begin{lemma}\label{lem:MonotoneCanonicalCoupling} Let  $(\eta_t)_{t \geq 0}$ and $(\zeta_t)_{t \geq 0}$ be two TASEPs with open boundaries on a segment of size $N$ with respect to parameters $\alpha,\beta>0$ and $\alpha^{\prime},\beta^{\prime}>0$. Assume that $\alpha\geq \alpha^{\prime}$ and $\beta \leq\beta^{\prime}$ holds. Then under the canonical coupling $\tilde{\mathbf{P}}$, we have that
\begin{equation}
\tilde{\mathbf{P}} \left( \eta_t(x) \geq \zeta_t(x) \text{ for all } x \in [N]  \text{ and } t\geq 0 \mid \eta_0(y) \geq \zeta_0(y)\text{ for all } y \in [N]  \right) = 1 \, .
\end{equation}
\end{lemma}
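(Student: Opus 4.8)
The statement is an attractivity (monotonicity) property for the canonical coupling, and the natural route is a standard generator/infinitesimal analysis, exploiting the fact that the same clocks drive the bulk and the shared boundary moves while the extra boundary clocks only push the processes \emph{further} apart in the favorable direction. The plan is to argue that the partial order $\{\eta_t(x)\ge\zeta_t(x)\ \forall x\}$, i.e.\ $\eta_t\succeq\zeta_t$, is preserved by every possible clock ring in the coupling, so that if it holds at time $0$ it holds for all $t\ge 0$ almost surely.

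First I would fix a time $t$ at which $\eta_t\succeq\zeta_t$ and check each type of transition. For a bulk clock at site $x\in[N-1]$ (shared by both processes): if a swap happens in $\zeta$ across the edge $(x,x+1)$, then $\zeta_t(x)=1,\zeta_t(x+1)=0$, hence $\eta_t(x)=1$; after the move, either $\eta_t(x+1)=0$ and both processes swap, preserving the order edge-wise, or $\eta_t(x+1)=1$ and only $\zeta$ moves, which \emph{increases} $\eta-\zeta$ at site $x$ (to $1-0=1\ge 0$) and leaves it $\ge 0$ at site $x+1$ ($1-1=0$). Symmetrically, if a swap happens in $\eta$ but not in $\zeta$, one checks that at site $x$ we go from $1-1$ or $1-0$ to $0-(\cdot)$; since $\eta$ swapping requires $\eta_t(x)=1,\eta_t(x+1)=0$, and $\zeta$ not swapping with $\zeta_t(x)\le 1$ forces $\zeta_t(x)=1\Rightarrow\zeta_t(x+1)=1>\eta_t(x+1)$, contradicting $\eta\succeq\zeta$; hence $\zeta_t(x)=0$, and after the move $\eta(x)-\zeta(x)=0-0$, $\eta(x+1)-\zeta(x+1)=1-\zeta_t(x+1)\ge 0$. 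So the order is maintained. For the shared left clock (rate $\alpha'$): a particle is placed at site $1$ in both, which can only keep or increase $\eta_t(1)-\zeta_t(1)$. For the shared right clock (rate $\beta$): a vacancy is placed at site $N$ in both, keeping or increasing $\eta_t(N)-\zeta_t(N)$. For the extra left clock (rate $\alpha-\alpha'$): a particle is placed at site $1$ only in $\eta$, which can only increase $\eta_t(1)$, preserving the order. For the extra right clock (rate $\beta'-\beta$): a vacancy is placed at site $N$ only in $\zeta$, which can only decrease $\zeta_t(N)$, again preserving the order.

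Since each of the finitely many transition types preserves the partial order, and the coupled process is a jump process with, almost surely, only finitely many jumps in any finite time interval (the total clock rate is bounded by $N-1+\alpha+\beta'$), a simple induction over the successive jump times shows that $\eta_0\succeq\zeta_0$ implies $\eta_t\succeq\zeta_t$ for all $t\ge 0$ with $\tilde{\mathbf P}$-probability one. This is exactly the claimed identity.

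The only point that needs genuine care — and hence the ``main obstacle'' — is the bulk case, where one must verify that the order cannot be broken at the shared edge $(x,x+1)$ in the subtle subcase where exactly one of the two processes performs the swap; the argument above shows this is precisely where $\eta\succeq\zeta$ at time $t^-$ is used (to rule out the order-violating configuration and to control the post-jump difference at both endpoints of the edge). Everything else is immediate from the construction, since the auxiliary boundary clocks were designed to act only in the direction that helps the inequality. This is why the lemma is stated as ``immediate''; the verification is the short case analysis just described, and I would present it as such rather than invoking an abstract attractivity theorem, both because the boundary terms are non-standard and because the one-sided boundary couplings make the direct check transparent.
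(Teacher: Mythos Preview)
Your proposal is correct and takes essentially the same approach as the paper, which does not give a proof at all but simply declares the lemma ``immediate from this construction'' (pointing to Lemma~2.1 in \cite{GNS:MixingOpen} for the analogous statement). Your case-by-case verification that every clock ring preserves the componentwise order is exactly the natural unpacking of that word ``immediate''; in particular your careful treatment of the bulk subcase where only one of the two processes swaps is the one non-trivial check hidden behind the paper's one-line dismissal.
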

Using the canonical coupling, we define the \textbf{disagreement process}  $(\xi_t)_{t \geq 0}$ between two TASEPs $(\eta_t)_{t \geq 0}$ (with parameters $\alpha,\beta>0$) and $(\zeta_t)_{t \geq 0}$ (with parameters $\alpha^{\prime},\beta^{\prime}>0$) on the segment of size $N$ as the process taking values in $\{0,1,2\}^{N}$ given by
\begin{equation}
\xi_t(x) := \mathds{1}_{\eta_t(x)=\zeta_t(x)=1} + 2 \mathds{1}_{\eta_t(x)\neq \zeta_t(x)}
\end{equation} for all $x \in [N]$ and $t\geq 0$. We say that site $x$ is occupied by a \textbf{first class particle} at time $t$ if $\xi_t(x)=1$. Similarly, we say that site $x$ is occupied by a \textbf{second class particle} at time $t$ if $\xi_t(x)=2$.  When $\alpha\geq \alpha^{\prime}$ and $\beta \leq\beta^{\prime}$, as well as $\eta_0(y) \geq \zeta_0(y)$ for all $y \in [N]$ holds, the canonical coupling $\tilde{\textbf{P}}$ between $(\eta_t)_{t \geq 0}$ and $(\zeta_t)_{t \geq 0}$, and Lemma \ref{lem:MonotoneCanonicalCoupling} ensure that the disagreement process $(\xi_t)_{t \geq 0}$ is given as the following Markov chain. \\

In the canonical coupling between $(\eta_t)_{t \geq 0}$ and $(\zeta_t)_{t \geq 0}$, we assign rate $1$ Poisson clocks to all sites $x\in[N-1]$, a rate $\alpha$ Poisson clock to the left boundary, and a rate $\beta$ Poisson clock to the right boundary. Further, if $\alpha\neq\alpha^{\prime}$, we assign a rate $(\alpha-\alpha^{\prime})$ Poisson clock to the left boundary, and if $\beta\neq\beta^{\prime}$ a rate $(\beta^{\prime}-\beta)$ Poisson clock to the right boundary. 
Suppose that at time $t$, the Poisson clock assigned to a site $x$ rings. Then we obtain the configuration $\xi_t$ from $\xi_{t_-}$ as follows. If $\xi_{t_-}(x)=1$ and $\xi_{t_-}(x+1)=2$ holds, or if $\xi_{t_-}(x)=2$ and $\xi_{t_-}(x+1)=0$, then we set
\begin{equation}
\xi_t(y) := \begin{cases} \xi_{t_-}(x+1) & \text{ if } y=x \\
\xi_{t_-}(x) & \text{ if } y=x+1 \\
\xi_{t_-}(y) & \text{ if } y \notin \{x,x+1\} \\
\end{cases}
\end{equation} for all $y\in [N]$. Otherwise, we set $\xi_{t}=\xi_{t_-}$. Suppose that at time $t$, the rate $\alpha$ Poisson clock assigned to the left boundary rings. Then we place a first class particle site $1$, i.e.\ we let $\xi_t(1)=1$, and leave the configuration unchanged at all other sites. Suppose that at time $t$, the rate $(\alpha^{\prime}-\alpha)$ Poisson clock assigned to the left boundary rings. If $\xi_{t_-}(1)=0$ holds, then set $\xi_t(1)=2$, and leave the configuration unchanged at all other sites. Otherwise, we set $\xi_{t}=\xi_{t_-}$.
Similarly, when the rate $\beta^{\prime}$ Poisson clock rings, we set $\xi_t(N)=0$ and leave the remaining configuration unchanged, whereas when the $(\beta-\beta^{\prime})$ Poisson clock rings, we set $\xi_t(N)=2$, provided that $\xi_{t_-}(N)=1$, and consider $\xi_{t}=\xi_{t_-}$ otherwise.   \\

The next lemma is a standard result for an upper bound on the mixing time using the exit times of second class particles in the disagreement process; see Corollary 2.5 in \cite{GNS:MixingOpen}.

\begin{lemma}\label{lem:MixingBoundByExiting}
Let $(\eta^{\mathbf{0}}_t)_{t \geq 0}$ and $(\eta^{\mathbf{1}}_t)_{t \geq 0}$ be two TASEPs with open boundaries on $[N]$ for some $N \in \N$, and the same boundary parameters $\alpha,\beta>0$,  in the canonical coupling $\tilde{\mathbf{P}}$ started from the all empty and the all full states $\mathbf{0}$ and $\mathbf{1}$, respectively. Let $(\xi_t)_{t \geq 0}$ be their disagreement process, and denote by $\tau$ the first time at which $(\xi_t)_{t \geq 0}$ contains no second class particles. If  $\tilde{\mathbf{P}}(\tau>s) \leq \varepsilon$ holds for some $\varepsilon>0$ and $s\geq 0$, then the mixing time of the TASEP with open boundaries satisfies $t^N_{\text{\normalfont mix}}(\varepsilon)\leq s$.
\end{lemma}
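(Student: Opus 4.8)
The plan is to use the standard coupling characterization of total-variation distance, combined with the monotonicity supplied by Lemma \ref{lem:MonotoneCanonicalCoupling}, to reduce the mixing bound to the disappearance of second class particles. First I would recall that for any initial configuration $\eta \in \Omega_N$, one can run a third copy $(\eta_t)_{t \geq 0}$ of the TASEP with parameters $\alpha,\beta$ from $\eta_0 = \eta$ inside the same canonical coupling $\tilde{\mathbf P}$. Since $\mathbf{0} \le \eta \le \mathbf{1}$ coordinatewise, Lemma \ref{lem:MonotoneCanonicalCoupling} gives $\eta^{\mathbf 0}_t(x) \le \eta_t(x) \le \eta^{\mathbf 1}_t(x)$ for all $x \in [N]$ and all $t \ge 0$, almost surely. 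In particular, on the event $\{\xi_t \text{ has no second class particle}\}$ we have $\eta^{\mathbf 0}_t = \eta^{\mathbf 1}_t$, and the sandwiching forces $\eta_t = \eta^{\mathbf 0}_t = \eta^{\mathbf 1}_t$ as well; once this coincidence happens it persists, because from that instant all three processes are driven by the same clocks from the same state. Hence on $\{\tau \le s\}$ we have $\eta_s = \eta^{\mathbf 1}_s$ for every choice of $\eta_0 = \eta$ simultaneously.

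Next I would quantify this with the coupling inequality. Fix $s \ge 0$ with $\tilde{\mathbf P}(\tau > s) \le \varepsilon$. For an arbitrary $\eta \in \Omega_N$, consider the pair $(\eta_s, \eta^{\mathbf 1}_s)$ under $\tilde{\mathbf P}$: the first marginal is $\P(\eta_s \in \cdot \mid \eta_0 = \eta)$ and the second is $\P(\eta^{\mathbf 1}_s \in \cdot \mid \eta^{\mathbf 1}_0 = \mathbf 1)$. Since $\{\eta_s \neq \eta^{\mathbf 1}_s\} \subseteq \{\tau > s\}$ by the previous paragraph, the coupling inequality yields
\begin{equation*}
\TV{\P(\eta_s \in \cdot \mid \eta_0 = \eta) - \P(\eta^{\mathbf 1}_s \in \cdot \mid \eta^{\mathbf 1}_0 = \mathbf 1)} \le \tilde{\mathbf P}(\eta_s \neq \eta^{\mathbf 1}_s) \le \tilde{\mathbf P}(\tau > s) \le \varepsilon.
\end{equation*}
In particular this applies to $\eta$ distributed according to the stationary measure $\mu_N$, in which case the second marginal above and $\mu_N$ differ in total variation by at most $\varepsilon$ (indeed averaging the displayed bound over $\eta \sim \mu_N$ and using convexity of total-variation distance shows $\TV{\P(\eta^{\mathbf 1}_s \in \cdot) - \mu_N} \le \varepsilon$). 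Combining the two estimates through the triangle inequality gives, for every $\eta \in \Omega_N$,
\begin{equation*}
\TV{\P(\eta_s \in \cdot \mid \eta_0 = \eta) - \mu_N} \le \TV{\P(\eta_s \in \cdot \mid \eta_0 = \eta) - \P(\eta^{\mathbf 1}_s \in \cdot)} + \TV{\P(\eta^{\mathbf 1}_s \in \cdot) - \mu_N} \le 2\varepsilon.
\end{equation*}

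To land exactly $t^N_{\mix}(\varepsilon) \le s$ rather than a bound with $2\varepsilon$, I would instead work with the single comparison $\eta$ versus $\eta^{\mathbf 1}$ and then note that by stationarity $\eta^{\mathbf 1}_s$ can be compared directly to $\mu_N$ through the same monotone sandwich: running a stationary copy $\eta^{\mu}_t$ from $\eta^{\mu}_0 \sim \mu_N$ in the coupling with $\eta^{\mathbf 0}$ and $\eta^{\mathbf 1}$, the event $\{\tau > s\}$ again dominates $\{\eta^\mu_s \neq \eta^{\mathbf 1}_s\}$, so that $\TV{\P(\eta^{\mathbf 1}_s \in \cdot) - \mu_N} \le \tilde{\mathbf P}(\tau > s) \le \varepsilon$ after using that $\eta^{\mu}_s \sim \mu_N$; and likewise $\TV{\P(\eta_s \in \cdot \mid \eta_0 = \eta) - \mu_N} \le \tilde{\mathbf P}(\tau > s) \le \varepsilon$ directly, taking $\eta^{\mathbf 1}$ out of the picture entirely and comparing $\eta$ to a stationary copy squeezed between $\eta^{\mathbf 0}$ and $\eta^{\mathbf 1}$. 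Taking the maximum over $\eta \in \Omega_N$ and recalling the definition \eqref{def:MixingTime} gives $t^N_{\mix}(\varepsilon) \le s$. The only genuinely delicate point is the bookkeeping around the strictness in \eqref{def:MixingTime} ($<\varepsilon$) versus the hypothesis ($\le \varepsilon$); this is handled by the standard observation that $\TV{\P(\eta_t \in \cdot \mid \eta_0 = \eta) - \mu_N}$ is nonincreasing in $t$, so a bound of $\varepsilon$ at time $s$ together with strict decrease, or simply replacing $s$ by any $s' > s$ for which the hypothesis still holds by monotonicity of $\tau$, secures the strict inequality; everything else is the routine monotone-coupling argument identical to Corollary 2.5 in \cite{GNS:MixingOpen}.
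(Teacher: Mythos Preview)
Your argument is correct and is precisely the standard monotone-coupling argument the paper invokes by citing Corollary 2.5 in \cite{GNS:MixingOpen} rather than giving its own proof: sandwich an arbitrary copy and a stationary copy between $\eta^{\mathbf 0}$ and $\eta^{\mathbf 1}$, so that on $\{\tau\le s\}$ all four coincide, and apply the coupling inequality. The detour through the $2\varepsilon$ bound is unnecessary---the direct comparison of $\eta_t$ with the stationary copy $\eta^{\mu}_t$ (both squeezed between the extremes) yields the $\varepsilon$ bound in one step---but your final paragraph arrives at exactly this, and your handling of the strict-vs-nonstrict inequality in \eqref{def:MixingTime} via monotonicity of $t\mapsto d(t)$ is the standard fix.
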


\subsection{Invariant measures and current}\label{sec:InvariantAndCurrent}

Recall the stationary distribution $\mu=\mu_N$ from Section \ref{sec:ModelResults}. When $\alpha=\beta=\frac{1}{2}$ holds, it is known that $\mu_N$ is the Uniform distribution on $\Omega=\{0,1\}^{N}$; see Proposition 2 in \cite{BCEPR:CombinatoricsPASEP}.  For general parameters $\alpha,\beta\geq \frac{1}{2}$, the invariant measure $\mu$ does not have a simple closed form, and it is a challenging task to find good  representations for $\mu$; see   \cite{CW:TableauxCombinatorics,DEHP:ASEPCombinatorics,M:TASEPCombinatorics}. The next lemma is a consequence of a seminal result due to Derrida et al.\ when $\alpha,\beta>1/2$, and due to Bryc and Wang for the entire maximum current phase \cite{BW:Density,DEL:DensityExcursion}. Their results give a precise description of the fluctuations of the number of particles in the invariant measure $\mu_N$. However, we only require the following simple consequence of Theorems 1.2 and 1.3 in \cite{BW:Density}.

% The following result originally goes back to Liggett \cite{L:ErgodicI} for $\alpha+\beta=1$, and can be found as Lemma 2.11 in \cite{GNS:MixingOpen} for general parameters.
\begin{lemma}\label{lem:invariant} Fix some $\alpha,\beta \geq \frac{1}{2}$. Let $c_1,c_2 \in [0,1]$ with $c_1 < c_2$. Then for all $s\in \R$, we have that the invariant measures $(\mu_N)_{N \in \N}$ satisfy
\begin{equation}\label{eq:InvaraintCLT}
\liminf_{N \rightarrow \infty} \mu_N\left( \frac{1}{\sqrt{N}}\sum_{i=c_1N}^{c_2N} \Big(\eta(i)  -  \frac{1}{2}\Big) > s \right) =  \P(Z > s) 
\end{equation} 
for some explicitly known random variable $Z$ whose law depends only on $c_1,c_2,\alpha$, and $\beta$.
\end{lemma}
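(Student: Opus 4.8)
The plan is to deduce Lemma~\ref{lem:invariant} directly from the fluctuation results of Bryc and Wang (Theorems~1.2 and 1.3 in \cite{BW:Density}), which describe the joint law of the partial sums of the occupation variables under $\mu_N$ in the maximal current phase. First I would recall the precise statement of those theorems: under $\mu_N$, the centered and rescaled current/density process, i.e.\ the random function $t \mapsto N^{-1/2}\sum_{i=1}^{\lfloor tN\rfloor}(\eta(i)-\tfrac12)$ for $t\in[0,1]$, converges in a suitable sense (finite-dimensional distributions suffice here) to an explicit Gaussian-type process $(\mathcal{B}_t)_{t\in[0,1]}$ whose description depends only on $\alpha,\beta$ (for $\alpha,\beta>\tfrac12$ it is a Brownian motion; at the boundary of the phase it picks up the extra contributions identified by Bryc--Wang, and in the triple point it is a Brownian bridge or Brownian excursion-type object, consistent with $\mu_N$ being uniform).

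Next I would identify the random variable $Z$ in the statement. Writing
\[
S_N := \frac{1}{\sqrt N}\sum_{i=\lceil c_1 N\rceil}^{\lfloor c_2 N\rfloor}\Big(\eta(i)-\frac12\Big) = \frac{1}{\sqrt N}\sum_{i=1}^{\lfloor c_2 N\rfloor}\Big(\eta(i)-\frac12\Big) - \frac{1}{\sqrt N}\sum_{i=1}^{\lceil c_1 N\rceil-1}\Big(\eta(i)-\frac12\Big) + o(1),
\]
the continuous mapping theorem (applied to the evaluation map at the two times $c_1$ and $c_2$, which is continuous for the limiting process since it has a.s.\ continuous paths) gives that $S_N \Rightarrow Z := \mathcal{B}_{c_2} - \mathcal{B}_{c_1}$, whose law manifestly depends only on $c_1,c_2,\alpha,\beta$. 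The $o(1)$ correction comes from the at most two extra summands at the endpoints of the index range, each contributing $O(N^{-1/2})$, which is negligible. Convergence in distribution then yields $\P(S_N > s) \to \P(Z > s)$ at every continuity point $s$ of the distribution function of $Z$; since $Z$ has a continuous (indeed absolutely continuous, being a nondegenerate Gaussian increment or a bridge/excursion increment) distribution, \emph{every} $s\in\R$ is a continuity point, so the stated limit — and in particular the $\liminf$ — holds for all $s\in\R$.

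I would organize the write-up as: (i) quote the Bryc--Wang functional limit theorem in the form needed, taking care that their normalization matches ours up to the trivial centering by $\tfrac12$; (ii) express $S_N$ as a difference of two partial sums plus a vanishing endpoint correction; (iii) invoke weak convergence of the two-dimensional marginal $(N^{-1/2}\sum_{i\le c_1 N}(\eta(i)-\tfrac12),\, N^{-1/2}\sum_{i\le c_2 N}(\eta(i)-\tfrac12))$ to $(\mathcal{B}_{c_1},\mathcal{B}_{c_2})$ and apply the continuous map $(a,b)\mapsto b-a$; (iv) conclude via the portmanteau theorem, using absolute continuity of $Z$ to upgrade from continuity points to all $s$. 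The main obstacle — really the only nontrivial point — is bookkeeping: making sure the Bryc--Wang results are cited in the exact regime ($\alpha,\beta\ge\tfrac12$, including the delicate critical lines $\alpha=\tfrac12$ or $\beta=\tfrac12$ and the triple point) and in a form that controls the \emph{joint} law at two points rather than a single one, and verifying that the limiting process is continuous and has a density so that no continuity-point caveat survives in the final statement. Everything else is routine, and since only the $\liminf$ and only a one-dimensional marginal of the increment are asserted, the lemma is a soft corollary of the cited work; no integrable-probability input beyond \cite{BW:Density} is needed.
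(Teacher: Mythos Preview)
Your proposal is correct and matches the paper's approach: the paper does not give a proof at all but simply states that the lemma is a direct consequence of Theorems~1.2 and~1.3 in \cite{BW:Density} (with the case $\alpha,\beta>\tfrac12$ going back to \cite{DEL:DensityExcursion}). Your write-up spells out the routine details (express $S_N$ as a difference of partial sums, apply the two-point marginal convergence from Bryc--Wang, use continuity of the limit law) that the paper leaves implicit.
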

\begin{remark}A similar result holds in the high and low density phase,  replacing the normalization $1/2$ in \eqref{eq:InvaraintCLT} by $(1-\beta)$ and $\alpha$, respectively. 
\end{remark}
Next, we consider the \textbf{current} $(\mathcal{J}^N_t)_{t \geq0}$ of particles in the TASEP with open boundaries on a segment of length $N$. Here, $\mathcal{J}^N_t$ denotes the number of times a particle was created at site $1$ until time $t\geq 0$. It is a classical result due to Derrida et al.\ in \cite{DEHP:ASEPCombinatorics} that the current satisfies a law of large numbers, where for any possible initial configuration, and any choice of the parameters $\alpha,\beta \geq \frac{1}{2}$
\begin{equation}
\lim_{t \rightarrow \infty} \frac{\mathcal{J}^N_t}{t} =    J_N
\end{equation} holds almost surely for some (deterministic) sequence $(J_N)_{N \in \N}$ with $\lim_{N \rightarrow \infty} J_N = \frac{1}{4}$. In the following, we are interested in the order of fluctuations for the current starting with empty initial conditions. For $\alpha=\beta=\frac{1}{2}$, Derrida et al.\ argue in \cite{DEM:DiffusionConstant}  that for fixed $N$, the variance of the current satisfies
\begin{equation*}%\label{eq:DerridaBound}
\lim_{t \rightarrow \infty} \frac{\Var(\mathcal{J}^N_t)}{t} = \Delta_N = \frac{1}{4\sqrt{\pi N}} \, ,
\end{equation*} where $ \Delta_N$ is called the diffusion constant. An inspection of their arguments suggests 
\begin{equation}%\label{eq:DerridaBound2}
\Var(\mathcal{J}^N_t) \leq C \Delta_N  (t +t^N_{\textup{rel}})
\end{equation} for all $t \geq0$ and some constant $C>0$, where $t^N_{\textup{rel}}$ denotes the relaxation time, i.e., the inverse spectral gap, of the TASEP with open boundaries on the segment of length $N$.  In \cite{GE:ExactSpectralGap,GE:BetheAnsatzPASEP}, the authors argue that $t^N_{\textup{rel}}$ is of order $N^{3/2}$ in the maximal current phase, which implies that at a  time of order $N^{3/2}$, the variance of the current for the TASEP with open boundaries on the segment of length $N$ should be of order $\sqrt{N}$. This motivates the following proposition.

%The following proposition shows that \eqref{eq:DerridaBound}  yields the correct order of fluctuations when $t$ is of order $N^{3/2}$, and when we start from equilibrium.
\begin{proposition}\label{pro:VarianceCurrent} Let $\alpha,\beta \geq \frac{1}{2}$, and let $t_N=cN^{3/2}$ for some constant $c>0$. Then for every $\varepsilon>0$, there exists some constant $C=C(c,\varepsilon)>0$ such that 
\begin{equation}
\liminf_{N \rightarrow \infty} \P\left(  \mathcal{J}^N_{t_N} - \frac{1}{4} t_N \in [ - C \sqrt{N}, C \sqrt{N} ] \ \Big| \ \eta_0 = \mathbf{0} \right) \geq 1 - \varepsilon \, .
\end{equation}
\end{proposition}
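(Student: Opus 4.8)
The plan is to connect the current $\mathcal{J}^N_{t_N}$ to the last-passage percolation representation of the TASEP with open boundaries developed in Section~\ref{sec:TASEPLPP}, and then invoke the moderate-deviation estimates for last-passage times on the strip. Starting from the empty configuration $\mathbf{0}$, the value $\mathcal{J}^N_{t_N}$ equals the number of particles that have entered through site $1$ by time $t_N$; in the CGM picture this is (up to a boundary bookkeeping term) a last-passage time $G$ along a path in the strip whose endpoints are determined by $t_N$. Concretely, the event $\{\mathcal{J}^N_{t_N} \ge k\}$ is dual to the event that a certain last-passage time along a path of roughly $4 t_N$ many i.i.d.\ (bulk) Exponential-$1$ weights together with $O(t_N/N)$ boundary weights is at most $t_N$. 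Because we are in the maximal current phase, the relevant boundary weights are subcritical, so the first-order behavior is governed by the bulk: the last-passage time over a rectangle of dimension of order $t_N \times$ (something of size $\Theta(\sqrt{t_N})$ dictated by $N$) has mean $\tfrac14 t_N + O(\sqrt{N})$ and fluctuations of order $(t_N)^{1/3}$ times a geometric prefactor, which for $t_N \asymp N^{3/2}$ is $O(N^{1/2})$.

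The key steps, in order, are: (i) recall from Section~\ref{sec:TASEPLPP} the identity expressing $\mathcal{J}^N_{t_N}$ (for $\eta_0 = \mathbf{0}$) in terms of a last-passage time $G^{(N)}$ on the strip of width $N$, and the monotone/duality relation $\{\mathcal{J}^N_{t_N} \ge k\} = \{G^{(N)}_k \le t_N\}$ where $G^{(N)}_k$ is the last-passage time to a site at ``height'' $k$; (ii) sandwich $G^{(N)}_k$ between last-passage times in two half-quadrant stationary CGMs (using the Burke-type property and the subcriticality of the boundary rates when $\alpha,\beta \ge \tfrac12$), so that one can replace the $N$-dependent strip geometry by a half-space model with explicitly computable shape; (iii) compute the leading term: the shape function gives $\mathbb{E}[G^{(N)}_k] = 4k + c_1 k/N + \dots$ so that $G^{(N)}_k \le t_N$ corresponds to $k \approx \tfrac14 t_N$ up to a correction of order $\sqrt{t_N} = o(\sqrt{N}\cdot N^{1/4})$ — more carefully one tracks that the window in $k$ of width $\Theta(\sqrt{N})$ around $\tfrac14 t_N$ is exactly what the fluctuation scale produces; (iv) apply the moderate-deviation bounds of Basu--Ganguly--Zhang / Basu--Sidoravicius--Sly type cited in the paper (the estimates referenced via \cite{BBCS:Halfspace,BG:TimeCorrelation,BGZ:TemporalCorrelation,BSV:SlowBond}) to get, for any $\delta>0$, uniform tail bounds $\P(|G^{(N)}_k - \mathbb{E} G^{(N)}_k| > \lambda k^{1/3}) \le C e^{-c \lambda^{\mathrm{const}}}$, valid in the relevant range of $k \asymp N^{3/2}$; (v) translate the $k^{1/3} = \Theta(N^{1/2})$ fluctuation of $G^{(N)}_k$ back, via the duality in (i) and local monotonicity in $k$, into the claimed $\Theta(\sqrt N)$ fluctuation of $\mathcal{J}^N_{t_N}$ around $\tfrac14 t_N$, choosing $C = C(c,\varepsilon)$ large enough that the tail probability is below $\varepsilon$.

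The main obstacle I expect is step (ii)–(iv): controlling the boundary contribution in the strip model and legitimately reducing to a half-space stationary CGM whose shape function and moderate-deviation estimates are actually available in the literature. Away from the triple point the boundary rates are strictly subcritical, which should make the boundary effect a genuinely lower-order perturbation, but one must quantify this uniformly in $N$ and check that the cited moderate-deviation estimates apply to the strip of width $N \asymp t_N^{2/3}$ (a non-stationary, width-growing geometry) rather than only to the fixed half-quadrant. A secondary technical point is making the duality $\{\mathcal{J}^N_{t_N} \ge k\} = \{G^{(N)}_k \le t_N\}$ and the passage from fluctuations of $G$ (a time) to fluctuations of $\mathcal{J}$ (a count) fully rigorous, which requires a two-sided comparison and a mild a~priori bound ruling out $G^{(N)}_k$ being atypically flat as a function of $k$ near $k \approx \tfrac14 t_N$ — this should follow from the same LPP estimates applied at two nearby heights. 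Given those inputs, the conclusion is a routine union bound and choice of constants.
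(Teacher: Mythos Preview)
Your proposal is correct and follows essentially the same route as the paper: reduce the current to last-passage times on the strip via Lemma~\ref{lem:CurrentVsGeodesic}, then apply moderate-deviation bounds (Proposition~\ref{pro:VarianceUpperBoundRephased} for the upper tail of $G$, Theorem~4.2 of \cite{BGZ:TemporalCorrelation} for the lower tail) and translate back, which is exactly the content of Remark~\ref{rem:VarianceCurrent}. Two small points to sharpen: the paper handles general $c>0$ in $t_N=cN^{3/2}$ by the subadditivity step $\P(G(1,N^{3/2})\ge s)\le n\,\P(G(1,n^{-1}N^{3/2})\ge n^{-1}s)$ to reach the large-$\theta$ regime of Proposition~\ref{pro:VarianceUpperBoundRephased}, and the boundary is controlled not by a direct half-space sandwich as in your step~(ii) but by the path decomposition of Lemmas~\ref{lem:Category1}--\ref{lem:Category3}.
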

Once we built up the framework of last passage percolation (LPP) on the strip in Section~\ref{sec:TASEPLPP}, 
we will see that Proposition~\ref{pro:VarianceCurrent} follows directly from Proposition~\ref{pro:VarianceUpperBoundRephased} by rephrasing the above statement on the current in terms of last passage times; see also Remark~\ref{rem:VarianceCurrent}.
%\begin{remark} An inspection of the arguments in \cite{DEM:DiffusionConstant}  yields that $\Var(\mathcal{J}_t) $ should satisfy
%\begin{equation}
%\Var(\mathcal{J}_t) \leq C \Delta_N  (t +t_{\textup{rel}})
%\end{equation} for all $t \geq0$ and some constant $C>0$, where $t_{\textup{rel}}$ denotes the relaxation time of the TASEP with open boundaries. In \cite{GE:BetheAnsatzPASEP,GE:ExactSpectralGap}, the authors argue that $t_{\textup{rel}}=\Theta(N^{3/2})$ in the maximal current phase, which in total provides an intuition for  the above proposition.
%\end{remark}

\section{The TASEP with open boundaries as a corner growth model}\label{sec:TASEPLPP}

We will now give a description of the TASEP with open boundaries as an exponential corner growth model (CGM) on a strip in $\Z^2$. The presented objects and statements are well-known for the TASEP on the integers; see \cite{S:LectureNotes} for an overview,  and we will in the following provide the corresponding definitions and theorems for the TASEP with open boundaries.

\subsection{Construction of the TASEP with open boundaries as a CGM}\label{sec:DefinitionTASEPLPP}

Consider the integer lattice $\Z^2$, and a function $p \colon \Z^2 \rightarrow [0,\infty)$. We let $(\omega^p_x)_{x \in \Z^2}$ be a family of independent Exponential-distributed random variables, where $\omega^p_x$ has mean $p(x)$, i.e., we have $\P(\omega^p_x >s)=\exp(-p(x)^{-1}s)$ for all $s\geq 0$. We use the convention that $\omega^p_x \equiv 0$ if $p(x)=0$. For $x=(x_1,x_2) \in \Z^2$, set $|x|_1:= |x_1|+|x_2|$. For $x,y \in \Z^{2}$ with $y\succeq x$ with respect to the component-wise ordering $\succeq$, we say that $\pi_{x,y}$ is an up-right \textbf{lattice path} from $x$ to $y$ if
\begin{equation*}
\pi_{x,y} = \{ z^0=x, z^{1},\dots, z^{|y-x|_1}=y \, \colon \, z^{i+1}-z^{i} \in \{ \eone,\etwo\} \text{ for all } i \} \, ,
\end{equation*} where $\eone:=(1,0)$ and $\etwo := (0,1)$. We denote by $\Pi_{x,y}$ the set of all lattice paths connecting $x$ to $y$. For $x,y \in \Z^{2}$ and a function $p$, we denote the \textbf{passage time} of a lattice path $\pi_{x,y}$ by
\begin{equation}\label{def:PathLPT}
G_p(\pi_{x,y}):=  \sum_{z \in \pi_{x,y}} \omega^p_z \, ,
\end{equation}
and let $G_p(x,y)$ be the \textbf{last passage time} between $x$ and $y$ with
\begin{equation*}
G_p(x,y):= \max_{\pi_{x,y} \in \Pi_{x,y}} G_p(\pi_{x,y})
\end{equation*} if $y \succeq x$, and $G_p(x,y):= -\infty$ otherwise.  A key property of $G_p$ is its monotonicity, i.e., for two functions $p,\tilde{p} \colon \Z^2 \rightarrow [0,\infty)$ with $p(x) \geq \tilde{p}(x)$ for all $x$, there exists a coupling $\mathbf{P}$ with
\begin{equation}\label{eq:monoticityGeodesics}
\mathbf{P}\left( G_p(x,y) \geq G_{\tilde{p}}(x,y) \text{ for all } x,y \in \Z^2\right) = 1 \, .
\end{equation} 
We will frequently use the monotonicity  throughout the paper. Furthermore, we have the following  notions and conventions: Let $\mathcal{S}= \mathcal{S}(p):=\{x \in \Z^2 \colon p(x)\neq 0\}$ be the \textbf{support} of $p$. We will write $G(x,y)$ for $G_p(x,y)$ when the function $p$ is clear from the context. For $x=(n,n)$ and $y=(m,m)$ for some $n,m \in \N$, let $G(n,m):=G(x,y)$. When $n,m$ are not integer-valued, we take $\lfloor n \rfloor,\lfloor m \rfloor$ without explicitly mentioning. Further, for finite $A,B \subseteq \Z^{2}$, we set
\begin{equation}\label{def:LPTSets}
G(A,B):= \max_{x\in A,y\in B} G(x,y) \, .
\end{equation}

Next, in order to provide a connection to the TASEP with open boundaries, we assign labels to the particles as they enter the segment, i.e., the first particle entering gets label~$1$, the next particle label $2$, and so on. Similarly, the left-most particle in the segment at time $0$ receives label $0$, the next particle label $-1$, and so on. For $u \in \N$ and $t\geq 0$, let $\ell_t(u)$ be the position of the particle with label $u$ at time $t$. We set  $\ell_t(u)=\infty$ if the particle with label $u$ has left the segment by time $t$, and $\ell_t(u)=-\infty$ if label $u$ was not yet assigned to a particle at time $t$. For an initial state $\eta_0 \in \Omega_N$, we denote by $\gamma_0=\{\gamma_0^{i} \in \Z^2 \colon i \in \{0,\dots,N\}\}$ the \textbf{initial growth interface}  with $\gamma^0_0:= (0,0)$, and recursively
\begin{equation}\label{def:GrowthInterface}
\gamma_0^{i} := \begin{cases} \gamma_0^{i-1} + \eone & \text{ if } \eta_0(i)=0 \\
 \gamma_0^{i-1} - \etwo & \text{ if } \eta_0(i)=1 \, .
\end{cases}
\end{equation} Further, we let
\begin{equation*}
\gamma_t := \{ x \in \Z^2 \colon G(\gamma_0,x) \leq t \text{ and }  G(\gamma_0,x+(1,1)) > t\} 
\end{equation*} for all $t \geq 0$. The process $(\gamma_t)_{t \geq 0}$ is called the \textbf{growth interface}. The next lemma describes a coupling between the TASEP with open boundaries and last passage times. As for last passage percolation with $p\equiv 1$, this coupling  is a classical result due to Rost \cite{R:PDEresult}, we will only give a sketch of the proof.
\begin{lemma}\label{lem:CurrentVsGeodesic} Let $\alpha,\beta\geq \frac{1}{2}$. Consider the function $p$ given for all $x=(x_1,x_2)\in \Z^2$ by
\begin{equation}\label{def:EnvironmentStripe}
p(x) := \begin{cases} \alpha^{-1} & \text{ if } x_1=x_2 > 0 \\
\beta^{-1} & \text{ if } x_1- N= x_2  \\ %\text{ and } x \succeq y_0^{N}+(1,1)  \\
1& \text{ if }   x_2  \in [x_1-N+1,x_1-1] \\ % \geq 0  %\text{ and } x \succeq y+(1,1) \text{ for some } y \in \gamma_0  \\
0 & \text{ otherwise} \, .
\end{cases}
\end{equation}
Then there exists a coupling such that for all $u\in \N$, $n\in [N] \cup \{0\}$ and $t \geq 0$
\begin{equation}\label{eq:LPPvsTASEP}
\left\{\ell_t(u) > n \right\} =  \left\{G_{p}(\gamma_0, (u,u+n)) \leq t \right\} \, .
\end{equation} In other words, the event that the particle with label $u$ is at some position $>n$ at time $t$ agrees with the event that the last passage time between $\gamma_0$ and $(u,u+n)$ is at most $t$.
\end{lemma}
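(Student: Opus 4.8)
The plan is to establish the identity \eqref{eq:LPPvsTASEP} by the standard coupling between the exponential corner growth model and the TASEP, adapted to the strip geometry determined by the environment \eqref{def:EnvironmentStripe}. First I would recall the bijection between configurations of the TASEP with open boundaries on $[N]$ and monotone lattice interfaces: a configuration $\eta \in \Omega_N$ is encoded by the staircase path $\gamma$ built according to \eqref{def:GrowthInterface}, where a vacant site produces a horizontal step $\eone$ and an occupied site a downward step $-\etwo$. Under this encoding, a legal TASEP jump of a particle from site $i$ to site $i+1$ (which requires $\eta(i)=1,\eta(i+1)=0$, i.e. a down-step immediately followed by a right-step) corresponds precisely to adding a unit box at the inner corner of the interface; entry of a particle at the left boundary corresponds to adding a box at the topmost corner; exit at the right boundary to adding a box at the bottommost corner. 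The rates match: interior boxes are added at rate $1$, the left-boundary box at rate $\alpha$, the right-boundary box at rate $\beta$, which is exactly why $p$ takes the values $1$, $\alpha^{-1}$, $\beta^{-1}$ on the diagonal strip, its upper diagonal boundary, and its lower diagonal boundary respectively (mean of the exponential clock is the reciprocal of the rate), and $0$ off the strip so that no growth occurs there.

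Next I would make the correspondence between labels and lattice coordinates precise. The particle with label $u$ entering the segment corresponds to the $u$-th box added along the top boundary diagonal $\{x_1=x_2\}$; its position $\ell_t(u)$ in the segment is read off from how far the growth cluster at time $t$ has progressed in the column indexed by $u$. Concretely, I want to argue that the particle with label $u$ has reached a site strictly greater than $n$ at time $t$ if and only if, by time $t$, the growth cluster started from the initial interface $\gamma_0$ has not yet absorbed the vertex $(u,u+n)$, i.e. $G_p(\gamma_0,(u,u+n))\le t$ but the growth has not gone past it. Here one uses the defining property of the growth interface: $G_p(\gamma_0,x)\le t$ exactly when $x$ lies in the occupied cluster at time $t$, a consequence of the last-passage recursion $G_p(\gamma_0,x)=\omega_x^p+\max\{G_p(\gamma_0,x-\eone),G_p(\gamma_0,x-\etwo)\}$ together with the memorylessness of the exponential clocks, which makes the cluster-growth process Markovian with exactly the TASEP transition rates. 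The translation of "particle $u$ is to the right of site $n$" into "$(u,u+n)$ has not been added" is the book-keeping heart of the argument: each downward step of the interface in diagonal-column $u$ records one unit of rightward displacement of particle $u$, so particle $u$ sits at site $>n$ precisely when at least $n$ such boxes sit below the $(u,u)$ corner, which is the statement that the cell $(u,u+n)$ is already (weakly) inside the cluster whereas — to have $\ell_t(u)>n$ rather than $\ge$ — one phrases it as the last passage time to $(u,u+n)$ being $\le t$. I would verify the base cases $n=0$ and $u$ corresponding to the boundary particles, and the initial condition $t=0$ where both sides reduce to a statement about $\gamma_0$, and then invoke Rost's construction to conclude that the full processes agree in law under the coupling; since the paper says only a sketch is expected, I would cite \cite{R:PDEresult,S:LectureNotes} for the standard TASEP-on-$\Z$ version and emphasize only the two modifications: (i) the initial interface $\gamma_0$ is finite, pinned at $(0,0)$ and running for $N$ steps, and (ii) the two boundary diagonals carry clocks of rate $\alpha$ and $\beta$ instead of rate $1$, encoded by the non-unit entries of $p$ on $\mathcal{S}(p)$.

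The main obstacle I anticipate is not any single hard estimate — there is none — but rather getting the indexing conventions consistent: aligning the label $u$ with the correct diagonal column, matching the sign conventions in \eqref{def:GrowthInterface} (vacant $\mapsto \eone$, occupied $\mapsto -\etwo$) with the direction in which the cluster grows, and making sure the strict inequality "$\ell_t(u)>n$" lines up with the non-strict "$G_p(\gamma_0,(u,u+n))\le t$" rather than being off by one. I would handle this by checking the smallest nontrivial example explicitly (say $N=1$ or $N=2$ with a specified $\eta_0$), confirming that entry, interior jumps, and exit all map to box additions at the predicted lattice sites with the predicted rates, and that the equality of events in \eqref{eq:LPPvsTASEP} holds there; the general case then follows by the same local correspondence propagated through the Markovian growth dynamics. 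A secondary point to treat carefully is the convention $\omega_x^p\equiv 0$ when $p(x)=0$ off the strip, which guarantees that the last passage time $G_p(\gamma_0,x)$ is finite only for $x$ in the strip and that geodesics never leave it — this is what makes the finite-$N$ strip geometry the right ambient space and ensures the coupling describes a process confined to $[N]$.
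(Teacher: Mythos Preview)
Your approach is essentially the same as the paper's: both sketch the classical Rost coupling, now on a strip, by identifying TASEP waiting times with the exponential weights $\omega_x^p$ and matching the last-passage recursion
\[
G_p(\gamma_0,x)=\omega_x^p+\max\bigl\{G_p(\gamma_0,x-\eone),\,G_p(\gamma_0,x-\etwo)\bigr\}
\]
with the rule that particle $u$ leaves site $n$ only after it has arrived at $n$ \emph{and} particle $u-1$ has vacated $n+1$. The paper makes this explicit by declaring $\omega_{(u,u+n)}^{p}$ to be the residual waiting time of particle $u$ at site $n$ once site $n+1$ is free (entry for $n=0$, exit for $n=N$), and then argues by induction on $|x|_1$; your geometric phrasing via box additions to the interface is the same content.

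One genuine slip: you write that $\ell_t(u)>n$ holds ``if and only if \dots\ the growth cluster \dots\ has \emph{not yet} absorbed the vertex $(u,u+n)$, i.e.\ $G_p(\gamma_0,(u,u+n))\le t$.'' The clause and the inequality contradict each other; the correct direction is that $\ell_t(u)>n$ iff the site \emph{has} been absorbed, i.e.\ $G_p(\gamma_0,(u,u+n))\le t$. You do state it correctly a few lines later (``the cell $(u,u+n)$ is already (weakly) inside the cluster''), so this is an exposition error rather than a conceptual one---but since you yourself flagged the indexing and sign conventions as the chief hazard, it is worth tightening this passage before anything else.
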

\begin{proof}[Sketch of the proof] For $x=(u,u+n)$ with $n \in \{1,\dots,N-1\}$, let $\omega_x^{p}$ be the time it takes for particle $u$ at $n$ to perform a jump after the first time that the site at position $n+1$ is empty. For $n=0$, let $\omega_x^{p}$ be the time it takes until label $u$ gets assigned to a particle after the first time~$t$ at which $\ell_t(u-1) > 1$. For $n=N$, let $\omega_x^{p}$ be the time until the particle with label $u$ exits the segment after the first time $t$ at which $\ell_t(u)=N$ holds. In all other cases, choose $\omega_x^{p}$ independently according to the correct marginals. Note that all random variables $(\omega_x^{p})_{x \in \Z^{2}}$ have the correct marginal distributions; see also Figure \ref{fig:environment}. To see that the relation \eqref{eq:LPPvsTASEP} holds, note that the last passage times satisfy the recursion 
\begin{equation*}
G_{p}(\gamma_0, (u,u+n)) =  \max\{ G_{p}(\gamma_0, (u-1,u+n)), G_{p}(\gamma_0, (u,u+n-1)) \} + \omega_x^{p} \, .
\end{equation*} Furthermore, notice that the particle of label $u$ will jump from site $n$ to site  $n+1$ after time $\omega_{x}^p$ plus the maximum of the time it takes until the particle with label $u-1$ jumped to site $n+2$, or was removed from the segment if $n=N-1$, and the time at which the particle of label $u$ jumped from site $n-1$ to $n$. Combining these two observations, the relation \eqref{eq:LPPvsTASEP} follows now by induction. 
\end{proof}

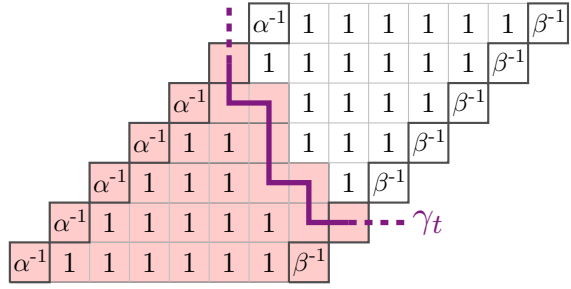
\begin{figure}
\begin{center}
\begin{tikzpicture}[scale=0.53]

\filldraw [fill=red!20] (1,1) rectangle ++(8,1);
\filldraw [fill=red!20] (2,2) rectangle ++(8,1);
\filldraw [fill=red!20] (3,3) rectangle ++(6,1);
\filldraw [fill=red!20] (4,4) rectangle ++(4,1);
\filldraw [fill=red!20] (5,5) rectangle ++(3,1);
\filldraw [fill=red!20] (6,6) rectangle ++(1,1);

\foreach \x in{1,...,8}{
	\draw[gray!50,thin](\x,\x) to (7+\x,\x); 
	\draw[gray!50,thin](\x,1) to (\x,\x);
	\draw[gray!50,thin](7+\x,\x) to (7+\x,8);  }
	
	\foreach \x in{1,...,7}{
	
	\draw[black!70,thick](\x,\x) -- (\x+1,\x) -- (\x+1,\x+1) -- (\x,\x+1)-- (\x,\x);
	\draw[black!70,thick](\x+7,\x) -- (\x+1+7,\x) -- (\x+1+7,\x+1) -- (\x+7,\x+1)-- (\x+7,\x);
	}
	
\draw[darkblue,line width =2pt] 	 (6.5,6.5) -- ++(0,-1) -- ++(1,0) -- ++(0,-2) -- ++(1,0) -- ++ (0,-1) -- ++ (1,0) ;
	
\draw[darkblue,line width =2pt,dashed] 	 (6.5,6.5) -- ++(0,1.5);	
\draw[darkblue,line width =2pt,dashed] 	 (9.5,2.5) -- ++(1.5,0);

	\node[scale=0.9]  (x1) at (1.53,1.55){$\alpha^{\text{-}1}$} ;
	\node (x2) at (2.5,1.5){$1$} ;
	\node (x3) at (3.5,1.5){$1$} ;
	\node (x4) at (4.5,1.5){$1$} ;
	\node (x5) at (5.5,1.5){$1$} ;
	\node (x6) at (6.5,1.5){$1$} ;
	\node (x7) at (7.5,1.5){$1$} ;
	\node[scale=0.9] (x8) at (8.53,1.5){$\beta^{\text{-}1}$} ;
	
	\node[scale=0.9] (x1) at (2.53,2.55){$\alpha^{\text{-}1}$} ;
	\node (x2) at (3.5,2.5){$1$} ;
	\node (x3) at (4.5,2.5){$1$} ;
	\node (x4) at (5.5,2.5){$1$} ;
	\node (x5) at (6.5,2.5){$1$} ;
	\node (x6) at (7.5,2.5){$1$} ;	
	
	\node[darkblue,scale=1.3] (y) at (11.5,2.5){$\gamma_t$} ;		

%	\node[red!40,scale=1.5] (y) at (3.5,5.5){$\mathcal{A}_t$} ;			
	
	\node[scale=0.9] (x1) at (3.53,3.55){$\alpha^{\text{-}1}$} ;
	\node (x2) at (4.5,3.5){$1$} ;
	\node (x3) at (5.5,3.5){$1$} ;
	\node (x4) at (6.5,3.5){$1$} ;
	\node (x5) at (9.5,3.5){$1$} ;
	\node[scale=0.9] (x6) at (10.53,3.5){$\beta^{\text{-}1}$} ;	
	
	\node[scale=0.9] (x1) at (4.53,4.55){$\alpha^{\text{-}1}$} ;
	\node (x2) at (5.5,4.5){$1$} ;
	\node (x3) at (6.5,4.5){$1$} ;	
	\node (x4) at (8.5,4.5){$1$} ;
	\node (x5) at (9.5,4.5){$1$} ;	
	\node (x6) at (10.5,4.5){$1$} ;
	\node[scale=0.9] (x7) at (11.53,4.5){$\beta^{\text{-}1}$} ;

	\node[scale=0.9] (x1) at (5.53,5.55){$\alpha^{\text{-}1}$} ;
	\node (x3) at (8.5,5.5){$1$} ;	
	\node (x4) at (9.5,5.5){$1$} ;
	\node (x5) at (10.5,5.5){$1$} ;	
	\node (x6) at (11.5,5.5){$1$} ;
	\node[scale=0.9] (x7) at (12.53,5.5){$\beta^{\text{-}1}$} ;	
	
	\node (x1) at (7.5,6.5){$1$} ;	
	\node (x2) at (8.5,6.5){$1$} ;
	\node (x3) at (9.5,6.5){$1$} ;	
	\node (x4) at (10.5,6.5){$1$} ;
	\node (x5) at (11.5,6.5){$1$} ;	
	\node (x6) at (12.5,6.5){$1$} ;
	\node[scale=0.9] (x7) at (13.53,6.5){$\beta^{\text{-}1}$} ;		
	
	\node[scale=0.9] (x1) at (7.53,7.55){$\alpha^{\text{-}1}$} ;		
	\node (x2) at (8.5,7.5){$1$} ;	
	\node (x3) at (9.5,7.5){$1$} ;
	\node (x4) at (10.5,7.5){$1$} ;	
	\node (x5) at (11.5,7.5){$1$} ;
	\node (x6) at (12.5,7.5){$1$} ;	
	\node (x7) at (13.5,7.5){$1$} ;
	\node[scale=0.9] (x8) at (14.53,7.5){$\beta^{\text{-}1}$} ;

\end{tikzpicture}
\end{center}
\caption{ \label{fig:environment}Visualization of \eqref{def:EnvironmentStripe} for $\eta_0=\mathbf{0}$. The numbers in the cells show $p$ in $(\omega^{x}_{p})_{x \in \Z^2}$, and $p\equiv 0 $ else. For $\eta_t=(1,0,1,1,0,1,0)$, the set $\{ x \in \Z^2 \colon G_{p}(\gamma_0,y) \leq t \} \cap \mathcal{S}$  is drawn in red, and the interface $\gamma_t$ in purple.}
\end{figure}
We conclude this paragraph by noting that within a box fully contained in the strip, the environment is homogeneous, i.e., we have $p \equiv 1$. The following bounds on the last passage times in homogeneous environments are Theorem 4.1 in \cite{BGZ:TemporalCorrelation}, and Theorem 2 in \cite{LR:BetaEnsembles}.
\begin{lemma} \label{lem:ShapeTheorem} Consider the environment with $p(x)=1$ for all $x \in \Z^2$, and denote by $G_1(x,y)$ the corresponding last passage time. Let $\varepsilon>0$. Then there exist $c,\bar{c} >0$, depending only on $\varepsilon$, such that for all $x=(x_1,x_2) \succeq (1,1)$ with $\varepsilon<x_1/x_2< \varepsilon^{-1}$,  and all $\theta>0$
\begin{equation}\label{eq:StatementShapeTheorem}
\P \left( |G_{1}(0,x) -   (\sqrt{x_1}+\sqrt{x_2})^2| \geq  \theta |x|^{1/3}_1   \right) \leq \bar{c} \exp\left(-c \min\big(\theta^{\frac{3}{2}},\theta  |x|^{1/3}_1 \big)\right) \, .
\end{equation} Furthermore, for general $x=(x_1,x_2)\in \N^2$, we have that
\begin{equation}\label{eq:StatementShapeTheorem2}
\P \left( |G_{1}(0,x) -   (\sqrt{x_1}+\sqrt{x_2})^2| \geq  \theta |x_1|^{1/2}|x_2|^{-1/6}   \right) \leq \bar{c} \exp\left(-c \theta \right) 
\end{equation} for all $\theta \in (0, \frac{1}{2} \sqrt{x_1}+\sqrt{x_2})^2|x_1|^{-1/2}|x_2|^{1/6})$.
In particular, there exists some absolute constant $C>0$ such that
\begin{equation}\label{eq:ExpectationShapeTheorem}
| \E[G_{1}(0,x)] -   (\sqrt{x_1}+\sqrt{x_2})^2 | \leq C |x_1|^{1/2}|x_2|^{-1/6} \, .
\end{equation}
\end{lemma}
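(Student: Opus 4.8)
The plan is to reduce all three estimates to sharp tail bounds for the top eigenvalue of a complex Wishart (Laguerre unitary) ensemble; in fact \eqref{eq:StatementShapeTheorem} is Theorem~4.1 in \cite{BGZ:TemporalCorrelation} and \eqref{eq:StatementShapeTheorem2} is Theorem~2 in \cite{LR:BetaEnsembles}, so the bulk of the work is to quote these and transport them through the standard dictionary. First I would use translation invariance of the i.i.d.\ weights together with Johansson's identity \cite{J:KPZ}: $G_{1}(0,x)=G_{1}((0,0),(x_1,x_2))$ has the same law as $G_{1}((1,1),(x_1+1,x_2+1))$, which has the same law as the top eigenvalue $\lambda_{\max}$ of a complex Wishart matrix with dimension parameters $(x_1+1)\wedge(x_2+1)$ and $(x_1+1)\vee(x_2+1)$, whose law-of-large-numbers limit is $(\sqrt{x_1+1}+\sqrt{x_2+1})^2$. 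Replacing this centering by $(\sqrt{x_1}+\sqrt{x_2})^2$ changes it by $O(\sqrt{x_1/x_2}+\sqrt{x_2/x_1})$, which at the scales occurring in the lemma is at most a constant times the fluctuation scale and hence can be absorbed into the constants. I will also use the classical fact that $G_{1}$ satisfies a large-deviation principle with positive rate on both sides, i.e.\ $\P(|G_{1}(0,x)-(\sqrt{x_1}+\sqrt{x_2})^2|\ge \delta|x|_1)\le e^{-c(\delta)|x|_1}$ for every fixed $\delta>0$, to handle deviations of order $|x|_1$.

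For \eqref{eq:StatementShapeTheorem}, with $\varepsilon<x_1/x_2<\varepsilon^{-1}$ the fluctuation scale is $|x|_1^{1/3}$. For $\theta$ in the moderate range $\theta\le|x|_1^{2/3}$ (equivalently $\theta^{3/2}\le\theta|x|_1^{1/3}$) the classical moderate deviation estimates for $\lambda_{\max}$ give an upper tail of order $e^{-c\theta^{3/2}}$ and a lower tail of order $e^{-c\theta^{3}}$; since $\theta^{3}\ge\theta^{3/2}$ for $\theta\ge 1$ and the asserted bound is trivial for small $\theta$ after enlarging $\bar c$, the two-sided bound in this range is $\exp(-c\min(\theta^{3/2},\theta|x|_1^{1/3}))$. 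For $\theta>|x|_1^{2/3}$ the deviation $\theta|x|_1^{1/3}$ exceeds a constant multiple of $|x|_1$, so the large-deviation bound above applies and gives both tails of order $\exp(-c\theta|x|_1^{1/3})$. Taking the minimum over the two regimes gives \eqref{eq:StatementShapeTheorem}.

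For \eqref{eq:StatementShapeTheorem2} the aspect ratio is unrestricted, and the relevant soft-edge scale of the Wishart ensemble is $x_1^{1/2}x_2^{-1/6}$, which recovers $|x|_1^{1/3}$ when $x_1$ and $x_2$ are comparable. The two-sided tail bound $\P(|\lambda_{\max}-(\sqrt{x_1}+\sqrt{x_2})^2|\ge\theta\,x_1^{1/2}x_2^{-1/6})\le\bar c\,e^{-c\theta}$, uniform in the two dimension parameters and valid up to the stated threshold on $\theta$, is exactly the content of Theorem~2 in \cite{LR:BetaEnsembles}; combined with the dictionary above, and with the large-deviation bound for $\theta$ past the threshold, it yields \eqref{eq:StatementShapeTheorem2}. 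I expect this step to be the main obstacle, since it is the one place where the usual bulk moderate-deviation statements do not suffice and one genuinely needs the refined $\beta$-ensemble edge analysis of \cite{LR:BetaEnsembles}, valid uniformly across all aspect ratios.

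Finally, \eqref{eq:ExpectationShapeTheorem} follows by integrating \eqref{eq:StatementShapeTheorem2}. Writing $R:=x_1^{1/2}x_2^{-1/6}$, we have
\begin{equation*}
\big|\E[G_{1}(0,x)]-(\sqrt{x_1}+\sqrt{x_2})^2\big|\le\E\big|G_{1}(0,x)-(\sqrt{x_1}+\sqrt{x_2})^2\big|=\int_0^\infty\P\big(|G_{1}(0,x)-(\sqrt{x_1}+\sqrt{x_2})^2|>s\big)\,\dif s,
\end{equation*}
and substituting $s=\theta R$ the integrand is at most $\bar c\,e^{-c\theta}$ on the range where \eqref{eq:StatementShapeTheorem2} applies and is exponentially small in a power of $|x|_1$ beyond it, so the integral is at most $CR$ for an absolute constant $C$, which is \eqref{eq:ExpectationShapeTheorem}.
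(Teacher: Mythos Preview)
The paper does not prove Lemma~\ref{lem:ShapeTheorem} at all; it simply records it as a quotation of Theorem~4.1 in \cite{BGZ:TemporalCorrelation} and Theorem~2 in \cite{LR:BetaEnsembles}, with \eqref{eq:ExpectationShapeTheorem} noted as an immediate consequence. Your proposal identifies exactly the same two external inputs, so at the level of strategy you match the paper precisely.

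Where you go beyond the paper is in spelling out the dictionary via Johansson's Wishart identity \cite{J:KPZ}, separating the moderate-deviation and large-deviation regimes for \eqref{eq:StatementShapeTheorem}, and explicitly integrating the tail in \eqref{eq:StatementShapeTheorem2} to get \eqref{eq:ExpectationShapeTheorem}. This extra detail is sound and is more than the paper itself offers; the only caveat is that once you have decided to quote \cite{BGZ:TemporalCorrelation} and \cite{LR:BetaEnsembles} directly, the Wishart route and the splitting into regimes are unnecessary detours, since those references already deliver the finished inequalities in the form stated. In short: your argument is correct, follows the paper's approach, and supplies a self-contained derivation of \eqref{eq:ExpectationShapeTheorem} that the paper leaves implicit.
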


\subsection{Geodesics and Busemann functions}\label{sec:GeodesicsBusemann}

For two sites $y \succeq x$, let $\pi^{\ast}_{x,y}$ denote a lattice path which maximizes the last passage time between $x$ and $y$. We call $\pi^{\ast}_{x,y}$ a \textbf{geodesic} between the sites $x$ and $y$. It will be convenient to work with geodesics on certain subsets of sites. Fix some $N \in \N$ and define for each $n \geq N$ the \textbf{anti-diagonal} $\mathbb{L}_n$ by
\begin{equation}\label{def:Antidiagonal}
\mathbb{L}_n := \{ x\in \mathcal{S} \colon |x|_1=n \}
\end{equation} for $\mathcal{S}=\mathcal{S}(p)$ with respect to $p$ from \eqref{def:EnvironmentStripe}. We denote by $R_n^m$ the rectangle
\begin{equation}\label{def:Rectangle}
R_n^m := \mathcal{S}  \cap \bigcup_{n \leq i < n+m} \mathbb{L}_i 
\end{equation} spanned between $\mathbb{L}_n$ and $\mathbb{L}_{n+m-1}$. 
We say that an up-right path $(z^n)_{n \in \N}$ of sites in $\Z^2$ with $z^1=x$ is a \textbf{semi-infinite geodesic} starting at $x\in \Z^{2}$, and write $\pi^{\ast}_{x}$, if we have that $\pi^{\ast}_{x}$ restricted between any two sites $z^i,z^j$ for some $i,j \in \N$ with $i<j$ is equal to $\pi^{\ast}_{z^i,z^j}$. The next lemma guarantees the existence and coalescence of a semi-infinite geodesic in $(\omega^p_x)_{x\in \Z^2}$ with $p$ from \eqref{def:EnvironmentStripe} for any starting vertex $x\in \mathcal{S}(p)$. While a similar result for homogeneous environments requires some work, it follows for the strip from a compactness argument. 
We give in the following an elementary proof of this fact.
\begin{lemma}\label{lem:ExistenceSemiInfinite} For every vertex $x\in \mathcal{S}(p)$, there exists almost surely a unique semi-infinite geodesic $\pi^{\ast}_{x}$ in $(\omega^p_x)_{x\in \Z^2}$ for $p$ from \eqref{def:EnvironmentStripe}. Moreover, for any two sites $x,y$, the semi-infinite geodesics $\pi^{\ast}_{x}$ and $\pi^{\ast}_{y}$ will almost surely coincide from some finite point $\mathbf{c}(x,y) \in \Z^2$ onwards.
\end{lemma}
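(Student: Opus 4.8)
The plan is to exploit the key geometric feature of the environment $p$ in \eqref{def:EnvironmentStripe}: the support $\mathcal{S}(p)$ is a strip of bounded width $N+1$ (in the $\ell^1$-metric, each anti-diagonal $\mathbb{L}_n$ has at most $N+1$ sites). This compactness replaces the hard analytic input needed for semi-infinite geodesics in the full quadrant. First I would set up finite geodesics: for a fixed $x \in \mathcal{S}(p)$ and each $n > |x|_1$, let $y_n$ be the a.s.\ unique maximizer over $y \in \mathbb{L}_n$ of $G_p(x,y)$ (uniqueness of maximizing paths and of the argmax over a finite set holds a.s.\ because the weights are continuous), and let $\pi^{(n)} := \pi^*_{x,y_n}$ be the geodesic to it. Each $\pi^{(n)}$ is an up-right path in the strip, hence encoded by a sequence of choices in $\{\eone,\etwo\}$ constrained to stay in $\mathcal{S}(p)$. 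Since $\mathbb{L}_m$ is finite for every $m$, a diagonal/compactness argument (König's lemma, or equivalently tightness on the product of finite sets $\prod_m \mathbb{L}_m$) produces a subsequence along which the $\pi^{(n)}$ converge to an infinite up-right path $\pi^*_x$. The standard argument that any subpath of a limit of geodesics is itself a geodesic — using that $G_p(z^i,z^j)$ is determined by finitely many weights and that optimality passes to the limit — shows $\pi^*_x$ is a semi-infinite geodesic in the sense defined in the excerpt.

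Next I would address uniqueness of $\pi^*_x$. The natural route is to show all the finite geodesics $\pi^{(n)}$ eventually agree on any fixed initial segment, so no subsequential choice was made. Here I would use a standard "no two disjoint geodesics through a fat enough window" input: because the strip has bounded width, for $n$ large any geodesic from $x$ to $\mathbb{L}_n$ must pass through all but finitely many anti-diagonals, and two distinct geodesics from $x$ that split must later re-cross or both traverse a common bounded region. The cleanest formulation is to argue that the function $n \mapsto (\text{first }K\text{ steps of }\pi^{(n)})$ is eventually constant: if it took two values infinitely often, one could build two disjoint maximizing paths from $x$, contradicting a.s.\ uniqueness of finite geodesics together with the strip constraint (two up-right paths in a width-$(N{+}1)$ strip that start at the same point and are disjoint for a long stretch force a specific crossing structure that one rules out by planarity of up-right paths — disjoint up-right paths from a common vertex are impossible once they share a vertex, and in a bounded strip they are forced to share vertices). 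This gives a well-defined limit independent of subsequence, i.e.\ uniqueness.

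For coalescence, given $x$ and $y$ in $\mathcal{S}(p)$, consider $\pi^*_x$ and $\pi^*_y$. Both are up-right paths confined to the strip. Two up-right lattice paths in $\Z^2$ that both eventually enter every anti-diagonal $\mathbb{L}_n$ (which they do, since the strip is "thin" in the diagonal direction — each $\mathbb{L}_n \cap \mathcal{S}(p) \neq \emptyset$ and the path must advance) must cross, and by planarity an up-right path crossing another must share a vertex with it. Once $\pi^*_x$ and $\pi^*_y$ share a vertex $z$, the portions of both from $z$ onward are each the unique semi-infinite geodesic $\pi^*_z$ (by the uniqueness established above, applied with basepoint $z$: a subpath of a semi-infinite geodesic is the semi-infinite geodesic from its start), so they coincide from $z$ onward; take $\mathbf{c}(x,y)$ to be the first such shared vertex. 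The main obstacle I anticipate is the uniqueness step: making rigorous that the finite geodesics $\pi^{(n)}$ stabilize on initial segments, rather than merely having a convergent subsequence. I would handle this by the planarity argument sketched above — two up-right paths from a common vertex confined to a bounded-width strip cannot remain vertex-disjoint, so the a.s.\ uniqueness of finite geodesics forces agreement — but one must be careful to phrase "confined to a bounded-width strip forces a shared vertex" correctly, since it is really the combination of up-right monotonicity and bounded $\ell^1$-width of each anti-diagonal slice that does the work.
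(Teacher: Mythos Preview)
Your compactness argument for \emph{existence} of a subsequential limit is fine and matches the spirit of what the paper alludes to. The gap is in your uniqueness and coalescence steps: the claim that two up-right paths in a width-$(N{+}1)$ strip are forced to share a vertex is simply false as a deterministic geometric statement. Take one path following the upper boundary $\{(k,k)\}$ and another following the lower boundary $\{(k+N,k)\}$; these are up-right, remain in $\mathcal{S}(p)$ forever, and never meet. Likewise, two semi-infinite up-right paths from the same interior vertex $x$ can split (one going $\eone$, the other $\etwo$) and then drift to opposite boundaries without ever re-meeting. So planarity plus bounded width does not deliver the shared vertex you need, and your argument that the initial segments of $\pi^{(n)}$ stabilize collapses at exactly this point.

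What is missing is a \emph{probabilistic} mechanism that forces all long geodesics through a common site. The paper supplies this via a bottleneck event: for each $n$, let $A_n$ be the event that inside the box $R_n^N$ the central site $(\tfrac{n}{2}+\tfrac{N}{2},\tfrac{n}{2})$ carries weight at least $N^3$ while every other site in $R_n^N$ has weight at most $1$. On $A_n$, any up-right path crossing $R_n^N$ that avoids the central site has passage time at most $2N$, whereas any path through the center has passage time at least $N^3$; hence \emph{every} geodesic from a site below $\mathbb{L}_n$ to a site above $\mathbb{L}_{n+N}$ must pass through that single central site. Since $\P(A_n)>0$ and $A_n$, $A_{n+m}$ are independent for $m>N$, Borel--Cantelli gives $A_n$ infinitely often almost surely. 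This simultaneously yields stabilization of $\pi^{(n)}$ on initial segments (hence uniqueness of $\pi^*_x$) and coalescence of $\pi^*_x$ with $\pi^*_y$, with $|\mathbf{c}(x,y)|_1$ bounded by the first $n\ge \max(|x|_1,|y|_1)$ for which $A_n$ occurs. Your proof would be repaired by inserting exactly this ingredient in place of the planarity claim.
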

\begin{proof} Without loss of generality, let $x=(1,1)$. For all $m\in \N$, set $v_m:=(m,m) \in \Z^2$ and consider the family of geodesics $(\pi^{\ast}_{x,v_m})_{m\in \N}$ where
\begin{equation*}
\pi^{\ast}_{x,v_m} = (x=v_m^1, v_m^2,v_m^3, \dots, v_m^{2m-1}=v_m) \, .
\end{equation*}  We claim that for every $j \in \N$, there exists some finite $M(j)$ such that
\begin{equation*}
v_m^{i} = v_{m^{\prime}}^{i}  
\end{equation*} holds for all $i \leq j$, and all $m,m^{\prime} \geq M(j)$. To see this,  let $A_n$ be the event 
\begin{equation*}
A_n := \left\{ \omega^p_{(\frac{n}{2}+\frac{N}{2},\frac{n}{2})} \geq N^3 \text{ and }  \omega^p_y \leq 1 \text{ for all } y\in R^N_n \setminus \Big\{ \Big(\frac{n}{2}+\frac{N}{2},\frac{n}{2}\Big)\Big\}\right\} \, .
\end{equation*}
When $A_n$ holds, $\pi^{\ast}_{x,v_m}$ and $\pi^{\ast}_{x,v_{m^{\prime}}}$  must agree on the first at least $n$ sites for all $m,m^{\prime} \geq \frac{n}{2}+N/2$ as any geodesic from $(1,1)$ to a site $z$ with $|z|_1 > n+N$ must pass through $(\frac{n}{2}+\frac{N}{2},\frac{n}{2})$. This follows from the fact that on $A_n$ any up-right path in $R_n^N$ which does not intersect $(\frac{n}{2}+\frac{N}{2},\frac{n}{2})$ has a passage time of most $2N$, while any up-right path in $R_n^N$ which intersects $(\frac{n}{2}+\frac{N}{2},\frac{n}{2})$ has a passage time of at least $N^3$; see Figure \ref{fig:Boxes}. Further, note that $A_n$ holds with positive probability,  and that $A_n$ and $A_{n+m}$ are independent for all $m> N$. We obtain the claim by applying the Borel-Cantelli lemma. Thus, we let the $j^{\textup{th}}$ vertex in $\pi^{\ast}_{x}$ be $v_j= v_{M(j)}$ to conclude the existence of the semi-infinite geodesic $\pi^{\ast}_{x}$. Similarly, the Borel-Cantelli lemma yields that $|\mathbf{c}(x,y) |_1$ is almost surely finite since
\begin{equation*}
|\mathbf{c}(x,y) |_1 \leq \inf\{ n \geq \max( |x|_1, |y|_1 ) \colon A_n \text{ occurs}\} \, .
\end{equation*} 
 Uniqueness of $\pi^{\ast}_x$ follows from the uniqueness of the paths $\pi^{\ast}_{x,v_m}$ for all $m\in \N$.
\end{proof}

\begin{figure}
\begin{center}
\begin{tikzpicture}[scale=.5]

%\draw[densely dotted] (0,0) -- (2.5,-2.5);
\draw[line width =1 pt] (0+3,0+3) -- (2.5+3,-2.5+3);
\draw[line width =1 pt] (0+6,0+6) -- (2.5+6,-2.5+6);
%\draw[densely dotted] (0+9,0+9) -- (2.5+9,-2.5+9);

   \draw[line width=1.2pt] (0,0) -- (9,9);
   \draw[line width=1.2pt] (2.5,-2.5) -- (11.5,6.5);
   
%\draw[darkblue, line width =2 pt,densely dotted] (0,0) to[curve through={(0.4,0.3) ..(1.6,1) .. (2.4,1.35)..(2.95,2.65) }] (3,3);

\draw[darkblue, line width =1.5 pt,densely dotted] (3,3) to[curve through={(4.2,3.2)..(5.6,3.1) .. (5.8,5.7)}] (6,6);

%\draw[darkblue, line width =2 pt,densely dotted] (6,6) to[curve through={(6.5,6.3) ..(7.8,7) .. (8.4,7.35)..(8.95,8.65) }] (9,9);

%\draw[blue, line width =2 pt,densely dotted] (2.5,-2.5) to[curve through={(1+2.5,1.6-2.5) .. (1.35+2.5,2.4-2.5)..(2.65+2.5,2.95-2.5) }] (2.5+3,-2.5+3);

\draw[blue, line width =1.5 pt,densely dotted] (2.5+3,-2.5+3) to[curve through={(5.2,1.5)..(5.6,3.1)}] (2.5+6,-2.5+6);

%\draw[blue, line width =2 pt,densely dotted] (2.5+6,-2.5+6) to[curve through={(6.5+2.5,6.7-2.5) .. (8.4+2.5,9.15-2.5) }] (11.5,6.5);

%\visible<2>{

\draw[red, line width =2 pt] (0,0) to[curve through={(0.7,0.2) ..(1.5,0.1)..(2.45,0.2) }] (2.65,0.4);
%\draw[red, line width =2 pt] (2.5,-2.5) to[curve through={(2.45,0.2) }] (2.65,0.4);
 
%\draw[red, line width =2 pt] (9,9) to[curve through={(9+0.2,9-0.7) ..(9-0.1,9-1.5)..(9-0.2,9-2.45) }] (9-0.4,9-2.65);
%\draw[red, line width =2 pt] (9+2.5,9-2.5) to[curve through={(9+1.2,9-2)..(9-0.2,9-2.45)}] (9-0.4,9-2.65);

\draw[red, line width =2 pt] (2.65,0.4) to[curve through={(2.75,0.5).. (5.6-2.2,3.1-1.6+0.2)..(5.6,3.1)..(5.68,3.18) ..(5.6+0.7,3.1+0.9)..(5.6+2.2,3.1+1.6) .. (9-0.5,9-2.75)..(9-0.4,9-2.65)}] (9+1.5,9-1.5);

%}
   
	\filldraw [fill=black] (2.5-0.15,-2.5-0.15) rectangle (2.5+0.15,-2.5+0.15);   
	
		\node[scale=1.2] (x1) at (-1.3,0){$(1,1)$} ;
	%	\node[scale=1.2] (x2) at (2.5+0.7,-2.5){$a_2$} ;
	%	\node[scale=1.2] (x3) at (9+0.7,9){$a_4$} ;
	%	\node[scale=1.2] (x4) at (9+2.5+0.7,9-2.5){$a_3$} ;
	
	\node[scale=1] (x1) at (2.7,6){$(\frac{n}{2}+\frac{N}{2},\frac{n}{2}+\frac{N}{2})$} ;	
    \node[scale=1] (x1) at (1.5,3){$(\frac{n}{2},\frac{n}{2})$} ;	
		
		\node[scale=1.2] (x1) at (7.5,1.5){$R_{n}^{N}$} ;

 	\filldraw [fill=black] (2.5+3-0.15,-2.5+3-0.15) rectangle (2.5+3+0.15,-2.5+3+0.15);   
 	\filldraw [fill=black] (2.5+6-0.15,-2.5+6-0.15) rectangle (2.5+6+0.15,-2.5+6+0.15);     
 	\filldraw [fill=black] (2.5+9-0.15,-2.5+9-0.15) rectangle (2.5+9+0.15,-2.5+9+0.15);     
 	
 	\filldraw [fill=red] (5.6-0.2,3.1-0.2) rectangle (5.6+0.2,3.1+0.2);     
 	
	\filldraw [fill=black] (-0.15,-0.15) rectangle (0.15,0.15);   
 	\filldraw [fill=black] (3-0.15,3-0.15) rectangle (3+0.15,3+0.15);   
 	\filldraw [fill=black] (6-0.15,6-0.15) rectangle (6+0.15,6+0.15);     
 	\filldraw [fill=black] (9-0.15,9-0.15) rectangle (9+0.15,9+0.15);     	
 	
	\end{tikzpicture}	
\end{center}	
	\caption{ \label{fig:Boxes}Visualization of the rectangle $R_{n}^N$ used in the proof of Lemma~\ref{lem:ExistenceSemiInfinite}. The red dot in the middle of the box $R_{n}^N$ corresponds to the site $(\frac{n}{2}+\frac{N}{2},\frac{n}{2})$, and any semi-infinite geodesic starting from $(1,1)$ must pass through this site when $A_n$ holds. }
 \end{figure}
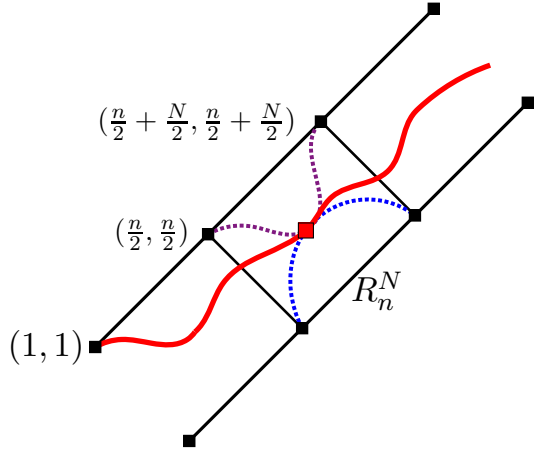

Note that Lemma \ref{lem:ExistenceSemiInfinite} can be extended for any uniformly bounded function $\tilde{p}$ with the same support as $p$ from \eqref{def:EnvironmentStripe}. Further, using Lemma \ref{lem:ExistenceSemiInfinite}, we can for all $x,y \in \mathcal{S}(p)$ define 
\begin{equation}\label{def:Busemann}
B(x,y):= G_p(\pi^{\ast}_{x,\mathbf{c}(x,y)}) - G_p(\pi^{\ast}_{y,\mathbf{c}(x,y)}) 
\end{equation} to be their  \textbf{Busemann function}. In words, $B(x,y)$ is the signed difference between the last passage times for any point on the semi-infinite geodesics of $x$ and $y$ beyond $\mathbf{c}(x,y)$.

\subsection{Second class particles and competition interfaces}\label{sec:CompetitionInterfaces}

Recall the notion of second class particles  from Section \ref{sec:TASEPInteracting}. We will now provide a connection to competition interfaces in the CGM representation of the TASEP with open boundaries. A connection of this type was first observed by Ferrari and Pimentel in \cite{FP:CompetitionInterface} for last passage percolation on $\Z^2$. \\ %We adapt the arguments from \cite{FMP:CompetitionInterface,FP:CompetitionInterface} to the TASEP with open boundaries. \\

We start by defining the competition interface, following the construction in \cite{FMP:CompetitionInterface,FP:CompetitionInterface}. Recall the initial growth interface $\gamma_0$ for the strip of width $N$. Suppose that we have some $m\in [N-1]$ with
\begin{equation}\label{def:Gamma+-}
\gamma^{m}_0 - \gamma^{m-1}_0 = \eone \ \text{ and } \ \gamma^{m}_0 - \gamma^{m+1}_0 =  \etwo \, .
\end{equation}
Fix such an $m$, and set $\gamma_+ := \{ \gamma^{i}_0 \colon  i \in \{0,\dots,m-1\}$ and $\gamma_- := \gamma_0 \setminus \gamma_+$. We define 
\begin{align*}
\Gamma^+ &:= \left\{ x \in \Z^2 \colon G(\gamma_+ , x) > G(\gamma_- , x)\text{ and } x \succeq y+(1,1) \text{ for some } y \in \gamma_0\right\} \\
\Gamma^- &:= \left\{ x \in \Z^2 \colon G(\gamma_- , x) > G(\gamma_+ , x)\text{ and } x \succeq y+(1,1) \text{ for some } y \in \gamma_0\right\} 
\end{align*} with respect to the environment with $p$ from \eqref{def:EnvironmentStripe}. Note that $\Gamma^+ $ and $\Gamma^-$ partition $\mathcal{S}(p)$ above $\gamma$ almost surely. The \textbf{competition interface} $(\phi_n)_{n \in \N}$ is recursively defined by $\phi_1:=\gamma_0^m$ and
\begin{equation}\label{def:CompetionInterface}
\phi_{n+1}:= \begin{cases} \phi_n +(0,1) &\text{ if }  \phi_n + (1,1) \in \Gamma^+ \\
\phi_n +(1,0) &\text{ if }  \phi_n + (1,1) \in \Gamma^- \\
\phi_n &\text{ otherwise} \, .
\end{cases}
\end{equation} Intuitively, we color the regions $\Gamma^+$ and $\Gamma^-$, and $(\phi_n)_{n \in \N}$  travels along the transition of the colors. Alternatively,  the competition interface is characterized by the recursion
\begin{equation}\label{eq:CompetionInterfaceArgMin}
\phi_{n+1} = \textup{arg min} \left\{ G(\gamma_0, \phi_{n}+(0,1)), G(\gamma_0, \phi_{n}+(1,0)) \right\} \, ;
\end{equation} see equation (2.2) in \cite{FMP:CompetitionInterface}. A visualization of the competition interface is given in Figure~\ref{fig:Competition}. For $A \subseteq \mathcal{S}(p)$, we say that $A$ is \textbf{monochromatic} if either $A \subseteq \Gamma^+$  or $A \subseteq \Gamma^-$ holds.\\

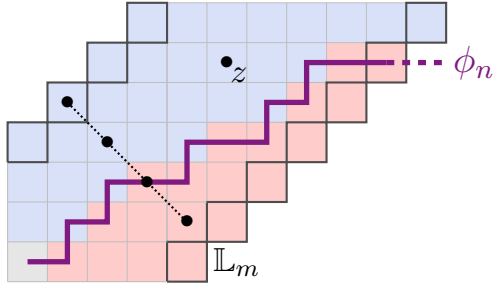
\begin{figure}  
\begin{center}
\begin{tikzpicture}[scale=0.53]

\fill [fill=gray!20] (4,1) rectangle ++(1,1);

\fill [fill=red!20] (5,1) rectangle ++(4,1);
\fill [fill=red!20] (6,2) rectangle ++(4,1);
\fill [fill=red!20] (7,3) rectangle ++(4,1);
\fill [fill=red!20] (9,4) rectangle ++(3,1);
\fill [fill=red!20] (11,5) rectangle ++(2,1);
\fill [fill=red!20] (12,6) rectangle ++(2,1);

\fill [fill=deepblue!20] (4,2) rectangle ++(2,1);
\fill [fill=deepblue!20] (4,3) rectangle ++(3,1);
\fill [fill=deepblue!20] (4,4) rectangle ++(5,1);
\fill [fill=deepblue!20] (5,5) rectangle ++(6,1);
\fill [fill=deepblue!20] (6,6) rectangle ++(6,1);
\fill [fill=deepblue!20] (7,7) rectangle ++(8,1);

\foreach \x in{4,...,8}{
	\draw[gray!50,thin](\x,\x) to (7+\x,\x); 
	\draw[gray!50,thin](\x,1) to (\x,\x);
	\draw[gray!50,thin](7+\x,\x) to (7+\x,8);  }
	
\foreach \x in{1,...,3}{
		\draw[gray!50,thin](7+\x,\x) to (7+\x,8);
}

	\foreach \x in{1,...,7}{

	\draw[black!70,thick](\x+7,\x) -- (\x+1+7,\x) -- (\x+1+7,\x+1) -- (\x+7,\x+1)-- (\x+7,\x);
	}
	
		\foreach \x in{4,...,7}{
	
	\draw[black!70,thick](\x,\x) -- (\x+1,\x) -- (\x+1,\x+1) -- (\x,\x+1)-- (\x,\x);

	}
	
	\draw[gray!50,thin](4,1) to (8,1); 	
	\draw[gray!50,thin](4,2) to (8,2); 		
	\draw[gray!50,thin](4,3) to (9,3); 	
	
\draw[darkblue,line width =2pt] 	 (4.5,1.5) -- ++(1,0) -- ++(0,1) -- ++(1,0) -- ++(0,1) -- ++ (2,0) -- ++ (0,1) -- ++ (2,0) -- ++ (0,1) -- ++(1,0) -- ++(0,1) -- ++ (2,0);
	
\draw[darkblue,line width =2pt,dashed] 	 (13.5,6.5) -- ++(1.5,0);	

	\node[darkblue,scale=1.3] (y) at (15.7,6.5){$\phi_n$} ;	

	\node (z) at (9.5,6.5) {$\bullet$};
	\node[scale=1.15] (z) at (9.8,6.2) {$z$};
	
%	\node[purple,scale=1.5] (y) at (11.5,2.5){$\gamma_t$} ;		

%\draw[black,ultra thick](6,5) -- ++(1,0) -- ++(0,1) -- ++(-1,0) -- ++(0,-1) ;
%\draw[black,ultra thick](7,4) -- ++(1,0) -- ++(0,1) -- ++(-1,0) -- ++(0,-1) ;
%\draw[black,ultra thick](8,3) -- ++(1,0) -- ++(0,1) -- ++(-1,0) -- ++(0,-1) ;
%\draw[black,ultra thick](9,2) -- ++(1,0) -- ++(0,1) -- ++(-1,0) -- ++(0,-1) ;

	\node (z1) at (5.5,5.5) {$\bullet$};
	\node (z2) at (6.5,4.5) {$\bullet$};
	\node (z3) at (7.5,3.5) {$\bullet$};
	\node (z4) at (8.5,2.5) {$\bullet$};
	
	\draw[densely dotted,thick] (5.5,5.5) to (8.5,2.5);
    \node[scale=1.15] (z) at (9.75,1.5) {$\mathbb{L}_m$};
\end{tikzpicture}
\end{center}
\caption{\label{fig:Competition}Visualization of the competition interface $(\phi_n)_{n \geq 1}$ drawn in purple. Suppose that for some $k,m \geq 1$, all geodesics between $\mathbb{L}_m$ and $\mathbb{L}_{m+k}$ pass through $z$. Since $z$ is blue, all sites in $\mathbb{L}_{m+k}$ must be blue as well.}
\end{figure}
Next, we consider a disagreement process $(\xi_t)_{t \geq 0}$ with a single second class particle. Let $\tau_{\textup{ex}}$ be the exit time of this second class particle. The following lemma intertwines the competition interface $(\phi_n)_{n \in \N}=(\phi^1_n,\phi^2_n)_{n \in \N}$ and the trajectory $(X_t)_{t \in [0,\tau_{\textup{ex}})}$ of the second class particle. Since it follows similar to Proposition 3 in \cite{FP:CompetitionInterface} and Proposition 2.2 in \cite{FMP:CompetitionInterface}, we will only highlight the required adjustments for the strip, rather than giving full details.  

\begin{lemma}\label{lem:SecondClassCompetition} For the disagreement process $(\xi_t)_{t \geq 0}$ on the segment of size $N$ with a single second class particle starting at $x$, let $\gamma_0$ be the initial growth interface on the strip of width $N+1$ corresponding to the configuration where we replace the second class particle in $\xi_0$ by a $(0,1)$ pair, and take  $m=x$ in \eqref{def:Gamma+-}. There exists a coupling such that 
\begin{equation}\label{eq:SecondClassCompetition}
X_t -X_0= (\phi^1_n - \phi^1_1) - (\phi^2_n - \phi^2_1)
\end{equation} holds almost surely for all $t \in [G(\gamma_0,\phi_n),G(\gamma_0,\phi_{n+1}))$ with $t <\tau_{\textup{ex}}$, and $n\in \N$. Moreover
\begin{equation}\label{eq:ExitImplication}
\mathbb{L}_k \subseteq  \Gamma^+ \text{ or } \ \mathbb{L}_k \subseteq  \Gamma^- \quad \Rightarrow \quad \tau_{\textup{ex}} \leq G(\gamma_0, \mathbb{L}_k)
\end{equation} holds for all $k > N$, i.e., if the anti-diagonal $\mathbb{L}_k$ from \eqref{def:Antidiagonal} is monochromatic, then the second class particle has left the segment by time $G(\gamma_0,\mathbb{L}_k)$.
\end{lemma}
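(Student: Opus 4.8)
The plan is to transfer the classical argument of Ferrari--Pimentel \cite{FP:CompetitionInterface} and Ferrari--Martin--Pimentel \cite{FMP:CompetitionInterface} from $\Z^2$ to the strip, taking care only of the boundary modifications encoded in the environment $p$ of \eqref{def:EnvironmentStripe}. First I would set up the coupling: replace the single second class particle in $\xi_0$ by an adjacent $(0,1)$ pair (hole to the left of a particle, in the labelling convention where the occupied site sits one index higher), producing an initial configuration on a segment of width $N+1$ whose growth interface $\gamma_0$ has the corner property \eqref{def:Gamma+-} exactly at $m=x$. The site where the second class particle lives corresponds to the corner $\gamma_0^m$ of $\gamma_0$, which is precisely the base point $\phi_1$ of the competition interface. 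The key observation, as in the full-plane case, is that under the canonical coupling the first class particles of $\xi$ evolve as an ordinary TASEP on the width-$(N+1)$ strip, and the position of the second class particle at time $t$ is determined by which cells of $\mathcal S(p)$ have been ``added'' to the growth cluster by time $t$ relative to the two half-interfaces $\gamma_+,\gamma_-$: a cell $z+(1,1)$ joins the $\Gamma^+$-cluster iff $G(\gamma_+,z+(1,1))>G(\gamma_-,z+(1,1))$, and this is exactly the rule \eqref{def:CompetionInterface} defining $(\phi_n)$. Tracking the last-passage recursion through Lemma \ref{lem:CurrentVsGeodesic}, one checks that in the time interval $[G(\gamma_0,\phi_n),G(\gamma_0,\phi_{n+1}))$ the cell added is $\phi_{n+1}$, and the displacement of the second class particle since time $0$ equals the horizontal minus the vertical displacement of the interface, which is \eqref{eq:SecondClassCompetition}.

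The second statement \eqref{eq:ExitImplication} is the more conceptual half. The point is that the second class particle has not yet exited the segment as long as it ``still sees'' cells of both colours on some anti-diagonal above it; once an entire anti-diagonal $\mathbb L_k$ is monochromatic, say $\mathbb L_k\subseteq\Gamma^+$, then by the up-right structure of last-passage percolation the colour $\Gamma^+$ propagates: every cell $z'\succeq z$ with $z\in\mathbb L_k$ of colour $\Gamma^+$ is again of colour $\Gamma^+$, because geodesics to $z'$ pass through $\mathbb L_k$ (recall the illustration in Figure \ref{fig:Competition} with the bottleneck $z$). Hence $\Gamma^-$ is contained in the region strictly below $\mathbb L_k$, so the competition interface $(\phi_n)$ can only move horizontally (or stay) once it reaches $\mathbb L_k$; translating this through the coupling \eqref{eq:SecondClassCompetition} back to the particle system, this says the second class particle has by time $G(\gamma_0,\mathbb L_k)$ reached a configuration from which it only ``chases holes'' forever -- equivalently, the $(0,1)$ perturbation has merged with the boundary and the disagreement has been resolved, i.e.\ $\tau_{\textup{ex}}\le G(\gamma_0,\mathbb L_k)$. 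The time at which the interface reaches $\mathbb L_k$ is $\max\{G(\gamma_0,z):z\in\mathbb L_k,\ z\in\phi\}\le G(\gamma_0,\mathbb L_k)$, giving the stated bound.

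The adjustments specific to the strip, which is where I would spend the care, are: (i) the environment is inhomogeneous on the two diagonals $x_1=x_2$ and $x_1-N-1=x_2$ (rates $\alpha^{-1}$ and $\beta^{-1}$), but since $\alpha,\beta\ge\frac12$ and in particular $p$ is everywhere finite and the support $\mathcal S(p)$ is still an up-right-connected strip, the monotonicity \eqref{eq:monoticityGeodesics}, the recursion for $G_p$, and the coupling of Lemma \ref{lem:CurrentVsGeodesic} all go through verbatim; (ii) one must make sure the competition interface stays inside $\mathcal S(p)$ and that the ``exit'' event for the particle — leaving via site $1$ on the left or site $N$ on the right — corresponds correctly to the interface being absorbed into one of the two boundary diagonals of the strip, which is exactly when an anti-diagonal becomes monochromatic; (iii) the existence of the relevant geodesics and of $\mathbf c(x,y)$ needed to make \eqref{eq:ExitImplication} meaningful is already guaranteed by Lemma \ref{lem:ExistenceSemiInfinite}. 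I expect the main obstacle to be item (ii): cleanly matching the two possible exit routes of the second class particle (left boundary vs.\ right boundary) with the two colours $\Gamma^\pm$ reaching the respective boundary diagonal of the strip, and verifying that no third behaviour (the perturbation persisting forever) is possible — this is where the finite width $N$ and the a.s.\ coalescence from Lemma \ref{lem:ExistenceSemiInfinite} are essential. Since all of these are local modifications of \cite{FMP:CompetitionInterface,FP:CompetitionInterface}, I would present the proof as a guided verification rather than a from-scratch argument, in line with the sketch-level exposition already adopted for Lemma \ref{lem:CurrentVsGeodesic}.
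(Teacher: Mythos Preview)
Your overall strategy matches the paper's: both adapt the Ferrari--Pimentel and Ferrari--Martin--Pimentel argument via the $(0,1)$-pair replacement on a segment of width $N+1$, couple the $*$-pair to the argmin recursion \eqref{eq:CompetionInterfaceArgMin}, and isolate the strip boundary as the one genuinely new case. The paper likewise presents only a sketch, handling the boundary explicitly by analysing what happens when $\phi_k=(\ell,\ell)$: the $*$-pair then sits at sites $\{1,2\}$, the rate-$\alpha$ clock annihilates the disagreement, and the second class particle exits at time $G(\gamma_0,\phi_k)$.

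Where your sketch falls short is in the justification of \eqref{eq:ExitImplication}. You argue that if $\mathbb L_k\subseteq\Gamma^+$ then the colour propagates upward and the competition interface, once it reaches $\mathbb L_k$, is forced in one direction toward the strip boundary; translating back, the second class particle ``only chases holes'' thereafter. But this yields only \emph{eventual} exit, not $\tau_{\textup{ex}}\le G(\gamma_0,\mathbb L_k)$: if the interface first touches the strip boundary at some level $>k$, the exit time $G(\gamma_0,\phi_{m^*})$ is not a priori bounded by $G(\gamma_0,\mathbb L_k)$. Your closing line (``the time at which the interface reaches $\mathbb L_k$ is $\le G(\gamma_0,\mathbb L_k)$'') bounds the wrong quantity --- reaching $\mathbb L_k$ is not the same as exiting.

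The paper's reasoning runs the implication the other way. Because the competition interface is by construction the separating path between $\Gamma^+$ and $\Gamma^-$ on each anti-diagonal (this is exactly the content of the Ferrari--Pimentel identification you invoke), a monochromatic $\mathbb L_k$ means there is nothing left to separate at level $k$: the interface must \emph{already} lie on the strip boundary at some level $\le k$. The explicit boundary analysis then gives exit at time $G(\gamma_0,\phi_{m^*})$ with $|\phi_{m^*}|_1\le k$, hence $\le G(\gamma_0,\mathbb L_k)$. You correctly flagged this boundary matching as the crux in your point (ii), but did not carry it through to the time bound.
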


\begin{proof}[Sketch of the proof] 
We adopt the arguments of Proposition 3 and Lemma 6 in \cite{FP:CompetitionInterface}. In a first step, we replace the second class particle by a $(0,1)$-pair, also called $\ast$-pair, by extending the dynamic to the segment $[N+1]$ using the same boundary conditions, i.e.\ a configuration $\xi \in \{0,1,2\}^N$ with a single second class particle at site $x$ gets mapped into the configuration $\eta \in \{0,1\}^{N+1}$ with
\begin{equation}
\eta(y) := \begin{cases} \xi(y) & \text{ if } y<x \\
0 & \text{ if } y=x \\
1 & \text{ if } y=x+1 \\
\xi(y-1) & \text{ if } y>x+1 \\
\end{cases}
\end{equation} for all $y\in [N+1]$. In a second step, we then describe the motion of the $\ast$-pair using the competition interface. \\

More precisely, for the first step, let $x$ be the starting location of the second class particle in $(\xi_t)_{t \geq 0}$, and let $(\eta^{01}_t)_{t \geq 0}$ be the same TASEP with open boundaries started from $\xi_0$, but where we replace the second class particle at $x$ by a $(0,1)$-pair, increasing the size of the segment by $1$. We use the same clocks for corresponding particles.   The particle in the $\ast$-pair agrees with the clock of the second class particle. Let the $\ast$-pair evolve in the following way: Whenever the particle in the $\ast$-pair moves, we let the $\ast$-pair move one step to the right. Whenever a particle moves to the empty site in the $\ast$-pair, we let the $\ast$-pair move one step to the left. Note that after every such move until time $\tau_{\text{ex}}$, we will obtain a $(0,1)$-pair.  This gives a bijection between $(X_t)_{t \geq 0}$ and the $\ast$-pair in $(\eta^{01}_t)_{t \geq 0}$ until time $\tau_{\text{ex}}$. \\

For the second step, we use \eqref{eq:CompetionInterfaceArgMin} and an induction argument as in Proposition~3 of~\cite{FP:CompetitionInterface} to see that the $\ast$-pair follows the competition interface as described in \eqref{eq:SecondClassCompetition}, e.g., every time we make a step to the right in  the competition interface, \eqref{eq:CompetionInterfaceArgMin} ensures that the particle in the $\ast$-pair jumps before the empty site gets replaced. A special case occurs when $(\phi_n)_{n \in \N}$ reaches the boundary of the strip. Without loss of generality, suppose that $\phi_k=(\ell,\ell)$ for some $\ell\in \N$. Then at time $\tilde{t}=G(\gamma_0,\phi_{k-1})$, the $\ast$-pair is located at the sites $1$ and $2$, while we have $X(\tilde{t})=1$. Using  \eqref{eq:CompetionInterfaceArgMin}, we see that the empty site in the $\ast$-pair gets replaced before the particle in the $\ast$-pair moves. Since this  replacement occurs at rate $\alpha$, we can extend the above coupling such that the second class particle in $(\xi_t)_{t \geq 0}$ leaves the segment at time $G(\gamma_0,\phi_{k})$. In particular, when we see a monochromatic anti-diagonal, the competition interface must have reached the boundary of the strip, giving \eqref{eq:ExitImplication}. \end{proof}

In Section \ref{sec:MixingTimesMaxCurrent}, our goal will be to establish criteria which ensure that with positive probability, we see a monochromatic anti-diagonal at a distance of order at most $N^{3/2}$ away from $x$ for any possible starting configuration. This will then allow us to prove Theorem \ref{thm:MaxCurrent}.

\subsection{Stationary corner growth model on the strip}\label{sec:StationaryLPP}

In this section, we discuss a stationary corner growth model for the triple point $\alpha=\beta=\frac{1}{2}$, which will be an essential tool in Section~\ref{sec:MixingTimesTriplePoint}. It is the analogue of the stationary CGM on $\Z^2$; see for example Section~3 in \cite{S:LectureNotes} for an introduction. Recall from Section \ref{sec:TASEPInteracting} that in the triple point, the stationary measure of the TASEP with open boundaries has a product structure, i.e., it is the Uniform distribution on $\{ 0,1\}^N$. We see that \textbf{Burke's property} holds, which means that in equilibrium, the particle trajectories have marginal distributions according to rate $\frac{1}{2}$ Poisson clocks; see also Lemma~\ref{lem:StationarityLPPprocess}. Formally, we define the stationary CGM on the strip as the corner growth model on $\Z^2$ in environment $(\omega_x^{p_{\textup{stat}}})_{x \in \Z^2}$ given according to
\begin{equation}\label{def:StationaryEnvironment}
p_{\textup{stat}}(x):=\begin{cases} 2 & \text{ if } x_1=x_2 \geq  1,  \text{ or } x_1-N= x_2 \geq 1, \text{ or }  x_2=0 \text{ and } x_1 \in [N] \\
1& \text{ if }  x_1 - N <  x_2 \geq 0   \text{ and } x_1> x_2>0 \\
0 & \text{ otherwise} \, , 
\end{cases}
\end{equation} and denote by $G_{\textup{stat}}$ the corresponding last passage times. A similar version of a stationary CGM was also studied on the half-quadrant; see \cite{BBCS:Halfspace,BFO:HalfspaceStationary,PS:CurrentFluctuations}. The following lemma characterizes the distribution of the increments of the last passage times. Further, it justifies that the CGM with respect to $p_{\textup{stat}}$ is called stationary, in the sense of \cite{BCS:CubeRoot}, and that Burke's property holds. Since the proof follows from similar arguments as Lemma 4.1 and Lemma~4.2 in \cite{BCS:CubeRoot}, we  only give a sketch of the proof; see also Lemma 2.1 and Lemma 2.2 in \cite{BFO:HalfspaceStationary}.

\begin{lemma}\label{lem:StationarityLPPprocess} Fix some $i,j$ with $(i+1,j+1) \in \mathcal{S}=\mathcal{S}(p_{\textup{stat}})$. If $(i,j+1) \in \mathcal{S}$ holds, then
\begin{equation}\label{def:HoriIncrement}
I_{i+1,j+1} := G_{\textup{stat}}(0, (i+1,j+1)) - G_{\textup{stat}}(0, (i,j+1)) 
\end{equation}  is Exponential-$\frac{1}{2}$-distributed. If $(i+1,j) \in \mathcal{S}$ holds, we see that
\begin{equation}\label{def:VertiIncrement}
J_{i+1,j+1} := G_{\textup{stat}}(0, (i+1,j+1)) - G_{\textup{stat}}(0, (i+1,j)) 
\end{equation} is Exponential-$\frac{1}{2}$-distributed. If $(i,j),(i,j+1),(i+1,j) \in \mathcal{S}$ holds, we have that
\begin{equation}\label{def:MinIncrement}
Y_{i,j} := \min(I_{i+1,j} , J_{i,j+1} ) 
\end{equation} is Exponential-$1$-distributed. Moreover, $I_{i+1,j+1},J_{i+1,j+1}$ and $Y_{i,j}$ are independent. Furthermore, along every down-right path $\gamma=\{\gamma^{i}\}_{i \in \{0,\dots,N\}}$ in $\mathcal{S}$,  where
\begin{equation}\label{def:DownRight}
\gamma^{i}- \gamma^{i-1} \in \{ \eone, -\etwo \}
\end{equation} for all $i\in [N]$, the increments $|G_{\textup{stat}}(0,\gamma^{i}) - G_{\textup{stat}}(0, \gamma^{i-1})|$ are i.i.d.\ Exponential-$\frac{1}{2}$-distributed, and independent of $Y_{i,j}$ for all $(i,j) \in \mathcal{S}$ with $(i+1,j+1) \preceq y$ for some $y\in \gamma$.
\end{lemma}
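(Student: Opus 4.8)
\textbf{Proof proposal for Lemma~\ref{lem:StationarityLPPprocess}.}

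The plan is to mimic the classical Burke-property argument for the stationary corner growth model on $\Z^2$ (as in Lemma~4.1 and Lemma~4.2 of \cite{BCS:CubeRoot}), tracking the induction along anti-diagonals and checking that the boundary weights in \eqref{def:StationaryEnvironment} are exactly the ones that make the increment structure propagate. First I would set up the ``corner'' computation: given a site $(i+1,j+1)\in\mathcal S$ with all three predecessors $(i,j),(i,j+1),(i+1,j)$ also in $\mathcal S$, the last-passage recursion gives
\begin{equation*}
G_{\textup{stat}}(0,(i+1,j+1)) = \omega^{p_{\textup{stat}}}_{(i+1,j+1)} + \max\big(G_{\textup{stat}}(0,(i,j+1)),\, G_{\textup{stat}}(0,(i+1,j))\big).
\end{equation*}
Writing $I := I_{i+1,j}$, $J := J_{i,j+1}$ (the increments across the lower and left edges of the unit cell), $\omega := \omega^{p_{\textup{stat}}}_{(i+1,j+1)}$, and $Y := \min(I,J)$, a direct manipulation shows $I_{i+1,j+1} = \omega + (J-I)^+$ and $J_{i+1,j+1} = \omega + (I-J)^+$, while $Y_{i,j} = \min(I,J)$. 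The key algebraic lemma (the ``exponential corner lemma'') is: if $I,J$ are independent Exponential-$\tfrac12$ and $\omega$ is independent Exponential-$1$, then $(\omega + (J-I)^+,\ \omega + (I-J)^+,\ \min(I,J))$ are independent with the first two Exponential-$\tfrac12$ and the third Exponential-$1$. This is the standard memorylessness computation; I would either cite it from \cite{BCS:CubeRoot} or include the short verification via the lack-of-memory property of the exponential.

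Next I would run the induction on anti-diagonals $\mathbb L_n$. The base case is the boundary: along the two diagonal edges $x_1=x_2$ and $x_1-N=x_2$, and along the bottom row $x_2=0$, $x_1\in[N]$, the weights are Exponential-$\tfrac12$ by construction (mean $2$), and these weights are exactly the increments $G_{\textup{stat}}(0,\gamma^i)-G_{\textup{stat}}(0,\gamma^{i-1})$ along the initial down-right path that hugs the boundary, since $G_{\textup{stat}}(0,\cdot)$ restricted to that path is a partial sum of boundary weights. So along this initial path the increments are i.i.d.\ Exponential-$\tfrac12$ and independent of the interior bulk weights (which are Exponential-$1$). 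The inductive step pushes the down-right path one cell forward at an interior corner: the exponential corner lemma shows the new pair of edge increments is again independent Exponential-$\tfrac12$, independent of the newly exposed $Y_{i,j}=\min(I,J)$ and of all the still-unused bulk weights strictly above-right of the path. Iterating across the whole anti-diagonal and then over $n$ gives that along \emph{every} down-right path $\gamma$ in $\mathcal S$ the increments $|G_{\textup{stat}}(0,\gamma^i)-G_{\textup{stat}}(0,\gamma^{i-1})|$ are i.i.d.\ Exponential-$\tfrac12$ and independent of $Y_{i,j}$ for cells below-left of $\gamma$. The statements \eqref{def:HoriIncrement}, \eqref{def:VertiIncrement}, \eqref{def:MinIncrement} are then just the special cases of this at a single cell.

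The main obstacle I anticipate is bookkeeping at the boundary, not in the bulk: one must check that when the advancing down-right path reaches a boundary diagonal (a site with $x_1=x_2$ or $x_1-N=x_2$) the corner update still closes up correctly, because there the update is not a genuine $\max$ of two interior increments --- one of the ``incoming'' edges lies on the boundary and the site has only one lattice predecessor inside $\mathcal S$. I would handle this by observing that at such a site the recursion degenerates to $G_{\textup{stat}}(0,x) = G_{\textup{stat}}(0,x-\etwo) + \omega_x$ (on the lower diagonal) or $G_{\textup{stat}}(0,x)=G_{\textup{stat}}(0,x-\eone)+\omega_x$ (on the upper diagonal), with $\omega_x$ Exponential-$\tfrac12$ and independent of everything already revealed, which is exactly what is needed to keep the down-right-path increments i.i.d.\ Exponential-$\tfrac12$ as the path slides past the boundary. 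This is the analogue of the two boundary rays in the $\Z^2$ stationary model and is the place where the specific choice of rate-$\tfrac12$ (mean $2$) boundary weights in \eqref{def:StationaryEnvironment} is forced. Since all of this parallels \cite{BCS:CubeRoot} and \cite{BFO:HalfspaceStationary} closely, I would present it as a sketch, spelling out only the corner lemma and the boundary-degeneration check and referring to those papers for the routine induction.
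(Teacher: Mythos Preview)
Your proposal is correct and follows essentially the same route as the paper's proof: both derive the increment recursion, invoke the exponential corner lemma (the paper phrases it as an MGF computation, you via memorylessness --- same content), and induct along down-right paths citing \cite{BCS:CubeRoot}. One harmless slip: with your conventions $I=I_{i+1,j}$, $J=J_{i,j+1}$ the correct formulas are $I_{i+1,j+1}=\omega+(I-J)^+$ and $J_{i+1,j+1}=\omega+(J-I)^+$, i.e.\ the signs are swapped relative to what you wrote; by the $I\leftrightarrow J$ symmetry of the corner lemma this does not affect the argument, and your explicit boundary-degeneration check is a welcome addition that the paper's sketch leaves implicit.
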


\begin{proof}[Sketch of the proof] The first claim in the lemma is shown using induction. Observe that the increments of the last passage times satisfy
\begin{equation}\label{eq:InductionStep}
I_{i,j} = (I_{i,{j-1}} - J_{i-1,j} )^+ + \omega^{p_{\textup{stat}}}_{(i,j)} \, ,
\end{equation} where $z^+:= \max(z,0)$ for $z \in \R$, and similarly for $J_{i,j}$. Note that for $I_{2,0}$ and $J_{1,1}$, the statements \eqref{def:HoriIncrement} and \eqref{def:VertiIncrement} hold by construction. Using this fact as induction basis, we obtain the independence and the correct marginal distributions by calculating the moment generating function of $(I_{i+1,j+1},J_{i+1,j+1},Y_{i,j})$, and using  \eqref{eq:InductionStep};  see Lemma 4.1 in \cite{BCS:CubeRoot}.
A similar induction argument on the down-right paths contained in $\mathcal{S}$ yields independence of the increments along every down-right path; see also Lemma 4.2 in \cite{BCS:CubeRoot}. 
\end{proof}
\begin{remark} For last passage percolation on $\Z^2$, the stationary CGM is closely related to Busemann functions; see \cite{CP:BusemannSecondClass,S:LectureNotes,S:ExistenceGeodesics}. Note that Busemann functions are defined in \eqref{def:Busemann} for the strip, but we will not further discuss a connection to Lemma \ref{lem:StationarityLPPprocess} at this point.
\end{remark}

\section{Mixing times in the maximal current phase} \label{sec:MixingTimesMaxCurrent}

In this section, our goal is to show Theorem \ref{thm:MaxCurrent}. We have the following strategy: Using Lemma  \ref{lem:ShapeTheorem} and Lemma \ref{lem:SecondClassCompetition}, we bound the exit time of a second class particle by finding a monochromatic anti-diagonal. We see in Section \ref{sec:ExitTimesByCoalescence} that  a sufficient condition for a monochromatic anti-diagonal is that all geodesics between two anti-diagonals  coalesce. 
In Section \ref{sec:Coalescence}, we use similar ideas as Dey et al.\ in \cite{DJP:CriticalStripe} for Poissonian LPP on a strip to show that the geodesics coalesce with positive probability if the anti-diagonals have a distance of order $N^{3/2}$. To do so, we need to control the fluctuations of the last passage times on the strip, which we accomplish in Section \ref{sec:VarianceUpperBound}. 
While the required estimates follow from known results when $\alpha,\beta \geq 1$, the analysis for $\alpha \in [\frac{1}{2},1)$ and $\beta \in [\frac{1}{2},1)$ is more delicate and uses a path decomposition together with recent results on last passage times in the half-quadrant \cite{BBCS:Halfspace}, and the stationary CGM on $\Z^2$ \cite{B:ModerateDeviationsStationary}.

\subsection{Exit times via the coalescence of geodesics}\label{sec:ExitTimesByCoalescence}

We start with a simple observation on the ordering of geodesics in last passage percolation; see also Lemma 11.2 in \cite{BSV:SlowBond}. We define a natural ordering $\succeq_{\g}$ of up-right paths of finite or infinite length on the strip as follows: For two up-right paths $\pi_1, \pi_2$, we have that
\begin{equation*}
\pi_1 \succeq_{\g} \pi_2  \quad \Leftrightarrow \quad  y_1 \geq y_2  \text{ for all } (x,y_1) \in \pi_1, (x,y_2)\in \pi_2  \, ,
\end{equation*} i.e., for every pair of sites on the paths with the same first component, the site in $\pi_1$ lies above the site in $\pi_2$. The next lemma gives a sufficient condition on the endpoints of finite geodesics 
such that they are ordered with respect to $\succeq_{\g}$.
\begin{lemma} \label{lem:OrderingGeodesics} Fix some environment $(\omega^p_x)_{x\in \Z^2}$. For $i \in [4]$,  let $a_i=(a^1_i,a^2_i) \in \Z^2$. If
\begin{equation}\label{eq:OrderingAssumptions}
a^1_1 \leq a^1_2  \leq a^1_4 \leq a^1_3 \ \   \text{ and } \  \  a^2_2 \leq  a^2_1 \leq a^2_3 \leq  a^2_4
\end{equation} holds, and the geodesics $\pi^{\ast}_{a_1,a_4}$ and $\pi^{\ast}_{a_2,a_3} $ are fully contained in $\mathcal{S}(p)$, then almost surely 
\begin{equation}\label{eq:OrderingStatement}
\pi^{\ast}_{a_1,a_4}  \succeq_{\g}  \pi^{\ast}_{a_2,a_3}  \, .
\end{equation}
\end{lemma}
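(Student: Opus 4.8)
The plan is to prove the ordering by a standard crossing/exchange argument for last-passage geodesics. First I would set up the picture: the hypothesis \eqref{eq:OrderingAssumptions} says that $a_1$ lies weakly to the left of and weakly above $a_2$ (so $a_1$ is the ``upper-left'' source and $a_2$ the ``lower-right'' source), while $a_4$ lies weakly above and to the left of $a_3$ (so $a_4$ is the ``upper-left'' sink and $a_3$ the ``lower-right'' sink). Thus the geodesic $\pi^\ast_{a_1,a_4}$ runs ``above'' and the geodesic $\pi^\ast_{a_2,a_3}$ runs ``below,'' at least at their endpoints. I would first dispose of the degenerate cases where the endpoints coincide in a coordinate, and note that since both geodesics are assumed to lie in $\mathcal{S}(p)$, all path segments appearing below have finite, well-defined passage times.

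The core step is the crossing argument. Suppose for contradiction that \eqref{eq:OrderingStatement} fails, i.e.\ there is some first component $x$ at which the point of $\pi^\ast_{a_2,a_3}$ lies strictly above the point of $\pi^\ast_{a_1,a_4}$. Since at the sources $a_1$ sits weakly above $a_2$ and at the sinks $a_4$ sits weakly above $a_3$, the two up-right lattice paths must cross: there is a vertex $v$ lying on $\pi^\ast_{a_1,a_4}$ with $\pi^\ast_{a_2,a_3}$ passing through a vertex on the same anti-diagonal (or the same column), and more carefully one can locate a common vertex $v$ such that $\pi^\ast_{a_1,a_4}$ enters the ``lower'' region at $v$ and $\pi^\ast_{a_2,a_3}$ is at that moment in the ``upper'' region, so that both geodesics can be taken to pass through $v$ (when two up-right paths in $\Z^2$ change relative vertical order, they necessarily share a lattice vertex). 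Now I would perform the exchange: let $\pi^\ast_{a_1,a_4} = \pi_1^{(\le v)} \cdot \pi_1^{(\ge v)}$ and $\pi^\ast_{a_2,a_3} = \pi_2^{(\le v)} \cdot \pi_2^{(\ge v)}$ be the decompositions at $v$, and form the swapped paths $\sigma_{a_1,a_3} := \pi_1^{(\le v)} \cdot \pi_2^{(\ge v)}$ from $a_1$ to $a_3$ and $\sigma_{a_2,a_4} := \pi_2^{(\le v)} \cdot \pi_1^{(\ge v)}$ from $a_2$ to $a_4$. Because $G_p$ is additive along paths through $v$ and each piece is a valid up-right path in $\mathcal{S}(p)$, we get
\begin{equation*}
G_p(\sigma_{a_1,a_3}) + G_p(\sigma_{a_2,a_4}) = G_p(\pi^\ast_{a_1,a_4}) + G_p(\pi^\ast_{a_2,a_3}) = G_p(a_1,a_4) + G_p(a_2,a_3).
\end{equation*}

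From here I would extract the contradiction: by definition of last-passage times, $G_p(\sigma_{a_1,a_3}) \le G_p(a_1,a_3)$ and $G_p(\sigma_{a_2,a_4}) \le G_p(a_2,a_4)$, hence $G_p(a_1,a_4) + G_p(a_2,a_3) \le G_p(a_1,a_3) + G_p(a_2,a_4)$. On the other hand, by the same exchange argument applied the other way (any geodesic from $a_1$ to $a_3$ and any from $a_2$ to $a_4$ must themselves cross, given the endpoint inequalities, so they can be rerouted into paths from $a_1$ to $a_4$ and from $a_2$ to $a_3$), one gets the reverse inequality $G_p(a_1,a_3) + G_p(a_2,a_4) \le G_p(a_1,a_4) + G_p(a_2,a_3)$; these together force equality, and then almost-sure uniqueness of geodesics (the weights are continuous random variables, so ties have probability zero) forces the swapped paths to actually be the geodesics, contradicting the assumed strict crossing — or, more cleanly, one chooses the crossing vertex $v$ to be the \emph{last} common vertex and derives that just after $v$ the geodesic $\pi^\ast_{a_1,a_4}$ would have to agree with a geodesic that stays weakly below, contradiction. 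The main obstacle, and the place requiring care rather than cleverness, is the combinatorial bookkeeping of \emph{why} the two up-right paths must share a vertex when they swap vertical order, and handling the boundary cases where the relevant points lie on the slanted edges $x_1 = x_2$ or $x_1 - N = x_2$ of the strip $\mathcal{S}(p)$; I would handle this by phrasing the crossing in terms of the anti-diagonals $\mathbb{L}_n$ (on which both geodesics have exactly one vertex) and comparing the second coordinates there, using the endpoint hypotheses to guarantee the second coordinate of $\pi^\ast_{a_1,a_4}$ starts $\ge$ that of $\pi^\ast_{a_2,a_3}$ and ends $\ge$ it, so any strict reversal in between produces an anti-diagonal where they agree. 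I expect the whole proof to be about half a page.
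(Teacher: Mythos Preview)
Your approach is correct and aligned with the paper's: the paper gives only a one-line proof (``immediate from a picture, see Figure 16 in \cite{BSV:SlowBond}, using that between any such pair of points there is almost surely a unique maximizing path''), and your crossing/exchange argument is exactly the standard way to make this picture rigorous. Both rest on the same core fact---almost-sure uniqueness of geodesics forbids two geodesics from properly crossing.

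One remark on efficiency: the full exchange-and-equality argument you outline is slightly roundabout. Once you have established that a violation of \eqref{eq:OrderingStatement} forces the two geodesics to share a vertex (your anti-diagonal bookkeeping handles this), the cleanest way to close is the one you mention parenthetically at the end: if $\pi^{\ast}_{a_1,a_4}$ and $\pi^{\ast}_{a_2,a_3}$ share two vertices $u \prec w$, then both restrict to geodesics from $u$ to $w$ and hence coincide there by uniqueness; so their intersection is a single connected segment, before and after which they are disjoint, and the endpoint hypotheses \eqref{eq:OrderingAssumptions} pin down which path is above on each side. This avoids the swap inequalities entirely and is closer in spirit to the ``picture'' the paper invokes.
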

\begin{proof} The statement \eqref{eq:OrderingStatement} is immediate from a picture, see for example Figure 16 in \cite{BSV:SlowBond} for a visualization, using that between any such pair of points, there is almost surely a unique maximizing path.
\end{proof}

As a direct consequence, we get a sufficient condition for monochromatic anti-diagonals.

\begin{lemma}\label{lem:Coalescence} Consider some initial growth interface for the strip of width $N$, which satisfies \eqref{def:Gamma+-}, and color all sites according to $\Gamma^+$ and $\Gamma^{-}$. Let $n,k \geq N$ and recall the rectangle $R_n^k$ from \eqref{def:Rectangle}. For $i \in [4]$,  let $a_i=(a^1_i,a^2_i) \in \Z^2$ be the corners of $R_n^k$ in counter-clockwise order such that $a_1,a_2 \in \mathbb{L}_n$ and $a_3,a_4 \in \mathbb{L}_{n+k-1}$. Then  almost surely 
\begin{equation}\label{eq:ImplicationCoalesceMono}
\pi^{\ast}_{a_1,a_4} \cap \pi^{\ast}_{a_2,a_3} \neq \emptyset \quad \Rightarrow \quad \mathbb{L}_{n+k} \text{ is monochromatic} \, . 
\end{equation}
\end{lemma}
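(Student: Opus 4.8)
The plan is to show that once the two geodesics $\pi^{\ast}_{a_1,a_4}$ and $\pi^{\ast}_{a_2,a_3}$ meet, all geodesics with endpoints on $\mathbb{L}_n$ and $\mathbb{L}_{n+k-1}$ are "trapped" between them, hence pass through a common vertex, and then use the argmin characterization of the coloring to propagate a single color to the whole anti-diagonal $\mathbb{L}_{n+k}$. First I would fix the labelling: the corners of $R_n^k$ in counter-clockwise order are $a_1,a_2 \in \mathbb{L}_n$ with $a_1$ the lower-right and $a_2$ the upper-left corner (so $a_1^1 \geq a_2^1$, $a_1^2 \leq a_2^2$ after relabelling to match \eqref{eq:OrderingAssumptions}), and $a_3, a_4 \in \mathbb{L}_{n+k-1}$ with $a_4$ upper-left, $a_3$ lower-right; one checks these satisfy the coordinate inequalities \eqref{eq:OrderingAssumptions} of Lemma \ref{lem:OrderingGeodesics}, so $\pi^{\ast}_{a_1,a_4} \succeq_{\g} \pi^{\ast}_{a_2,a_3}$.

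Next I would argue that if these two ordered geodesics intersect, they must share a vertex $z$, and moreover \emph{every} geodesic from a point of $\mathbb{L}_n \cap \mathcal S$ to a point of $\mathbb{L}_{n+k-1}\cap \mathcal S$ passes through $z$. The reason: any such geodesic $\pi$ has endpoints whose coordinates interlace with those of $a_1,\dots,a_4$ in the pattern of \eqref{eq:OrderingAssumptions} — its starting point on $\mathbb{L}_n$ lies (weakly) between $a_2$ and $a_1$, its endpoint on $\mathbb{L}_{n+k-1}$ lies between $a_4$ and $a_3$ — so Lemma \ref{lem:OrderingGeodesics} gives $\pi^{\ast}_{a_1,a_4} \succeq_{\g} \pi \succeq_{\g} \pi^{\ast}_{a_2,a_3}$. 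Since the outer two geodesics coincide at $z$ and $\pi$ is squeezed between them in the $\succeq_{\g}$ order, $\pi$ must also contain $z$ (at the first coordinate of $z$, $\pi$ is both $\geq$ and $\leq$ the common value). In particular this applies to the geodesics $\pi^{\ast}_{\gamma_0, w}$ for each $w \in \mathbb{L}_{n+k}\cap \mathcal S$: restricting such a geodesic between its crossings of $\mathbb{L}_n$ and $\mathbb{L}_{n+k-1}$ yields a geodesic of the above type, so it passes through $z$ (here I should note $n \geq N$ ensures $\mathbb{L}_n$ and $\mathbb{L}_{n+k-1}$ are genuine anti-diagonals inside $\mathcal S(p)$ that every path from $\gamma_0$ to $\mathbb{L}_{n+k}$ must cross, so the restriction makes sense).

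Finally I would invoke the coloring: by the argmin recursion \eqref{eq:CompetionInterfaceArgMin}, equivalently by the definition of $\Gamma^\pm$ via comparison of $G(\gamma_+,\cdot)$ and $G(\gamma_-,\cdot)$, the color of a vertex $w$ is determined by which of $\gamma_+,\gamma_-$ the (unique) geodesic $\pi^{\ast}_{\gamma_0,w}$ emanates from, i.e. by whether that geodesic's initial segment lies in $\gamma_+$ or $\gamma_-$. Since every $w \in \mathbb{L}_{n+k}\cap\mathcal S$ has its geodesic $\pi^{\ast}_{\gamma_0,w}$ passing through the common vertex $z$, and two geodesics through $z$ share their entire initial portion up to $z$ (by uniqueness of finite geodesics, which holds a.s. for continuous weights), all these geodesics come from the same side of $\gamma_0$; hence all of $\mathbb{L}_{n+k}$ receives the color of $z$, i.e. $\mathbb{L}_{n+k} \subseteq \Gamma^+$ or $\mathbb{L}_{n+k}\subseteq\Gamma^-$. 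This is exactly \eqref{eq:ImplicationCoalesceMono}; see also Figure \ref{fig:Competition}.

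\textbf{Main obstacle.} The delicate point is the squeezing step: turning "$\pi^{\ast}_{a_1,a_4}$ and $\pi^{\ast}_{a_2,a_3}$ intersect" into "all intermediate geodesics pass through a common vertex." One must be careful that $\succeq_{\g}$ is defined via the second coordinate at each shared first coordinate, and that an up-right path sandwiched between two up-right paths that agree at some column is forced to agree with them there; this also uses a.s. uniqueness of geodesics (continuous environment) to rule out a path that touches $z$ only tangentially without the coloring argument going through. The boundary bookkeeping — checking the corner inequalities \eqref{eq:OrderingAssumptions} hold for the actual corners of $R_n^k$ given the geometry of the strip $\mathcal S(p)$, and that $n,n+k-1 \geq N$ guarantees the relevant anti-diagonals are crossed by every geodesic from $\gamma_0$ — is routine but must be done with the correct orientation of the anti-diagonals.
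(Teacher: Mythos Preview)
Your proposal is correct and follows essentially the same approach as the paper: use the ordering of geodesics (Lemma \ref{lem:OrderingGeodesics}) to sandwich every geodesic between $\mathbb{L}_n$ and $\mathbb{L}_{n+k-1}$ between the two corner-to-corner geodesics, force them all through a common vertex $z$ once those two meet, and then invoke a.s.\ uniqueness of the geodesic from $\gamma_0$ to $z$ to conclude that every $w\in\mathbb{L}_{n+k}$ inherits the color of $z$. Your write-up is more explicit than the paper's on the squeezing step and on why uniqueness is needed; one small slip is your corner labelling (in the paper's convention $a_1$ is the upper-left and $a_2$ the lower-right point of $\mathbb{L}_n$, so that \eqref{eq:OrderingAssumptions} holds directly without relabelling), but as you note this is routine bookkeeping and does not affect the argument.
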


\begin{proof} Using the ordering in Lemma \ref{lem:OrderingGeodesics}, we see that any geodesic between $x\in \mathbb{L}_n$ and $y \in \mathbb{L}_{n+k}$ must pass through some common point $z$ whenever the event on the left-hand side of \eqref{eq:ImplicationCoalesceMono} occurs. By construction, any geodesic connecting the initial growth interface $\gamma_0$ to $\mathbb{L}_{n+k}$ must intersect with $\mathbb{L}_n$, and hence also pass through $z$; see Figure \ref{fig:Competition}. Without loss of generality, assume that $z \in \Gamma^{+}$. This implies that all sites in $\mathbb{L}_{n+k}$ must be contained in $\Gamma^+$ as there is almost surely a unique maximal path connecting $\gamma_0$ and $z$ which every geodesic between $\gamma_0$ and some site in $\mathbb{L}_{n+k}$ follows. \end{proof}

 Note that Lemma \ref{lem:ExistenceSemiInfinite} guarantees the coalescence of the geodesics between any two points in $\mathbb{L}_{n}$ and $\mathbb{L}_{n+k}$ on the strip of width $N$ when $k$ is sufficiently large. The next proposition bounds the probability of coalescence for $k=k(N)$. Its proof is deferred to Section~\ref{sec:Coalescence}.
 
 \begin{proposition}\label{pro:CoalescenceTimes}  Recall the environment for $p$ from \eqref{def:EnvironmentStripe} for some $\alpha,\beta \geq \frac{1}{2}$, and some initial growth interface. Consider the  rectangle $R^k_N$ and denote by $a_i$ for $i\in [4]$ its corners in counter-clockwise order such that $a_1,a_2 \in \mathbb{L}_N$ and $a_3,a_4 \in \mathbb{L}_{N+k-1}$. There exist  $c,C>0$ and $N_0 \in \N$ such that for all  $N \geq N_0$, and for all $m \in \N$, we have for $k=mcN^{3/2}$
 \begin{equation}\label{eq:CoalescenceTimesQuanti}
 \P(\pi^{\ast}_{a_1,a_4} \cap \pi^{\ast}_{a_2,a_3} \neq \emptyset) \geq 1 - \exp(- C m) \, .
 \end{equation}
 \end{proposition}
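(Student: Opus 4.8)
The plan is to follow the path-decomposition strategy of Dey--Janjigian--Peled \cite{DJP:CriticalStripe} adapted to the exponential CGM on the strip. The heuristic is clear: the rectangle $R^k_N$ has width $N$ (transversally) and length $k$ (in the time direction), and for $k \gg N^{3/2}$ the transversal fluctuations of a geodesic, which are of order (length)$^{2/3}$ on the KPZ scale, exceed the width $N$, forcing any two geodesics to intersect. Since $\pi^{\ast}_{a_1,a_4} \succeq_{\g} \pi^{\ast}_{a_2,a_3}$ by Lemma \ref{lem:OrderingGeodesics} (the endpoints of $R^k_N$ satisfy \eqref{eq:OrderingAssumptions}), the two geodesics are ordered, so they fail to intersect precisely when the upper one stays strictly above the lower one throughout; I need to show this costs probability $\exp(-Cm)$ when $k = mcN^{3/2}$.

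First I would slice the rectangle $R^k_N$ into $\Theta(m)$ consecutive sub-rectangles $R^{cN^{3/2}}_{N + j c N^{3/2}}$, each of transversal width $N$ and length $cN^{3/2}$. Inside each slice the environment is homogeneous (the slices lie strictly between the boundary diagonals once $n \geq N$, so $p\equiv 1$ there), which lets me invoke the moderate deviation estimates of Lemma \ref{lem:ShapeTheorem} with the KPZ exponents. The key deterministic input, following \cite{DJP:CriticalStripe}, is: if in a given slice there is a "heavy" up-right path crossing the full width of the slice that passes near the bottom-left corner region and another near the bottom-right corner region — more precisely, if every pair of near-optimal crossing paths of the slice must intersect — then the geodesics $\pi^{\ast}_{a_1,a_4}$ and $\pi^{\ast}_{a_2,a_3}$, which restrict to near-geodesics of the slice, are forced to meet. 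So I want to show that each slice independently has probability bounded below by a constant $p_0 \in (0,1)$ of being a "good" slice that forces intersection, and then the probability that the geodesics avoid each other is at most the probability that all $\Theta(m)$ slices are bad.

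The engine for "each slice is good with probability $\geq p_0$" is the following comparison argument, as indicated in the outline of the paper: take the geodesic entering the slice from the left; its passage time across a slice of length $cN^{3/2}$ and width $\le N$ is, by the shape theorem, of order $cN^{3/2}$ with fluctuations of order $(cN^{3/2})^{1/3} \cdot N^{1/2}$-type corrections controlled by \eqref{eq:StatementShapeTheorem}–\eqref{eq:StatementShapeTheorem2}; one then shows that with constant probability this near-optimal crossing can be "rerouted" through any prescribed pair of windows near the two sides of the slice at a negligible cost, producing a heavy path between arbitrary pairs of sites near its endpoints. Two such heavy crossing paths of a sufficiently narrow (width $N$, length $\gg N^{3/2}$ is not narrow — rather width $N \ll$ length) region must intersect by a planarity/ordering argument exactly as in \cite{DJP:CriticalStripe}. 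The constant probability comes from bounding below the probability that the relevant last-passage increments are not too small and not too large simultaneously — this is where Lemma \ref{lem:ShapeTheorem} gives $\bar c \exp(-c\theta)$-type tails that can be made smaller than $1$ for a fixed $\theta$. For $\alpha, \beta \in [\frac12, 1)$ the boundary diagonals carry heavier weights ($\alpha^{-1}, \beta^{-1} > 1$), but since all the slices I use lie strictly inside the strip, the boundary only enters through the starting interface $\gamma_0$ and through the global geodesic-ordering, so the slice-wise estimates are unaffected; the boundary analysis postponed to Section \ref{sec:VarianceUpperBound} is needed only for the variance bound feeding into the choice of the constant $c$, not for the slicing itself.

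The main obstacle I anticipate is making the "rerouting at negligible cost with constant probability" step quantitatively rigorous on the strip: one must show that conditionally on the environment outside a small window, the contribution of the window to the optimal crossing path can be controlled, and that forcing the path through a fixed window near each side changes the passage time by only $O(N^{1/2+o(1)})$, which is $o(N^{3/2})$ and hence absorbable — but one needs this uniformly over the $\Theta(m)$ slices and with enough independence between slices to multiply the failure probabilities. Independence is delicate because consecutive slices share a boundary anti-diagonal and the entry point of the geodesic into slice $j+1$ depends on slices $\le j$; the standard fix (again from \cite{DJP:CriticalStripe}) is to condition on the entry point and use that the "good slice" event depends only on the fresh randomness strictly inside the slice, giving a genuine product lower bound $\P(\text{all bad}) \le (1-p_0)^{\Theta(m)} = \exp(-Cm)$. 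I would organize the proof so that Lemma \ref{lem:ShapeTheorem} and the variance control of Section \ref{sec:VarianceUpperBound} are the only external inputs, and the rest is the planar geometry of ordered geodesics plus a Borel–Cantelli-free union/product estimate over slices.
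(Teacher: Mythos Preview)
Your reduction from general $m$ to $m=1$ by slicing into independent sub-rectangles is exactly what the paper does, and it is simpler than you make it sound: define the ``good'' event for slice $j$ to be the intersection of the two corner-to-corner geodesics \emph{of that slice}; these events depend on disjoint randomness and any one of them forces, via Lemma~\ref{lem:OrderingGeodesics}, intersection of the global corner-to-corner geodesics. There is no need to condition on entry points of the big geodesic.

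For the single-slice ($m=1$) argument, however, your sketch has two genuine gaps.

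First, your claim that ``inside each slice the environment is homogeneous ($p\equiv 1$)'' is false: the rectangle $R^{cN^{3/2}}_n$ includes the two boundary diagonals, where $p=\alpha^{-1}$ and $p=\beta^{-1}$, and the geodesics $\pi^{\ast}_{a_1,a_4}$, $\pi^{\ast}_{a_2,a_3}$ start and end on those diagonals and may run along them. Thus the fluctuation input you need is not just Lemma~\ref{lem:ShapeTheorem} but precisely Proposition~\ref{pro:VarianceUpperBoundRephased}, which handles the boundary via the half-quadrant comparison. This is used \emph{inside} the single-slice argument (to bound the events $D_n,E_n$ and the probability of $B$), not merely ``for the choice of $c$''.

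Second, your sentence ``two such heavy crossing paths \ldots\ must intersect by a planarity/ordering argument'' does not hold. Ordering only gives $\pi^{\ast}_{a_1,a_4}\succeq_{\g}\pi^{\ast}_{a_2,a_3}$; it does not force them to meet, and the existence of a rerouted heavy path from $a_2$ to $a_3$ sharing a segment with $\pi^{\ast}_{a_1,a_4}$ does not by itself say anything about the \emph{geodesic} $\pi^{\ast}_{a_2,a_3}$. The missing ingredient is the BKR inequality (Lemma~\ref{lem:BKinequality}): the paper defines events $A=\{G(a_1,a_4)\text{ large}\}$ and $B=\{G(a_2,a_3)\text{ large}\}$, shows via rerouting (your idea) together with FKG and Proposition~\ref{pro:VarianceUpperBoundRephased} that $\P(B\mid A)\geq 3/4$, and then uses $\P(A\circ B\mid A)\leq \P(B)\leq 1/4$ from BKR and the upper tail. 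On $A\cap B\cap (A\circ B)^c$ the geodesics $\pi^{\ast}_{a_1,a_4}$ and $\pi^{\ast}_{a_2,a_3}$ cannot be disjoint, giving intersection with probability at least $\frac{1}{2}\P(A)>0$. Without this disjoint-occurrence step your argument does not close.
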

 
 Assuming that Proposition \ref{pro:CoalescenceTimes} holds, we have all the tools to show Theorem \ref{thm:MaxCurrent}.
 
 \begin{proof}[Proof of Theorem \ref{thm:MaxCurrent}] By Lemma \ref{lem:MixingBoundByExiting}, we can bound the mixing time by the exit  time of the second class particles in the disagreement process between $\mathbf{1}$ and $\mathbf{0}$. Instead of bounding the exit time of all second class particles at once, we instead consider $N$ disagreement processes $(\xi^{i}_t)_{t \geq 0}$ with respect to the step initial configurations $\eta_0^{i}$ and $\eta_0^{i-1}$ in $\Omega_N$ for $i \in[N]$, recalling $[N]=\{1,\dots,N\}$, where we define
 \begin{equation*}
 \eta_0^{i}(j) := \mathds{1}_{j > i}
 \end{equation*} for all $j\in [N]$. Note that $\eta_0^{0}=\mathbf{1}$ and $\eta_0^{N}=\mathbf{0}$. Fix some $\varepsilon=\varepsilon(N)>0$. Using the triangle inequality for the total-variation distance in \eqref{def:TVDistance}, it suffices to show that the exit time $\tau_{\textup{ex}}^{i}$ of the second class particles in $(\xi^{i}_t)_{t \geq 0}$ satisfies
 \begin{equation}\label{eq:ManyDisagreementProcesses}
 \P\left( \tau_{\textup{ex}}^{i} \leq s \right) \geq 1- \frac{\varepsilon}{N}
 \end{equation}
 for all $i \in [N]$ in order to conclude that $t^N_\mix(\varepsilon) \leq s$ for the mixing time of the TASEP with open boundaries.  Note that each of the disagreement processes $(\xi^{i}_t)_{t \geq 0}$ contains initially exactly one second class particle. Let $\gamma_{0,i}$ denote the initial growth interface with respect to $(\xi^{i}_t)_{t \geq 0}$.  By combining Lemma \ref{lem:SecondClassCompetition} and Lemma \ref{lem:Coalescence}, together with Proposition \ref{pro:CoalescenceTimes} for $m=2C^{-1}\log(N)$, and thus $k=2 C^{-1} c \log(N) N^{3/2}$, we see that
 \begin{equation}\label{eq:ReductionMixingTheorem}
\max_{i \in [N]} \P\left( \tau_{\textup{ex}}^{i} \leq 5k \right) \geq 1 -  \frac{1}{N^2} - N \max_{i \in [N]}\P\left(  G(\gamma_{0,i}, \mathbb{L}_{N+k}) \geq 5k \right)  
 \end{equation} for all $N$ large enough, where $c$ and $C$ are taken from Proposition \ref{pro:CoalescenceTimes}.  We claim that the right-hand side in \eqref{eq:ReductionMixingTheorem} must be larger than $1-2N^{-2}$ for sufficiently large $N$. To see this, we dominate the environment with respect to $p$ with a homogeneous environment on $\Z^2$ for $p \equiv 2$. Let $G_1(\cdot,\cdot)$ and $G_2(\cdot,\cdot)$ denote the last passage times with respect to $p \equiv 1$ and $p \equiv 2$, and note that $G_2(x,y)$ has the same law as $2G_1(x,y)$ for all $y \succeq x$. By Lemma \ref{lem:ShapeTheorem}, we get 
 \begin{align*}
 \P( G_2(x,y) &> 2 \E[ G_2(x,y)]\text{ for all } x\in \gamma_{0,i} \text{ and } y\in \mathbb{L}_{N+k} ) \\
 &=  \P( G_1(x,y) > 2 \E[ G_1(x,y)]\text{ for all } x\in \gamma_{0,i} \text{ and } y\in \mathbb{L}_{N+k} ) \leq N^2 \exp(-N^{1/3}) \, .
\end{align*}  for all $N$ sufficiently large.
 Using the well-known fact that $t^N_{\textup{mix}}(\varepsilon)$ is for fixed $N$ monotone decreasing in $\varepsilon>0$, we conclude the upper bound on $t^N_{\textup{mix}}(\varepsilon)$ by  \eqref{eq:ManyDisagreementProcesses} with $\varepsilon=2N^{-1}$.
 \end{proof}

\subsection{Coalescence times of geodesics}\label{sec:Coalescence}

In order to show Proposition \ref{pro:CoalescenceTimes}, we use a similar strategy as Dey et al.\ in Section 6 of \cite{DJP:CriticalStripe} for Poissonian LPP on a strip in the plane. We first give a lower bound on the probability that the last passage time between $\mathbb{L}_N$ and $\mathbb{L}_{N+k}$ is large when $k$ is of order $N^{3/2}$; see Lemma \ref{lem:LowerTails}. We then show that with high probability, there are no two disjoint  paths with large passage times. We then argue that with sufficiently large probability, we can construct a large path between any two points of $\mathbb{L}_N$ and $\mathbb{L}_{N+k}$, letting us conclude that all the corresponding geodesics must in fact intersect with positive probability. We iterate this argument along multiples of $k$ to conclude. \\

We start with a bound on the probability to see a large last passage time in $R_N^{nN^{3/2}}$. In the following, we consider an environment $(\omega_x^p)_{x \in \Z^2}$ for $p$ from \eqref{def:EnvironmentStripe} for some $\alpha,\beta \geq \frac{1}{2}$ and $N\in \N$, starting from  the growth interface corresponding  to the all empty configuration $\mathbf{0}$. 

\begin{lemma}\label{lem:LowerTails} There exist  $\tilde{c} >0$ and $\tilde{N}_0 \in \N$ such that for all $N\geq \tilde{N}_0$, and all $n \in \N$ and $\theta \geq 1$, the last passage time on the strip satisfies
\begin{equation}\label{eq:LowerBoundLargeGeodesic}
\P\left( G(\mathbb{L}_N, \mathbb{L}_{N+n N^{3/2}} ) > 2 n N^{\frac{3}{2}} +  \theta n \sqrt{N} \right)  \geq  \exp(-\tilde{c} n \theta^{3/2} ) \, .
\end{equation}
\end{lemma}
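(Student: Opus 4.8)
The plan is to reduce the bound, by a superadditivity argument, to the single‑block case $n=1$, and then to obtain that case from moderate‑deviation estimates for last passage times in the critical strip. \textbf{Reduction to one block.} Partition $\mathbb{L}_N,\dots,\mathbb{L}_{N+nN^{3/2}}$ into $n$ consecutive blocks, the $j$‑th spanning $\mathbb{L}_{N+(j-1)N^{3/2}},\dots,\mathbb{L}_{N+jN^{3/2}}$ (the rounding of $N^{3/2}$ is immaterial). For $j\in\{0,\dots,n\}$ let $v_j$ be the midpoint of $\mathbb{L}_{N+jN^{3/2}}$, so $v_j-v_{j-1}=(N^{3/2}/2,\,N^{3/2}/2)$, and set
\[
H_j \ :=\ G_p(v_{j-1},v_j)-\omega^p_{v_{j-1}}-\omega^p_{v_j},
\]
the maximal weight of the \emph{interior} of an up‑right path from $v_{j-1}$ to $v_j$. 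Since $H_j$ depends only on the weights on the anti‑diagonals strictly between $\mathbb{L}_{N+(j-1)N^{3/2}}$ and $\mathbb{L}_{N+jN^{3/2}}$, the variables $H_1,\dots,H_n$ are independent; and since $p$ from \eqref{def:EnvironmentStripe} is invariant under the diagonal shift $(1,1)$ away from the corner of $\mathcal{S}(p)$, they are identically distributed. Concatenating the geodesics $v_{j-1}\to v_j$ produces an admissible path, so $G_p(\mathbb{L}_N,\mathbb{L}_{N+nN^{3/2}})\ge\sum_{j=1}^nG_p(v_{j-1},v_j)-\sum_{j=1}^{n-1}\omega^p_{v_j}\ge\sum_{j=1}^nH_j$. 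Hence it suffices to show that for some $c_1>0$, all $N$ large and all $\theta\ge1$,
\[
\P\big(H_1>2N^{3/2}+\theta\sqrt N\big)\ \ge\ \exp\!\big(-c_1\theta^{3/2}\big),
\]
because then $\P\big(G_p(\mathbb{L}_N,\mathbb{L}_{N+nN^{3/2}})>2nN^{3/2}+\theta n\sqrt N\big)\ge\P\big(H_1>2N^{3/2}+\theta\sqrt N\big)^n\ge\exp(-c_1n\theta^{3/2})$, which is \eqref{eq:LowerBoundLargeGeodesic}.

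\textbf{Comparison with the homogeneous critical strip.} Because $\alpha,\beta\ge\tfrac12$ forces $\alpha^{-1},\beta^{-1}\ge\tfrac12$, the endpoints $v_0,v_1$ lie in the bulk, where $p\equiv1$. Restricting the maximum in the definition of $H_1$ to paths staying in the bulk yields $H_1\ge\widehat H$, where $\widehat H$ is a last passage time in a homogeneous (i.i.d.\ Exponential‑$1$) strip of width $\asymp N$ over diagonal length $\asymp N^{3/2}$ --- precisely the \emph{critical} strip of Dey--Joseph--Peled \cite{DJP:CriticalStripe} --- and, importantly, $\widehat H$ no longer depends on $\alpha,\beta$, so it suffices to treat it once. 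Using \eqref{eq:ExpectationShapeTheorem} for the upper bound and critical‑strip estimates from \cite{DJP:CriticalStripe} for the matching lower bound, one gets $\E[\widehat H]=2N^{3/2}+O(\sqrt N)$; thus for a universal constant $C$ one has $\{\widehat H>2N^{3/2}+\theta\sqrt N\}\supseteq\{\widehat H-\E[\widehat H]>(\theta+C)\sqrt N\}$.

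\textbf{The moderate‑deviation lower bound.} It then remains to prove, for the critical‑strip last passage time $\widehat H$, the upper‑tail lower bound $\P\big(\widehat H-\E[\widehat H]>s\sqrt N\big)\ge\exp(-c_2s^{3/2})$ for every $s\ge1$ and $N$ large; inserting $s=\theta+C\le(C+1)\theta$ then closes the argument. Here the fluctuation scale is $N^{1/2}=(N^{3/2})^{1/3}$ while the mean is of order $N^{3/2}$, so throughout the moderate‑deviation range $1\le s\lesssim N$ (deviations up to the order of the mean) this is the critical‑strip, exponential‑weight counterpart of the classical lower bound on the upper tail of last passage times. I would prove it by combining the path‑decomposition method of \cite{DJP:CriticalStripe} with a moderate‑deviation lower bound for (sub)square last passage times of the type in \cite{BG:TimeCorrelation,BGZ:TemporalCorrelation} --- roughly, forcing a favourable fluctuation on a sub‑block of length $\asymp N^{3/2}$ straddling the typical corridor of the geodesic, and invoking the ordering of geodesics (Lemma \ref{lem:OrderingGeodesics}) so that the gain is inherited by the line‑to‑line passage time; for $s\gtrsim N$ a single‑weight bound already suffices.

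\textbf{Where the difficulty lies.} The crux is the last step: a \emph{lower} bound on an \emph{upper} tail of last passage times is appreciably more delicate than the two‑sided upper bounds recorded in Lemma \ref{lem:ShapeTheorem}, and it has to be carried out at the critical scale, where the transversal fluctuations of geodesics are comparable to the strip width. In particular one cannot reduce to square or thin‑rectangle last passage percolation without losing a power of $N$ in the exponent: chopping a block of length $N^{3/2}$ into $\asymp\sqrt N$ square windows and demanding each beat its own mean costs $\exp(-\Theta(\sqrt N))$, whereas forcing the deviation on a shorter sub‑block introduces a spurious power of $N$; the critical scaling must be exploited directly.
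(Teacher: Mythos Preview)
Your reduction by concatenating midpoint-to-midpoint geodesics is exactly what the paper does, including the independence observation. The difference lies entirely in the one-block step: the paper simply invokes Lemma~B.2 of \cite{BG:TimeCorrelation}, which directly supplies the upper-tail lower bound
\[
\P\big(G(x_{\textup{mid}}^N,\,x_{\textup{mid}}^{N+N^{3/2}-1})>2N^{3/2}+\theta\sqrt N\big)\ \ge\ \exp(-\tilde c\,\theta^{3/2}),
\]
and remarks that monotonicity in $\alpha,\beta$ extends this to all boundary parameters. Your plan to rederive this via the critical-strip machinery of \cite{DJP:CriticalStripe} is sound in spirit but unnecessary: the needed input is precisely the known moderate-deviation lower bound for point-to-point exponential LPP, and the construction witnessing that upper tail can be confined to a corridor of width $o(N)$ about the diagonal, so the strip constraint is harmless once one restricts to bulk paths. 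In short, your ``Where the difficulty lies'' paragraph correctly identifies the only nontrivial ingredient, but this ingredient is available off the shelf in exactly the reference you mention---the paper resolves it by citation rather than by argument.

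One small correction: the sentence ``Because $\alpha,\beta\ge\tfrac12$ forces $\alpha^{-1},\beta^{-1}\ge\tfrac12$, the endpoints $v_0,v_1$ lie in the bulk'' is muddled. The midpoints lie in the interior of the strip simply because they are the centres of their anti-diagonals, irrespective of $\alpha,\beta$; the values $\alpha^{-1},\beta^{-1}$ govern only the boundary weights, not where the midpoints sit.
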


\begin{proof} For $n \in \N$, let $x_{\textup{mid}}^n:= (\lfloor n/2+ N/4\rfloor,\lfloor n/2- N/4\rfloor)$ denote the midpoint of the anti-diagonal $\mathbb{L}_n$. By Lemma B.2 in \cite{BG:TimeCorrelation}, and monotonicity in $\alpha$ and $\beta$, we have that
\begin{equation}\label{eq:MidToMid}
\P\left( G(x_{\textup{mid}}^N, x_{\textup{mid}}^{N+N^{3/2}-1} ) > 2 N^{\frac{3}{2}} + \theta \sqrt{N} \right)  \geq  \exp(-\tilde{c} \theta^{3/2} ) 
\end{equation}
holds for all $\alpha,\beta> 0$, $\theta \geq 1$, and some constant $\tilde{c}$, provided that $N$ is sufficiently large. We concatenate the geodesics along the midpoints of the anti-diagonals $\mathbb{L}_{N+i N^{3/2}}$ for all $i \in \{0,\dots,n\}$ to see that the left-hand side in \eqref{eq:LowerBoundLargeGeodesic} is bounded from below by
\begin{equation*}
\prod_{i=1}^{n}  \P\left( G\Big(x_{\textup{mid}}^{N+(i-1)N^{3/2}}, x_{\textup{mid}}^{N+iN^{3/2}-1} \Big) > 2 N^{\frac{3}{2}} + \theta \sqrt{N} \right) \, .
\end{equation*}
Together with \eqref{eq:MidToMid}, this yields the claim.
\end{proof}

For the next statement, which states bounds on the fluctuations of the passage times along the geodesics between $\mathbb{L}_N$ and $\mathbb{L}_{N+\theta^{-1}N^{3/2}}$ for some   $\theta \geq 1$, we let for all $n,k \geq N$
\begin{align*}
G_{\min}(\mathbb{L}_n, \mathbb{L}_{n+k}) := \min_{x \in \mathbb{L}_n, y \in \mathbb{L}_{n+k}} G(x,y)
\end{align*} denote the minimal last passage time connecting two points on the anti-diagonals.
\begin{proposition}\label{pro:VarianceUpperBoundRephased} There exist constants  $c_1,c_2,\tilde{\theta}>0$ and $N^{\prime}_0 \in \N$ such that for all $\theta \geq \tilde{\theta}$, and all $n\geq N \geq N^{\prime}_0$, we have that
\begin{align}
\label{eq:UpperBoundFluctuations}\P\big( G(\mathbb{L}_n, \mathbb{L}_{n+\theta^{-1}N^{3/2}}) - 2\theta^{-1}N^{\frac{3}{2}}\geq \theta \sqrt{N} \big) &\leq \exp(-c_1 \theta^{\frac{3}{2}}) \\
\label{eq:LowerBoundFluctuations}\P\big( G_{\min}(\mathbb{L}_n, \mathbb{L}_{n+\theta^{-1}N^{3/2}})  - 2\theta^{-1}N^{\frac{3}{2}} \leq  -  2\theta \sqrt{N} \big) &\leq  \exp(- c_2 \theta) \, .
\end{align}
\end{proposition}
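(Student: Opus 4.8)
The plan is to prove the two bounds in Proposition~\ref{pro:VarianceUpperBoundRephased} separately, treating the upper tail \eqref{eq:UpperBoundFluctuations} first and then the lower tail \eqref{eq:LowerBoundFluctuations}, and in both cases reducing everything, as much as possible, to the homogeneous estimate in Lemma~\ref{lem:ShapeTheorem} by inserting the geodesic into a box fully contained in the strip, at the cost of controlling the contribution of the two boundary diagonals (where $p=\alpha^{-1}$ or $p=\beta^{-1}$). Since the anti-diagonals $\mathbb{L}_n,\dots,\mathbb{L}_{n+\theta^{-1}N^{3/2}}$ span a rectangle $R$ of length $\theta^{-1}N^{3/2}$ inside a strip of width $N$, and since $\theta\geq\tilde\theta$ is large, this rectangle is ``long and thin'': its aspect ratio is of order $\theta^{-1}\sqrt N$, so a typical geodesic crossing it makes order $N$ horizontal steps and order $\theta^{-1}N^{3/2}$ vertical steps (or vice versa). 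The first thing I would do is set up coordinates so that a crossing geodesic is split by its intersections with the anti-diagonals $\mathbb{L}_{n+iN}$, $i=0,1,\dots,\theta^{-1}\sqrt N$, into order $\theta^{-1}\sqrt N$ pieces, each of which lives inside a roughly $N\times N$ sub-box; within such a sub-box the environment is homogeneous except along at most one stretch of the left boundary (rate $\alpha^{-1}$) and one stretch of the right boundary (rate $\beta^{-1}$), and a geodesic spends at most $N$ cells on each boundary.

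For the upper bound \eqref{eq:UpperBoundFluctuations}: I would dominate each boundary-diagonal weight $\omega^p_x$ with $x$ on the $\alpha^{-1}$- or $\beta^{-1}$-diagonal stochastically from above; the point is that in each $N\times N$ sub-box a geodesic can collect at most $N$ such weights, contributing a sum of at most $N$ independent Exponential-$\alpha^{-1}$ (resp.\ Exponential-$\beta^{-1}$) variables, which concentrates around a value of order $N$ with Gaussian-type fluctuations of order $\sqrt N$ on the scale of a single box, hence order $\theta^{-1}N$ total — negligible compared to the main term $2\theta^{-1}N^{3/2}$ and well within the allowed slack $\theta\sqrt N$ once summed with a union bound over the (polynomially many) possible boundary-entry and boundary-exit points. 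For the homogeneous part, I would bound $G(\mathbb{L}_n,\mathbb{L}_{n+\theta^{-1}N^{3/2}})$ above by a maximum of $G_1$-type last passage times between pairs of points on the two anti-diagonals, use the upper tail in \eqref{eq:StatementShapeTheorem} for a box of ``diagonal length'' $\theta^{-1}N^{3/2}$ and transversal width $\lesssim N$ (so $|x|_1\asymp\theta^{-1}N^{3/2}$ and the fluctuation scale $|x|_1^{1/3}\asymp\theta^{-1/3}N^{1/2}$), and choose the deviation parameter so that $\theta\sqrt N$ equals a multiple $\asymp\theta^{4/3}$ of the fluctuation scale, yielding a bound $\bar c\exp(-c\min(\theta^2,\ldots))$; a union bound over the order $N$ entry/exit points on each anti-diagonal only costs a polynomial factor, absorbed into $\exp(-c_1\theta^{3/2})$ for $\theta$ large. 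The geometric restriction that crossing geodesics are essentially forced to have bounded aspect ratio inside each sub-box (so that Lemma~\ref{lem:ShapeTheorem}'s hypothesis $\varepsilon<x_1/x_2<\varepsilon^{-1}$ applies) should be handled by the piecewise decomposition above together with the observation that off-diagonal excursions of the maximizer are themselves controlled by the transversal-fluctuation content of the same shape estimate; alternatively one uses \eqref{eq:StatementShapeTheorem2}, which has no aspect-ratio restriction, at the modest price of a weaker $\exp(-c\theta)$ rather than $\exp(-c\theta^{3/2})$ — acceptable for the upper bound if one is willing to state it with $c_1\theta$, but since the proposition asks for $\theta^{3/2}$ I would prefer the piecewise-homogeneous route.

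For the lower bound \eqref{eq:LowerBoundFluctuations}, the event is that \emph{some} pair of endpoints on $\mathbb{L}_n$ and $\mathbb{L}_{n+\theta^{-1}N^{3/2}}$ has last passage time at least $2\theta\sqrt N$ below the deterministic value; this is where a union bound over the order $N^2$ pairs of endpoints is the enemy, so I would instead exploit monotonicity and the structure of the strip: the minimum over endpoints is controlled by a single well-chosen ``corner-to-corner'' last passage time, because a geodesic from the extreme corner $a_1\in\mathbb{L}_n$ to the opposite extreme corner $a_4\in\mathbb{L}_{n+\theta^{-1}N^{3/2}}$ is forced by the geometry of the strip to cross essentially the whole length, and its passage time lower-bounds (up to the boundary contributions, which now help rather than hurt, since $\alpha^{-1},\beta^{-1}\ge 1$) the passage time between any intermediate pair — more precisely, $G_{\min}$ is bounded below by $G$ between two fixed points obtained by shifting the corners inward by $O(N)$, and then by superadditivity the homogeneous lower-tail estimate \eqref{eq:StatementShapeTheorem} applied to order $\theta^{-1}\sqrt N$ consecutive $N\times N$ sub-boxes gives a sum of order $\theta^{-1}\sqrt N$ i.i.d.-like terms, each with a sub-box-scale fluctuation of order $N^{1/2}$ and a lower tail that is exponential; concentrating this sum gives a deviation of order $\theta\sqrt N$ with probability at most $\exp(-c_2\theta)$, which is exactly the claimed rate. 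I expect the main obstacle to be precisely this lower-tail step: controlling $G_{\min}$ over all endpoint pairs without paying a prohibitive union bound, which forces one to argue via a single boundary-anchored geodesic and to carefully check that the $\alpha^{-1},\beta^{-1}\ge 1$ boundary weights only increase passage times (so that replacing the true minimizing path by a worst-case near-boundary path is legitimate), together with the bookkeeping that the total boundary contribution, concentrated around a value of order $\theta^{-1}N$, does not by itself create a downward deviation exceeding $\theta\sqrt N$ — which is true since its fluctuations are only of order $\theta^{-1}N\cdot(\theta^{-1}\sqrt N)^{-1/2}=\theta^{-1/2}N^{3/4}\ll\theta\sqrt N$ for $\theta$ in the relevant range $\theta\le N^{1/2}$, but needs to be stated carefully. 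Throughout, I would keep $n$ a free parameter $\ge N$ since by translation invariance of the strip in the diagonal direction nothing depends on $n$ beyond $n\ge N$, so all estimates reduce to the case $n=N$.
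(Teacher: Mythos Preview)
Your lower-tail argument for \eqref{eq:LowerBoundFluctuations} is in the right spirit but vaguer than the paper's: the paper simply computes $\E[G_1(x,y)]\ge 2\theta^{-1}N^{3/2}-(\theta+o(\theta))\sqrt N$ for \emph{every} pair $x\in\mathbb{L}_n,\ y\in\mathbb{L}_{n+\theta^{-1}N^{3/2}}$ and then invokes Theorem~4.2(iii) of \cite{BGZ:TemporalCorrelation}, which gives a lower tail for last passage times restricted to a parallelogram (so the boundary rows play no role). Your claim that the corner-to-corner passage time lower-bounds $G_{\min}$ is not correct as stated --- there is no monotonicity that forces $G(a_1,a_4)\le G(x,y)$ for intermediate $x,y$ --- but the fix is easy once you realise one can bound each $G(x,y)$ from below by a geodesic constrained to a thin parallelogram inside the strip, which is exactly what the cited result provides.

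The real gap is in your upper-tail argument for \eqref{eq:UpperBoundFluctuations}, and it is specific to the regime $\alpha\in[\tfrac12,1)$ or $\beta\in[\tfrac12,1)$. There the boundary weights have mean $\alpha^{-1},\beta^{-1}\in(1,2]$, strictly \emph{larger} than the interior mean $1$, so a maximising path may profit from clinging to the boundary. Your proposal to ``bound the boundary contribution separately'' fails both arithmetically and structurally. Arithmetically: in $R=R_N^{\theta^{-1}N^{3/2}}$ a path can visit of order $\theta^{-1}N^{3/2}$ boundary sites (not $N$ per box summed to $\theta^{-1}N$; there are $\theta^{-1}\sqrt N$ boxes each contributing up to $N$ boundary weights, giving $\theta^{-1}N^{3/2}$ in total), and this is of the same order as the main term and far exceeds the slack $\theta\sqrt N$ whenever $\theta\le\sqrt N$. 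Structurally: you cannot decouple the boundary bonus from the interior optimisation, because the maximising path in the inhomogeneous environment need not be close to the homogeneous maximiser, so an inequality of the form $G_p\le G_1+\text{(boundary bonus)}$ is not available.

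The paper handles this with a genuinely different device: a decomposition of every crossing path into three types of sub-paths --- $\Pi_1$ (interior excursions), $\Pi_2$ (crossings from $R_+$ to $R_-$), and $\Pi_3$ (excursions that start and end on the same boundary side). Paths in $\Pi_1$ and $\Pi_2$ are controlled by the homogeneous estimate (Lemma~\ref{lem:LastPassagePercolationGeneral}), and crucially $\Pi_2$ paths carry a \emph{deficit} of order $N^2/|y-x|_1$ relative to $2|y-x|_1$ because they cross the full width. The delicate piece is $\Pi_3$: here the paper dominates by last passage percolation on the half-quadrant with diagonal rate $2$, and invokes the nontrivial fact (via \cite{BBCS:Halfspace} and the stationary moderate deviations of \cite{B:ModerateDeviationsStationary}) that $G_{\textup{half}}(0,n)=4n+O(n^{1/3})$, i.e.\ the rate-$2$ diagonal does \emph{not} raise the leading order. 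The combinatorial identity $|\Pi_3^\pi|\le|\Pi_2^\pi|+1$ then lets the $\Pi_2$ deficits absorb the $\Pi_3$ surpluses. Your approach essentially works when $\alpha,\beta\ge 1$ (then $p\le 1$ and $G_p\le G_1$), which the paper also notes is the easy case; for $\alpha$ or $\beta$ below $1$ you are missing precisely the half-quadrant input and the $\Pi_2/\Pi_3$ balancing mechanism.
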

%In words, Proposition \ref{pro:VarianceUpperBoundRephased} says that with high probability, all geodesics connecting two sites on $\mathbb{L}_n$ and $\mathbb{L}_{n+N^{3/2}})$ have weight $4N^{3/2}$ up to corrections of order $\sqrt{N}$. The second statement \eqref{eq:LowerBoundFluctuations} is immediate from Proposition A.4 in \cite{BG:TimeCorrelation} and a monotonicity argument, see also Proposition 12.2.\ in \cite{BSV:SlowBond} for the original version for Poissonian environments. The first statement \eqref{eq:UpperBoundFluctuations} on the maximal last passage time between the two anti-diagonals requires more work, and is deferred to Section \ref{sec:VarianceUpperBound}. 
Since the proof of Proposition \ref{pro:VarianceUpperBoundRephased} requires a bit of setup, it is deferred to Section \ref{sec:VarianceUpperBound}. 

\begin{remark}\label{rem:VarianceCurrent}
Using Lemma \ref{lem:CurrentVsGeodesic}, the upper bound on $\mathcal{J}^N_{t_N}$ in Proposition \ref{pro:VarianceCurrent} follows from Theorem 4.2 in \cite{BGZ:TemporalCorrelation}. The lower bound follows from \eqref{eq:UpperBoundFluctuations}, using that 
\begin{equation}
\P\big( G(1,N^{\frac{3}{2}}) \geq  s \big) \leq n \P\big( G(1,n^{-1}N^{\frac{3}{2}}) \geq  n^{-1} s \big)
\end{equation}  for all $n \in \N$ and $s \geq 0$.
\end{remark}

In order to show Proposition \ref{pro:CoalescenceTimes}, we require the following bound on the intersection of geodesics. For $y \succeq x$ and $\tilde{y} \succeq \tilde{x}$, and fixed times $t,\tilde{t}>0$, we consider the events 
\begin{align*}
A_{x,y} &:= \{ G_{x,y} > t \} \quad \text{ and } \quad 
B_{\tilde{x},\tilde{y}} := \{ G_{\tilde{x},\tilde{y}}>\tilde{t}\} \, .
\end{align*}
We define the
\textbf{disjoint occurrence} of $A_{x,y}$ and $B_{\tilde{x},\tilde{y}}$ as the event 
\begin{align*}
A_{x,y} \circ B_{\tilde{x},\tilde{y}} := \big\{  \exists  \pi_{x,y} \in \Pi_{x,y} , \,   \pi_{\tilde{x},\tilde{y}} \in \Pi_{\tilde{x},\tilde{y}} \colon G(x,y) \geq t  \, \wedge \,  G(\tilde{x},\tilde{y}) \geq \tilde{t} \, \wedge \,  \pi_{x,y} \cap \pi_{\tilde{x},\tilde{y}}  = \emptyset \big\} \, .
\end{align*} In words, the event $A_{x,y}\circ B_{\tilde{x},\tilde{y}}$ occurs when we find a lattice path $\pi_{\tilde{x},\tilde{y}}$ from $\tilde{x}$ to $\tilde{y}$ of weight at least $\tilde{t}$, as well as a lattice path $\pi_{x,y}$ from $x$ to $y$ of weight at least $t$ such that $\pi_{x,y}$ and $\pi_{\tilde{x},\tilde{y}}$ do not intersect. In next lemma follows from the BKR inequality; see Theorem 7 in~\cite{AGH:BKRgeneral}.

\begin{lemma}\label{lem:BKinequality} For all $y \succeq x$ and $\tilde{y} \succeq \tilde{x}$, and all $t,\tilde{t}>0$ in the definition of $A_{x,y}$ and $B_{\tilde{x},\tilde{y}}$
\begin{equation}
\P(A_{x,y} \circ B_{\tilde{x},\tilde{y}}) \leq \P(A_{x,y})\P(B_{\tilde{x},\tilde{y}}) \, .
\end{equation}
\end{lemma}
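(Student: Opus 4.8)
The statement to prove is Lemma \ref{lem:BKinequality}, which asserts the BKR-type inequality $\P(A_{x,y} \circ B_{\tilde{x},\tilde{y}}) \leq \P(A_{x,y})\P(B_{\tilde{x},\tilde{y}})$ for the disjoint occurrence of two "large passage time" events in the exponential corner growth environment on the strip.

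The plan is to deduce this directly from the abstract BKR inequality for product spaces with general (not necessarily discrete) marginals, as stated in Theorem 7 of \cite{AGH:BKRgeneral}. First I would note that the environment $(\omega^p_z)_{z\in\Z^2}$ is a family of independent random variables indexed by the countable set $\Z^2$, so the underlying probability space is a product space $\prod_{z} (\bR_{\geq 0}, \nu_z)$ where $\nu_z$ is the law of $\omega^p_z$ (an exponential distribution, or a point mass at $0$ when $p(z)=0$). The events $A_{x,y}=\{G(x,y)>t\}$ and $B_{\tilde x,\tilde y}=\{G(\tilde x,\tilde y)>\tilde t\}$ each depend only on the weights along lattice paths in the relevant finite rectangles, so they are measurable with respect to finitely many coordinates — this lets us work on a finite product and apply the finite-dimensional BKR theorem (or invoke the version in \cite{AGH:BKRgeneral} that already handles general product spaces directly).

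The key step is to check that $A_{x,y}\circ B_{\tilde x,\tilde y}$, as defined in the excerpt, really is the "disjoint occurrence" of $A_{x,y}$ and $B_{\tilde x,\tilde y}$ in the precise sense required by \cite{AGH:BKRgeneral}: namely that there exist two disjoint subsets $S, \tilde S \subseteq \Z^2$ of coordinates such that on $S$ the configuration forces $A_{x,y}$ (i.e. there is an up-right path $\pi_{x,y}$ supported on $S$ with $G_p(\pi_{x,y})\geq t$), and on $\tilde S$ it forces $B_{\tilde x,\tilde y}$. The definition in the paper phrases this via the existence of two vertex-disjoint lattice paths $\pi_{x,y}, \pi_{\tilde x,\tilde y}$ carrying enough weight; taking $S$ to be the vertex set of $\pi_{x,y}$ and $\tilde S$ that of $\pi_{\tilde x,\tilde y}$ gives exactly the disjoint witnesses BKR requires, since both $A_{x,y}$ and $B_{\tilde x,\tilde y}$ are increasing events in the weights and a path with weight $\geq t$ (resp. $\geq \tilde t$) certifies $G(x,y)\geq t$ (resp. $G(\tilde x,\tilde y)\geq \tilde t$). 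Then the abstract BKR inequality yields $\P(A_{x,y}\circ B_{\tilde x,\tilde y})\leq \P(A_{x,y})\P(B_{\tilde x,\tilde y})$ immediately.

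I expect the only genuinely delicate point — and hence the "main obstacle" — to be the verification that the continuous-weight setting is covered: the classical van den Berg–Kesten inequality is stated for finite product spaces, and one needs the extension (due to van den Berg–Fiebig, Arratia–Garibaldi–Hurley, and others, cf. \cite{AGH:BKRgeneral}) to products of arbitrary probability spaces, applied here to exponential laws. Once that citation is in place the argument is essentially a bookkeeping check that our combinatorial definition of $\circ$ matches theirs, together with the observation that $G_p(\pi)$ is monotone in the weights and that only finitely many coordinates are relevant. I would therefore keep the proof short: cite \cite{AGH:BKRgeneral} for the general-product BKR inequality, identify $S$ and $\tilde S$ as the vertex sets of the two disjoint witnessing paths, and conclude.
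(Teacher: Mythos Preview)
Your proposal is correct and matches the paper's approach exactly: the paper simply states that the lemma follows from the BKR inequality, citing Theorem~7 in \cite{AGH:BKRgeneral}, without further detail. Your additional bookkeeping (identifying the disjoint witnessing sets with the vertex sets of the two paths, noting that only finitely many coordinates are involved, and pointing out that the cited reference covers general product spaces) is more than the paper provides and is entirely appropriate.
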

%\begin{proof} Note that for all $\pi \in \Pi_{x,y}$, we have that
%\begin{equation*}
%\P\Big( \max_{\tilde{\pi} \in \Pi_{\tilde{x},\tilde{y}} \colon \tilde{\pi} \cap \pi = \emptyset}G(\tilde{\pi}) \geq \tilde{t} \Big) \leq \P(B_{\tilde{x},\tilde{y}}) 
%\end{equation*} 
%Since for fixed $\pi$, the passage times $G(\pi)$ and $G(\tilde{\pi})$ for some $\tilde{\pi} \cap \pi=\emptyset$ are defined with respect to disjoint parts of the environment, we obtain 
%\begin{align*}
%\P(A_{x,y} \circ B_{\tilde{x},\tilde{y}})  &=  \sum_{\pi \in \Pi_{x,y}}\P\Big(\max_{\tilde{\pi} \in \Pi_{\tilde{x},\tilde{y}} \colon \tilde{\pi} \cap \pi = \emptyset}G(\tilde{\pi}) \geq \tilde{t} \, \text{ and } \, G(\pi)>t \,  \Big| \, \pi^{\ast}_{x,y} =\pi  \Big)\P( \pi^{\ast}_{x,y} =\pi ) \\
%&\leq  \sum_{\pi \in \Pi_{x,y}}\P(B_{\tilde{x},\tilde{y}})\P( A_{x,y}\,  | \, \pi^{\ast}_{x,y} =\pi  )\P( \pi^{\ast}_{x,y} =\pi ) \\
%&= \P(A_{x,y})\P(B_{\tilde{x},\tilde{y}}) 
%\end{align*} which allows us to conclude.
%\end{proof}

Assuming that Proposition \ref{pro:VarianceUpperBoundRephased} holds, we have all the tools to show Proposition \ref{pro:CoalescenceTimes}. %, which in return gives Theorem \ref{thm:MaxCurrent}.

\begin{proof}[Proof of Proposition \ref{pro:CoalescenceTimes}] We claim that it suffices to show \eqref{eq:CoalescenceTimesQuanti} for $m=1$. To see this, note that for $m\geq 2$, we can consider the coalescence event on the left-hand side of \eqref{eq:CoalescenceTimesQuanti} for the anti-diagonals $\mathbb{L}_{N+c(i-1)N^{3/2}}$ and  $\mathbb{L}_{N+ciN^{3/2}-1}$, where $i \in [m]$. Note that these events are independent, and whenever one of these events occurs for a pair of geodesics, we have that by Lemma \ref{lem:OrderingGeodesics}, the geodesics between $\mathbb{L}_{N}$ and $\mathbb{L}_{N+cmN^{3/2}-1}$ must coalesce as well. Fix $m=1$, and let $c > 0$ be some constant, which we will determine later on.
In order to show \eqref{eq:CoalescenceTimesQuanti}, we consider the events
\begin{align*}
A :=& \left\{  G(a_1, a_4)\geq  2 c N^{\frac{3}{2}} +  9c^2\sqrt{N}  \right\} \\
B :=& \left\{  G(a_2, a_3)\geq  2 c N^{\frac{3}{2}} +   c^2  \sqrt{N}  \right\} 
\end{align*} on the rectangle $R_N^{c N^{3/2}}$, where we recall that $a_i$ for $i\in [4]$ denotes the corners of the rectangle $R_N^{c N^{3/2}}$. Further, recall that we denote by $A \circ B$  the event of  disjoint occurrence of $A$ and $B$, and that Lemma \ref{lem:BKinequality}  yields \begin{equation}\label{eq:BKinequality}
\P(A\circ B) \leq \P(A )\P(B)
\end{equation} for all choices of $c>0$. Next, for all $n\in \N$ with $n\geq N$,  we define the events
\begin{align*}
D_{n} :=& \{ G(\mathbb{L}_n, \mathbb{L}_{n+c^{-2}N^{3/2}}) - 2c^{-2}N^{\frac{3}{2}}\leq  c^2 \sqrt{N} \}   \\
E_{n} :=&  \{ G_{\min}(\mathbb{L}_n, \mathbb{L}_{n+c^{-2}N^{3/2}}) - 2c^{-2}N^{\frac{3}{2}}\geq  -2c^2 \sqrt{N}  \}   \, ; 
\end{align*} see also Proposition \ref{pro:VarianceUpperBoundRephased}. Since $A$ and $E_{n}$ are increasing events with respect to the environment $(\omega^p_x)_{x\in \Z^2}$, we apply the FKG inequality to get
\begin{equation*}%\label{eq:FKGforLPP}
\P\big( D_{n} \cap E_{n} \big|  A \big) \geq \P( E_{n} ) - (1-\P( D_{n} ))\P(A)^{-1}
\end{equation*}  for all $c>0$. By Lemma \ref{lem:LowerTails}, we have that for some constant $c_0>0$
\begin{align*}
\P\Big( \min_{x\in \mathbb{L}_{N^{3/2}}} G(a_1,x) -2 N^{3/2} > -c^{-1}_0 \sqrt{N}\Big) \geq c_0 \\ \P\Big( \min_{y\in \mathbb{L}_{(c-1)N^{3/2}}} G(y,a_4) -2 N^{3/2} > - c^{-1}_0 \sqrt{N}\Big) \geq c_0 \, .
\end{align*} Together with Proposition \ref{pro:VarianceUpperBoundRephased} to estimate $G({\mathbb{L}_{N^{3/2}+1},\mathbb{L}_{(c-1)N^{3/2}-1}})$, we see that
\begin{equation}
\P(A) \geq \exp\big(-50\tilde{c} c^{\frac{5}{2}} )
\end{equation} for all $c>0$ sufficiently large.
Using Lemma \ref{lem:LowerTails} and Proposition \ref{pro:VarianceUpperBoundRephased} to bound the probability of the events $D_n$ and $E_n$, respectively, this gives us that
\begin{equation}\label{eq:ConnectionEvents}
\P\big( D_{n} \cap E_{n}  \big|  A \big)  \geq 1-\exp(-c_2 c^2) - \exp\big(-c_1 c^3 + 50\tilde{c} c^{\frac{5}{2}}\big)  \geq \frac{7}{8} 
\end{equation}
for $c$ sufficiently large, and all $n\geq N \geq \max(\tilde{N}_0,N^{\prime}_0)$. When the events in \eqref{eq:ConnectionEvents} hold for a suitable choice of $n$, we want to argue that 
% $n=N$ as well as for $n=N+(C-C^{-2})N^{3/2}-1$, 
we can modify the geodesic between $a_1$ and $a_4$ to obtain a large path between $a_2$ and $a_3$. Formally, let $M= N+ \lfloor (c-c^{-2})N^{3/2} \rfloor -1$. We claim that
\begin{equation}\label{eq:ModifedAgain}
 A \cap D_{N} \cap E_{N} \cap D_{M} \cap E_{M} \subseteq  B
\end{equation} holds for all $c>0$ sufficiently large. 
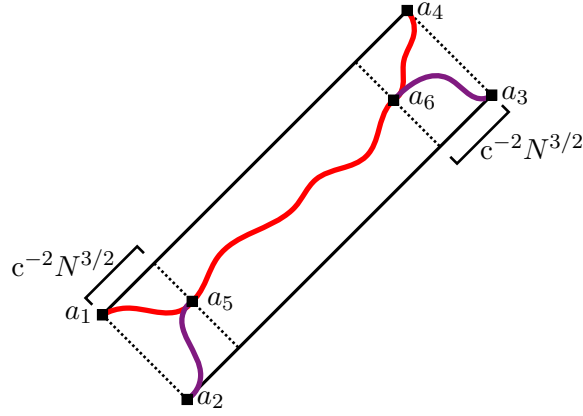
\begin{figure}
\begin{center}
\begin{tikzpicture}[scale=.45]

\draw[densely dotted, line width=1pt] (0,0) -- (2.5,-2.5);

\draw[densely dotted, line width=1pt] (0+9,0+9) -- (2.5+9,-2.5+9);

\draw[densely dotted, line width=1pt] (1.525,1.525) -- (1.525+2.5,1.525-2.5);
\draw[densely dotted, line width=1pt] (9-1.525,9-1.525) -- (9-1.525+2.5,9-1.525-2.5);

   \draw[line width=1.2pt] (0,0) -- (9,9);
   \draw[line width=1.2pt] (2.5,-2.5) -- (11.5,6.5);

\draw[red, line width =2 pt] (0,0) to[curve through={(0.7,0.2) ..(1.5,0.1)..(2.45,0.2) }] (2.65,0.4);

 \draw[red, line width =2 pt] (2.65,0.4) to[curve through={(2.75,0.5).. (5.6-2.2,3.1-1.6+0.2)..(5.6,3.1)..(5.68,3.18) ..(5.6+0.7,3.1+0.9)..(5.6+2.2,3.1+1.6) .. (9-0.5,9-2.75)}] (9-0.4,9-2.65);
\draw[red, line width =2 pt] (9,9) to[curve through={(9+0.2,9-0.7) ..(9-0.1,9-1.5)..(9-0.2,9-2.45) }] (9-0.4,9-2.65);

\draw[darkblue, line width =2 pt] (9+2.5,9-2.5) to[curve through={(9+2.3,9-2.6)..(9+1.2,9-2)..(9-0.2,9-2.45)}] (9-0.4,9-2.65);
\draw[darkblue, line width =2 pt] (2.5,-2.5) to[curve through={(2.5+0.4,-2.5+0.7) ..(2.65-0.2,0.2)}] (2.65,0.4);

		\node[scale=1] (x1) at (-0.7,0){$a_1$} ;
		\node[scale=1] (x2) at (2.5+0.7,-2.5){$a_2$} ;
		\node[scale=1] (x3) at (9+0.7,9){$a_4$} ;
		\node[scale=1] (x4) at (9+2.5+0.7,9-2.5){$a_3$} ;
		
		\draw[line width =1 pt] (-0.3,0.3) -- (-0.5,0.5)   --  (-0.5+1.525,0.5+1.525) --  (-0.3+1.525,0.3+1.525);

		\draw[line width =1 pt] (9+0.3+2.5,9-0.3-2.5) -- (9+0.5+2.5,9-0.5-2.5)   --  (0.5+9+2.5-1.525,-0.5+9-1.525-2.5) --  (0.3+9+2.5-1.525,-0.3+9-1.525-2.5);
		
		\node[scale=1] (x3) at (-0.8+0.75-1.1,0.8+0.75){$\c^{-2}N^{3/2}$} ;
		
			\node[scale=1] (x3) at (12.7,5){$\c^{-2}N^{3/2}$} ;

		%	\node[scale=1] (x3) at (11.5,1){$|a_4-a_1|_1=CN^{3/2}$} ;

%\node[scale=1] (x5) at (9-1.525-0.15-3.5,9-1.525-0.15){$a_4-(C^{-1} N^{3/2}, C^{-1} N^{3/2})$};
%\node[scale=1] (x5) at (1.525-0.15-3.5,1.525-0.15){$a_1+(C^{-1} N^{3/2}, C^{-1} N^{3/2})$};

	\filldraw [fill=black] (2.5-0.15,-2.5-0.15) rectangle (2.5+0.15,-2.5+0.15);   

 	\filldraw [fill=black] (2.5+9-0.15,-2.5+9-0.15) rectangle (2.5+9+0.15,-2.5+9+0.15);     
 	
 %	\visible<2>{
	\filldraw [fill=black] (2.65-0.15,0.4-0.15) rectangle (2.65+0.15,0.4+0.15);    	
	\filldraw [fill=black] (9-0.4-0.15,9-2.65-0.15) rectangle (9-0.4+0.15,9-2.65+0.15);    	 	
	
	\node[scale=1] (x1) at (2.65+0.85,0.4){$a_5$} ;	
    \node[scale=1] (x1) at ((9-0.4+0.85,9-2.65){$a_6$} ;
  % }
 	
	\filldraw [fill=black] (-0.15,-0.15) rectangle (0.15,0.15);   

 	\filldraw [fill=black] (9-0.15,9-0.15) rectangle (9+0.15,9+0.15);     	
 	
	\end{tikzpicture}
	\end{center}
\caption{\label{fig:CoalescenceModified}Visualization of the modified path constructed in the proof of Proposition \ref{pro:CoalescenceTimes}.}		
 \end{figure}
To see this, let $a_5 \in \Z^2$ be the site where $\pi^{\ast}_{a_1,a_4}$ intersects $\mathbb{L}_{N+c^{-2}N^{3/2}}$. Similarly, let 
$a_6 \in \Z^2$ be the site where $\pi^{\ast}_{a_1,a_4}$ intersects $\mathbb{L}_{M}$. We construct a path from $a_2$ to $a_3$ by first following the geodesic from $a_2$ to $a_5$, then the geodesic from $a_5$ to $a_6$ (which agrees with $\pi^{\ast}_{a_1,a_4}$), and then the geodesic from $a_6$ to $a_3$; see Figure \ref{fig:CoalescenceModified} for a visualization. Note that when the events on the left-hand side in \eqref{eq:ModifedAgain} hold, the constructed path ensures that $B$ holds. Thus, using \eqref{eq:ConnectionEvents}, we obtain that
\begin{align}\label{eq:LowerBoundBC}
\P\left( B| A\right) &\geq \P\big( D_{N} \cap E_{N} \cap D_{M} \cap E_{M}  \big|  A \big)  \nonumber \\
&\geq 1 - \P\big( (D_{N} \cap E_{N})^{\textup{c}} \big|  A \big) - \P\big( (D_{M} \cap E_{M})^{\textup{c}}   \big|  A \big)  \geq  \frac{3}{4}  
\end{align} for sufficiently large $c>0$. Note that by \eqref{eq:BKinequality}, and the fact that
\begin{align*}
\P(B) &\leq \P\big( \exists i \in [c^2] \colon G(\mathbb{L}_{N+(i-1) c^{-1} N^{3/2}},\mathbb{L}_{N+i c^{-1} N^{3/2}-1}) \geq 2 N^{\frac{3}{2}} +   c \sqrt{N}  \big) \\
&\leq  c^2 \exp(-c_2c^{\frac{3}{2}})
\end{align*}
by Proposition \ref{pro:VarianceUpperBoundRephased}, we see that
\begin{equation*}
\P\left( A\circ B | A \right) \leq  \P\left( B\right) \leq \frac{1}{4}
\end{equation*} for all $c$ large enough. Together with \eqref{eq:LowerBoundBC} and choosing $c$ large enough, we note that with  probability at least $\frac{1}{2}\P(A)$, the two geodesics $\pi^{\ast}_{a_1,a_4}$ and $\pi^{\ast}_{a_2,a_3}$ must intersect. 
\end{proof}

\subsection{Proof of the fluctuations in Proposition \ref{pro:VarianceUpperBoundRephased}} \label{sec:VarianceUpperBound}
\begin{figure}
\centering
\begin{tikzpicture}[scale=1.1]

%   \draw[line width=1.2pt] (0,0) -- (10,0) -- (10,4) -- (0,4) -- (0,0);
%   
%\draw[nicos-red, line width =1.7 pt] (0,2) to[curve through={(0.4,2.2) .. (0.7,2.7) .. (1.6,3.4) .. (1.9,3.35) .. (2.3,3.7)..(2.65,3.95) }] (2.7,3.96);
%
%\draw[darkblue, line width =1.7 pt] (2.7,3.96) to[curve through={(2.75,3.95) .. (2.95,3.8).. (3.2,3.8) .. (3.45,3.95) }] (3.5,3.96);
%
%
%\draw[nicos-green, line width =1.7 pt] (3.5,3.96) to[curve through={(3.55,3.95).. (3.75,3.8) .. (3.9,3.5) .. (4.15,2.5) .. (4.4,2.4).. (4.8,1.2) .. (5.25,1)..  (5.5,0.05) }] (5.55,0.04);
%
%\draw[darkblue, line width =1.7 pt] (5.55,0.04) to[curve through={(5.6,0.05) .. (5.8,0.2).. (6.2,0.5) .. (6.6,0.15) ..  (7.2,0.3).. (7.5,0.05) .. (7.55,0.04) .. (7.6,0.05) ..(7.9,0.3).. (8.2,0.05) }] (8.25,0.04);
%
%\draw[nicos-red, line width =1.7 pt] (8.25,0.04) to[curve through={(8.3,0.05) .. (8.5,0.2).. (8.9,0.5) ..  (9.2,0.5)..(9.8,0.95)}] (10,1);

   \draw[line width=1.2pt] (0,0) -- (10,0) -- (10,3) -- (0,3) -- (0,0);
   
\draw[nicos-red, line width =2 pt] (0,1) to[curve through={(0.4,1.2) .. (0.7,1.7) .. (1.6,2.4) .. (1.9,2.35) .. (2.3,2.7)..(2.65,2.95) }] (2.7,2.96);

\draw[darkblue, line width =2 pt,densely dotted] (2.7,2.96) to[curve through={(2.75,2.95) .. (2.95,2.8).. (3.2,2.8) .. (3.45,2.95) }] (3.5,2.96);

\draw[deepblue, line width =2 pt] (3.5,2.96) to[curve through={(3.55,2.95).. (3.75,2.8) .. (3.9,2.5) .. (4.15,1.5) ..  .. (5.25,1)..  (5.5,0.05) }] (5.55,0.04);

\draw[darkblue, line width =2 pt, densely dotted] (5.55,0.04) to[curve through={(5.6,0.05) .. (5.8,0.2).. (6.2,0.5) .. (6.6,0.15) ..  (7.2,0.3).. (7.5,0.05) .. (7.55,0.04) .. (7.6,0.05) ..(7.9,0.3).. (8.2,0.05) }] (8.25,0.04);

\draw[nicos-red, line width =2 pt] (8.25,0.04) to[curve through={(8.3,0.05) .. (8.5,0.2).. (8.9,0.5) ..  (9.2,0.5)..(9.8,0.95)}] (10,1);

	\end{tikzpicture}	
\caption{\label{fig:PathDecomposition}Decomposition of the paths used in the proof of Proposition \ref{pro:VarianceUpperBoundRephased} on a strip, rotated by $45^{\circ}$. Paths in $\Pi_1$ are drawn in red, paths in $\Pi_2$ are drawn in blue, and paths in $\Pi_3$ are drawn dotted in purple.}
 \end{figure}
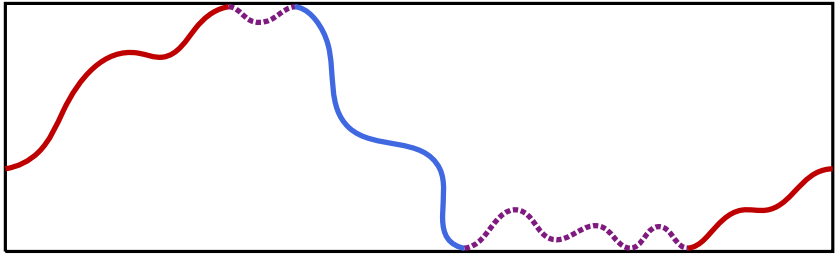

We start with the statement \eqref{eq:LowerBoundFluctuations}. Recall \eqref{eq:ExpectationShapeTheorem} in Lemma \ref{lem:ShapeTheorem} with some constant $C=C(1/2)$. A calculation shows that for any two points $x,y$ with $x\in \mathbb{L}_N$ and $y \in \mathbb{L}_{N+\theta^{-1}N^{3/2}}$, the last passage time $G_1$ from Lemma~\ref{lem:ShapeTheorem} satisfies
\begin{align}\label{eq:ExpectationEstimateLower}
\E[G_1(x,y)] &\geq \Big(\sqrt{\theta^{-1}N^{3/2}  + N/2 } + \sqrt{\theta^{-1}N^{3/2}  - N/2 }\Big)^2 - 2C\theta^{-1/3} N^{\frac{1}{2}}  \\
&\geq 2\theta^{-1} N^{3/2} - (\theta + 2C\theta^{-1/3}) N^{1/2} -  \theta^{2}N^{-1/2} \nonumber 
\end{align} for all $N$ large enough, using the fact that $\sqrt{1-\delta}\leq 1-\delta/2 + \delta^2$ for all $\delta\in [0,1]$. The statement \eqref{eq:LowerBoundFluctuations} is now a consequence of Theorem 4.2 (iii) in \cite{BGZ:TemporalCorrelation}, which covers the special case $\theta=1$, and similarly applies for fixed $\theta>1$ when $N$ is sufficiently large. \\

It remains to show the statement \eqref{eq:UpperBoundFluctuations}. Recall the rectangle $R := R_N^{\theta^{-1}N^{3/2}} \subseteq \Z^2 $ defined in \eqref{def:Rectangle}, and denote by $\partial R$ its boundary. We have the following strategy.
Each lattice path $\pi_{x,y}$ between some $x \in \mathbb{L}_N$ and $y \in \mathbb{L}_{N+\theta^{-1}N^{3/2}}$ is decomposed into subpaths which belong to exactly one of the following three sets $\Pi_1,\Pi_2$ and $\Pi_3$. Paths in $\Pi_1$ have both endpoints in $\partial R$, but do not touch $\partial R$ otherwise. Paths in $\Pi_2$ connect a site in $R_+ := R \cap \{ (n,n) \colon n\in \N\}$ with a site in $R_- := R \cap \{ (n+N,n) \colon n\in \N\}$, and do not touch $\partial R$ otherwise. Paths in $\Pi_3$ connect two sites which are both in $R_+$, respectively both in $R_-$, but do not touch $R_-$, respectively $R_+$, at any point. \\

More precisely, for a lattice path $\pi_{x,y}=(x=z^0,z^1,\dots,z^\ell=y)$ with $\ell=\lfloor\theta^{-1}N^{3/2}\rfloor$, let $\mathcal{I}_{+}$ and $\mathcal{I}_{-}$ denote the sets of indices where $z^i$ is contained in the upper boundary $R_+$, respectively in the lower boundary $R_-$. We apply the following recursive decomposition: \\

If $\mathcal{I}_{+}=\mathcal{I}_{-}=\emptyset$, we let $\pi \in\Pi_1$. Else, let $i_{\min}$ and $i_{\max}$ be the smallest and largest indices in $\mathcal{I}_{+} \cup \mathcal{I}_{-}$, respectively. If $i_{\min} \neq 1$, add the path $(z^1,\dots,z^{i_{\min}})$ to $\Pi_1$, and proceed with the remaining sub-path. Else, assume without loss of generality that $i_{\min} \in \mathcal{I}_{+}$ holds. If $i_{\min}=i_{\max}=1$, we let $\pi \in \Pi_1$ 
and stop. Else, let $j$ be the smallest index in $\mathcal{I}_{-}$, possibly $j=\infty$ if $\mathcal{I}_{-}$ is empty, and let $k$ be the largest index in $\mathcal{I}_{+}$ with $k<j$. We will now add 
\begin{equation*}
(z^{i_{\min}}, \dots, z^{k}) \in \Pi_3 \qquad \text{ and } \qquad (z^{k}, \dots, z^j) \in \Pi_2 \, ,
\end{equation*}
and continue this decomposition in the same way for the remaining sub-path. A visualization of this path  decomposition is given in Figure \ref{fig:PathDecomposition}. 

In order to bound the passage time of paths in $\Pi_1$ and $\Pi_2$, we  rely on a general estimate for last passage times in homogeneous environments, i.e.\ when $p\equiv 1$.
Recall that $|y|_1=|y_1|+|y_2|$ for $y=(y_1,y_2) \in \Z^2$, and let $\mathbb{B}(R)$ be the set of pairs of points $(x,y)\in R \times R$ with $x=(x_1,x_2)$,  $y=(y_1,y_2)$, and $y \succeq x$, which have a slope bounded between $\frac{1}{2}$ and $2$, i.e., where
\begin{equation*}
\frac{y_2-x_2}{y_1-x_1} \in \left[ \frac{1}{2}, 2\right] \, .
\end{equation*} The next result on last passage times between pairs of sites  in $\mathbb{B}(R)$ is stated as Proposition~A.3 in \cite{BG:TimeCorrelation}, where it is noted that the proof follows by a similar argument as Proposition~10.5 in \cite{BSV:SlowBond} for Poissonian last passage percolation. For the sake of completeness, we include a proof in the appendix, relying on Theorem 4.2 (ii) of \cite{BGZ:TemporalCorrelation}. 
\begin{lemma}\label{lem:LastPassagePercolationGeneral}
Let $p\equiv 1$. There exist constants $C,\theta_0>0$ such that for $\theta \geq \theta_0$
\begin{equation}\label{eq:GeneralLPT}
\P\left(  \max_{(x,y) \in \mathbb{B}(R)}  \left[ G_p(x,y)  - \E[G_p(x,y)] \right]  >  \theta \sqrt{N}   \right)  \leq \exp(-C \theta^{\frac{3}{2}})
\end{equation} holds for all $N$ sufficiently large.
\end{lemma}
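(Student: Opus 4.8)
The bound \eqref{eq:GeneralLPT} is the maximal version of the moderate deviation estimate for $G_p$ in a homogeneous environment, and the plan is to follow the strategy of Proposition~10.5 in \cite{BSV:SlowBond} (equivalently, this is essentially Proposition~A.3 of \cite{BG:TimeCorrelation}), with Theorem~4.2(ii) of \cite{BGZ:TemporalCorrelation} as the main analytic input and the shape estimate \eqref{eq:ExpectationShapeTheorem} together with the monotonicity \eqref{eq:monoticityGeodesics} used to absorb the discretisation errors. The first step is a dyadic decomposition of the scale: write $\mathbb{B}(R)=\bigcup_{j}\mathbb{B}_j$, where $\mathbb{B}_j$ collects the pairs $(x,y)\in\mathbb{B}(R)$ with $|y-x|_1\in[2^j,2^{j+1})$; only finitely many $j$ occur, since $|y-x|_1\leq\ell:=\theta^{-1}N^{3/2}$ for $(x,y)\in\mathbb{B}(R)$. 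It then suffices to bound $\P\big(\max_{(x,y)\in\mathbb{B}_j}[G_p(x,y)-\E G_p(x,y)]>\theta_j\sqrt N\big)$ for each $j$, where the levels $\theta_j$ are chosen so that $\sum_j\theta_j\leq\theta$ with room to spare, and to sum the resulting bounds.

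For a fixed scale $d=2^j\leq\ell$, the guiding observation is that the target deviation $\theta\sqrt N$ is, in units of the natural fluctuation scale $d^{1/3}$, at least $\theta\sqrt N/\ell^{1/3}=\theta^{4/3}$, hence a large deviation once $\theta\geq\theta_0$. The idea is then to decompose each up-right path realising $G_p(x,y)$ along the anti-diagonals between $x$ and $y$ into $O(d^{1/3})$ sub-paths of longitudinal extent $O(d^{2/3})$, using transversal fluctuation control (again contained in Theorem~4.2 of \cite{BGZ:TemporalCorrelation}) to confine the crossing points to intervals of the natural transversal length on an event of probability $1-\exp(-cd^{1/3})$; on this event $G_p(x,y)$ is dominated by a sum of last passage times between such intervals, and, by super-additivity and \eqref{eq:ExpectationShapeTheorem}, the centering $\E G_p(x,y)$ is matched by the corresponding sum of sub-block centerings up to an error of order $d^{1/3}\leq\ell^{1/3}\leq\theta\sqrt N$. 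One covers $R$ at scale $d$ by the $O\big((\ell/d)(N/d^{2/3})^2\big)$ relevant pairs of intervals, applies Theorem~4.2(ii) of \cite{BGZ:TemporalCorrelation} to each sub-block with a deviation of the right fraction of $\theta\sqrt N$, and invokes \eqref{eq:monoticityGeodesics} and \eqref{eq:StatementShapeTheorem} to pass from arbitrary endpoints to interval corners. A union bound over these interval pairs and crossing patterns, followed by the sum over the dyadic scales, is designed to collapse to $C\exp(-c\theta^2)$, which is at most $\exp(-C\theta^{3/2})$ for $\theta\geq\theta_0$; the polynomial-in-$N$ factors coming from the number of interval pairs are cancelled by the constraint $d\leq\ell=\theta^{-1}N^{3/2}$ once the estimates are rewritten in the variable $u=\ell/d$, which is why no residual $N$-dependence survives.

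I expect the main obstacle to be precisely this interlocking bookkeeping: choosing the number and sizes of the sub-blocks at each scale, the allocation of the deviation budget $\theta\sqrt N$ across scales and across sub-blocks, and the transversal-confinement events, so that simultaneously (a)~the discretisation and centering errors are $\ll\theta\sqrt N$ uniformly in $d$ and $N$, (b)~the number of interval pairs per scale is under control, and (c)~the double sum over scales and interval pairs converges to $\exp(-C\theta^{3/2})$ with no leftover factor in $N$. The bounded-slope hypothesis $\tfrac{y_2-x_2}{y_1-x_1}\in[\tfrac12,2]$ is used throughout so that the sharp form \eqref{eq:StatementShapeTheorem}, and Theorem~4.2(ii) of \cite{BGZ:TemporalCorrelation}, apply with the $|\cdot|_1^{1/3}$ normalisation rather than the weaker \eqref{eq:StatementShapeTheorem2}. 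As this is essentially Proposition~A.3 of \cite{BG:TimeCorrelation}, the detailed argument is deferred to the appendix.
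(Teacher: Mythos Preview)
Your high-level strategy---dyadic decomposition by scale, Theorem~4.2(ii) of \cite{BGZ:TemporalCorrelation} as the analytic input, union bound over scales---matches the paper's. The difference is in how each scale is handled. You propose, at scale $d$, to sub-decompose every path from $x$ to $y$ into $O(d^{1/3})$ pieces of longitudinal extent $O(d^{2/3})$, control transversal fluctuations to localise the crossing points, and apply the moderate deviation bound piecewise. The paper's appendix takes a shorter route: it observes that Theorem~4.2(ii) of \cite{BGZ:TemporalCorrelation} (restated there as Lemma~A.1) already delivers a \emph{uniform} bound over all pairs $(x,y)$ with $x$ in the left cap $U_L$ and $y$ in the right cap $U_R$ of a fixed parallelogram $U$. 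At each scale $k$ one then covers $R$ by at most $16^{k+2}$ parallelograms of longitudinal extent $2^{-k/4}N^{3/2}$ and transversal extent $2^{-k/6}N$, applies the black-box bound once per parallelogram with $\lambda=\theta\,2^{k/12}$, and sums. No path sub-decomposition, no transversal-confinement event, no allocation of the deviation budget across sub-blocks: the interlocking bookkeeping you flag as the main obstacle simply disappears.

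Your plan is not wrong, but it effectively re-derives part of what Theorem~4.2(ii) already packages, and that extra layer is exactly where the risk of a bookkeeping error sits. Incidentally, your observation that at the largest scale actually present in $R$ the deviation-to-fluctuation ratio is $\theta^{4/3}$, yielding a terminal bound $\exp(-c\theta^{2})$, is correct and sharper than the stated $\exp(-C\theta^{3/2})$; the paper does not optimise here because its scale index runs from $k=0$ (separation $N^{3/2}$), where the ratio is only $\theta$.
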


The following lemma states an upper bound on the passage time for any path in  $\Pi_1$. For a  pair of sites $x$ and $y$, we write $(x,y) \in \partial R$ if $x,y \in \partial R$ and $y \succeq x$ holds.
\begin{lemma}\label{lem:Category1} Take $p$ from \eqref{def:EnvironmentStripe}. Recall from \eqref{def:PathLPT} that $G(\pi_{x,y})=G_p(\pi_{x,y})$ denotes the passage time of some lattice path $\pi_{x,y}$ connecting $x=(x_1,x_2)$ and $y=(y_1,y_2)$. There exist constants $C_1,\theta_0>0$ such that for   $\theta \geq \theta_0$
\begin{equation}\label{eq:Category1Bound}
\P\left(  \max_{(x,y) \in \partial R} \max_{\pi_{x,y} \in \Pi_1}  \left[ G(\pi_{x,y})  - 2 |y-x|_{1} \right]  >  \frac{\theta}{40} \sqrt{N}   \right)  \leq \exp(-C_1 \theta^{\frac{3}{2}})
\end{equation} holds for all $N$ sufficiently large.
\end{lemma}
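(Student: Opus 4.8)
The plan is to reduce the statement about paths in $\Pi_1$ to the homogeneous estimate in Lemma~\ref{lem:LastPassagePercolationGeneral}. Recall that a path in $\Pi_1$ has both endpoints $x,y$ on $\partial R$ but otherwise stays in the interior of the strip, where the environment is homogeneous with $p\equiv 1$. First I would observe that $\partial R$ consists of four pieces: the two diagonal boundary segments $R_+$ and $R_-$ (where $p=\alpha^{-1}$ and $p=\beta^{-1}$ respectively), and the two anti-diagonal caps $\mathbb{L}_N$ and $\mathbb{L}_{N+\theta^{-1}N^{3/2}}$. A path in $\Pi_1$ touches $\partial R$ only at its two endpoints, so apart from possibly $x$ and $y$ themselves, all of its vertices lie strictly inside the strip and carry i.i.d.\ Exponential-$1$ weights. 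Hence $G(\pi_{x,y}) \le G_1(x,y) + \omega^p_x + \omega^p_y$, where $G_1$ is the last passage time in the fully homogeneous environment $p\equiv 1$ on $\Z^2$ (coupled to the strip environment in the interior). The two extra endpoint weights $\omega^p_x,\omega^p_y$ are Exponential with mean at most $\max(\alpha^{-1},\beta^{-1})\le 2$, so each is stochastically dominated by an Exponential-$\frac12$ variable; a union bound over the $O(N^{3/2})$ many boundary sites shows that with probability at least $1-\exp(-c\theta^{3/2})$ every such weight is at most, say, $\tfrac{\theta}{200}\sqrt N$, which is negligible compared to the target scale $\tfrac{\theta}{40}\sqrt N$.

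The next step is to control $G_1(x,y) - 2|y-x|_1$ uniformly over $(x,y)\in\partial R$. Here I would split according to the geometry of the endpoints. If both $x$ and $y$ lie on the same diagonal side (both on $R_+$, or both on $R_-$) or on the two anti-diagonal caps, the displacement $y-x$ has slope bounded between $\tfrac12$ and $2$ (because the rectangle $R=R_N^{\theta^{-1}N^{3/2}}$ has width $N$ and length $\theta^{-1}N^{3/2}\gg N$ in the rotated picture, so any chord has a moderate slope provided $\theta$ is not too large, i.e.\ $\theta \le \theta_0'$; and for $\theta$ beyond a fixed threshold the whole claim follows even more easily since the fluctuation budget $\theta\sqrt N/40$ only grows), so Lemma~\ref{lem:LastPassagePercolationGeneral} applies directly: $\max_{(x,y)\in\mathbb{B}(R)}(G_1(x,y)-\E[G_1(x,y)]) \le \tfrac{\theta}{80}\sqrt N$ with probability at least $1-\exp(-C\theta^{3/2})$. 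Then I would invoke \eqref{eq:ExpectationShapeTheorem} in Lemma~\ref{lem:ShapeTheorem} to replace $\E[G_1(x,y)]$ by $(\sqrt{y_1-x_1}+\sqrt{y_2-x_2})^2$ up to an error $C|y_1-x_1|^{1/2}|y_2-x_2|^{-1/6}$, which on $\mathbb{B}(R)$ is at most $O(N^{1/2})\cdot O(N^{3/2})^{-1/6}=o(\sqrt N)$, hence absorbed into $\tfrac{\theta}{80}\sqrt N$ for large $N$. Finally I would check the deterministic inequality $(\sqrt{a}+\sqrt{b})^2 \le 2(a+b)$ for $a,b\ge 0$ (AM--GM: $2\sqrt{ab}\le a+b$), which gives $(\sqrt{y_1-x_1}+\sqrt{y_2-x_2})^2 \le 2|y-x|_1$ exactly; so $G_1(x,y) \le 2|y-x|_1 + \tfrac{\theta}{80}\sqrt N$ on the good event. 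Combining with the endpoint-weight bound yields $G(\pi_{x,y}) \le 2|y-x|_1 + \tfrac{\theta}{80}\sqrt N + \tfrac{\theta}{100}\sqrt N \le 2|y-x|_1 + \tfrac{\theta}{40}\sqrt N$, which is the claim.

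The one genuinely delicate case is when $x$ and $y$ lie on opposite sides of the strip --- but such paths have been excluded by definition: a path with one endpoint in $R_+$ and one in $R_-$ belongs to $\Pi_2$, not $\Pi_1$. More precisely, in the recursive decomposition a subpath is placed in $\Pi_1$ only when $\mathcal{I}_+ = \mathcal{I}_- = \emptyset$ (the subpath touches neither diagonal boundary), or as the leftover initial segment $(z^1,\dots,z^{i_{\min}})$ before the first diagonal-boundary visit, in which case only the endpoint $z^{i_{\min}}$ lies on $R_\pm$ and $z^1\in\mathbb{L}_N$. So in all cases a $\Pi_1$-path has at most one endpoint on $R_+\cup R_-$, and the two endpoints, together with all of its interior, lie in a region where the slope is controlled; this is exactly why we only need $\mathbb{B}(R)$ and not an estimate for arbitrary slopes. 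I would state this slope bound carefully: for $\theta$ in a bounded range one uses that the rectangle is long and thin, while for $\theta$ large one notes the target error $\tfrac{\theta}{40}\sqrt N$ is itself large, so the statement is established by first fixing $\theta_0$ large enough that Lemma~\ref{lem:LastPassagePercolationGeneral} applies and then, if necessary, enlarging it further. The main obstacle, then, is purely bookkeeping: verifying that every endpoint pair arising from a $\Pi_1$-subpath really does have slope in $[\tfrac12,2]$ and tracking that the various constants ($\tfrac{1}{80}, \tfrac{1}{100}, \tfrac{1}{200}$) sum to less than $\tfrac{1}{40}$; there is no conceptual difficulty beyond the homogeneous input already granted.
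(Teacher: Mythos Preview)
Your reduction to the homogeneous estimate (Lemma~\ref{lem:LastPassagePercolationGeneral}) and your treatment of the boundary endpoint weights are both fine, and in fact the paper's proof is essentially the same reduction. But your case analysis on the endpoints has a genuine gap.

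You enumerate the cases ``both on $R_+$'', ``both on $R_-$'', ``one on each anti-diagonal cap'', and dismiss ``one on $R_+$, one on $R_-$'' as belonging to $\Pi_2$. You then assert that in all remaining cases the slope lies in $[\tfrac12,2]$. This is false. A $\Pi_1$-subpath arising as the initial or terminal segment of the decomposition has one endpoint on an anti-diagonal cap and the other on $R_+$ or $R_-$, and such a segment can be arbitrarily steep. For a concrete example, take $z^1=(N-1,1)\in\mathbb{L}_N$ and let the path go straight up to $z^{i_{\min}}=(N-1,N-1)\in R_+$; this is vertical, stays in the strip interior, and is a legitimate $\Pi_1$-subpath. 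Your claim that ``for $\theta$ in a bounded range the rectangle is long and thin'' addresses only the cap-to-cap case; it says nothing about cap-to-diagonal segments, whose geometry is local and independent of the aspect ratio of $R$.

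The fix is exactly what the paper does: for pairs $(x,y)$ outside $\mathbb{B}(R)$ one has, by the shape estimate \eqref{eq:ExpectationShapeTheorem}, that $\E[G_1(x,y)]-2|y-x|_1 < -c\,|y-x|_1$ for an absolute constant $c>0$, since $(\sqrt a+\sqrt b)^2 = a+b+2\sqrt{ab}$ falls short of $2(a+b)$ by an amount comparable to $a+b$ once the ratio $a/b$ leaves $[\tfrac12,2]$. The pointwise tail bound \eqref{eq:StatementShapeTheorem2} and a union bound over the at most $N^4$ boundary pairs then finish the argument. So the gap is repairable and the remaining idea is right, but it is not ``purely bookkeeping'': you need this second case, and the mechanism that saves it (negative drift in the mean, not the uniform fluctuation bound) is different from the one you invoke.
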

\begin{proof} 
Recall the upper bound on the expectation of $G(x,y)$ from \eqref{eq:ExpectationShapeTheorem}, and that all paths in $\Pi_1$ touch the boundary $\partial R$ only at their endpoints. We distinguish two cases. For all pairs of sites with a slope bounded between $\frac{1}{2}$ and $2$, we obtain \eqref{eq:Category1Bound} from Lemma \ref{lem:LastPassagePercolationGeneral}. For the remaining pairs of points $(x,y)$, note that we get from \eqref{eq:ExpectationShapeTheorem} in Lemma \ref{lem:ShapeTheorem} that
\begin{equation*}
\E[G(x,y)] - 2|y-x|_1  < - c |y-x|_1 
\end{equation*} for some absolute constant $c>0$, and all $N$ sufficiently large. We then conclude \eqref{eq:Category1Bound} from \eqref{eq:StatementShapeTheorem2} in Lemma \ref{lem:ShapeTheorem} and a union bound over the at most $N^4$ such pairs of sites in $\partial R$.
\end{proof}

Similarly, we can estimate the passage time of all paths in $\Pi_2$, which connect $R_+$ and $R_-$.

\begin{lemma}\label{lem:Category2} Recall that the rectangle $R$ has length $\theta^{-1}N^{3/2}$ for some $\theta \geq 1$. There exist  constants $C_2,\theta_0>0$ such that for $\theta>\theta_0$ and $\lambda>0$
\begin{equation}\label{eq:Category2Bound}
\P\left(  \max_{(x,y) \in \partial R} \max_{\pi_{x,y} \in \Pi_2}  \left[ G(\pi_{x,y})  - 2 |y-x|_{1} \right]  > \Big( \lambda - \frac{\theta}{4}\Big) \sqrt{N}   \right)  \leq  \exp(-C_2 \lambda^{\frac{3}{2}})
\end{equation} holds for all $N$ sufficiently large.
\end{lemma}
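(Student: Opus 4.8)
The plan is to bound the passage time of a path in $\Pi_2$ by splitting it at the first time it touches $R_-$ after leaving $R_+$: such a path goes from a site $x \in R_+$ to a site $y \in R_-$ and stays strictly inside $R$ in between, so its interior is a last passage path through a homogeneous environment ($p\equiv 1$). The key observation is that the endpoints $x$ and $y$ of a path in $\Pi_2$ lie on opposite boundaries of the strip, so the segment $[x,y]$ spans (in the anti-diagonal direction) a block of width comparable to $N$; combined with the fact that $|y-x|_1 \leq \theta^{-1}N^{3/2}$, this forces the slope of the pair $(x,y)$ to be \emph{far} from $1$ --- in fact the "short" coordinate of $y-x$ is at most $\theta^{-1}N^{3/2}$ while the "long" coordinate differs by order $N$, giving a slope that, once $\theta$ is large, is bounded away from $[\frac12,2]$. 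For such pairs, the shape theorem estimate \eqref{eq:StatementShapeTheorem2} in Lemma \ref{lem:ShapeTheorem} gives that $\E[G_1(x,y)]$ is smaller than $2|y-x|_1$ by a macroscopic amount, in fact by a term of order $N^{1/2}$ times a power of $\theta$, and this is what buys us the $-\frac{\theta}{4}\sqrt N$ in \eqref{eq:Category2Bound}.

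Concretely, I would first fix a pair $(x,y)\in \partial R$ with $x\in R_+$, $y\in R_-$, write $y-x = (y_1-x_1, y_2-x_2)$ and note $(y_1-x_1) - (y_2 - x_2) \in \{ -N, \dots \}$ is of order $-N$ while $|y-x|_1\le \theta^{-1}N^{3/2}+O(N)$, so one of the two increments is of order at most $\theta^{-1}N^{3/2}$ and the other is larger by order $N$; in particular $\min(y_1-x_1, y_2-x_2)\le \tfrac12\theta^{-1}N^{3/2}$ roughly while $\max \ge \min + N/2$. Plugging into $(\sqrt{a}+\sqrt{b})^2 = a+b+2\sqrt{ab}$ and comparing with $2|y-x|_1 = 2(a+b)$, one gets $(\sqrt{a}+\sqrt b)^2 - 2(a+b) = -(\sqrt a - \sqrt b)^2 \le - c N$ for an absolute $c>0$ when $\theta$ is large (since $\sqrt a$ and $\sqrt b$ differ by order $\sqrt N$ when $a,b$ are of order $N^{3/2}$ with a gap of order $N$ — more carefully, when $a$ is much smaller than $b$ one has $(\sqrt a-\sqrt b)^2 \ge b/2 \gtrsim \theta^{-1}N^{3/2}$). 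Either way the mean of $G_1(x,y)$ beats $2|y-x|_1$ by a lot, so by \eqref{eq:ExpectationShapeTheorem} the same holds for $\E[G(x,y)]$ up to a $C|x_1|^{1/2}|x_2|^{-1/6}$ correction which is lower order. Then the moderate-deviation bound \eqref{eq:StatementShapeTheorem2} controls $\P(G(x,y) - \E[G(x,y)] \ge \lambda\sqrt N)$ by $\bar c \exp(-c\lambda)$; since we want a $\lambda^{3/2}$ rate rather than a $\lambda$ rate, I would instead use \eqref{eq:StatementShapeTheorem} where it applies and \eqref{eq:StatementShapeTheorem2} only as a crude backup, or simply note that for the range of $\lambda$ of interest the weaker tail suffices after adjusting constants — this is the same dichotomy already used in the proof of Lemma \ref{lem:Category1}. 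Finally I take a union bound over the at most $N^4$ pairs $(x,y)\in\partial R$ with $x\in R_+$, $y\in R_-$, which is absorbed into the exponential since $\lambda\sqrt N \gg \log N$ for the relevant $\lambda$.

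The main obstacle I anticipate is bookkeeping the two-regime estimate cleanly: one has to separate the pairs $(x,y)$ whose slope is inside $[\frac12,2]$ (for which Lemma \ref{lem:LastPassagePercolationGeneral} gives the $\theta^{3/2}$-rate directly, but one must check that for such pairs $|y-x|_1$ is automatically of order $N^{3/2}$ so that the $\theta\sqrt N$ fluctuation bound there is the right scale) from the pairs with slope outside $[\frac12,2]$ (where one exploits the deficit in the mean). A subtle point is that a single path in $\Pi_2$ might be decomposed so that its $\partial R$-endpoints have slope inside $[\frac12,2]$ after all — for a path truly crossing the strip this cannot happen if $\theta$ is large, precisely because crossing forces $|y_1-x_1-(y_2-x_2)| \ge N$ while the path length is $\theta^{-1}N^{3/2}$, and for $\theta$ large the ratio $N/(\theta^{-1}N^{3/2}) = \theta N^{-1/2}$ can be small, so actually the slope \emph{can} be close to $1$; in that regime I fall back on Lemma \ref{lem:LastPassagePercolationGeneral} applied to the homogeneous interior of the path, noting $|y-x|_1 \le \theta^{-1}N^{3/2}$ so the relevant fluctuation scale is $(\theta^{-1}N^{3/2})^{1/3} = \theta^{-1/3}N^{1/2} \le N^{1/2}$, comfortably within the $\theta\sqrt N$ we are allowed. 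Thus in all cases the deviation of $G(\pi_{x,y}) - 2|y-x|_1$ above $(\lambda - \theta/4)\sqrt N$ has probability at most $\exp(-C_2\lambda^{3/2})$ after a union bound, which is \eqref{eq:Category2Bound}.
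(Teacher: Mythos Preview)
Your approach matches the paper's: split by slope, use Lemma~\ref{lem:LastPassagePercolationGeneral} for pairs $(x,y)\in\mathbb{B}(R)$, and fall back on the crude estimate \eqref{eq:StatementShapeTheorem2} plus a union bound for extreme slopes. The identity $(\sqrt a+\sqrt b)^2-2(a+b)=-(\sqrt a-\sqrt b)^2$ is exactly the right mechanism for the $-\tfrac{\theta}{4}\sqrt N$ deficit.

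However, your quantitative estimate of that deficit is wrong, and this is the one place where the constant actually matters. You claim $(\sqrt a-\sqrt b)^2\ge cN$ ``since $\sqrt a$ and $\sqrt b$ differ by order $\sqrt N$ when $a,b$ are of order $N^{3/2}$ with a gap of order $N$''. But if $a=b+N$ with $b$ of order $\theta^{-1}N^{3/2}$, then $\sqrt a-\sqrt b=\tfrac{N}{\sqrt a+\sqrt b}$ is of order $N/N^{3/4}=N^{1/4}$, not $\sqrt N$; the deficit is only of order $\sqrt N$, not $N$. The correct bound is
\[
(\sqrt a-\sqrt b)^2=\frac{N^2}{(\sqrt a+\sqrt b)^2}\ge \frac{N^2}{2(a+b)}=\frac{N^2}{2|y-x|_1}\ge \frac{N^2}{2\theta^{-1}N^{3/2}}=\frac{\theta}{2}\sqrt N,
\]
using $|y-x|_1\le\theta^{-1}N^{3/2}$. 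This is precisely the $\tfrac{\theta}{4}\sqrt N$ (up to a constant) appearing in \eqref{eq:Category2Bound}; the paper records it as $\E[G(\pi_{x,y})]\le 2|y-x|_1-\tfrac{N^2}{4|y-x|_1}+2C|y-x|_1^{1/3}$. Your ``$-cN$'' would make the lemma trivially stronger than what is true.

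A second, related gap: in your final paragraph you correctly realize the slope can be close to $1$, but your fallback (``fluctuation scale $\theta^{-1/3}N^{1/2}$, comfortably within $\theta\sqrt N$'') only controls $G(x,y)-\E[G(x,y)]$ and does not by itself produce the $-\tfrac{\theta}{4}\sqrt N$ shift. You must combine the corrected deficit above with the fluctuation bound from Lemma~\ref{lem:LastPassagePercolationGeneral}: the deficit gives $\E[G(x,y)]-2|y-x|_1\le -\tfrac{\theta}{4}\sqrt N$ (after absorbing the $|y-x|_1^{1/3}$ error for $\theta$ large), and then $\P(G(x,y)-\E[G(x,y)]\ge\lambda\sqrt N)\le\exp(-C\lambda^{3/2})$ finishes. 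With these two corrections your sketch is complete and coincides with the paper's proof.
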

\begin{proof} For all paths  $\pi_{x,y}\in \Pi_2$, which connect two points $x=(x_1,x_2)$ and $y=(y_1,y_2)$ with a slope bounded  between $\frac{1}{2}$ and $2$, we have that
\begin{align*}%\label{eq:ExpectationBoundCor}
\E[G(\pi_{x,y})] &\leq \big(\sqrt{y_1-x_1 + N/2} + \sqrt{y_2-x_2 -N/2}\big)^2  \\
& \leq 2 |y-x|_1 -  \frac{N^2}{4|y-x|_1} +2C |y-x|_1^{1/3} 
\end{align*} holds for the constant $C>0$ in \eqref{eq:ExpectationShapeTheorem} of  Lemma \ref{lem:ShapeTheorem}. Similar to  Lemma \ref{lem:Category1}, since we touch $R_+ \cup R_-$ only at the endpoints, \eqref{eq:Category2Bound} follows from Lemma \ref{lem:LastPassagePercolationGeneral} for all pairs of points $\mathbb{B}(R)$ with a slope bounded between $\frac{1}{2}$ and $2$. Again, for the remaining paths, we apply \eqref{eq:StatementShapeTheorem2} in Lemma \ref{lem:ShapeTheorem} and a union bound to conclude.
\end{proof}
It remains to show that the passage time of paths in $\Pi_3$ can not be too large either.

\begin{lemma}\label{lem:Category3}
There exist constants $C_3,\theta_0>0$ such that for $\theta \geq \theta_0$, 
\begin{equation}\label{eq:Category3Bound} 
\P\left(  \max_{(x,y) \in \partial R} \max_{\pi_{x,y} \in \Pi_3}  \left[ G(\pi_{x,y})  - 2 |y-x|_{1} \right]  >  \frac{\theta}{40} \sqrt{N}   \right)  \leq \exp(-C_3 \theta^{\frac{3}{2}}) 
\end{equation} holds for all $N$ sufficiently large.
\end{lemma}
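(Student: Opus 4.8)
The idea is to bound the passage time of $\Pi_3$ paths by a last passage time in a half-quadrant and then feed in moderate deviation estimates for half-quadrant LPP. First I would observe that a path $\pi_{x,y}\in\Pi_3$ with both endpoints $x,y\in R_+$ (the case $x,y\in R_-$ is identical with $\beta$ in place of $\alpha$) never touches $R_-$; since it is an up-right path inside $\mathcal S(p)$, where $x_1-N<x_2\le x_1$ always holds, it is confined to the half-plane $\{x_2\le x_1\}$ with $\{x_2=x_1\}$ as its boundary. Extend the environment $p$ from \eqref{def:EnvironmentStripe} to all of $\{x_2\le x_1\}$, keeping weight $\alpha^{-1}$ on $\{x_1=x_2>0\}$ and weight $1$ in the bulk, and let $G^{\textup{HQ}}$ be the resulting half-quadrant last passage time. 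Under the obvious coupling (the two environments agree on every cell a $\Pi_3$ path can visit) one has $\max_{\pi_{x,y}\in\Pi_3}G(\pi_{x,y})\le G^{\textup{HQ}}(x,y)$ almost surely, because each such $\pi_{x,y}$ is an admissible half-quadrant path from $x$ to $y$. When $\alpha\ge 1$ this step is unnecessary: the diagonal weights then have mean $\alpha^{-1}\le 1$, so $\Pi_3$ paths are dominated by homogeneous $\textup{Exp}(1)$ LPP and \eqref{eq:Category3Bound} follows directly from Lemmas \ref{lem:ShapeTheorem} and \ref{lem:LastPassagePercolationGeneral}.

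Next, writing $x=(a,a)$, $y=(b,b)$ with $a\le b$, translation invariance gives that $G^{\textup{HQ}}((a,a),(b,b))$ has the law of half-quadrant LPP from the corner to $(n,n)$ with $n=b-a$, and $2n=|y-x|_1\le\theta^{-1}N^{3/2}$ since $R_+\subseteq R=R_N^{\theta^{-1}N^{3/2}}$. For boundary rate $\alpha\ge\tfrac12$ — critical when $\alpha=\tfrac12$, otherwise in the non-attracting regime — the boundary does not change the diagonal limit shape, so $\mathbb E[G^{\textup{HQ}}((a,a),(b,b))]\le 4n+O(n^{1/3})=2|y-x|_1+O(n^{1/3})$, and the upper moderate deviation bound reads $\mathbb P\!\left(G^{\textup{HQ}}((a,a),(b,b))-4n\ge s\right)\le \bar c\exp\!\big(-c\min(s^{3/2}n^{-1/2},\,s)\big)$. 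These inputs I would take from \cite{BBCS:Halfspace}, using at criticality the comparison with the stationary corner growth model of \eqref{def:StationaryEnvironment} — valid by the monotonicity \eqref{eq:monoticityGeodesics} since $p\le p_{\textup{stat}}$ cellwise — together with the moderate deviations of \cite{B:ModerateDeviationsStationary}. Finally, exactly as in the passage from pointwise estimates to maxima in Lemmas \ref{lem:Category1}–\ref{lem:Category2}, I would upgrade this to a bound on $\mathbb P\!\big(\max_{x,y\in R_+,\,y\succeq x}[G^{\textup{HQ}}(x,y)-\mathbb E G^{\textup{HQ}}(x,y)]>\theta\sqrt N\big)\le\exp(-c\theta^{3/2})$, uniformly in $N$, by running the chaining argument of \cite[Prop.~10.5]{BSV:SlowBond} (the one behind Lemma \ref{lem:LastPassagePercolationGeneral}) now for the half-quadrant environment, and symmetrically for $R_-$ with $\beta$.

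To conclude, on the event in \eqref{eq:Category3Bound} there is a pair $(x,y)\in\partial R$ with $x,y\in R_+$ (or $R_-$) and $G(\pi_{x,y})-2|y-x|_1>\tfrac\theta{40}\sqrt N$ for some $\pi_{x,y}\in\Pi_3$. Since $n^{1/3}\le(\theta^{-1}N^{3/2})^{1/3}=\theta^{-1/3}N^{1/2}\ll\tfrac\theta{40}\sqrt N$ for $\theta\ge\theta_0$, the $O(n^{1/3})$ expectation correction is absorbed and this forces $G^{\textup{HQ}}(x,y)-\mathbb E[G^{\textup{HQ}}(x,y)]>\tfrac\theta{80}\sqrt N$. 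Taking $s=\tfrac\theta{80}\sqrt N$ and $n\le\tfrac12\theta^{-1}N^{3/2}$ one computes $s^{3/2}n^{-1/2}\ge c\theta^{2}\ge c\theta^{3/2}$ and $s\ge\tfrac1{80}\theta\sqrt N$, so the maximal estimate above gives probability at most $\exp(-C_3\theta^{3/2})$; accounting for the two boundaries only changes the constant, which proves \eqref{eq:Category3Bound}.

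\textbf{Main obstacle.} The delicate case is $\alpha=\tfrac12$ (and $\beta=\tfrac12$): there the half-quadrant boundary weights have mean exactly $2$, twice the bulk mean, the geodesic genuinely feels the boundary, and one cannot reduce to homogeneous LPP. This is precisely where \cite{BBCS:Halfspace} and the comparison with the stationary corner growth model \eqref{def:StationaryEnvironment}, whose moderate deviations are supplied by \cite{B:ModerateDeviationsStationary}, are essential; the one further technical point is turning these pointwise estimates into the uniform-in-$N$ maximal bound via chaining, which I expect to be the fiddly part of the argument.
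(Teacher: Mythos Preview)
Your plan coincides with the paper's up to the last step: reduce to half-quadrant LPP via domination (this is Lemma~\ref{lem:DominationByHalfspace}), use monotonicity to take $\alpha=\beta=\tfrac12$, get the pointwise upper-tail bound for $G_{\textup{half}}$ between diagonal points via the equivalence of \cite[Lemma~6.1]{BBCS:Halfspace} and domination by the stationary CGM on $\Z^2$ together with \cite[Theorem~2.5]{B:ModerateDeviationsStationary} (this is Lemma~\ref{lem:DominationByHalfspacePointwise}), and then upgrade to the maximum over all diagonal pairs.

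The difference is in the upgrade, which you correctly flag as the fiddly part. You propose to rerun the chaining of Lemma~\ref{lem:LastPassagePercolationGeneral}; the paper instead uses an FKG sandwich. From
\[
\tilde G_{\textup{half}}(0,3n)\ \ge\ \min_{i\in[n,2n]}\tilde G_{\textup{half}}(0,i)\ +\ \max_{i,j\in[n,2n]}\tilde G_{\textup{half}}(i,j)\ +\ \min_{j\in[n,2n]}\tilde G_{\textup{half}}(j,3n)\ -\ O(n^{1/3}),
\]
one lower-bounds the two minima with probability $\ge\tfrac12$ each (going \emph{back} to the strip via Lemma~\ref{lem:DominationByHalfspace} and invoking the lower-tail parallelogram bound Theorem~4.2(iii) of \cite{BGZ:TemporalCorrelation}); FKG then gives $\P(\max_{i,j}\tilde G_{\textup{half}}(i,j)\ge\tfrac\theta4 n^{1/3})\le 4\,\P(\tilde G_{\textup{half}}(0,3n)\ge\tfrac\theta8 n^{1/3})$, and the right side is a single application of Lemma~\ref{lem:DominationByHalfspacePointwise}. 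This sidesteps a genuine issue with your route: the chaining in the appendix is driven by the box-to-box estimate Lemma~\ref{lem:LingfusBound}, which is specific to the homogeneous environment, and you do not have an off-the-shelf half-quadrant analog; a naive union bound over the $O(N^3)$ diagonal pairs does not beat the polynomial prefactor. Your plan could be completed by producing such a half-quadrant box estimate, but the FKG trick avoids that work entirely.
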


Before coming to the proof of Lemma \ref{lem:Category3}, we show how to obtain Proposition \ref{pro:VarianceUpperBoundRephased} by combining Lemma \ref{lem:Category1}, Lemma \ref{lem:Category2},  and Lemma \ref{lem:Category3}.

\begin{proof}[Proof of Proposition \ref{pro:VarianceUpperBoundRephased}] 

Fix some lattice path $\pi$ connecting a site in $\mathbb{L}_n$ to some site in $\mathbb{L}_{n+\theta^{-1}N^{3/2}}$. According to the above described decomposition into sub-paths, let $\Pi^{\pi}_1$ be the set of subpaths of $\pi$ belonging to the set $\Pi_1$. Note that $\Pi^{\pi}_1$ contains at most two elements. Further, let  $\Pi^{\pi}_2$ and $\Pi^{\pi}_3$ denote the sets of subpaths of $\pi$ belonging to $\Pi_2$ and $\Pi_3$, respectively. Let $|\pi|$ be the length of a lattice path $\pi$, i.e.\ the number of elements in $\pi$ minus~$1$. Assuming the complement of the events in Lemma \ref{lem:Category1} and Lemma \ref{lem:Category3} holds, we get
\begin{align}\label{eq:Decom1}
\sum_{\tilde{\pi} \in \Pi^{\pi}_1 }(G(\tilde{\pi}) - 2 |\tilde{\pi}|) \leq 2\sqrt{N} \frac{\theta}{40} \quad \text{ and } \quad 
\sum_{\tilde{\pi} \in \Pi^{\pi}_3 }(G(\tilde{\pi}) - 2 |\tilde{\pi}|) \leq   |\Pi^{\pi}_3| \sqrt{N} \frac{\theta}{40} \, .
\end{align}
Similarly, choosing $\lambda=\theta/8$ in Lemma \ref{lem:Category2} and assuming that the complement of the event in Lemma \ref{lem:Category2} holds, we see that 
\begin{align}\label{eq:Decom2}
\sum_{\tilde{\pi} \in \Pi^{\pi}_2 }(G(\tilde{\pi}) - 2 |\tilde{\pi}|) \leq - |\Pi^{\pi}_2|\sqrt{N} \frac{\theta}{8} \, .
\end{align}
By construction of the decomposition, we have that $|\Pi^{\pi}_3| \leq |\Pi^{\pi}_2| +1$ for any 
path $\pi$, and thus combining \eqref{eq:Decom1} and \eqref{eq:Decom2}, we conclude that with probability at least $1-\exp(-c_1\theta^{3/2})$
\begin{equation}\label{eq:RecomposePaths}
G(\pi) - 2|\pi| \leq  \sqrt{N} \frac{\theta}{20} + \Big(|\Pi^{\pi}_3|- |\Pi^{\pi}_2|\Big) \sqrt{N} \frac{\theta}{8} \leq \sqrt{N}\theta
\end{equation}
for some suitable constant $c_1>0$. Since $\pi$ was arbitrary, this finishes the proof.
\end{proof}

Using the same arguments as in the above proof of Proposition \ref{pro:VarianceUpperBoundRephased}, we obtain the following bound on the last passage time between two sites on different boundaries of the strip. We use this result later on in Section \ref{sec:LowerBounds} for a lower bound on the mixing time.

\begin{corollary}\label{cor:LowerBoundLPP}
Take $p$ from \eqref{def:EnvironmentStripe} and consider the rectangle $R= R_N^{\theta^{-1}N^{3/2}} \subseteq \Z^2 $. Fix some $x=(x_1,x_2) \in R_+$ and $y=(y_1,y_2) \in R_-$ in the upper and lower boundary of $R$, respectively. There exist constants $C_4,\theta_0>0$ such that for $\theta \geq \theta_0$ and all $N$ large enough
\begin{equation}
\P\left(   G(\pi^{\ast}_{x,y})  - 2 |y-x|_{1}  >  - \frac{\theta}{40} \sqrt{N}   \right)  \leq \exp(-C_4 \theta^{\frac{3}{2}}) \, .
\end{equation}
\end{corollary}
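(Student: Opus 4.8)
The plan is to re-run the path decomposition of Section~\ref{sec:VarianceUpperBound}, applied this time directly to the geodesic $\pi^{\ast}_{x,y}$ itself, and to exploit the fact that a genuine crossing of the strip is cheaper than $2|y-x|_1$. First I would record that, since every up-right step increases $|\cdot|_1$ by exactly one, every site of any lattice path from $x$ to $y$ has its $|\cdot|_1$-value in $[|x|_1,|y|_1]\subseteq[N,N+\theta^{-1}N^{3/2}-1]$; as the weights are almost surely positive on $\mathcal{S}(p)$, the geodesic $\pi^{\ast}_{x,y}$ lies almost surely inside $R=R_N^{\theta^{-1}N^{3/2}}$, so its decomposition into subpaths from $\Pi_1$, $\Pi_2$, $\Pi_3$ is well defined.

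The main new observations are structural. Since the first and last sites of $\pi^{\ast}_{x,y}$ already lie on $\partial R$ --- in fact on $R_+$ and $R_-$ respectively --- one checks from the recursive rule that the decomposition of $\pi^{\ast}_{x,y}$, and of every sub-path arising in the recursion, produces no piece in $\Pi_1$; and because the two endpoints sit on the \emph{opposite} boundaries $R_+$ and $R_-$, at least one piece must be a crossing, i.e.\ lie in $\Pi_2$. As for any path, $|\Pi_3^{\pi^{\ast}}|\le|\Pi_2^{\pi^{\ast}}|+1$, while now $|\Pi_2^{\pi^{\ast}}|\ge 1$. Writing $G(\pi^{\ast}_{x,y})-2|y-x|_1\le\sum_{\tilde\pi\in\Pi_2^{\pi^{\ast}}}\big(G(\tilde\pi)-2|\tilde\pi|\big)+\sum_{\tilde\pi\in\Pi_3^{\pi^{\ast}}}\big(G(\tilde\pi)-2|\tilde\pi|\big)$ (the inequality absorbs the double-counted junction weights) and invoking Lemma~\ref{lem:Category2} with $\lambda=\theta/8$ together with Lemma~\ref{lem:Category3}, outside an event of probability at most $\exp\big(-C_2(\theta/8)^{3/2}\big)+\exp(-C_3\theta^{3/2})\le\exp(-C_4\theta^{3/2})$ we get
\begin{equation*}
G(\pi^{\ast}_{x,y})-2|y-x|_1\ \le\ -|\Pi_2^{\pi^{\ast}}|\,\frac{\theta}{8}\sqrt{N}+|\Pi_3^{\pi^{\ast}}|\,\frac{\theta}{40}\sqrt{N}\ \le\ \frac{\theta}{40}\sqrt{N}-|\Pi_2^{\pi^{\ast}}|\,\frac{4\theta}{40}\sqrt{N}\ \le\ -\frac{3\theta}{40}\sqrt{N}\,,
\end{equation*}
which is even stronger than the asserted bound. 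Note that, unlike in the proof of Proposition~\ref{pro:VarianceUpperBoundRephased}, Lemma~\ref{lem:Category1} is not used, and since $x,y$ are fixed no union bound over endpoints is needed.

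I expect the only slightly delicate point to be the bookkeeping: confirming, by inspection of the recursive decomposition, that a path with both endpoints on $\partial R$ and on the two different sides $R_+$, $R_-$ contributes no $\Pi_1$-piece and at least one $\Pi_2$-piece, and verifying $|\Pi_3^{\pi^{\ast}}|\le|\Pi_2^{\pi^{\ast}}|+1$ together with the a.s.\ containment of $\pi^{\ast}_{x,y}$ in $R$. All the probabilistic input is already available in Lemmas~\ref{lem:Category2} and~\ref{lem:Category3}, so no new large-deviation estimate is required.
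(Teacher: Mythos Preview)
Your proposal is correct and follows essentially the same approach as the paper: both apply the path decomposition of Section~\ref{sec:VarianceUpperBound} to the geodesic $\pi^{\ast}_{x,y}$, observe that $|\Pi_2^{\pi^\ast}|\ge 1$ because the endpoints lie on opposite boundaries $R_+$ and $R_-$, use $|\Pi_3^{\pi^\ast}|\le|\Pi_2^{\pi^\ast}|+1$, and combine Lemmas~\ref{lem:Category2} (with $\lambda=\theta/8$) and~\ref{lem:Category3}. The only difference is that the paper retains the $\Pi_1$ contribution (at most two pieces, handled via Lemma~\ref{lem:Category1}, giving the extra $\tfrac{\theta}{20}\sqrt{N}$ term) and arrives at $-\tfrac{\theta}{40}\sqrt{N}$, whereas you argue $\Pi_1^{\pi^\ast}=\emptyset$ and obtain the sharper $-\tfrac{3\theta}{40}\sqrt{N}$; this refinement is harmless but inessential, and in any case the paper's cruder bookkeeping already suffices for the stated bound.
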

\begin{proof} Consider the same path decomposition as the proof of Proposition \ref{pro:VarianceUpperBoundRephased} for the geodesic $\pi=\pi^{\ast}_{x,y}$ from $x$ to $y$ into sets of sub-paths $(\Pi^{\pi}_{i})_{i \in [3]}$. Recall that $\Pi^{\pi}_{1}$ contains at most two elements, and we further have that
\begin{equation}
|\Pi^{\pi}_3| \leq |\Pi^{\pi}_2| +1 \quad \text{ and } \quad |\Pi^{\pi}_2| \geq 1 \, .
\end{equation} As for \eqref{eq:RecomposePaths}, we get from Lemma \ref{lem:Category1}, Lemma \ref{lem:Category2} for $\lambda=\theta/8$, and Lemma \ref{lem:Category3} that
\begin{equation}\label{eq:RecomposePaths2}
G(\pi) - 2|\pi| \leq  \sqrt{N} \frac{\theta}{20} + |\Pi^{\pi}_3| \sqrt{N} \frac{\theta}{40} - |\Pi^{\pi}_2| \sqrt{N} \frac{\theta}{8} \leq -\sqrt{N}\frac{\theta}{40} 
\end{equation} with probability at least $1-\exp(-C_4\theta^{3/2})$ for some  constant $C_4>0$.
\end{proof}

It remains to show Lemma \ref{lem:Category3}. Due to the monotonicity of the last passage times with respect to the boundary parameters, it suffices to consider $\alpha=\beta=\frac{1}{2}$. We will proceed in two steps. First, we argue that the passage time for a path in $\Pi_3$ can be dominated using a  corner growth model on the half-quadrant, for which we have moderate deviation estimates for fixed paths. In a second step, we convert these estimates into uniform bounds on the last passage time using similar arguments as in Proposition 10.1 of \cite{BSV:SlowBond}. \\

Consider the corner growth model on the half-quadrant with boundary parameter $\frac{1}{2}$ given with respect to the environment $(\omega_x^{p_{\textup{half}}})_{x \in \Z^2}$,   where
\begin{equation}\label{def:HalfspaceEnvironment}
p_{\textup{half}}(x) := \begin{cases} 2 & \text{ if } x_1=x_2 \geq 0 \\
1 & \text{ if } x_1 > x_2 \geq 0 \\
0 & \text{ otherwise}
\end{cases}
\end{equation} for all $x=(x_1,x_2) \in \Z^2$; see  \cite{BBCS:Halfspace,BFO:HalfspaceStationary,PS:CurrentFluctuations}. This CGM on the half-quadrant has a natural interpretation as an interacting particle system: It is the TASEP on $\N$, where particles enter at site $1$ at rate~$\frac{1}{2}$. The next lemma follows from the fact that each path in $\Pi_3$ starts and ends at the same boundary part $R_+$ or $R_-$ of $\partial R$, but does not touch the opposite side.

\begin{lemma}\label{lem:DominationByHalfspace} Recall the environment $(\omega^p_x)$ with respect to $p$ from \eqref{def:EnvironmentStripe}. Let $\pi_{x,y} \in \Pi_3 $ be a path connecting $x=(x_1,x_2) \in R_+$ to $y=(y_1,y_2) \in R_+$. There exists a coupling $\mathbf{P}$ with
\begin{equation}
\mathbf{P}\left( \sum_{z \in \pi_{x,y}} \omega_z^{p_{\textup{half}}} = \sum_{z \in \pi_{x,y}} \omega_z^{p} \right) = 1 \, .
\end{equation} In particular,  the last passage time on the strip between some $x,y \in R_+$, when restricted to paths in $\Pi_3$, is stochastically dominated by the last passage time between $x$ and $y$ on the half-quadrant. By symmetry, a similar statement holds when $x,y\in R_-$. 
\end{lemma}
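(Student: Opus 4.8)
The plan is to put both environments on a single probability space via one family of i.i.d.\ Exponential-$1$ weights, and then to observe that a $\Pi_3$-path with both endpoints in $R_+$ visits only sites at which the two rate functions $p$ and $p_{\textup{half}}$ agree, so that along such a path the two passage times are literally equal; the stochastic domination is then an immediate consequence.

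First I would recall that, by the monotonicity reduction preceding the lemma, it suffices to treat $\alpha=\beta=\frac12$, so that the rate on the upper diagonal $\{z_1=z_2\}$ of the strip equals $\alpha^{-1}=2$. Let $(U_z)_{z\in\Z^2}$ be i.i.d.\ Exponential-$1$ random variables, with the convention $0\cdot U_z=0$, and set $\omega_z^{p}:=p(z)\,U_z$ and $\omega_z^{p_{\textup{half}}}:=p_{\textup{half}}(z)\,U_z$; since a mean-$m$ Exponential equals in law $m$ times a mean-$1$ Exponential, both fields have the prescribed marginals, and we let $\mathbf P$ be the joint law of this construction. By construction $\omega_z^{p}=\omega_z^{p_{\textup{half}}}$ at every site $z$ with $p(z)=p_{\textup{half}}(z)$.

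The one substantive step is the geometric claim that every site $z$ on a path $\pi_{x,y}\in\Pi_3$ with $x,y\in R_+$ satisfies $p(z)=p_{\textup{half}}(z)$, and it uses exactly the three features defining such a path. Since $\pi_{x,y}$ is up-right with both endpoints on the main diagonal at sites with (strictly) positive second coordinate, the second coordinate is non-decreasing along the path, so $z_2>0$ throughout; since $\mathcal{S}(p)\subseteq\{z_1\ge z_2\}$ and the path stays in $\mathcal{S}(p)$, we have $z_2\le z_1$; and since $\pi_{x,y}\in\Pi_3$ with endpoints in $R_+$ it never meets $R_-=\{z_1=z_2+N\}$, so $z_1<z_2+N$. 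Hence each such $z$ is either on the diagonal $\{z_1=z_2\}$, where $p(z)=2=p_{\textup{half}}(z)$, or satisfies $z_2\in[z_1-N+1,z_1-1]$ together with $z_1>z_2\ge 0$, where $p(z)=1=p_{\textup{half}}(z)$. Summing $\omega_z^{p}=\omega_z^{p_{\textup{half}}}$ over $z\in\pi_{x,y}$ yields the asserted $\mathbf P$-a.s.\ identity of passage times.

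For the stochastic-domination consequence, note that the $\Pi_3$-restricted strip last passage time from $x$ to $y$ is $\max_{\pi_{x,y}\in\Pi_3}G_p(\pi_{x,y})$; by the identity each term equals $G_{p_{\textup{half}}}(\pi_{x,y})$, and by the same three constraints any such path lies in $\mathcal{S}(p_{\textup{half}})=\{z_1\ge z_2\ge 0\}$, hence competes for the half-quadrant last passage time, so $G_{p_{\textup{half}}}(\pi_{x,y})\le G_{p_{\textup{half}}}(x,y)$. Thus under $\mathbf P$ the $\Pi_3$-restricted strip last passage time is pointwise at most the half-quadrant last passage time, which gives the domination; the case $x,y\in R_-$ follows by the same argument after applying the reflection exchanging the two boundaries of the strip (permissible since $\alpha=\beta=\frac12$), which sends the $R_-$ boundary to the diagonal of a half-quadrant. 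I do not anticipate a genuine obstacle here: the whole content is the bookkeeping in the geometric claim, whose only delicate point is to keep all three constraints in force simultaneously so as never to land on the $R_-$ boundary, the single place in the relevant region where $p$ and $p_{\textup{half}}$ disagree.
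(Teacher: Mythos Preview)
Your proof is correct and follows exactly the approach the paper has in mind: the paper does not give a formal proof but only records the one-line observation that ``each path in $\Pi_3$ starts and ends at the same boundary part $R_+$ or $R_-$ of $\partial R$, but does not touch the opposite side,'' and you have simply fleshed this out carefully, including an explicit coupling and the verification that every site on such a path lies in the region where $p$ and $p_{\textup{half}}$ coincide. The reduction to $\alpha=\beta=\tfrac12$ and the symmetry argument for $R_-$ are also handled as the paper intends.
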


We remark that last passage times in the corner growth model on the half-quadrant, in the following denoted  by $G_{\textup{half}}(x,y)$ for $x,y\in R_+$, were investigated in \cite{BBCS:Halfspace}. The authors show that the last passage time between two points in $R_+$ of distance $n$  has fluctuations of order $n^{1/3}$, and that the limiting distribution is given by the Tracy-Widom GOE distribution. We will use the following moderate deviation estimate.

\begin{lemma}\label{lem:DominationByHalfspacePointwise} There exists a constant $c>0$ such that the geodesic $\pi^{\ast}_{x,y}$ in the environment $(\omega_x^{p_{\textup{half}}})$ with respect to $p_{\textup{half}}$ from \eqref{def:StationaryEnvironment} connecting $x,y \in \{(n,n) \colon n\in \N\}$ with $y \succeq x$ satisfies
\begin{equation}\label{eq:ModerateDeviationsStationaryBound}
\P\left( G_{\textup{half}}(\pi^{\ast}_{x,y}) > 2 |y-x|_1 + \lambda  |y-x|_1^{1/3} \right)  \leq  \exp(-c \lambda^{\frac{3}{2}})
\end{equation} for any $1 \leq \lambda \leq  |y-x|_1^{1/3}$. 
\end{lemma}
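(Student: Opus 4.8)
The plan is to reduce the moderate deviation bound for the half-quadrant last-passage time $G_{\textup{half}}(x,y)$ between diagonal points to the corresponding bound for the fully stationary corner growth model on $\Z^2$, for which such an upper-tail estimate is available in \cite{B:ModerateDeviationsStationary} (and is the analogue of the classical moderate deviation bounds for i.i.d.\ Exponential weights in the plane). First I would set $x=(a,a)$, $y=(b,b)$ with $b\ge a$ and write $n=|y-x|_1 = 2(b-a)$. The diagonal entries on $\{x_1=x_2\ge 0\}$ have rate parameter $2$, i.e.\ Exponential mean $1/2$ (rate $2$), while the bulk entries $\{x_1>x_2\ge 0\}$ have rate $1$; this is exactly the half-space stationary setup of Lemma \ref{lem:StationarityLPPprocess} restricted to the half-quadrant, so Burke's property along down-right paths in the half-quadrant holds. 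The key structural fact I would use is that a geodesic from a diagonal point to a diagonal point in the half-quadrant uses the boosted diagonal weights only near its two endpoints (it must leave the diagonal and return), so up to a bounded number of these heavy Exponential-$\frac12$ variables, $G_{\textup{half}}(\pi^\ast_{x,y})$ is comparable to a bulk passage time; more precisely I would compare it with the stationary CGM on $\Z^2$ in which the boundary of the quadrant is replaced by stationary boundary weights, for which \cite{B:ModerateDeviationsStationary} gives $\P(G_{\textup{stat}}(0,(m,m)) > 4m + \lambda m^{1/3}) \le \exp(-c\lambda^{3/2})$ in the relevant moderate deviation range $1\le\lambda\le m^{1/3}$ (note $\E[G]\sim 4m$ for a point at $\ell_1$-distance $2m$ in a mean-$1$ environment, matching $2|y-x|_1$).

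The key steps, in order, would be: (i) identify $G_{\textup{half}}(x,y)$ for diagonal $x,y$ with a last-passage time in a model whose boundary is the stationary half-space boundary of Lemma \ref{lem:StationarityLPPprocess}; (ii) dominate this, up to the finitely many endpoint diagonal weights whose contribution is $O(\log n)$ with overwhelming probability (sum of a few Exponentials), by the passage time $G_{\textup{stat}}(0,(b-a,b-a))$ of the stationary CGM on $\Z^2$ — or, alternatively, invoke the moderate deviation estimate for the half-space stationary CGM directly from \cite{BBCS:Halfspace} if that is what is cited; (iii) apply the upper-tail moderate deviation bound $\P(G > \E G + \lambda n^{1/3}) \le \exp(-c\lambda^{3/2})$ from \cite{B:ModerateDeviationsStationary} (stationary CGM) valid on $1\le\lambda\le n^{1/3}$; (iv) absorb the $O(\log n)$ endpoint correction into the $\lambda n^{1/3}$ budget, which is harmless since $\lambda n^{1/3} \ge n^{1/3} \gg \log n$ in the stated range, at the cost of adjusting $c$ and restricting to $N$ (equivalently $n$) large. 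Throughout, $\E[G_{\textup{half}}(x,y)] = 2|y-x|_1 + O(|y-x|_1^{1/3})$ by the shape theorem for the half-quadrant, so centering at $2|y-x|_1$ rather than at the true mean only shifts $\lambda$ by a constant.

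The main obstacle I anticipate is step (ii): carefully controlling the effect of the boosted diagonal weights near the two endpoints and making the comparison with a genuinely stationary (hence exactly solvable / amenable to the cited moderate deviation input) model rigorous. One must argue that the geodesic cannot accumulate more than $O(1)$ diagonal sites near each endpoint with high probability — or, more robustly, that replacing all diagonal weights by their bulk counterparts changes the passage time by at most the sum of the (at most two relevant) excess Exponential variables, which is $O(\log n)$ except with probability $\exp(-c\lambda^{3/2})$ — and then invoke monotonicity of last-passage times in the weights (the coupling \eqref{eq:monoticityGeodesics}) together with the half-space Burke property to land exactly in the setting covered by \cite{BBCS:Halfspace,B:ModerateDeviationsStationary}. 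Once that reduction is in place, the tail bound is immediate from the cited moderate deviation estimate and a rescaling of $\lambda$.
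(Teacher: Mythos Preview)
Your step (ii) has a genuine gap. First, a sign slip: in the paper's convention $p(x)$ is the \emph{mean} of $\omega^p_x$, so the diagonal carries Exponential-$\tfrac12$ (mean $2$) weights, not mean-$\tfrac12$ weights. These are \emph{heavier} than the bulk. Consequently the assertion that the geodesic from $(a,a)$ to $(b,b)$ ``uses the boosted diagonal weights only near its two endpoints'' is not justified: at the critical boundary parameter the geodesic has no drift away from the diagonal and can (and typically will) touch it at order $(b-a)^{1/3}$ many sites spread across the whole stretch, not just $O(1)$ sites near the endpoints. So the $O(\log n)$ correction you budget for the diagonal contribution is not enough, and the monotone comparison you sketch does not land in the stationary CGM on $\Z^2$, whose heavy boundary lives on the \emph{axes}, not on the diagonal --- these are not directly comparable by the weight-coupling \eqref{eq:monoticityGeodesics}.

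The paper circumvents this entirely. It invokes a distributional identity (Lemma~6.1 of \cite{BBCS:Halfspace}): for diagonal endpoints, $G_{\textup{half}}(x,y)$ has the same law as $G_{\textup{swap}}(0,y-x)$, where $p_{\textup{swap}}$ puts the mean-$2$ weights on the horizontal axis $\{x_2=0\}$ instead of the diagonal. Once the heavy boundary is on the axis, the environment is pointwise dominated by the stationary CGM on $\Z^2$ (heavy weights on both axes), and then Theorem~2.5 of \cite{B:ModerateDeviationsStationary} gives the desired upper-tail bound $\P(G_\Z(0,n) > 4n + \lambda n^{1/3}) \le \exp(-c\lambda^{3/2})$ directly. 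The swap identity is the missing idea in your plan; it replaces the delicate (and here incorrect) geometric control of diagonal excursions by an exact equality in law.
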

\begin{proof} Consider the corner growth model with respect to the environment given by the function $p_{\textup{swap}}:\Z^{2} \rightarrow [0,\infty)$ with 
\begin{equation*}%\label{def:HalfspaceEnvironmentTilted}
p_{\textup{swap}}(x) := \begin{cases} 2 & \text{ if } x_1 \geq  0 \text{ and } x_2= 0 \\
1 & \text{ if } x_1 \geq x_2 > 0 \\
0 & \text{ otherwise} \, ,
\end{cases}
\end{equation*}
and denote by $G_{\textup{swap}}(x,y)$ the corresponding last passage time.
As observed in Lemma 6.1 of \cite{BBCS:Halfspace}, we have for all $x,y$ on the diagonal $\{(n,n) \colon n\in \N\}$ that
\begin{equation}\label{eq:EquivalenceHalfspaceShift}
G_{\textup{half}}(x,y) \ \stackon{=}{\tiny(d)}  \ G_{\textup{swap}}(0,y-x)
\end{equation}
holds,  i.e., instead of putting i.i.d.\ Exponential-$\frac{1}{2}$-random variables on the diagonal, it is equivalent to study the last passage time when placing  i.i.d.\ Exponential-$\frac{1}{2}$-random variables on the $x_1$-axis. Note that the last passage time in the environment $(\omega_x^{p_{\textup{swap}}})$ with respect to $p_{\textup{swap}}$ is stochastically dominated by the last passage time of the stationary corner growth model on $\Z^2$, which is defined as the CGM on $\Z^2$ in the environment $(\omega_x^{p_{\Z}})$~with 
\begin{equation*}%\label{def:StationaryLPP} 
p_{\Z}(x) := \begin{cases} 2 & \text{ if } x_1= 0 \text{ or }  x_2= 0\\
1 & \text{ if } x_1, x_2 > 0 \\
0 & \text{ otherwise}
\end{cases}
\end{equation*}  for all $x\in \Z^2$. Let $G_{\Z}(x,y)$ denote the corresponding last passage time between $x,y\in \Z^2$, and recall that we write $G(i,j)=G((i,i),(j,j))$ for $i,j\in \N$ with $i<j$. Theorem 2.5 in \cite{B:ModerateDeviationsStationary} states that there exist constants $c,n_0>0$, such that 
\begin{equation*}
\P\left( G_{\Z}(0,n) > 4  n + \lambda  n^{\frac{1}{3}} \right)  \leq  \exp\big(-c \lambda^{\frac{3}{2}}\big) \, .
\end{equation*} holds for all $n>n_0$ and $1 \leq \lambda \leq n^{1/3}$.  Using  \eqref{eq:EquivalenceHalfspaceShift}, this implies \eqref{eq:ModerateDeviationsStationaryBound}.
\end{proof}

It remains to convert the path-wise bound in Lemma \ref{lem:DominationByHalfspacePointwise} into a uniform bound. % on all paths in $\Pi_3$ in order to conclude Lemma \ref{lem:Category3}.

\begin{proof}[Proof of Lemma \ref{lem:Category3}] Note that by Lemma \ref{lem:DominationByHalfspace}, it suffices to show \eqref{eq:Category3Bound}  for all $x,y \in R_+$ with respect to the environment according to $p_{\textup{half}}$ from \eqref{def:HalfspaceEnvironment} instead of $p$ from \eqref{def:EnvironmentStripe}. By symmetry, we get a similar bound for $x,y \in R_{-}$. We will now  follow the strategy of Proposition 10.5 in~\cite{BSV:SlowBond}.  By Theorem 1.5 in \cite{BBCS:Halfspace}, which bounds the fluctuations of the last passage times on the half-quadrant between two points on the diagonal, there exists some $c>0$ such that
\begin{equation}\label{eq:ExpectationHalfspace}
-c n^{\frac{1}{3}} \leq \E\left[ G_{\textup{half}}(0, n) \right] - 4n \leq c n^{\frac{1}{3}}
\end{equation} 
holds for all $n \in \N $. Note that we have the bound
\begin{equation}\label{eq:MinMaxDecomposition}
G_{\textup{half}}(0,3n) \geq \max_{i,j \in [n,2n], i<j} G_{\textup{half}}(0,i) + G_{\textup{half}}(i,j)  +  G_{\textup{half}}(j,3n) 
\end{equation} for all $n\geq 1$. For all $x,y$ with $y \succeq x$, we let in the following
\begin{equation*}%\label{eq:CenteredLPP}
\tilde{G}_{\textup{half}}(x,y) := G_{\textup{half}}(x,y) - \E\left[G_{\textup{half}}(x,y)\right]
\end{equation*} be the\textbf{ normalized last passage time} between $x$ and $y$. From \eqref{eq:ExpectationHalfspace} and \eqref{eq:MinMaxDecomposition}, we get
\begin{equation}\label{eq:LowerBoundUniformG}
\tilde{G}_{\textup{half}}(0,3n) \geq \min_{ i \in [n,2n]} \tilde{G}_{\textup{half}}(0,i) +  \min_{ j \in [n,2n]} \tilde{G}_{\textup{half}}(j,3n) + \max_{\substack{i,j \in [n,2n]}} \tilde{G}_{\textup{half}}(i,j)  -  8cn^{\frac{1}{3}}
\end{equation} for all $n\geq 1$.  Now for fixed  $\theta>0$, let $F_1,F_2$ and $F_3$ be the events 
\begin{align*}
F_1 &:= \left\{  \min_{ i \in [n,2n]} \tilde{G}_{\textup{half}}(0,i) \geq -\frac{\theta}{32} n^{\frac{1}{3}}  \right\}     \\
F_2 &:= 	\left\{ \min_{ j \in [n,2n]} \tilde{G}_{\textup{half}}(j,3n) \geq -\frac{\theta}{32} n^{\frac{1}{3}}  \right\}	\\
F_3 &:= 	\left\{ \max_{i,j \in [n,2n]} \tilde{G}_{\textup{half}}(i,j) \geq \frac{\theta}{4} n^{\frac{1}{3}} \right\} \, .
\end{align*}
As a consequence of Theorem 4.2(iii) in \cite{BGZ:TemporalCorrelation}, which states a lower bound on the last passage times between sufficiently far enough points when restricting the geodesics to stay within a parallelogram, we see that on the environment $p$ from \eqref{def:EnvironmentStripe} with $\alpha=\beta=1$, there exists constants $\bar{c},t_0>0$ such that 
\begin{equation}
\P( \min_{i,j \in [3n] \colon i\leq j-n} G_{p}(i,j) - 2(j-i) \geq -t n^{1/3}) \geq 1- \exp(-\bar{c} t^{1/3})
\end{equation} for all $t>t_0$. Together with Lemma \ref{lem:DominationByHalfspace} in order to bound $\tilde{G}_{\textup{half}}(\cdot,\cdot)$ using $G_{p}(\cdot,\cdot)$, and \eqref{eq:ExpectationHalfspace}, we see that
\begin{equation*}
\P(F_1 ) = \P(F_2 ) \geq \frac{1}{2}
\end{equation*} holds for all $n\geq 1$, provided that $\theta$ is sufficiently large. %, see also Proposition 12.2 in \cite{BSV:SlowBond} for the corresponding statement for Poissonian LPP. 
Choosing $\theta\geq 128 c$ large enough, and using the FKG-inequality together with \eqref{eq:LowerBoundUniformG}, we obtain for all $n \geq 1$ that
\begin{equation*}
\P\left( \tilde{G}_{\textup{half}}(0,3n) \geq \frac{\theta}{8} n^{\frac{1}{3}} \right) \geq \P(F_1  \cap F_2 \cap F_3 )\geq \P(F_1 )\P(F_2 )\P(F_3 ) \geq \frac{1}{4} \P(F_3 ) \, .
\end{equation*} Recall that it suffices by Lemma \ref{lem:DominationByHalfspace} to consider the environment according to $p_{\textup{half}}$ from \eqref{def:HalfspaceEnvironment} instead of $p$ from \eqref{def:EnvironmentStripe}. We apply now Lemma \ref{lem:DominationByHalfspacePointwise} for $n=\theta^{-1}N^{3/2}$ and $\lambda=\theta/8$ in order to bound $\P(F_3)$. Using \eqref{eq:ExpectationHalfspace} in order to bound the expectation of $\tilde{G}_{\textup{half}}(i,j)$ for $0<i<j<n$, and noting that we can replace the rectangle $R=R^n_N$ in \eqref{eq:Category3Bound} by $R^n_{n}$ without changing the statement, we conclude Lemma \ref{lem:Category3}. 
\end{proof}

\section{Mixing times in the triple point} \label{sec:MixingTimesTriplePoint}

In this part, we prove Theorem \ref{thm:Triple}. Our key tool is a duality result for the stationary corner growth model on the strip; see Section \ref{sec:StationaryLPP}. More precisely, similar to \cite{BCS:CubeRoot,P:Duality}, we relate the semi-infinite geodesics in the stationary CGM to the competition interface for the reversed process, which will again be a stationary CGM. The arguments used in the proof of Theorem \ref{thm:MaxCurrent} allow us to estimate the time it takes until a semi-infinite geodesic hits the boundary of the strip. We then couple the stationary CGM to the TASEP  in the triple point, where all sites are initially occupied by second class particles. We bound the time until all second class particles have left the segment by the number of steps until a particular semi-infinite geodesics touches both boundaries of the strip. \\

Recall the stationary CGM in environment $(\omega_x^{p_{\textup{stat}}})_{x \in \Z^2}$ with $p_{\textup{stat}}$ from \eqref{def:StationaryEnvironment}. We let $\pi^{\ast}_{(1,0)}=(v^{\ast}_1=(1,0),v^{\ast}_2,\dots)$ be the semi-infinite geodesic starting from $(1,0)$, and set
\begin{equation}
\nu^{n}_{\textup{hit}} := \inf\{ j \geq n \colon v^{\ast}_j  \in \{ (m,m),(m+N,m) \colon m \in \N \} \}
\end{equation} as the first generation after reaching $\mathbb{L}_n$ at which the semi-infinite geodesic touches the boundary of the strip. Recall that Lemma \ref{lem:ExistenceSemiInfinite} ensures the existence and uniqueness of $\pi^{\ast}_{(1,0)}$, and note that the same arguments as in the proof of Lemma \ref{lem:ExistenceSemiInfinite} ensure that $\nu^{n}_{\textup{hit}}$ is almost surely finite for every $n\in \N$. The following proposition quantifies the first  generation where the boundary is hit in $\pi^{\ast}_{(1,0)}$ after reaching $\mathbb{L}_n$. 

\begin{proposition}\label{pro:BoundaryHittingStationary} For all $\varepsilon>0$, there exist constants  $\tilde{c}=\tilde{c}(\varepsilon)>0$ and $N_0\in \N$ such that 
\begin{equation}\label{eq:BoundaryHittingStationary}
\P\left( \nu^{n}_{\textup{hit}} - n \leq \tilde{c}N^{\frac{3}{2}} \right) \geq 1 - \varepsilon
\end{equation} holds for all $n \geq N \geq N_0$.
\end{proposition}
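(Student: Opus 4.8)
The strategy is to exploit the duality between semi-infinite geodesics in the stationary CGM and competition interfaces, which reduces the statement to an exit-time estimate of the type already established in Section~\ref{sec:MixingTimesMaxCurrent}. Concretely, I would first set up the duality: consider the stationary CGM in environment $(\omega_x^{p_{\textup{stat}}})$, and reverse it through the down-right path along $\mathbb{L}_n$ (or more naturally through a suitable ``staircase'' separating the quadrant below from the region above). By Lemma~\ref{lem:StationarityLPPprocess}, the increments of $G_{\textup{stat}}$ along any down-right path in $\mathcal{S}$ are i.i.d.\ Exponential-$\tfrac12$, so the reversed last-passage environment is again a stationary CGM on the strip with the same law, exactly as in the $\Z^2$ case treated in \cite{BCS:CubeRoot,P:Duality}. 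Under this reversal the semi-infinite geodesic $\pi^\ast_{(1,0)}$ corresponds to a competition interface for the reversed stationary CGM, and the event that $\pi^\ast_{(1,0)}$ touches a boundary line $\{x_1=x_2\}$ or $\{x_1-N=x_2\}$ at generation $\nu^n_{\textup{hit}}$ translates into the event that the competition interface of the reversed process has reached the boundary of the strip --- i.e.\ that the anti-diagonal $\mathbb{L}_{\nu^n_{\textup{hit}}}$ is monochromatic in the sense of Section~\ref{sec:CompetitionInterfaces}.

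Once the problem is phrased in terms of a competition interface hitting the boundary, I would run the same machinery used to prove Theorem~\ref{thm:MaxCurrent}. Namely, by Lemma~\ref{lem:Coalescence}, a sufficient condition for $\mathbb{L}_{n+k}$ to be monochromatic is that the two geodesics $\pi^\ast_{a_1,a_4}$ and $\pi^\ast_{a_2,a_3}$ between the corners of the rectangle $R_n^k$ intersect, and Proposition~\ref{pro:CoalescenceTimes} gives that for $k = m\tilde{c}N^{3/2}$ this coalescence occurs with probability at least $1-\exp(-Cm)$. Taking $m$ a large enough constant depending on $\varepsilon$ then yields $\P(\nu^n_{\textup{hit}} - n > \tilde{c}N^{3/2}) \le \exp(-Cm) < \varepsilon$, provided $N \ge N_0$. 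The one point needing care is that Proposition~\ref{pro:CoalescenceTimes} is stated for the environment $p$ of \eqref{def:EnvironmentStripe} with some $\alpha,\beta\ge\tfrac12$, whereas here the relevant environment is $p_{\textup{stat}}$ (equivalently, the reversed stationary environment); but $p_{\textup{stat}}$ is uniformly bounded with the same support as $p$ for $\alpha=\beta=\tfrac12$, and by monotonicity \eqref{eq:monoticityGeodesics} together with the fact that rescaling the weights by a constant does not affect which paths are geodesics or the coalescence structure, the coalescence estimate transfers verbatim. I would also invoke the extension of Lemma~\ref{lem:ExistenceSemiInfinite} to uniformly bounded functions with the same support (explicitly noted after its proof) to guarantee that $\nu^n_{\textup{hit}}$ is well-defined and a.s.\ finite.

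The main obstacle I anticipate is making the duality between the semi-infinite geodesic and the reversed competition interface precise on the strip, including the bookkeeping at the two boundary lines: one must verify that the reversed stationary environment on the strip of width $N$ really does have the law $p_{\textup{stat}}$ (with the correct Exponential-$2$ weights on both diagonal boundaries and on the $x_2=0$ line), that the reversal exchanges the roles of ``semi-infinite geodesic started at $(1,0)$'' and ``competition interface emanating from the boundary,'' and that ``the geodesic hits a boundary line at generation $j$'' matches ``$\mathbb{L}_j$ is monochromatic for the reversed process'' up to at most an $O(N)$ shift (which is negligible against $\tilde{c}N^{3/2}$). This is the analogue on the strip of the $\Z^2$ duality of \cite{BCS:CubeRoot,P:Duality}, and while morally routine it requires carefully tracking the boundary effects; the rest of the argument is then a direct application of Lemma~\ref{lem:Coalescence} and Proposition~\ref{pro:CoalescenceTimes}. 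A small additional point is controlling the ``first generation after $n$'' phrasing: one should note that between $\mathbb{L}_n$ and $\mathbb{L}_{n+\tilde cN^{3/2}}$ the geodesic advances by exactly $\tilde cN^{3/2}$ anti-diagonals, so monochromaticity of $\mathbb{L}_{n+\tilde cN^{3/2}}$ immediately gives $\nu^n_{\textup{hit}} \le n+\tilde cN^{3/2}$.
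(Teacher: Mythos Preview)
Your proposal is correct and follows essentially the same approach as the paper: reverse the stationary CGM on the strip (the paper makes this precise via Lemmas~\ref{lem:ReversedProcessStationaryLPP} and~\ref{lem:CompetitionInterfaceReversed}, which establish exactly the duality you describe between the semi-infinite geodesic and a competition interface in the reversed environment), then apply Lemma~\ref{lem:Coalescence} and Proposition~\ref{pro:CoalescenceTimes} with $m$ chosen so that $\exp(-Cm)<\varepsilon$. Your anticipated obstacle---tracking the boundary bookkeeping in the reversal---is precisely what the paper handles in those two lemmas, and your remark that on the relevant rectangle the reversed environment coincides in law with $p$ for $\alpha=\beta=\tfrac12$ (so Proposition~\ref{pro:CoalescenceTimes} applies directly) is also how the paper proceeds.
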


In order to show Proposition \ref{pro:BoundaryHittingStationary}, we introduce a reversed process of the stationary CGM on the strip; see also Section 4.1 in \cite{BCS:CubeRoot} for a similar process for the stationary CGM on $\Z^2$. Recall that $G_{\textup{stat}}$ denotes the last  passage time in the environment $\omega^{p_{\textup{stat}}}=(\omega_x^{p_{\textup{stat}}})_{x \in \Z^2}$ for $p_{\textup{stat}}$ from \eqref{def:StationaryEnvironment} on the strip of width  $N\in \N$. Further, recall $I_{i,j},J_{i,j}$ and $Y_{i,j}$ with respect to $G_{\textup{stat}}$ defined in Lemma \ref{lem:StationarityLPPprocess}. In the following, we fix some $M \geq N$ and denote by
\begin{equation*}
\mathcal{P}_M := \left\{  x\in \Z^2 \colon (1,0) \preceq x \preceq (M+N-1,M)  \right\} \cap \mathcal{S}(p_{\textup{stat}})
\end{equation*} the parallelogram which we get from all sites below $(N+M-1,M)$ in the support of $p_{\textup{stat}}$. We define an environment $(\omega^{\textup{rev}}_x)_{x \in \mathcal{P}_M}$ on $\mathcal{P}_M \subseteq \Z^2$ by 
\begin{equation}\label{def:ReversedEnvironmentStripe}
\omega^{\textup{rev}}_x := \begin{cases}
I_{N+M+1-x_1,M-x_2} & \text{ if }  x_1-M= x_2 \geq 1, \text{ or }  x_2=0 \text{ and } x_1 \in [N] \\
J_{N+M-x_1,M-x_2+1} & \text{ if }  x_1=x_2 \geq  1 \\
Y_{N+M-x_1,M-x_2}& \text{ otherwise }   \\
\end{cases}
\end{equation} for all $x=(x_1,x_2) \in \mathcal{P}_M$.  Let $G_{\textup{rev}}(i,j)$ denote the last passage time between $(1,0)$ and $(i,j)\in \mathcal{P}_M$ with respect to  $\omega^{{\textup{rev}}}=(\omega^{\textup{rev}}_x)_{x \in \mathcal{P}_M}$. We will  see in the next lemma that the last passage times $G_{\textup{rev}}$ describe a time-reversal of the stationary TASEP in the triple point, which is again a TASEP with open boundaries, and thus has a CGM representation.
% In the following, we fix some $x_{\textup{rev}} \in \Z^2$ with $x_{\textup{rev}}=(m+N,m)$ for some $m> N$, and let for all $y\in \Z^2$ with $(1,1) \preceq_{\c} y  \preceq_{\c} x_{\textup{rev}} $ the \textbf{reversed last passage time} $G^{\textup{rev}}(y)$ be given by 
%\begin{equation}\label{def:ReversedLPP}
%G_{\textup{rev}}(y) := G(1,x_0) - G(1,x_{\textup{rev}}  - y) \, .
%\end{equation} We denote by 
%\begin{equation}
%R_{x_0} := \{ y \in \Z^2 \colon (1,1) \preceq_{\c} y  \preceq_{\c} x_0)  \}  \cap \mathcal{S}
%\end{equation} the rectangle, in which we define the reversed process. Due to a version of Burke's property, which says that the time-reversal of the TASEP in the triple point has again the law of a TASEP in the triple point, but with particles traveling to the other direction, we have the following characterization of the reversed stationary time.
\begin{lemma}\label{lem:ReversedProcessStationaryLPP} The collection of random variables $(\omega^{\textup{rev}}, G_{\textup{rev}} )$ has the same law as $ (\omega^{p_{\textup{stat}}}, G_{\textup{stat}}) $ when restricted to $\mathcal{P}_M$. Moreover, the last passage times $G_{\textup{rev}}$ in $\mathcal{P}_M$ satisfy %recursion
%\begin{equation}\label{eq:RucursionLastPassage}
%G_{\textup{rev}}(x,y) = \max\big(G_{\textup{rev}}(x,y-(1,0)), G_{\textup{rev}}(x,y-(0,1))) +  \omega^{\textup{rev}}_y 
%\end{equation} and the
\begin{equation} \label{eq:ReversalsOfEachOther}
G_{\textup{rev}}(i,j) =  G_{\textup{stat}}((1,0),(N+M,M))- G_{\textup{stat}}((1,0),(N+M-i,M-j)) \, .
\end{equation}
\end{lemma}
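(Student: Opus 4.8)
The lemma makes two assertions: the deterministic reversal identity \eqref{eq:ReversalsOfEachOther}, and the distributional identity; I would establish the former first and deduce the latter from it, following the strategy of Lemmas~4.1--4.3 in \cite{BCS:CubeRoot} (and Lemmas~2.1--2.2 in \cite{BFO:HalfspaceStationary}).

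\textbf{The reversal identity.} Write $\rho(x_1,x_2):=(N+M-x_1,\,M-x_2)$; one checks that $\rho$ is an order-reversing involution of $\Z^2$ mapping $\mathcal{P}_M$ bijectively onto itself, sending the diagonal $\{x_1=x_2\}$ to $\{x_1-x_2=N\}$ and the axis $\{x_2=0,\,x_1\in[N]\}$ to the top row $\{x_2=M\}$ of $\mathcal{P}_M$, and sending the lattice neighbours $(i-1,j),(i,j-1)$ of a point to the neighbours $(i+1,j),(i,j+1)$ of its image. Set $C:=G_{\textup{stat}}((1,0),(N+M,M))$ and $H(i,j):=C-G_{\textup{stat}}((1,0),\rho(i,j))$ for $(i,j)\in\mathcal{P}_M$. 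I would prove $H\equiv G_{\textup{rev}}$ by induction along $\mathcal{P}_M$ in the direction of $\preceq$, starting at the source $(1,0)$. Since $G_{\textup{rev}}$ is the last-passage time from $(1,0)$ in the environment $\omega^{\textup{rev}}$, it is characterised on $\mathcal{P}_M$ as the unique solution of the recursion $G_{\textup{rev}}(i,j)=\max\{G_{\textup{rev}}(i-1,j),\,G_{\textup{rev}}(i,j-1)\}+\omega^{\textup{rev}}_{(i,j)}$ (with $G_{\textup{rev}}\equiv-\infty$ off $\mathcal{P}_M$), so it suffices to check that $H$ (a) agrees with $G_{\textup{rev}}$ on the lower-left boundary of $\mathcal{P}_M$, and (b) obeys the same recursion in the interior. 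Claim~(a) is immediate from \eqref{def:ReversedEnvironmentStripe}: along that boundary, $\omega^{\textup{rev}}_{(i,j)}$ is by construction exactly the increment $I_{\,\cdot\,,\,\cdot}$ or $J_{\,\cdot\,,\,\cdot}$ of $G_{\textup{stat}}$ across the edge of $\partial\mathcal{P}_M$ that $\rho$ identifies with the edge joining $(i,j)$ to its predecessor, so telescoping these increments recovers $H$ there. For~(b), applying the max-plus recursion for $G_{\textup{stat}}$ at $\rho(i,j)$ and using the neighbour-swap property of $\rho$ gives $H(i,j)=\min\{H(i+1,j),\,H(i,j+1)\}-\omega^{p_{\textup{stat}}}_{\rho(i,j)}$; rewriting this ``backward'' recursion in terms of the increment variables, and using $Y_{i,j}=\min(I_{i+1,j},J_{i,j+1})$ from \eqref{def:MinIncrement} together with the increment relations \eqref{eq:InductionStep}, turns it into the forward recursion $H(i,j)=\max\{H(i-1,j),\,H(i,j-1)\}+\omega^{\textup{rev}}_{(i,j)}$. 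This is the last-passage analogue of output-reversibility for an $M/M/1$ queue, and the computation is essentially the one already performed for \eqref{eq:InductionStep}.

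\textbf{The distributional identity.} By \eqref{def:ReversedEnvironmentStripe}, $(\omega^{\textup{rev}}_x)_{x\in\mathcal{P}_M}$ is assembled from the horizontal increments $I$, the vertical increments $J$, and the bulk minima $Y$ of $G_{\textup{stat}}$; I would peel off the anti-diagonals of $\mathcal{P}_M$ one layer at a time, as in Lemma~4.2 of \cite{BCS:CubeRoot}. Conditioning on $G_{\textup{stat}}$ restricted to everything up to a given anti-diagonal (which determines all $\omega^{\textup{rev}}_x$ for $x$ ``below''), Lemma~\ref{lem:StationarityLPPprocess} shows that the increments of $G_{\textup{stat}}$ along the next down-right boundary segment, together with the next layer of $Y$-variables, are mutually independent, the $I,J$-increments being Exponential-$\tfrac12$ and the $Y$'s Exponential-$1$. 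Iterating, $(\omega^{\textup{rev}}_x)_{x\in\mathcal{P}_M}$ is an independent family with mean $2$ on the strip-boundary sites of $\mathcal{P}_M$ and mean $1$ at the interior sites --- which is precisely the law of $(\omega^{p_{\textup{stat}}}_x)_{x\in\mathcal{P}_M}$ from \eqref{def:StationaryEnvironment}. Since $G_{\textup{rev}}$ is obtained from $\omega^{\textup{rev}}$ by the same deterministic last-passage recursion by which $G_{\textup{stat}}$ is obtained from $\omega^{p_{\textup{stat}}}$, the pair $(\omega^{\textup{rev}},G_{\textup{rev}})$ then has the same law as $(\omega^{p_{\textup{stat}}},G_{\textup{stat}})$ restricted to $\mathcal{P}_M$.

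\textbf{Main obstacle.} The delicate part is the deterministic identity: one must keep careful track of the reflection $\rho$ together with the three cases of \eqref{def:ReversedEnvironmentStripe}, since on the two strip diagonals and on the axis the reversed weight is a single increment ($I$ or $J$, with a one-step index shift) rather than a minimum $Y$, and one has to verify that with these boundary choices the backward recursion for $H$ closes up into the forward recursion for $G_{\textup{rev}}$ with no index mismatch. The algebraic heart --- that the interior reversed weight must be $Y_{i,j}=\min(I_{i+1,j},J_{i,j+1})$ for this to work --- is exactly the reversibility computation underlying \eqref{eq:InductionStep}; the rest is bookkeeping, which is why I would run the induction above rather than attempt a direct path-reflection argument, since reflecting paths alone does not suffice (the reversed weights are increments of $G_{\textup{stat}}$, not reflected copies of $\omega^{p_{\textup{stat}}}$).
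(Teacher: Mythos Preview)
Your proposal is correct and follows essentially the same approach as the paper's proof: both invoke Lemma~\ref{lem:StationarityLPPprocess} to identify the marginals and independence of $(\omega^{\textup{rev}}_x)$, and both establish \eqref{eq:ReversalsOfEachOther} by showing that the reflected function satisfies the last-passage recursion in the environment $\omega^{\textup{rev}}$. The paper compresses your induction into a one-line ``calculation'' pointing to \cite{BCS:CubeRoot}, and computes the forward recursion for $\bar G_{\textup{stat}}$ directly (this is equivalent to your forward recursion for $H$ after the coordinate change), rather than first deriving the backward recursion and then converting it; your detour through the backward form is harmless but unnecessary.
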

\begin{proof} By Lemma \ref{lem:StationarityLPPprocess}, the random variables $(\omega^{\textup{rev}}_x)$ have on $\mathcal{P}_M$ the same marginal distributions as $(\omega_x^{p_{\textup{stat}}})$, and are independent. %The recursion \eqref{eq:RucursionLastPassage} is an equivalent formulation of the last passage times independent environments for $\Z^2$. 
As in Section 4.1 of  \cite{BCS:CubeRoot}, a calculation shows~that $$\bar{G}_{\textup{stat}}(i,j):=G_{\textup{stat}}((1,0),(N+M,M))- G_{\textup{stat}}((1,0),(i,j)) $$ for $(i,j)\in \mathcal{P}_M$ satisfies the recursion
\begin{equation}\label{eq:NecessaryLPP}
\bar{G}_{\textup{stat}}(i,j) = \max(\bar{G}_{\textup{stat}}(i+1,j), \bar{G}_{\textup{stat}}(i,j+1) ) + \omega^{\textup{rev}}_{(N+M-i,M-j)} \, .
\end{equation} This gives \eqref{eq:ReversalsOfEachOther} since the property  \eqref{eq:NecessaryLPP} characterizes the last passage times $G_{\textup{rev}}$.
%We follow a similar argument as for Lemma 4.3 in \cite{BCS:CubeRoot}. Recall the notation from Lemma \ref{lem:StationarityLPPprocess}. We define an environment $(\omega_{\textup{rev}}^x)$ where 
%\begin{equation}\label{def:ReversedEnvironment}
%\omega_{\textup{rev}}^x = \begin{cases}  I_{(x^1_0-x^1, x^2_0-x^2)} & \text{ if } x^1=0 \\
%Y_{x^1_0-x^1, x^2_0-x^2} & \text{ otherwise}
%\end{cases}
%\end{equation} for all $1 \preceq x=(x^1,x^2) \preceq x_0$. By Lemma \ref{lem:StationarityLPPprocess}, we see that the recursion in \eqref{eq:RucursionLastPassage} holds for the last passage times with respect to $(\omega_{\textup{rev}}^x)$. Together with the fact that the boundary laws agree by construction, we see that the last passage times $(G_{\textup{rev}}(y))_{y \in R_{x_0}}$ must have the same law as a stationary corner growth model on the strip.
\end{proof}

Next, we relate  on $\mathcal{P}_M$ the semi-infinite geodesic $\pi^{\ast}_{(1,0)}=(v_1^{\ast},v_2^{\ast},\dots)$ in the stationary CGM for $\omega^{p_{\textup{stat}}}$ to competition interfaces in the reversed process for $\omega^{{\textup{rev}}}$.

\begin{lemma}\label{lem:CompetitionInterfaceReversed} Fix some $n,m \geq N$ with $n+m<M$. Let $(v^{\ast}_{n},\dots,v^{\ast}_{n+m})$ be the trajectory of $\pi^{\ast}_{(1,0)}$ restricted between the anti-diagonals $\mathbb{L}_{n}$ and $\mathbb{L}_{n+m}$. Let $(z^1,\dots,z^{m+1})$ denote the corresponding lattice path which we obtain by rotating the environment, i.e., we set
\begin{equation}\label{def:ReversedPathCompetition}
z^i := (N+M,M)- v^{\ast}_{n+m+1-i}
\end{equation} for all $i\in [m+1]$. Then for all $z^i \notin \partial \mathcal{P}_M$ with $i \in [m]$, the recursion \eqref{eq:CompetionInterfaceArgMin} holds, i.e.,
\begin{equation}\label{eq:InverseRecursion}
z^{i+1} = \textup{arg}\min\big( G_{\textup{rev}}(z^{i}+(0,1)), G_{\textup{rev}}(z^{i}+(1,0))\big)  \,  .
\end{equation} 
\end{lemma}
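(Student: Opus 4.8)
The plan is to read off \eqref{eq:InverseRecursion} directly from the duality identity \eqref{eq:ReversalsOfEachOther} of Lemma~\ref{lem:ReversedProcessStationaryLPP}, together with the last-passage recursion and the defining property of geodesics; this mirrors the computations for the stationary corner growth model on $\Z^2$ in Section~4.1 of \cite{BCS:CubeRoot} and in \cite{P:Duality}. Note first that by Lemma~\ref{lem:ReversedProcessStationaryLPP} the function $G_{\textup{rev}}(\cdot)=G_{\textup{rev}}((1,0),\cdot)$ is genuinely the last passage time for the environment $\omega^{\textup{rev}}$ on $\mathcal{P}_M$, so that \eqref{eq:InverseRecursion} is precisely the competition-interface recursion \eqref{eq:CompetionInterfaceArgMin} for that environment with source $(1,0)$.

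For the bookkeeping, write $C:=G_{\textup{stat}}((1,0),(N+M,M))$ and, for $i\in[m]$, put $k:=n+m+1-i$, so that $v^{\ast}_k$ ranges over the portion of $\pi^{\ast}_{(1,0)}$ strictly between $\mathbb{L}_n$ and $\mathbb{L}_{n+m}$, which lies in the interior of $\mathcal{P}_M$ because $n+m<M$. By \eqref{def:ReversedPathCompetition} we have $z^i=(N+M,M)-v^{\ast}_k$ and $z^{i+1}=(N+M,M)-v^{\ast}_{k-1}$; since consecutive vertices of the up-right path $\pi^{\ast}_{(1,0)}$ differ by $\eone$ or $\etwo$, this shows $z^{i+1}-z^i=v^{\ast}_k-v^{\ast}_{k-1}\in\{\eone,\etwo\}$, and in particular $z^{i+1}=z^i+\etwo$ exactly when $v^{\ast}_{k-1}=v^{\ast}_k-\etwo$ (and likewise with $\eone$). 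Because $z^i\notin\partial\mathcal{P}_M$, both $z^i+\etwo$ and $z^i+\eone$ lie in $\mathcal{P}_M$ and their images $v^{\ast}_k-\etwo$ and $v^{\ast}_k-\eone$ lie in $\mathcal{S}(p_{\textup{stat}})$, so \eqref{eq:ReversalsOfEachOther} applies and gives
\[
G_{\textup{rev}}(z^i+\etwo)=C-G_{\textup{stat}}\big((1,0),v^{\ast}_k-\etwo\big),\qquad G_{\textup{rev}}(z^i+\eone)=C-G_{\textup{stat}}\big((1,0),v^{\ast}_k-\eone\big).
\]
Hence the claimed recursion \eqref{eq:InverseRecursion} is equivalent to saying that $v^{\ast}_{k-1}$ is the predecessor of $v^{\ast}_k$ that \emph{maximises} $G_{\textup{stat}}((1,0),\cdot)$.

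This last statement is exactly the geodesic property. Since $\pi^{\ast}_{(1,0)}$ is a semi-infinite geodesic, its restriction between $(1,0)=v^{\ast}_1$ and $v^{\ast}_k$ is the finite geodesic $\pi^{\ast}_{(1,0),v^{\ast}_k}$, so $v^{\ast}_{k-1}$ is the vertex of this geodesic immediately before $v^{\ast}_k$. The last-passage recursion reads $G_{\textup{stat}}((1,0),v^{\ast}_k)=\omega^{p_{\textup{stat}}}_{v^{\ast}_k}+\max\{G_{\textup{stat}}((1,0),w)\colon w\in\{v^{\ast}_k-\eone,v^{\ast}_k-\etwo\}\cap\mathcal{S}(p_{\textup{stat}})\}$, every maximising path through $v^{\ast}_k$ passes through a predecessor attaining this maximum, and, the weights $\omega^{p_{\textup{stat}}}$ being continuous, the maximiser is almost surely unique; hence $v^{\ast}_{k-1}$ is that unique maximiser, and combining with the display above yields $z^{i+1}=\textup{arg min}\{G_{\textup{rev}}(z^i+\etwo),G_{\textup{rev}}(z^i+\eone)\}$, which is \eqref{eq:InverseRecursion}. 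The step I expect to require the most care is not an estimate but the bookkeeping: confirming that the $180^\circ$ rotation $z\mapsto(N+M,M)-z$ correctly matches the index reversal, and that excluding $z^i\in\partial\mathcal{P}_M$ is precisely what rules out the degenerate cases in which $v^{\ast}_k$ has only one admissible predecessor (i.e.\ $v^{\ast}_k$ on a strip boundary $\{x_1=x_2\}$, $\{x_1-N=x_2\}$, or on the bottom row $\{x_2=0\}$ of $\mathcal{S}(p_{\textup{stat}})$), where the geodesic step is deterministic and the corresponding competition-interface step would run along $\partial\mathcal{P}_M$; since the statement only asserts the recursion for interior $z^i$, nothing beyond the observation that both neighbours then lie in the support is needed.
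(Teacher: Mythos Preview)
Your proof is correct and follows essentially the same approach as the paper: both arguments combine the duality identity \eqref{eq:ReversalsOfEachOther} with the geodesic backward recursion (the paper cites this as equation~(2.3) in \cite{BCS:CubeRoot}) to convert the argmax for the predecessor of $v^{\ast}_k$ under $G_{\textup{stat}}$ into the argmin for the successor of $z^i$ under $G_{\textup{rev}}$. Your write-up is more explicit about the index bookkeeping and the role of the hypothesis $z^i\notin\partial\mathcal{P}_M$, but there is no substantive difference in the argument.
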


\begin{proof} Note that all $v_i^{\ast} \in \pi^{\ast}_{(1,0)}$ with $v^{\ast}_{i+1} \notin \partial \mathcal{P}_M$ satisfy the recursion
\begin{equation*}
v_{i}^{\ast}  = \begin{cases} v^{\ast}_{i+1}-(0,1) & \text{ if } G_{\textup{stat}}((1,0),v^{\ast}_{i+1}-(0,1))> G_{\textup{stat}}((1,0),v^{\ast}_{i+1}-(1,0)) \\
v^{\ast}_{i+1}-(1,0) & \text{ if } G_{\textup{stat}}((1,0),v^{\ast}_{i+1}-(0,1))< G_{\textup{stat}}((1,0),v^{\ast}_{i+1}-(1,0))\,  ; \end{cases}
\end{equation*} see for example equation (2.3) in \cite{BCS:CubeRoot}. Using \eqref{eq:ReversalsOfEachOther} in Lemma \ref{lem:ReversedProcessStationaryLPP}, this yields the recursion \eqref{eq:InverseRecursion} for all sites in the path $(z^1,\dots,z^{m+1})$  which are not contained in $\partial \mathcal{P}_M$.
\end{proof}

%We have now all tools in order to show Proposition \ref{pro:BoundaryHittingStationary}.

\begin{proof}[Proof of Proposition \ref{pro:BoundaryHittingStationary}] In the following, we take the setup from Lemma \ref{lem:CompetitionInterfaceReversed}. We fix $m=cC^{-1}\log(\varepsilon^{-1})N^{3/2}$ where the constants $c$ and $C$ are taken from Proposition \ref{pro:CoalescenceTimes} with $M=m+n+1$. We claim that with probability at least $1-\varepsilon$, the semi-infinite geodesic $\pi^{\ast}_{(1,0)}$ hits the boundary $\partial \mathcal{S}(p_{\textup{stat}})$ of the strip between the anti-diagonals $\mathbb{L}_n$ and $\mathbb{L}_{n+m}$. If $v_{n+m}^{\ast} \in \partial \mathcal{S}$, there is nothing to show. Otherwise, we follow the path $(z^1,\dots,z^{m+1})$ from \eqref{def:ReversedPathCompetition} between $\mathbb{L}_N$ and $\mathbb{L}_{N+m}$ in the environment $(\omega^{\textup{rev}}_{x})_{x \in \mathcal{P}_M}$ until we reach $\partial \mathcal{P}_M$. \\ 

Let $k$ be the first generation such that $z^k\in \partial \mathcal{S}$, and $k=\infty$ if no such $k$ exists. We claim that due to Lemma~\ref{lem:ReversedProcessStationaryLPP}, the path $(z^1,\dots,z^{\min(k,m+1)})$ has the law of a competition interface in $(\omega^{\textup{rev}}_{x})_{x \in \mathcal{P}_M}$. More precisely, we decompose $\mathbb{L}_{N+1}$ according to $z^1=(z_1^1,z_2^1) \in \mathbb{L}_{N+1}$ as
\begin{equation*}
\mathbb{L}^+_{N+1} := \{ (x_1,x_2) \in \Z^2 \colon x_1+x_2=N+1\text{ and }  x_1 \leq z_1^1 \}  
\end{equation*} and $\mathbb{L}^-_{N+1} := \mathbb{L}_{N+1}  \setminus \mathbb{L}^+_{N+1}  $. We construct the competition interface starting at $z^1$ until we reach $\partial \mathcal{S}$, using the recursive definition \eqref{def:CompetionInterface} with respect to the coloring $(\tilde{\Gamma}^+,\tilde{\Gamma}^-)$ for
\begin{align*}
\tilde{\Gamma}^+ &:= \left\{ x \in \mathcal{P}_M \colon |x|_1 >N \text{ and } \pi^{\ast}_{((1,0),x)} \cap \mathbb{L}^+_{N+1} \neq \emptyset \right\} \\
\tilde{\Gamma}^- &:= \left\{ x \in \mathcal{P}_M  \colon |x|_1 >N \text{ and } \pi^{\ast}_{((1,0),x)} \cap \mathbb{L}^-_{N+1} \neq \emptyset\right\} \, ,
\end{align*} 
i.e., we color $x$ according to the site where the geodesic from $(1,0)$ to $x$ passes through $\mathbb{L}_{N+1}$; see also Figure \ref{fig:SemiCompetition}. Note  that this definition agrees with the construction in Section~\ref{sec:CompetitionInterfaces} for a fixed initial growth interface. In particular, the recursion  \eqref{eq:CompetionInterfaceArgMin}, respectively \eqref{eq:InverseRecursion}, holds. Since by Lemma~\ref{lem:ReversedProcessStationaryLPP},  $(\omega^{\textup{rev}}_{x})_{x \in \mathcal{P}_M}$ is an environment of a stationary corner growth model on the strip,  Lemma~\ref{lem:Coalescence} ensures that the coalescence of all geodesics connecting $\mathbb{L}_{N}$ and $\mathbb{L}_{N+m}$ in the environment $(\omega_x^{\textup{rev}})_{x \in \mathcal{P}_M}$ is sufficient  for the event that the competition interface reaches the boundary before hitting $\mathbb{L}_{N+m}$. Apply Proposition~\ref{pro:CoalescenceTimes} to conclude.
\end{proof}

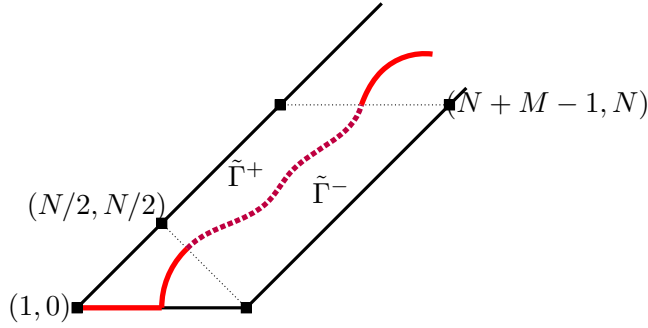
\begin{figure}
\begin{center}
\begin{tikzpicture}[scale=.45]

\draw[densely dotted] (0+2.5,0+2.5) -- (5,0);
\draw[densely dotted] (0+6,0+6) -- (11,6);

   \draw[line width=1.2pt] (0,0) -- (5,0);

   \draw[line width=1.2pt] (0,0) -- (9,9);
   \draw[line width=1.2pt] (5,0) -- (11.5,6.5);
   
%\draw[darkblue, line width =2 pt,densely dotted] (0,0) to[curve through={(0.4,0.3) ..(1.6,1) .. (2.4,1.35)..(2.95,2.65) }] (3,3);

%\draw[darkblue, line width =2 pt,densely dotted] (3,3) to[curve through={(4.2,3.2)..(5.6,3.1) .. (5.8,5.7)}] (6,6);

%\draw[darkblue, line width =2 pt,densely dotted] (6,6) to[curve through={(6.5,6.3) ..(7.8,7) .. (8.4,7.35)..(8.95,8.65) }] (9,9);

%\draw[blue, line width =2 pt,densely dotted] (2.5,-2.5) to[curve through={(1+2.5,1.6-2.5) .. (1.35+2.5,2.4-2.5)..(2.65+2.5,2.95-2.5) }] (2.5+3,-2.5+3);

%\draw[blue, line width =2 pt,densely dotted] (2.5+3,-2.5+3) to[curve through={(5.2,1.5)..(5.6,3.1)}] (2.5+6,-2.5+6);

%\draw[blue, line width =2 pt,densely dotted] (2.5+6,-2.5+6) to[curve through={(6.5+2.5,6.7-2.5) .. (8.4+2.5,9.15-2.5) }] (11.5,6.5);

%\visible<2>{

\draw[red, line width =2 pt] (0,0) to (2.5,0);

\draw[red, line width =2 pt] (2.5,0) to[curve through={(2.5+0.65,2.5-0.85)}] (2.5+0.75,2.5-0.75);

\draw[red, line width =2 pt] (8.4,6) to[curve through={(9-0.5,9-2.75)..(9,7)}] (9+1.5,7.5);

\draw[purple, line width =2 pt,densely dotted] (2.5+0.75,2.5-0.75) to[curve through={(2.5+0.75+0.1,2.5-0.75+0.1)..(5.6,3.1)..(5.68,3.18) ..(5.6+0.7,3.1+0.9)..(8.37,5.9)}] (8.4,6);

%}
   
	%\filldraw [fill=black] (2.5-0.15,-2.5-0.15) rectangle (2.5+0.15,-2.5+0.15);   
	
		\node[scale=1] (x1) at (-1.1,0){$(1,0)$} ;
	\node[scale=1] (x1) at (-2.4+3,3){$(N/2,N/2)$} ;	
	%	\node[scale=1] (x3) at (9+0.7,9){$a_4$} ;
		\node[scale=1] (x4) at (9+4.9,6){$(N+M-1,N)$} ;
		
		\node[scale=1] (x1) at (5,4){$\tilde{\Gamma}^+$} ;		
		\node[scale=1] (x1) at (7.5,3.5){$\tilde{\Gamma}^-$} ;

 	\filldraw [fill=black] (2.5+2.5-0.15,-2.5+2.5-0.15) rectangle (2.5+2.5+0.15,-2.5+2.5+0.15);   
 	\filldraw [fill=black] (11-0.15,6-0.15) rectangle (11+0.15,6+0.15);

 %	\filldraw [fill=red] (5.6-0.2,3.1-0.2) rectangle (5.6+0.2,3.1+0.2);     
 	
	\filldraw [fill=black] (-0.15,-0.15) rectangle (0.15,0.15);   
 	\filldraw [fill=black] (2.5-0.15,2.5-0.15) rectangle (2.5+0.15,2.5+0.15);   
 	\filldraw [fill=black] (6-0.15,6-0.15) rectangle (6+0.15,6+0.15);

	\end{tikzpicture}	
\end{center}	
	\caption{ \label{fig:SemiCompetition}Visualization of the semi-infinite geodesic in the proof of Proposition \ref{pro:BoundaryHittingStationary} started from $(1,0)$. On the dotted purple part, the semi-infinite geodesic corresponds to a competition interface in the reversed process.}
 \end{figure}

Before coming to the proof of Theorem \ref{thm:Triple}, we require the following two lemmas on the coupling and crossing  of  geodesics. 

\begin{lemma}\label{lem:CoalescenceStationaryAllFull} Fix $M\in \N$. Let $\pi_{1,M}^{\ast}=(\pi_{1,M}^{\ast}(m))_{m\in [2M-1]}$ be the geodesic between $(1,0)$ and $(M,M)$ in environment $(\omega^p_x)$ for $p$ from \eqref{def:EnvironmentStripe} with boundary parameters $\alpha=\frac{1}{2}$ and $\beta=\frac{1}{2}$.  Let  $\pi^{\ast}_{\textup{stat}}=(\pi_{\textup{stat}}^{\ast}(m))_{m\in \N}$  be the semi-infinite geodesic in environment $(\omega_x^{p_{\textup{stat}}})$ with $p_{\textup{stat}}$ from \eqref{def:StationaryEnvironment}, starting at $(1,0)$. Then there exists a coupling of the two environments such that for all $\varepsilon>0$, we find some constant $c^{\prime}(\varepsilon)>0$ such that for all $M> 2c^{\prime}N^{\frac{3}{2}}$
\begin{equation}\label{eq:CoalescenceInHomo}
 \liminf_{N \rightarrow \infty} \P\big( \pi_{1,M}^{\ast}(m) = \pi^{\ast}_{\textup{stat}}(m)  \text{ for all } m \in [ c^{\prime}N^{\frac{3}{2}} , M- c^{\prime}N^{\frac{3}{2}}] \big) \geq 1- \varepsilon \, .
\end{equation}
\end{lemma}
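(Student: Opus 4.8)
The plan is to couple the environments $(\omega^p_x)$ and $(\omega^{p_{\textup{stat}}}_x)$ so that the two geodesics agree with high probability away from the two endpoints. The first step is to set up the coupling: both environments have the same support $\mathcal{S}(p)$, and in the interior of the strip both have i.i.d.\ Exponential-$1$ weights, so we can use the same weights there. The boundary weights differ (mean $2$ on the diagonal and anti-diagonal for $p_{\textup{stat}}$, versus mean $2$ only for $\alpha=\beta=\frac12$ --- in fact here $\alpha^{-1}=\beta^{-1}=2$ so the diagonal/anti-diagonal weights of $p$ also have mean $2$), and $p_{\textup{stat}}$ additionally has a row of weights on $x_2=0$, $x_1\in[N]$ with mean $2$. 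Since in the triple point $\alpha=\beta=\frac12$ we have $p(x)=2$ on $\{x_1=x_2\ge1\}\cup\{x_1-N=x_2\}$ as well, we can couple those weights to be identical; the only genuine difference is the extra bottom row in $p_{\textup{stat}}$ and the fact that $G_{\textup{stat}}$-geodesics may start from $(1,0)$ rather than from a fixed growth interface. So the plan is to take $(\omega^p_x)$ with the all-empty initial growth interface $\gamma_0$ (which starts at $(0,0)$ and goes up-right along the anti-diagonal), couple all interior and diagonal/anti-diagonal weights between $p$ and $p_{\textup{stat}}$, and treat the extra bottom-row weights of $p_{\textup{stat}}$ as an independent add-on.

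Next I would invoke the coalescence machinery. By Lemma~\ref{lem:ExistenceSemiInfinite} (extended to the bounded environment $p_{\textup{stat}}$ as remarked after its proof), $\pi^{\ast}_{\textup{stat}}$ exists and is unique; and by Lemma~\ref{lem:OrderingGeodesics}/Lemma~\ref{lem:Coalescence}, if all geodesics between $\mathbb{L}_N$ and $\mathbb{L}_{N+k}$ coalesce (for $k$ of order $N^{3/2}$), then in particular the finite geodesic $\pi^{\ast}_{1,M}$ and the semi-infinite geodesic $\pi^{\ast}_{\textup{stat}}$ must pass through a common point $z$ on $\mathbb{L}_{N+k}$, hence agree between that point and their further coalescence. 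Applying this at both ends of the strip --- once just after $\mathbb{L}_N$ and once just before $\mathbb{L}_{2M-1}$, i.e.\ near $(M,M)$ --- gives the claim. Concretely, I would apply Proposition~\ref{pro:CoalescenceTimes} with $m$ a large constant (depending on $\varepsilon$) in the two environments separately: with probability $\ge 1-\varepsilon/2$ all geodesics between $\mathbb{L}_{N+1}$ and $\mathbb{L}_{N+c'N^{3/2}}$ coalesce in the $p$-environment, and likewise for the reversed range near $(M,M)$ (using the reversal symmetry / Lemma~\ref{lem:ReversedProcessStationaryLPP} to handle the top end, or simply directly since $M-c'N^{3/2}$ is still deep inside the strip). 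Once coalescence holds on both ends, the two geodesics share a point in $\mathbb{L}_{N+c'N^{3/2}}$ and a point in $\mathbb{L}_{2M-1-c'N^{3/2}}$, and since between those two points both are the unique geodesic in the common (coupled) interior environment, they coincide on $[c'N^{3/2},\,M-c'N^{3/2}]$, which is exactly \eqref{eq:CoalescenceInHomo}.

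The main obstacle I anticipate is the bottom-row discrepancy: the semi-infinite geodesic $\pi^{\ast}_{\textup{stat}}$ starts at $(1,0)$ and may travel along the weighted bottom row $\{x_2=0, x_1\in[N]\}$ before climbing into the strip, whereas $\pi^{\ast}_{1,M}$ in the $p$-environment starts at $(1,0)$ too but there $p$ vanishes on that row except we are allowed $p(x)=\alpha^{-1}$ only on the true diagonal. Wait --- actually in \eqref{def:EnvironmentStripe} the relevant starting structure is the growth interface $\gamma_0$, not the point $(1,0)$; so strictly I must compare a geodesic from the interface $\gamma_0$ to one from $(1,0)$. The resolution is that these differ only within the first $O(N)$ anti-diagonals (the ``burn-in'' near the corner), which is negligible compared to $c'N^{3/2}$, and Lemma~\ref{lem:OrderingGeodesics} shows that once both geodesics have entered the bulk of the strip past $\mathbb{L}_{N}$ they are squeezed between the same pair of extremal geodesics, so coalescence past $\mathbb{L}_{N+c'N^{3/2}}$ is unaffected by how they started. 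I would spell this out by noting that any geodesic reaching $\mathbb{L}_{N+c'N^{3/2}}$, regardless of its source in the first $N$ anti-diagonals, must pass through the coalescence point guaranteed by Proposition~\ref{pro:CoalescenceTimes} applied to the rectangle $R_{N}^{c'N^{3/2}}$ (whose left edge $\mathbb{L}_N$ every such geodesic crosses). Thus the bottom-row issue only affects the excluded initial segment $[1,c'N^{3/2}]$ and causes no trouble for the stated range.
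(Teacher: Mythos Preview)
Your proposal is correct and follows essentially the same approach as the paper: couple the two environments to agree everywhere except on the bottom row $\{x_2\le 0\}$, then apply Proposition~\ref{pro:CoalescenceTimes} once near $\mathbb{L}_{N}$ and once near $\mathbb{L}_{M}$ to force both geodesics through common points, after which uniqueness in the shared interior environment gives agreement on $[c'N^{3/2},\,M-c'N^{3/2}]$. Your detour via Lemma~\ref{lem:ReversedProcessStationaryLPP} for the top end is unnecessary---the paper simply applies Proposition~\ref{pro:CoalescenceTimes} directly to the rectangle near $(M,M)$, which is legitimate since the coalescence event depends only on the (shifted, identically distributed) environment in that rectangle---but this does not affect correctness.
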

\begin{proof} Note that $p$ and $p_{\textup{stat}}$ differ only in the sites $\{ (x_1,x_2) \in \mathcal{S}(p) \colon x_2 \leq 0 \}$. Consider the coupling, where the environments $(\omega_x^{p})$ and $(\omega_x^{p_{\textup{stat}}})$ agree on $\Z^2 \setminus \{ (x_1,x_2) \in \mathcal{S}(p) \colon x_2 \leq 0 \}$, and are independently chosen otherwise. Note that by  Proposition \ref{pro:CoalescenceTimes}, we find some constant $c^{\prime}=c^{\prime}(\varepsilon)$ such that 
\begin{align}
\label{eq:CoalescenceAll}\P( \pi^{\ast}_{x,y} \cap \pi^{\ast}_{x^{\prime},y^{\prime}} \neq \emptyset \text{ for all } x,x^{\prime} \in  \mathbb{L}_{N+1} \text{ and } y,y^{\prime} \in \mathbb{L}_{c^{\prime}N^{3/2}}) \geq 1- \frac{\varepsilon}{2}  \\
\P( \pi^{\ast}_{x,y} \cap \pi^{\ast}_{x^{\prime},y^{\prime}} \neq \emptyset \text{ for all } x,x^{\prime} \in  \mathbb{L}_{M-c^{\prime}N^{3/2}} \text{ and } y,y^{\prime} \in \mathbb{L}_{M} ) \geq 1- \frac{\varepsilon}{2} \nonumber
\end{align} holds. Since $\pi_{1,M}^{\ast}$ and $\pi^{\ast}_{\textup{stat}}$ are defined with respect to the same environment between $\mathbb{L}_{N+1}$ and $\mathbb{L}_M$, this allows us to conclude \eqref{eq:CoalescenceInHomo}.
\end{proof}

\begin{lemma}\label{lem:CrossingProbability} For $n,m\in \N$, recall the rectangle $R_n^m$ from \eqref{def:Rectangle} with corners $(a_i)_{i \in [4]}$ in counter-clockwise order and width $N\in \N$  such that $a_1,a_2 \in \mathbb{L}_n$ and $a_3,a_4 \in \mathbb{L}_{n+m-1}$. Let 
\begin{align}
B^{n,m}_{+} &:= \{ \pi^{\ast}_{a_2,a_3} \cap \{ (i,i) \colon i \in \N \} \neq \emptyset \} \\
 B^{n,m}_{-} &:= \{ \pi^{\ast}_{a_1,a_4} \cap \{ (i+N,i) \colon i \in \N \} \neq \emptyset \} \, ,
\end{align} i.e., the geodesics $\pi^{\ast}_{a_1,a_4}$  and $\pi^{\ast}_{a_2,a_3}$ touch both diagonal boundaries of $R$, respectively.  Then for all $\varepsilon>0$, there exists some $c=c(\varepsilon)>0$ such that for all $n \geq N$ and all $m \geq cN^{3/2}$
\begin{equation}
\P(B^{n,m}_{+}) = \P(B^{n,m}_{-}) \geq \frac{1}{2} - \varepsilon \, .
\end{equation}
\end{lemma}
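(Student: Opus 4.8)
The plan is to show that a geodesic $\pi^{\ast}_{a_2,a_3}$ crossing the rectangle $R_n^m$ must, with probability at least $\frac12 - \varepsilon$, touch the upper boundary $\{(i,i)\}$, and symmetrically for $\pi^{\ast}_{a_1,a_4}$ and the lower boundary; by reflection symmetry of the environment in \eqref{def:EnvironmentStripe} (the roles of $\alpha$ at the upper boundary and $\beta$ at the lower boundary enter symmetrically, and for the bound we may by monotonicity reduce to $\alpha=\beta=\frac12$), the two probabilities are equal, so it suffices to treat $B^{n,m}_+$. First I would recall from Lemma \ref{lem:Coalescence} and Proposition \ref{pro:CoalescenceTimes} that, for $m = cN^{3/2}$ with $c=c(\varepsilon)$ large, all geodesics between $\mathbb{L}_n$ and $\mathbb{L}_{n+m-1}$ coalesce with probability $\ge 1-\varepsilon/2$; on this event every such geodesic — and in particular $\pi^{\ast}_{a_1,a_4}$ and $\pi^{\ast}_{a_2,a_3}$ — passes through a common point $z$. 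Color the sites of $\mathcal{S}(p)$ above $\mathbb{L}_n$ by whether the geodesic from $\gamma_0$ through them exits $\mathbb{L}_n$ at a point $\le$ or $>$ the midpoint; equivalently, use the competition-interface coloring $(\Gamma^+,\Gamma^-)$ of Section \ref{sec:CompetitionInterfaces} associated to splitting $\mathbb{L}_n$ at its midpoint. The key point is that $z \in \Gamma^+$ forces $\mathbb{L}_{n+m-1}$ to be monochromatic of one color (as in the proof of Lemma \ref{lem:Coalescence}), hence the competition interface starting from the midpoint of $\mathbb{L}_n$ must have hit \emph{one} of the two diagonal boundaries of the strip before reaching $\mathbb{L}_{n+m-1}$.

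The remaining and genuinely probabilistic step is to argue that, conditionally on coalescence, the interface is \emph{equally likely} — up to $o(1)$ — to have exited through the upper boundary $\{(i,i)\}$ as through the lower boundary $\{(i+N,i)\}$, so that $\P(\pi^{\ast}_{a_2,a_3}$ touches the upper boundary$) \ge \frac12 - \varepsilon$. Here I would use the symmetry of the environment: there is a measure-preserving involution of $(\omega^p_x)_{x\in\Z^2}$ (reflecting the strip across its center line and swapping $\alpha \leftrightarrow \beta$, which for $\alpha=\beta=\frac12$ is an exact symmetry) that interchanges the upper and lower boundaries, interchanges $a_2 \leftrightarrow a_1$ and $a_3 \leftrightarrow a_4$, and sends $\pi^{\ast}_{a_2,a_3}$ to $\pi^{\ast}_{a_1,a_4}$. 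Thus $\P(\pi^{\ast}_{a_2,a_3}$ hits the upper boundary$) = \P(\pi^{\ast}_{a_1,a_4}$ hits the lower boundary$) = \P(B^{n,m}_-)$, and on the coalescence event at least one of these two occurs (the common point $z$ is monochromatic, so $\mathbb{L}_{n+m-1}$ is monochromatic, so the interface hit some boundary; by the ordering Lemma \ref{lem:OrderingGeodesics}, whichever boundary the interface hit, the corresponding extremal geodesic $\pi^{\ast}_{a_2,a_3}$ or $\pi^{\ast}_{a_1,a_4}$ is the one touching that boundary). Therefore $\P(B^{n,m}_+) + \P(B^{n,m}_-) \ge \P(\text{coalescence}) \ge 1-\varepsilon/2$, and combined with $\P(B^{n,m}_+) = \P(B^{n,m}_-)$ this gives $\P(B^{n,m}_+) \ge \frac12 - \frac{\varepsilon}{4} \ge \frac12 - \varepsilon$.

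The main obstacle I anticipate is making the reduction to $\alpha = \beta = \frac12$ and the reflection symmetry fully rigorous for general $\alpha,\beta \ge \frac12$: the events $B^{n,m}_\pm$ are \emph{not} monotone in a way that cooperates naively with the monotonicity coupling \eqref{eq:monoticityGeodesics}, since increasing $\alpha$ pushes geodesics toward the $\alpha$-boundary. One clean route is to observe that the statement is purely about the geodesic \emph{structure} inside a homogeneous block far from $\gamma_0$ once coalescence has occurred, so only the relative behavior near the two diagonal boundaries matters; alternatively, prove the $\alpha=\beta=\frac12$ case by the symmetry argument above and then handle general $\alpha,\beta$ by noting that for $m$ large the coalescence probability is still $\ge 1-\varepsilon/2$ by Proposition \ref{pro:CoalescenceTimes} and that at least one of $B^{n,m}_\pm$ holds on that event, while the \emph{equality} $\P(B^{n,m}_+) = \P(B^{n,m}_-)$ can be dropped in favor of the weaker conclusion — but since the lemma as stated asserts equality, I would instead invoke the reflection symmetry of \eqref{def:EnvironmentStripe} together with the fact that, in the regime relevant to its later use (Section \ref{sec:MixingTimesTriplePoint}, where it is applied with $\alpha=\beta=\frac12$), only the symmetric case is needed; I would state the lemma's proof for that case and remark that the general case follows identically once one notes $a_1,a_4$ and $a_2,a_3$ play symmetric roles under $\alpha \leftrightarrow \beta$.
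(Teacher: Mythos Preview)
Your symmetry step identifying $\P(B^{n,m}_+) = \P(B^{n,m}_-)$ via reflection (at $\alpha=\beta=\tfrac12$) is fine and indeed used in the paper. The gap is in the inequality $\P(B^{n,m}_+) + \P(B^{n,m}_-) \ge \P(\text{coalescence})$: coalescence of $\pi^{\ast}_{a_1,a_4}$ and $\pi^{\ast}_{a_2,a_3}$ does \emph{not} force either geodesic to touch the opposite boundary. For a concrete picture, place an exceptionally heavy site at the exact center of $R_n^m$; both extremal geodesics are drawn through that interior point and coalesce there, yet $\pi^{\ast}_{a_1,a_4}$ can stay in the upper half and $\pi^{\ast}_{a_2,a_3}$ in the lower half throughout. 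Your attempt to close this with a competition interface started at the midpoint of $\mathbb{L}_n$ does not work: the interface follows the $\arg\min$ recursion \eqref{eq:CompetionInterfaceArgMin} and is not a geodesic, so Lemma~\ref{lem:OrderingGeodesics} gives no ordering between it and $\pi^{\ast}_{a_2,a_3}$; the fact that the interface hits, say, the upper boundary tells you that eventually all geodesics from $\gamma_0$ pass through the lower half of $\mathbb{L}_n$, but says nothing about $\pi^{\ast}_{a_2,a_3}$ reaching the upper boundary.

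The paper takes a genuinely different route. It first couples $\pi^{\ast}_{a_2,a_3}$ to the semi-infinite geodesic $\pi^{\ast}_{\textup{stat}}$ of the \emph{stationary} CGM on the strip (Lemma~\ref{lem:CoalescenceStationaryAllFull}, via coalescence). The real work is then Proposition~\ref{pro:BoundaryHittingStationary}, which shows that $\pi^{\ast}_{\textup{stat}}$ hits $\partial\mathcal{S}$ within $O(N^{3/2})$ steps; this is proved by a duality for the stationary CGM (Lemma~\ref{lem:ReversedProcessStationaryLPP} and Lemma~\ref{lem:CompetitionInterfaceReversed}) under which the semi-infinite geodesic becomes a competition interface in the time-reversed environment, and \emph{then} the coalescence estimate from Proposition~\ref{pro:CoalescenceTimes} bounds the interface's exit time. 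Only after knowing the geodesic hits the boundary does the reflection symmetry yield the factor $\tfrac12$. In short, you are missing the boundary-hitting input (Proposition~\ref{pro:BoundaryHittingStationary}) that coalescence alone cannot supply.
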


\begin{proof} By symmetry, it suffices to show $\P(B^{n,m}_{+}) \geq 1/2-\varepsilon$. Recall $c^{\prime}=c^{\prime}(\varepsilon/3)$ from Lemma~\ref{lem:CoalescenceStationaryAllFull} and $\tilde{c}=\tilde{c}(\varepsilon/3)$ from Proposition~\ref{pro:BoundaryHittingStationary}. In the following, we set $c:=3c^{\prime} +\tilde{c}$. By Lemma~\ref{lem:CoalescenceStationaryAllFull}, the geodesic $\pi^{\ast}_{a_2,a_3}$ agrees with probability at least $1-\varepsilon/3$ with the stationary geodesic $\pi^{\ast}_{\textup{stat}}$ started from $(1,0)$ for all positions $i\in [c^{\prime}N^{3/2}+N,m-c^{\prime}N^{3/2}]$. 
By Proposition~\ref{pro:BoundaryHittingStationary}, we see that $\pi^{\ast}_{\textup{stat}}$ touches with probability at least $1-\varepsilon/3$ the boundary of $R$ in $[n+2c^{\prime}N^{3/2},m-c^{\prime}N^{3/2}]$. \\

We claim that on the event that all geodesics between $\mathbb{L}_n$ and $\mathbb{L}_{n+c^{\prime}N^{3/2}}$ coalesce, which has probability at least $1-\varepsilon/3$ by Proposition \ref{pro:CoalescenceTimes}, both boundary sides are equally likely to be hit by $\pi^{\ast}_{\textup{stat}}$. To see this, assume that $N$ is even as a similar argument will hold for odd $N$. Consider the semi-infinite geodesic $\tilde{\pi}^{\ast}_{\textup{stat}}$ started from $(N/2,1-N/2)$ with respect to the environment $(\omega_x^{\tilde{p}_{\textup{stat}}})$ for the function $\tilde{p}_{\textup{stat}} \colon \Z^2 \mapsto \R$ given by
\begin{equation}
\tilde{p}_{\textup{stat}}(x_1,x_2):= p_{\textup{stat}}((x_2+N/2),(x_1-N/2))
\end{equation} for all $(x_1,x_2)\in \Z^2$ and $p_{\textup{stat}}$ from \eqref{def:StationaryEnvironment}. In words, we obtain $\tilde{p}_{\textup{stat}}$ by flipping $p_{\textup{stat}}$ along the axis $\{(x_1,x_2) \colon x_2=x_1-N/2 \} $. Since $p_{\textup{stat}}(x)=\tilde{p}_{\textup{stat}}(x)$ for all $x\in \N^2$ with $|x|_1>N$, we use the same coupling as in Lemma \ref{lem:CoalescenceStationaryAllFull} for the environments $(\omega_x^{p_{\textup{stat}}})$ and $(\omega_x^{\tilde{p}_{\textup{stat}}})$ to get that $\tilde{\pi}^{\ast}_{\textup{stat}}$ and $\pi^{\ast}_{\textup{stat}}$ must agree above $\mathbb{L}_{n+c^{\prime}N^{3/2}}$ when all geodesics between $\mathbb{L}_n$ and $\mathbb{L}_{n+c^{\prime}N^{3/2}}$ coalesce. The claim follows now by the symmetry of the environments with respect to $\tilde{p}_{\textup{stat}}$ and $p_{\textup{stat}}$. Thus, combining all of the above observations, this finishes the proof.
\end{proof}

%We now have all tools in order to show Theorem \ref{thm:Triple}.

\begin{proof}[Proof of Theorem \ref{thm:Triple}] %In the following, we show that if a specific semi-infinite geodesic touches the boundary of the stripe, this provides a sufficient condition such that all second class particles in the disagreement process have exited, and we conclude by Lemma \ref{lem:MixingBoundByExiting}. \\ 

  Recall from Section \ref{sec:TASEPInteracting} the TASEP with open boundaries as an interacting particle system and consider the disagreement process $(\xi_t)_{t \geq 0}$ between $\mathbf{1}$ and $\mathbf{0}$ taking values in $\{0,1,2\}^{N}$, noting that $\xi_0(i)=2$ for all $i\in [N]$ by construction. Further, recall from Section \ref{sec:DefinitionTASEPLPP} that we assign labels $u \in \N$ to the first class particles as they enter the segment at site $1$, and we denote by $\ell_t(u)$ the position of the particle with label $u$ at time $t \geq 0$. Here, we use the convention that $\xi_s({\ell_s(u) + 1}) = 0$ if label $u$ was not assigned by time $s$, the particle $u$ has reached site $N$ by time~$s$, or the particle $u$ has left the segment by time~$s$. For $T \geq 0$, we define the following event $A_T$: There exists some $k=k(T) \in \N$, and a sequence of times $0 = t_0 \leq  t_1 \leq t_2 \leq \dots \leq t_k \in [0,T]$ such that 
\begin{equation}\label{eq:NoParticleBefore}
\xi_s({\ell_s(i) + 1}) = 0
\end{equation} holds for all $s \in [t_{i-1},t_{i})$ and $i\in [k]$. Further, we require that there exists some $\tau \in  [t_{k-1},t_{k})$ such that
\begin{equation}\label{eq:TerminationCondition}
\ell_\tau(k) = N \, . 
\end{equation}
In words, the event $A_T$ says that we can find a sequence of time intervals such that during the $i^{\text{th}}$ interval, the particle of label $i$ never sees a particle to its right while it is contained in the segment, even if it jumps during this time. Further, the $k^{\text{th}}$ particle is located at site $N$ at some time $\tau \in  [t_{k-1},t_{k})$ less than $T$. The proof of Theorem \ref{thm:Triple} is now split into four steps. First, we argue that if $\P(A_T) \geq \varepsilon$ holds for some $\varepsilon>0$ and $T>0$, then we have that
\begin{equation}\label{eq:DisagreeingRelation}
\P( \xi_T \in \{0,1\}^{N}) \geq \varepsilon \, ,
\end{equation} i.e.\ with probability at least $\varepsilon$, the corresponding disagreement process contains no second class particles at time $T$. Second, we give an alternative representation of the event $A_T$ using tagged particles. In a third step, we link the tagging mechanism to semi-infinite geodesics. In a last step, we combine the above observations to deduce the desired upper bound on the mixing time of the TASEP with open boundaries. \\

Let us start with the first claim, i.e.\ we show that when the event $A_T$ holds for some $T \geq 0$ with some $k=k(T)$ and $t_1,t_2,\dots,t_k$ depending on $k$ and $T$, we get that all second class particles in the disagreement process have left by time $T$. To see this, let $j-1$ be the largest integer such that the particle with label $j-1$ has not entered by time $t_{j-1}$, and note that $j \in [k]$ by construction. We claim that for all $i\in [j,k]$, there can not be a second class particle to the left of the particle with label $i$ at time $s$ for all $s \in [t_{i-1},t_i)$. To see this, observe that after the particle with label $j$ enters the segment, it will never see a second class particle at time $s$ directly to its right for all $s<t_{j}$. Since the particle with label $j+1$ is placed to the left of the particle with label $j$, we obtain the claim by induction over $i\in [j,k]$. Condition \eqref{eq:TerminationCondition} now ensures that all second class particles must have left the segment by time $\tau$. \\

\begin{figure}
\centering
\begin{tikzpicture}[scale=1]

\def\spiral[#1](#2)(#3:#4:#5){% \spiral[draw options](placement)(end angle:revolutions:final radius)
\pgfmathsetmacro{\domain}{pi*#3/180+#4*2*pi}
\draw [#1,
       shift={(#2)},
       domain=0:\domain,
       variable=\t,
       smooth,
       samples=int(\domain/0.08)] plot ({\t r}: {#5*\t/\domain})
}

\def\particles(#1)(#2){

	\node[shape=circle,scale=1.5,draw] (A) at (#1+1,#2){} ;
    \node[shape=circle,scale=1.5,draw] (B1) at (#1+2,#2){} ;
	\node[shape=circle,scale=1.5,draw] (B2) at (#1+3,#2){} ;
    \node[shape=circle,scale=1.5,draw] (B3) at (#1+4,#2){} ;
	\node[shape=circle,scale=1.5,draw] (B4) at (#1+5,#2){} ;	
	
	\draw[thick] (A) to (B1);		
	\draw[thick] (B1) to (B2);		
	\draw[thick] (B2) to (B3);		
	\draw[thick] (B3) to (B4);		
    
    \draw [->,line width=1pt] (#1+0.3,#2+0.2) to [bend right,in=135,out=45] (A);
    \draw [->,line width=1pt] (B4) to [bend right,in=135,out=45] (#1+5.7,#2+0.2);	 	
 
  	 }

 	\node[shape=circle,scale=1.2,fill=nicos-red] (YZ2) at (3,0) {};

	\node[shape=circle,scale=1.2,fill=nicos-red] (YZ3) at (5,0) {};

\particles(0)(0);

 	\node[shape=circle,scale=1.2,fill=nicos-red] (YZ2) at (3,0) {};
	\node[shape=circle,scale=1.2,fill=nicos-red] (YZ3) at (5,0) {};

\particles(0)(-1);

    \draw [->,line width=1pt] (1,0.5-1) to (1,0.2-1);	 	
	\node[shape=circle,scale=1.2,fill=nicos-red] (YZ2) at (1,0-1) {};
 	\node[shape=circle,scale=1.2,fill=nicos-red] (YZ2) at (3,0-1) {};
	\node[shape=circle,scale=1.2,fill=nicos-red] (YZ3) at (5,0-1) {};	
	\node[scale=1]  (x1) at (1+0.3,0-1.3){$1$} ;

\particles(0)(-2);

    \draw [->,line width=1pt] (1,0.5-2) to (1,0.2-2);	 	
	\node[shape=circle,scale=1.2,fill=nicos-red] (YZ2) at (1,0-2) {};
 	\node[shape=circle,scale=1.2,fill=nicos-red] (YZ2) at (4,0-2) {};
	\node[shape=circle,scale=1.2,fill=nicos-red] (YZ3) at (5,0-2) {};	
	\node[scale=1]  (x1) at (1+0.3,0-2.3){$1$} ;

\particles(0)(-3);

    \draw [->,line width=1pt] (2,0.5-3) to (2,0.2-3);	 	
	\node[shape=circle,scale=1.2,fill=nicos-red] (YZ2) at (2,0-3) {};
 	\node[shape=circle,scale=1.2,fill=nicos-red] (YZ2) at (4,0-3) {};
	\node[shape=circle,scale=1.2,fill=nicos-red] (YZ3) at (5,0-3) {};	
	\node[scale=1]  (x1) at (2+0.3,0-3.3){$1$} ;

\particles(8)(0);

    \draw [->,line width=1pt] (8+2,0.5-0) to (8+2,0.2-0);	 	
	\node[shape=circle,scale=1.2,fill=nicos-red] (YZ2) at (8+2,0-0) {};

  %  \draw [->,line width=1pt] (8+1,0.5-0) to (8+1,0.2-0);	 	
	\node[shape=circle,scale=1.2,fill=nicos-red] (YZ2) at (8+1,0-0) {};

 	\node[shape=circle,scale=1.2,fill=nicos-red] (YZ2) at (8+4,0-0) {};
	\node[shape=circle,scale=1.2,fill=nicos-red] (YZ3) at (8+5,0-0) {};	
	\node[scale=1]  (x1) at (8+2+0.3,0-0.3){$1$} ;
	\node[scale=1]  (x1) at (8+1+0.3,0-0.3){$2$} ;

\particles(8)(-1);

 % \draw [->,line width=1pt] (8+2,0.5-1) to (8+2,0.2-1);	 	
	\node[shape=circle,scale=1.2,fill=nicos-red] (YZ2) at (8+1,0-1) {};

    \draw [->,line width=1pt] (8+1,0.5-1) to (8+1,0.2-1);	 	
	\node[shape=circle,scale=1.2,fill=nicos-red] (YZ2) at (8+3,0-1) {};

 	\node[shape=circle,scale=1.2,fill=nicos-red] (YZ2) at (8+4,0-1) {};
	\node[shape=circle,scale=1.2,fill=nicos-red] (YZ3) at (8+5,0-1) {};	
	\node[scale=1]  (x1) at (8+2+1.3,0-1.3){$1$} ;
	\node[scale=1]  (x1) at (8+1+0.3,0-1.3){$2$} ;

\particles(8)(-2);

    \draw [->,line width=1pt] (8+1,0.5-2) to (8+1,0.2-2);	 	
	\node[shape=circle,scale=1.2,fill=nicos-red] (YZ2) at (8+1,0-2) {};

 %   \draw [->,line width=1pt] (8+1,0.5-1) to (8+1,0.2-1);	 	
	\node[shape=circle,scale=1.2,fill=nicos-red] (YZ2) at (8+3,0-2) {};

 	\node[shape=circle,scale=1.2,fill=nicos-red] (YZ2) at (8+4,0-2) {};
	%\node[shape=circle,scale=1.2,fill=nicos-red] (YZ3) at (8+5,0-2) {};	
	\node[scale=1]  (x1) at (8+2+1.3,0-2.3){$1$} ;
	\node[scale=1]  (x1) at (8+1+0.3,0-2.3){$2$} ;

\particles(8)(-3);

 % \draw [->,line width=1pt] (8+2,0.5-1) to (8+2,0.2-1);	 	
%	\node[shape=circle,scale=1.2,fill=nicos-red] (YZ2) at (8+1,0-3) {};
	\node[shape=circle,scale=1.2,fill=nicos-red] (YZ2) at (8+2,0-3) {};

 %   \draw [->,line width=1pt] (8+1,0.5-1) to (8+1,0.2-1);	 	
	\node[shape=circle,scale=1.2,fill=nicos-red] (YZ2) at (8+3,0-3) {};

 	\node[shape=circle,scale=1.2,fill=nicos-red] (YZ2) at (8+4,0-3) {};
%	\node[shape=circle,scale=1.2,fill=nicos-red] (YZ3) at (8+5,0-3) {};	
	\node[scale=1]  (x1) at (8+2+1.3,0-3.3){$1$} ;
	\node[scale=1]  (x1) at (8+2+0.3,0-3.3){$2$} ;
%	\node[scale=1]  (x1) at (8+1+0.3,0-3.3){$3$} ;

%\annhil(0)(0.7);
%\annhil(-13.4)(-0.7);

%\draw [->,line width=1pt]  (B2) to [bend right,in=135,out=45,->] (C);
  
  % \draw [->,line width=1pt] (B2) to [bend right,in=-135,out=-45] (C);
  % \draw [->,line width=1pt] (B2) to [bend right,in=-135,out=-45] (A);
 %     \draw [->,line width=1pt] (A) to [bend right,in=-135,out=-45] (Z2);

	\end{tikzpicture}	
\caption{\label{fig:Tagging}A possible evolution of the tagging mechanism. Particles which entered the segment are shown with their labels. In each step, the tagged particle is marked with an arrow, provided it is contained in the segment. }
\end{figure}
Next, we turn to providing sufficient conditions for the event $A_T$ to hold. A key observation is that we can without loss of generality start from the initial configuration $\xi_0 \equiv 1$, i.e. $\xi_0(i)=1$ for all $i\in [N]$, instead of $\xi_0 \equiv 2$. This is due to fact that we do in \eqref{eq:NoParticleBefore} not distinguish between seeing a first or second class particle to the right of the particle with label~$i$. In particular, under the canonical coupling from Section \ref{sec:CanonicalCoupling}, the event $A_T$ occurs 
when starting from $\xi_0 \equiv 1$ if and only if it occurs when starting from $\xi_0 \equiv 2$, potentially using different times $(t_i)$ and a different $k\in \N$ in the definition of $A_T$ for the two initial conditions $\xi_0 \equiv 1$ and $\xi_0 \equiv 2$, respectively. 
Starting now with $\xi_0 \equiv 1$, we interpret the conditions for the event $A_T$ to hold by tagging particles. We define a \textbf{tag} $(X_t)_{t \geq 0}$ to be a non-decreasing sequence with $X_t \in \N$ for all $t \geq 0$, and declare at time $t$ the first class particle of label $X_t$ as \textbf{tagged}. Further, we require that a tag $(X_t)_{t \geq 0}$ satisfies
\begin{equation}\label{eq:TaggedAlternative}
\xi_t(\ell_{X_t}+1) = 0
\end{equation} for all $t \geq 0$, noting that a tagged particle at time $t$ might not be contained in the segment at time $t\geq 0$, in which case \eqref{eq:TaggedAlternative} trivially holds. Observe that by taking 
\begin{equation}
t_i =  \sup\{ t \geq 0 \colon X_{t} \leq  i \}
\end{equation} for all $i \in \N$, the event $A_T$ holds whenever we can find some $\tau \leq T$ such that $\ell_{\tau}(X_{\tau})=N$; see Figure \ref{fig:Tagging} for an example. \\

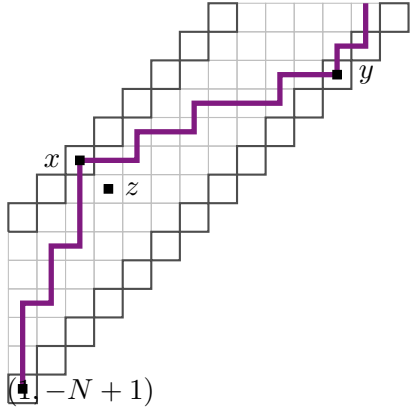
\begin{figure}\centering
\begin{tikzpicture}[scale=0.38]

\foreach \x in{1,...,9}{
	\draw[gray!50,thin](\x,\x-1) --++ (1,0) --++ (0,1) --++ (-1,0) --++(0,-1) ; 
}

\foreach \x in{1,...,10}{
	\draw[gray!50,thin](\x,\x-2) --++ (1,0) --++ (0,1) --++ (-1,0) --++(0,-1) ; 
}
\foreach \x in{1,...,11}{
	\draw[gray!50,thin](\x,\x-3) --++ (1,0) --++ (0,1) --++ (-1,0) --++(0,-1) ; 
}
\foreach \x in{1,...,12}{
	\draw[gray!50,thin](\x,\x-4) --++ (1,0) --++ (0,1) --++ (-1,0) --++(0,-1) ; 
}
\foreach \x in{1,...,13}{
	\draw[gray!50,thin](\x,\x-5) --++ (1,0) --++ (0,1) --++ (-1,0) --++(0,-1) ; 
}
	
	\foreach \x in{1,...,8}{
	
	\draw[black!70,thick](\x,\x) --++ (1,0) --++ (0,1) --++ (-1,0) --++(0,-1) ;
	}
	
		\foreach \x in{1,...,14}{
	
	\draw[black!70,thick](\x,\x-6) --++ (1,0) --++ (0,1) --++ (-1,0) --++(0,-1) ;
	}

\draw[darkblue, line width =2 pt] (1.5,-4.5) --++(0,3)--++(1,0) --++(0,2) --++(1,0) --++(0,3) --++(2,0) --++(0,1) --++(2,0) --++(0,1)--++(3,0) --++(0,1) --++(2,0) --++(0,1) --++(1,0) --++(0,1.5);

%\draw[darkblue,line width =2pt,dashed] 	 (6.5,6.5) -- ++(0,1.5);	
%\draw[darkblue,line width =2pt,dashed] 	 (9.5,2.5) -- ++(1.5,0);	

\filldraw [fill=black] (1.5-0.15,1.5-0.15-6) rectangle (1.5+0.15,1.5+0.15-6);   
\filldraw [fill=black] (1.5-0.15+2,1.5-0.15+2) rectangle (1.5+0.15+2,1.5+0.15+2);   
\filldraw [fill=black] (1.5-0.15+3,1.5-0.15+1) rectangle (1.5+0.15+3,1.5+0.15+1);   
\filldraw [fill=black] (1.5+4+7-0.15,1.5+5-0.15) rectangle (1.5+4+7+0.15,1.5+5+0.15);   

	\node[scale=1]  (x1) at (1.53+1,3.5){$x$} ;
	\node[scale=1]  (x1) at (1.53+3.8,2.5){$z$} ;
	\node[scale=1]  (x1) at (1.53+2,3.5-8.1){$(1,-N+1)$} ;	

	\node[scale=1]  (x1) at (1.53+6+6,1.5+5){$y$} ;

\end{tikzpicture}\caption{\label{fig:SemiinfiniteHitting}Visualization of a sufficient condition for $A_T$ when $G_{p}((1,-N),y) < T$. The semi-infinite geodesic from $(1,-N+1)$ for $N=6$ drawn in purple touches both boundaries of the strip in sites $x=(n,n)$ and $y=(n^{\prime}+N,n^{\prime})$ for $n^{\prime}=6$, respectively. In particular, note that since $G_p((1,-N+1),x)>G_p((1,-N+1),z)$ with $x=(3,3)$ and $z=(4,2)$, the particle with label $3$ sees an empty site at position $2$ when entering.}
\end{figure}
Using the CGM representation of $(\xi_t)_{t \geq 0}$ started from $\xi_0 \equiv 1$, we provide a sufficient condition for the event $A_T$ via semi-infinite geodesics on the strip; see Section \ref{sec:GeodesicsBusemann}. We claim that the event $A_T$ holds whenever the corresponding semi-infinite geodesic $\pi^{\ast}$ starting at $(1,-N+1)$ contains sites $(n,n)$ and $(n^{\prime}+N,n^{\prime})$ for some $1 < n < n^{\prime}$ while we have that 
\begin{equation}\label{eq:ControlLPTSemi}
G_{p}( (1,-N+1), (n^{\prime}+N,n^{\prime}))  < T \, ;
\end{equation}
 see Figure \ref{fig:SemiinfiniteHitting} for a visualization. In other words, $A_T$ holds whenever we ensure that the semi-infinite geodesic started from $(1,-N+1)$ first touches the upper boundary of the strip before returning to the lower boundary at some site $(n^{\prime}+N,n^{\prime})$ while collecting a passage time between $(1,-N+1)$ and $(n^{\prime}+N,n^{\prime})$ of at most $T$. \\

  To show this claim, note that the semi-infinite geodesic $\pi^{\ast}=(v_1=(1,-N+1),v_2,\dots)$ satisfies for $p$ from \eqref{def:EnvironmentStripe} the backwards equations
\begin{equation*}
v_{i} = \begin{cases} v_{i+1}-(0,1) & \text{ if } G_{p}((1,-N+1),v_{i+1}-(0,1))> G_{p}((1,-N+1),v_{i+1}-(1,0)) \\
v_{i+1}-(1,0) & \text{ if } G_{p}((1,-N+1),v_{i+1}-(0,1))< G_{p}((1,-N+1),v_{i+1}-(1,0)) \end{cases}
\end{equation*} for all $v_i,v_{i+1} \in \mathcal{S}(p)$ with $i>N$, recalling the support $\mathcal{S}(p)$ of $p$; see for example (2.8) in \cite{FMP:CompetitionInterface} for a reference on the backwards equations. Consider now  a particle of label $n$ such that $(n,n) \in \pi^{\ast}$. As necessarily $(n+1,n)\in \pi^{\ast}$ holds since the semi-infinite geodesic is contained in the strip, the backwards equations ensure that
\begin{equation}
 G_p((1,-N+1),(n,n)) >  G_p((1,-N+1),(n+1,n-1)) \, .
\end{equation} In particular, at time $G_p((1,-N+1),(n,n))$ when the particle of label $n$ enters the segment, site~$2$ must be empty; see also Figures \ref{fig:SemiinfiniteHitting} and \ref{fig:SemiinfiniteToTagged}. Continuing this argument, set
\begin{equation}
t_1 = \sup\{ G_p( (1,-N+1),(n+j,n)) \colon  (n+j,n)\in \pi^{\ast} \text{ for some } j\in [N] \} 
\end{equation} and $X_t=n$ for all $t\in [0,t_1)$, noting that \eqref{eq:TaggedAlternative} holds due to the backwards equations for all $t\in [0,t_1)$, i.e.\ until time $t_1$, the tagged particle of label $n$ has not seen a particle to its right. At time $t_1$, we declare particle $n+1$ as tagged, noting that as the particle of label $n$ jumps at time $t_1$, we have that $\xi_{t_1}(\ell_{n+1}+1)=0$. Using again the backwards equations, we can set
\begin{equation}
t_2 = \sup\{ G_p( (1,-N+1),(n+j,n+1)) \colon  (n+j,n+1)\in \pi^{\ast} \text{ for some } j\in [N] \} 
\end{equation} and $X_t=n+1$ for all $t\in [t_1,t_2)$, observing that \eqref{eq:TaggedAlternative} holds for all $t\in [t_1,t_2)$. A visualization of this procedure is given in Figure \ref{fig:SemiinfiniteToTagged}. \\
\begin{figure}
\centering
\begin{tikzpicture}[scale=0.8]

\def\particles(#1)(#2){

	\node[shape=circle,scale=1.5,draw] (A) at (#1+1,#2){} ;
    \node[shape=circle,scale=1.5,draw] (B1) at (#1+2,#2){} ;
	\node[shape=circle,scale=1.5,draw] (B2) at (#1+3,#2){} ;
    \node[shape=circle,scale=1.5,draw] (B3) at (#1+4,#2){} ;
	\node[shape=circle,scale=1.5,draw] (B4) at (#1+5,#2){} ;	
	
	\draw[thick] (A) to (B1);		
	\draw[thick] (B1) to (B2);		
	\draw[thick] (B2) to (B3);		
	\draw[thick] (B3) to (B4);		
    
    \draw [->,line width=1pt] (#1+0.3,#2+0.2) to [bend right,in=135,out=45] (A);
    \draw [->,line width=1pt] (B4) to [bend right,in=135,out=45] (#1+5.7,#2+0.2);	 	
 
  	 }
  	 
\def\shape(#1)(#2){

	\foreach \x in{1,...,4}{
		\draw[gray!50,thin](\x/2+#1,\x/2-2/2+#2) --++ (1/2,0) --++ (0,1/2) --++ (-1/2,0) --++(0,-1/2) ; 
	}
	\foreach \x in{1,...,5}{
		\draw[gray!50,thin](\x/2+#1,\x/2-3/2+#2) --++ (1/2,0) --++ (0,1/2) --++ (-1/2,0) --++(0,-1/2) ; 
	}
	\foreach \x in{1,...,6}{
		\draw[gray!50,thin](\x/2+#1,\x/2-4/2+#2) --++ (1/2,0) --++ (0,1/2) --++ (-1/2,0) --++(0,-1/2) ; 
	}
	\foreach \x in{1,...,7}{
		\draw[gray!50,thin](\x/2+#1,\x/2-5/2+#2) --++ (1/2,0) --++ (0,1/2) --++ (-1/2,0) --++(0,-1/2) ; 
	}
	\foreach \x in{1,...,3}{
		\draw[black!70,thick](\x/2+#1,\x/2-1/2+#2) --++ (1/2,0) --++ (0,1/2) --++ (-1/2,0) --++(0,-1/2) ;
	}
	\foreach \x in{1,...,8}{
		\draw[black!70,thick](\x/2+#1,\x/2-6/2+#2) --++ (1/2,0) --++ (0,1/2) --++ (-1/2,0) --++(0,-1/2) ;
	}
}

  	  	 \draw[fill=darkblue!30] (1/2,-5/2) --++ (0,3)--++ (0.5,0) --++(0,-3)--++(-0.5,0);
  		 \draw[fill=darkblue!30] (1,-4/2) --++ (0,2.5)--++ (0.5,0) --++(0,-2.5)--++(-0.5,0);
  		 
 		 \draw[fill=blue!50] (1,0.5) --++ (0,0.5)--++ (0.5,0) --++(0,-0.5)--++(-0.5,0);  		 
  		 
  		 \draw[fill=darkblue!30] (3/2,-3/2) --++ (0,2)--++ (0.5,0) --++(0,-2)--++(-0.5,0);
 		 \draw[fill=darkblue!30] (2,-2/2) --++ (0,1)--++ (0.5,0) --++(0,-1)--++(-0.5,0);
		 \draw[fill=darkblue!30] (5/2,-1/2) --++ (0,0.5)--++ (0.5,0) --++(0,-0.5)--++(-0.5,0);

  	  	 \draw[fill=darkblue!30] (1/2+6,-5/2) --++ (0,3)--++ (0.5,0) --++(0,-3)--++(-0.5,0);
  		 \draw[fill=darkblue!30] (1+6,-4/2) --++ (0,3)--++ (0.5,0) --++(0,-3)--++(-0.5,0);
  		 \draw[fill=darkblue!30] (3/2+6,-3/2) --++ (0,3)--++ (0.5,0) --++(0,-3)--++(-0.5,0);
  		 \draw[fill=darkblue!30] (2+6,-2/2) --++ (0,2.5)--++ (0.5,0) --++(0,-2.5)--++(-0.5,0);
  		 \draw[fill=darkblue!30] (2.5+6,-1/2) --++ (0,1)--++ (0.5,0) --++(0,-1)--++(-0.5,0);
  		 \draw[fill=darkblue!30] (3+6,-0) --++ (0,0.5)--++ (0.5,0) --++(0,-0.5)--++(-0.5,0);

 		\draw[fill=blue!50] (2.5+6,0.5) --++ (0,0.5)--++ (0.5,0) --++(0,-0.5)--++(-0.5,0);

  	  	 \draw[fill=darkblue!30] (1/2+12,-5/2) --++ (0,3)--++ (0.5,0) --++(0,-3)--++(-0.5,0);
  		 \draw[fill=darkblue!30] (1+12,-4/2) --++ (0,3)--++ (0.5,0) --++(0,-3)--++(-0.5,0);
  		 \draw[fill=darkblue!30] (3/2+12,-3/2) --++ (0,3)--++ (0.5,0) --++(0,-3)--++(-0.5,0);
		 \draw[fill=darkblue!30] (2+12,-2/2) --++ (0,2.5)--++ (0.5,0) --++(0,-2.5)--++(-0.5,0);
  		 \draw[fill=darkblue!30] (2.5+12,-1/2) --++ (0,1.5)--++ (0.5,0) --++(0,-1.5)--++(-0.5,0);
  		 \draw[fill=darkblue!30] (3+12,-0) --++ (0,1)--++ (0.5,0) --++(0,-1)--++(-0.5,0);

 		\draw[fill=blue!50] (2.5+12,1) --++ (0,0.5)--++ (0.5,0) --++(0,-0.5)--++(-0.5,0);  	

 \particles(-0.5)(-3.5);
 
\node[shape=circle,scale=1.2,fill=nicos-red] (YZ2) at (0.5,-3.5) {}; 
\node[shape=circle,scale=1.2,fill=nicos-red] (YZ2) at (0.5+2,-3.5) {}; 
 
	\node[scale=0.9]  (x1) at (0.5+0.35,0-3.8){$2$} ; 
	\node[scale=0.9]  (x1) at (2.5+0.35,0-3.8){$1$} ;  
 
 \particles(-0.5+6)(-3.5);
 
\node[shape=circle,scale=1.2,fill=nicos-red] (YZ2) at (1.5+6,-3.5) {}; 
\node[shape=circle,scale=1.2,fill=nicos-red] (YZ2) at (1.5+2+6,-3.5) {}; 
%\node[shape=circle,scale=1.2,fill=nicos-red] (YZ2) at (2.5+2+6,-3.5) {}; 

	\node[scale=0.9]  (x1) at (1.5+0.35+6,0-3.8){$3$} ;  
	\node[scale=0.9]  (x1) at (3.5+0.35+6,0-3.8){$2$} ; 
%	\node[scale=0.9]  (x1) at (4.5+0.35+6,0-3.8){$1$} ;   

 \particles(-0.5+12)(-3.5);
 
\node[shape=circle,scale=1.2,fill=nicos-red] (YZ2) at (1.5+12+1,-3.5) {}; 
\node[shape=circle,scale=1.2,fill=nicos-red] (YZ2) at (1.5+2+12+1,-3.5) {};  
 
	\node[scale=0.9]  (x1) at (1.5+0.35+12+1,0-3.8){$3$} ;  
	\node[scale=0.9]  (x1) at (3.5+0.35+12+1,0-3.8){$2$} ;

 \draw [->,line width=1pt] (0.5,-2.9) to (0.5,-3.25);	 	
   
 \draw [->,line width=1pt] (0.5+7,-2.9) to (0.5+7,-3.25);	
 
 \draw [->,line width=1pt] (0.5+14,-2.9) to (0.5+14,-3.25);	   
   
 \shape(0)(0);
 \shape(6)(0);
 \shape(12)(0);

 \draw[darkblue, line width =2 pt] (1/2+0.25,-5/2+0.25) --++(0,2)--++(0.5,0) --++(0,1)--++(0.5,0);

 \draw[darkblue, line width =2 pt,dotted] 	(1/2+1.25,-5/2+3.25) --++(1,0) --++(0,0.5) --++(1.6,0);

 \draw[darkblue, line width =2 pt] (1/2+0.25+6,-5/2+0.25) --++(0,2)--++(0.5,0) --++(0,1)--++(0.5,0)--++(1,0) --++(0,0.5);

 \draw[darkblue, line width =2 pt,dotted] 	(1/2+1.25+7,-5/2+3.75) --++(1.65,0);

	\draw[darkblue, line width =2 pt] (1/2+0.25+12,-5/2+0.25) --++(0,2)--++(0.5,0) --++(0,1)--++(0.5,0)--++(1,0) --++(0,0.5)--++(0.5,0);

 \draw[darkblue, line width =2 pt, dotted] 	(1/2+1.25+13.5,-5/2+3.75) --++(1.15,0);

		\node[scale=1]  (x1) at (2.5,-4.5){$t=G_p((1,-4),(2,2))$} ;
		\node[scale=1]  (x2) at (2.5+6,-4.5){$t=t_1=G_p((1,-4),(5,2))$} ;
		\node[scale=1]  (x3) at (2.5+12,-4.5){$t=G_p((1,-4),(3,5))$} ; 	 
 
\end{tikzpicture}
\caption{\label{fig:SemiinfiniteToTagged}
Correspondence between the TASEP with open boundaries and its growth interface at three different time points $t$. The cell added at time $t$ is highlighted in blue. Note that the tagged particle, marked with an arrow, never has a particle to its right. The backwards equations for the semi-infinite geodesic started at $(1,-4)$ yield that the cells $(3,1)$, $(4,3)$, and $(6,2)$ were part of the growth interface before the respective blue cells are added.
}
\end{figure}

In general, suppose that $(i,j),(i+1,j)\in\pi^{\ast} $ for some $i\geq j$ and $n^{\prime} \geq j\geq n$. Then the backwards equations ensure that
\begin{equation}
G_p((1,-N+1),(i,j)) >  G_p((1,-N+1),(i+1,j-1)) \, ,
\end{equation}
i.e. at the time the particle with label $j$ jumps from site $i-j$ to site $i-j+1$, the particle with label $j-1$ has already reached site $i-j+3$ or left the segment. Thus, for all $j\in \N$ with $n^{\prime} \geq j \geq n$, we can choose
\begin{equation}
t_{j-n+1} = \sup\{ G_p( (1,-N+1),(i,j)) \colon  (i,j)\in \pi^{\ast} \text{ for some } i\in \N \} 
\end{equation} and $X_t=j$ for all $t\in [t_{j-n},t_{j-n+1})$, noting \eqref{eq:TaggedAlternative} holds for all $t\in [t_{j-n},t_{j-n+1})$. 
%  Similarly,  for $(i,j),(i,j+1)\in\pi^{\ast} $, the particle with label $j+1$ sees no particles in front of it for all $s\in [G_p((1,-N),(i,j)),G_p((1,-N),(i,j+1)) )$. Thus, we can choose $t_{j}$ to be the largest last passage time of a site $(\cdot,j) \in \pi^{\ast}$.  
Furthermore, notice that for $k=n^{\prime}-n+1$ and $\tau=G_p( (1,-N+1),(n^{\prime}+N,n^{\prime}))$, the condition \eqref{eq:ControlLPTSemi} for the semi-infinite geodesics $\pi^{\ast}$ implies that we can find some $\tau < \min(T,t_k)$ such that $\ell_{\tau}(X_{\tau})=N$. Combining the above observations, this justifies the claim. \\

As a last step, we argue that $A_T$ holds with positive probability for some $T$ of order $N^{3/2}$, using the above correspondence to semi-infinite geodesics. 
By the Markov property of $(\xi_t)_{t \geq 0}$ and \eqref{eq:DisagreeingRelation}, we use independent attempts to see that for any $\varepsilon>0$, there exists some $C>0$ such that all second class particles in the disagreement process have left the segment at time $CN^{3/2}$ with probability at least $1-\varepsilon$. Together with Lemma \ref{lem:MixingBoundByExiting},  we obtain the desired upper bound on the mixing time in Theorem~\ref{thm:Triple}. To see that $T$ can indeed be chosen of order $N^{3/2}$, consider the events $B^{1,m}_+$ and $B^{m,m}_-$ from Lemma~\ref{lem:CrossingProbability} with $m=cN^{3/2}$ for $c=c(1/4)>0$.  From Lemma~\ref{lem:OrderingGeodesics} on the ordering of geodesics, we see that the semi-infinite geodesic touches both boundaries of the strip as required whenever $B^{1,m}_+$ and $B^{m,m}_-$ hold, and $T=16cN^{3/2}$. Since $B^{1,m}_+$ and $B^{m,m}_-$ are independent by construction and have probability at least $1/4$, we conclude.  
\end{proof}

\section{Lower bounds on the mixing times}\label{sec:LowerBounds}

In this section, we give a proof of the lower bound of order $N^{3/2}$ in Theorem~\ref{thm:LowerBound} using the results from Section \ref{sec:MixingTimesMaxCurrent} on last passage times on the strip. Let us stress that the presented arguments do not rely on spectral techniques like the Bethe ansatz. We start with a comparison between geodesics on the strip. We use it as an estimate on the total number of particles in the TASEP with open boundaries for some time $t$  of order $N^{3/2}$, starting from the initial configuration $\mathbf{0}$ with only empty sites.

\begin{lemma}\label{lem:TechnicalGeodesics} Recall the last passage time $G_p$ in the environment $(\omega_x^p)_{x \in \Z^2}$ according to $p$ from \eqref{def:EnvironmentStripe} for boundary parameters $\alpha,\beta$, and the initial growth interface for $\mathbf{0}$.  Then for every $\varepsilon,\theta>0$, there exist  $\delta,d>0$, depending only on $\alpha,\beta,\varepsilon,\theta>0$, such that
\begin{align}
\label{eq:GeodesicLow1} B_{1} := & \left\{G_p(0,\delta N^{3/2}) > 4\delta N^{3/2} - d \sqrt{N} \right\} \\
\label{eq:GeodesicLow2} B_{2} := & \left\{ G_p(0,(\delta N^{3/2}+ \theta\sqrt{N}+ N/2, \delta N^{3/2} + \theta\sqrt{N} -N/2)) < 4\delta N^{3/2} - d \sqrt{N}\right\} 
\end{align} satisfy $\P(B_1) \geq 1- \varepsilon/4$ and $\P(B_2) \geq 1- \varepsilon/4$ for all $N$ sufficiently large.
\end{lemma}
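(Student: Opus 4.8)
The statement is essentially a direct consequence of the one-point estimates for last passage times in a (nearly) homogeneous environment, so the plan is to reduce both events to the tail bounds in Lemma~\ref{lem:ShapeTheorem}. First I would pin down the expectations. For $B_1$, the point $\delta N^{3/2}$ is on the diagonal, so $G_1(0,\delta N^{3/2})$ concentrates around $(\sqrt{\delta N^{3/2}}+\sqrt{\delta N^{3/2}})^2 = 4\delta N^{3/2}$ up to an error of order $|\delta N^{3/2}|^{1/2}|\delta N^{3/2}|^{-1/6} = c_\delta N^{1/2-1/12}\cdot N^{3/2\cdot 1/3}$; a quick computation shows this correction is $O(N^{2/3})$, which is smaller than any $d\sqrt N$ only if — wait, $N^{2/3} \gg N^{1/2}$, so here one must instead use the sharper bound \eqref{eq:StatementShapeTheorem} valid for points of bounded aspect ratio, giving fluctuations of order $|x|_1^{1/3} = O(N^{1/2})$. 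Since $\delta N^{3/2}$ lies well inside the strip (its coordinates are $(\delta N^{3/2},\delta N^{3/2})$, far from the $\alpha$- and $\beta$-boundaries once $N$ is large), the environment along any relevant geodesic is $p\equiv 1$, so $G_p(0,\delta N^{3/2})$ agrees with $G_1(0,\delta N^{3/2})$ here. Then \eqref{eq:StatementShapeTheorem} with a suitable $\theta = \theta(d)$ gives $\P(G_p(0,\delta N^{3/2}) \le 4\delta N^{3/2} - d\sqrt N) \le \bar c\exp(-c\min(\theta^{3/2},\theta N^{1/2\cdot ?}))$, which is $\le \varepsilon/4$ once $d$ is chosen large relative to $\varepsilon$ and $N$ is large; this yields $\P(B_1)\ge 1-\varepsilon/4$.

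For $B_2$, the target point is $y_\delta := (\delta N^{3/2}+\theta\sqrt N + N/2,\ \delta N^{3/2}+\theta\sqrt N - N/2)$, which is off-diagonal by exactly $N$ and sits at distance $\asymp\delta N^{3/2}$ from the origin, again inside the homogeneous region. The shape-function value is $(\sqrt{y_1}+\sqrt{y_2})^2$ with $y_1 - y_2 = N$ and $y_1+y_2 = 2\delta N^{3/2} + 2\theta\sqrt N$. Expanding $(\sqrt{y_1}+\sqrt{y_2})^2 = y_1+y_2 + 2\sqrt{y_1 y_2}$ and using $\sqrt{y_1 y_2} = \sqrt{(y_1+y_2)^2/4 - N^2/4} = \tfrac{y_1+y_2}{2}\sqrt{1 - N^2/(y_1+y_2)^2}$, a Taylor expansion (the same $\sqrt{1-u}\le 1 - u/2$ estimate used in \eqref{eq:ExpectationEstimateLower}) shows the shape value equals $2(y_1+y_2)/2\cdot 2$ — more carefully, it equals $2(\delta N^{3/2}+\theta\sqrt N) - \tfrac{N^2}{2(y_1+y_2)} + O(\ldots) = 4\delta N^{3/2} + 4\theta\sqrt N - \tfrac{1}{4\delta}\sqrt N + O(N^{-1/2})$. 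So the mean of $G_p(0,y_\delta)$ is $4\delta N^{3/2} + (4\theta - \tfrac{1}{4\delta})\sqrt N + o(\sqrt N)$, and we want this to be strictly below $4\delta N^{3/2} - d\sqrt N$, i.e.\ we need $4\theta - \tfrac{1}{4\delta} < -d$, i.e.\ $\tfrac{1}{4\delta} > 4\theta + d$. This is where the choice of $\delta$ enters: given $\varepsilon,\theta$, first fix $d$ large enough to beat $\varepsilon/4$ in the tail bound, then choose $\delta$ small enough that $\tfrac{1}{4\delta} > 4\theta + d + 1$, giving a mean gap of at least $\sqrt N$. With that margin, \eqref{eq:StatementShapeTheorem} (or \eqref{eq:StatementShapeTheorem2}, since the aspect ratio $y_2/y_1 \to 1$) controls the upper deviation of $G_p(0,y_\delta)$ above its mean by at most $\tfrac12\sqrt N$ except on an event of probability $\le\bar c\exp(-c\theta'^{3/2}) \le \varepsilon/4$ for large $N$, which gives $\P(B_2)\ge 1-\varepsilon/4$.

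The main obstacle — really the only subtlety — is bookkeeping the order of magnitude of the shape-theorem correction terms and ensuring the hierarchy of constant choices ($d$ first, then $\delta$, then $N$ large) is consistent: one must check that the deterministic shape-function gap between the two target points is genuinely of order $\sqrt N$ with a favorable sign, which forces $\delta$ to be small (the $-\tfrac{1}{4\delta}\sqrt N$ term from the off-diagonal displacement is what creates the gap), and simultaneously that $\delta N^{3/2}$ stays inside the strip so that the environment is homogeneous and Lemma~\ref{lem:ShapeTheorem} applies verbatim. Everything else is a routine application of the exponential tail estimates \eqref{eq:StatementShapeTheorem}--\eqref{eq:StatementShapeTheorem2} followed by taking $N$ large enough that the sub-Gaussian/sub-exponential bounds fall below $\varepsilon/4$.
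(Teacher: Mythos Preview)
Your approach to $B_1$ is essentially the paper's, though your claim that the environment along the geodesic is homogeneous is false: the endpoint $(\delta N^{3/2},\delta N^{3/2})$ lies exactly on the $\alpha$-boundary $\{x_1=x_2\}$, not in the bulk. This does not break the argument, because for a \emph{lower} bound on $G_p$ you may restrict to paths staying inside a parallelogram in the bulk (which is what the paper does via Theorem~4.2(iii) of \cite{BGZ:TemporalCorrelation}), or when $\alpha,\beta\le 1$ simply use monotonicity $G_p\ge G_1$. Either way $B_1$ goes through.

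The real gap is in $B_2$. The target $y_\delta$ satisfies $y_1-y_2=N$, so it sits on the $\beta$-boundary, and the starting corner is adjacent to the $\alpha$-boundary. In the maximal current phase one only has $\alpha,\beta\ge\tfrac12$, so the boundary means $\alpha^{-1},\beta^{-1}$ may be as large as $2$; geodesics can collect substantial extra weight by hugging the boundaries, and hence $G_p(0,y_\delta)$ may exceed $G_1(0,y_\delta)$ by order $\sqrt N$ or more. Your shape-function expansion $(\sqrt{y_1}+\sqrt{y_2})^2\approx 4\delta N^{3/2}+(4\theta-\tfrac{1}{4\delta})\sqrt N$ is correct for $G_1$, but it does \emph{not} control $G_p$ from above, so Lemma~\ref{lem:ShapeTheorem} alone cannot yield $\P(B_2)\ge 1-\varepsilon/4$. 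This is precisely the issue flagged at the start of Section~\ref{sec:VarianceUpperBound}: for $\alpha,\beta\in[\tfrac12,1)$ the upper tail on the strip requires the path decomposition into $\Pi_1,\Pi_2,\Pi_3$. The paper closes $B_2$ by invoking Corollary~\ref{cor:LowerBoundLPP}, which shows that any geodesic crossing from $R_+$ to $R_-$ in a rectangle of length $\theta'^{-1}N^{3/2}$ satisfies $G_p(x,y)-2|y-x|_1\le -\tfrac{\theta'}{40}\sqrt N$ with high probability; taking $\theta'\asymp\delta^{-1}$ large (i.e.\ $\delta$ small) then gives exactly the negative gap you computed heuristically, but now valid for the strip environment. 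Your hierarchy of choices ($d$ first from the $B_1$ tail, then $\delta$ small) is correct; what is missing is replacing the homogeneous one-point estimate by Corollary~\ref{cor:LowerBoundLPP}.
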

Informally speaking, Lemma \ref{lem:TechnicalGeodesics} says that with high probability, the geodesic in the strip between $(0,0)$ and a site on the diagonal  is larger than the  geodesic connecting to an endpoint, which has a larger $|\cdot|_1$-distance, but is sufficiently far off from the diagonal.
\begin{proof}[Proof of Lemma \ref{lem:TechnicalGeodesics}] Recall from Lemma \ref{lem:ShapeTheorem} the last passage time $G_1$ in a homogeneous environment on $\Z^2$. By \eqref{eq:ExpectationShapeTheorem}, we see that
\begin{align*}
\E\left[ G_1(0,\delta N^{3/2})  \right] \geq 4\delta N^{3/2} - C \delta^{1/3} N^{1/2} 
\end{align*} holds for the constant $C=C(1/2)$ from Lemma \ref{lem:ShapeTheorem}. We obtain that  $\P(B_1) \geq 1 - \varepsilon/4$ by choosing $d>0$ large enough, uniformly in $\delta<1$, and applying Theorem 4.2 (iii) in \cite{BGZ:TemporalCorrelation}, which bounds the fluctuations of the last passage time on the strip with respect to the expected last passage time in a homogeneous environment. % Similarly,
%\begin{align*}%\label{eq:ExpectationAll1}
%\E\left[ G_1(0,(\delta N^{3/2}+ \theta\sqrt{N}+ N/2, \delta N^{3/2} + \theta \sqrt{N} -N/2)  \right] \\
%  \leq 4\delta N^{3/2} + 4 \theta \sqrt{N} - (4\delta)^{-1} N^{1/2}  + 2C \delta^{1/3} N^{1/2} 
%\end{align*} 
%holds for $C=C(1/2)$ from Lemma \ref{lem:ShapeTheorem} when $N$ is large enough. 
To see that $\P(B_2) \geq 1 - \varepsilon/4$ holds for any given $\theta>0$ and $d=d(\theta,\varepsilon)$, we apply Corollary \ref{cor:LowerBoundLPP} with $\delta=\delta(d,\varepsilon)>0$ small enough.  This yields the desired bound on the fluctuations of the last passage time between two points in the strip.
\end{proof}

%Equipped with these estimates, we now prove Theorem \ref{thm:LowerBound}.

\begin{proof}[Proof of Theorem \ref{thm:LowerBound}]

Recall from Section \ref{sec:ModelResults} that we denote by $\mu_{N}$ the unique invariant measure of the TASEP with open boundaries. For all $\theta>0$, we define
\begin{equation*}%\label{eq:invariantEvents}
E_{\theta} := \left\{ \eta \in \{ 0,1\}^N \colon \sum_{j=1}^{N} \Big(\eta(j) -\frac{1}{2}\Big) \geq  -    \theta \sqrt{N}  \right\}  \, .
\end{equation*} 
By Lemma \ref{lem:invariant}, we can find for every $\varepsilon>0$  some $\theta=\theta(\varepsilon)>0$ and $N_0\in \N$ such that 
\begin{equation}\label{eq:invariantCondition}
\mu_N( E_\theta) \geq 1- \frac{\varepsilon}{2}
\end{equation}
holds for all $N\geq N_0$. Let $(\eta_t)_{t \geq 0}$ be the TASEP with open boundaries started from the empty initial configuration, i.e.\ $\eta_0=\mathbf{0}$. We claim that we can find  $\delta,d>0$ such that for $t_N=t_N(\delta,d) :=4\delta N^{3/2}-d\sqrt{N}$ 
\begin{equation}\label{eq:MixingBoundLower}
\P\left(  \sum_{i=1}^N \eta_{t_N}(i)  \geq \frac{N}{2} -  \theta\sqrt{N}  \  \Big| \ \eta_0 = \mathbf{0} \right) \leq  \frac{\varepsilon}{2}
\end{equation} holds for all $N$ sufficiently large, where $\theta=\theta(\varepsilon)$ is taken from \eqref{eq:invariantCondition}. To see this, recall from Lemma \ref{lem:CurrentVsGeodesic} the correspondence between a corner growth model on a strip and the TASEP with open boundaries. Since we start from the all empty initial configuration, whenever 
\begin{equation*}
G_p(0,\delta N^{3/2}) > t_N
\end{equation*}
holds, we know from the location of the corresponding growth interface that at most $\delta N^{3/2}$ particles have entered the segment by time $t_N$. Similarly, when
\begin{equation*}
G_p(0,(\delta N^{3/2}+ \theta\sqrt{N}+ N/2, \delta N^{3/2} + \theta\sqrt{N} -N/2)  < t_N
\end{equation*}
holds, we see that at least $\delta N^{3/2}+ \theta\sqrt{N}-N/2$ particles left the segment by time $t_N$. Now apply Lemma~\ref{lem:TechnicalGeodesics} to obtain \eqref{eq:MixingBoundLower}. This gives
\begin{align}
\TV{\P\left( \eta_{t_N} \in \cdot \ \right | \eta_0 = \mathbf{0}) - \mu_N} \geq \mu_N( E_\theta)  - \P\left( \eta_{t_N} \in E_\theta \ \right | \eta_0 = \mathbf{0}) \geq 1 - \varepsilon \, ,
\end{align} using the characterization of the total-variation distance in \eqref{def:TVDistance} for the first inequality, together with \eqref{eq:invariantCondition} and \eqref{eq:MixingBoundLower} for the second step. Since we take a maximum over all possible initial configurations in the definition of the mixing time, this yields the desired lower bound. 
\end{proof}

\subsection*{Acknowledgment} I want to thank Patrik Ferrari, Nina Gantert, Nicos Georgiou, Evita Nestoridi, and Shangjie Yang for helpful suggestions and answering questions about last passage percolation and random polymer models, and Milton Jara for pointing out Conjecture~\ref{conj:Milton}. Moreover, I am extremely grateful to two anonymous referees for their valuable suggestions, which significantly helped to improve the paper. The Studienstiftung des deutschen Volkes 
%and the TopMath program are
is acknowledged for financial support.

\bibliographystyle{plain}

\bibliography{TASEPmixing}

\vspace{0.1cm}

\appendix

\section{Proof of Lemma \ref{lem:LastPassagePercolationGeneral}}

We now give a proof of Lemma \ref{lem:LastPassagePercolationGeneral} following the arguments from Proposition 10.1 of \cite{BSV:SlowBond} for Poissonian last passage percolation. We start by defining a family of parallelograms, which use in order to cluster the rectangle $R := R_N^{\theta^{-1}N^{3/2}} \subseteq \Z^2 $ from~\eqref{def:Rectangle}. For all $k\in \{0,1,2,\dots\}$, we define the sets
\begin{align*}
V_k &:= \Big\{ (\lfloor x_1 \rfloor, \lfloor x_2-x_1 \rfloor)  \colon x_1= \frac{1}{8} N^{3/2} 2^{-k/4} i \text{ and } x_2= N 2^{-k/6} j\text{ for some } i,j\in \Z \text{ and }\\
&\qquad   |x_1 -z_1| \leq 2^{-k/4} N^{3/2} \text{ and } |(x_2-x_1)-z_2| \leq N 2^{-k/6} \text{ for some } (z_1,z_2) \in R \Big\} \, ,
\end{align*}
i.e.\ the set $V_k$ is a discrete lattice approximating the rectangle $R$ by  $|V_k| \leq 4^{k+2}$ points. For each pair of lattice points $s=(s_1,s_2)$ and $u=(u_1,u_2)$ with $s,u\in V_k$ such that 
\begin{equation}\label{eq:SlopeConditionUniform}
u_1 - s_1 = 2^{-k/4} N^{3/2} \quad \text{ and } \quad  \frac{2}{3} \leq 1 + \frac{u_2-s_2}{2^{-k/12}N^{1/2}} \leq \frac{3}{2} \, ,
\end{equation}
we denote by $U^{(k)}_{s,u}$ the parallelogram spanned by the four points
\begin{equation*}
s + (0,N 2 ^{-k/6}) , \ s - (0,N 2 ^{-k/6}), \ u + (0,N 2 ^{-k/6}) , \ u - (0,N 2 ^{-k/6}) \, .
\end{equation*}
Let $\mathcal{U}_k$ denote the set of all such parallelograms using sites in $V_k$, and note that $|\mathcal{U}_k| \leq 16^{k+2}$. Further, for all $U=U^{(k)}_{s,u} \in \mathcal{U}_k$, we set
\begin{align*}
U_L &:= U \cap \left\{ (x_1,x_2)\in \Z^2 \colon s_1 \leq x_1 \leq s_1 + \frac{1}{8} 2^{-k/4} N^{3/2} \right\} \\
U_R &:= U \cap \left\{ (x_1,x_2)\in \Z^2 \colon u_1 - \frac{1}{8} 2^{-k/4} N^{3/2} \leq x_1 \leq u_1 \right\} \, .
\end{align*}
The following lemma is a reformulation of Theorem 4.2 (ii) of \cite{BGZ:TemporalCorrelation} for our setup. %with $\psi=3/2$ in their notation.
\begin{lemma}\label{lem:LingfusBound} Fix $p \equiv 1$, and let $k=k(N) \in \N$  satisfy 
\begin{equation}
\limsup_{N \rightarrow \infty} \frac{k}{6\log_2(N)} < 1
\end{equation} There exist constants $c,\lambda_0>0$ such that for each parallelogram $U=U^{(k)}_{s,u} \in \mathcal{U}_k$ with sites $s,u \in V_k$ satisfying \eqref{eq:SlopeConditionUniform}, and all $\lambda=\lambda(N) \geq \lambda_0$ and $N$ large enough, we have that
\begin{equation*}
\P\Big( \max_{ x \in U_L,y\in U_R } G_p(x,y) - \E[G_p(x,y)] \geq \lambda 2^{-k/12} \sqrt{N}\Big) \leq \exp(-c\min(\lambda^{\frac{3}{2}},\lambda \sqrt{N} 2^{-k/12} )) \, .
\end{equation*}
\end{lemma}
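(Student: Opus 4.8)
\textbf{Proof proposal for Lemma \ref{lem:LingfusBound}.}

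The plan is to transfer the moderate deviation estimate of Theorem 4.2 (ii) in \cite{BGZ:TemporalCorrelation} to our family of parallelograms by rescaling. First I would record the elementary geometry: the parallelogram $U = U^{(k)}_{s,u}$ has ``time direction'' length (measured along the anti-diagonal scale) of order $2^{-k/4}N^{3/2}$, the pieces $U_L$ and $U_R$ sit at the two short ends and each has anti-diagonal extent of order $2^{-k/4}N^{3/2}/8$, and the transversal width $N 2^{-k/6}$ is exactly the KPZ-fluctuation scale associated with a time length $2^{-k/4}N^{3/2}$, since $(2^{-k/4}N^{3/2})^{2/3} = 2^{-k/6}N$. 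The slope condition \eqref{eq:SlopeConditionUniform} guarantees that every pair $(x,y)$ with $x\in U_L$, $y \in U_R$ has a direction bounded away from the axes (slope in, say, $[1/3,3]$ after accounting for the $1/8$ slack at the ends), so that the shape theorem applies uniformly and the relevant fluctuation scale for $G_p(x,y)$ is $(2^{-k/4}N^{3/2})^{1/3} = 2^{-k/12}N^{1/2}$, which matches the scale $\lambda 2^{-k/12}\sqrt N$ in the statement.

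Next I would apply Theorem 4.2 (ii) of \cite{BGZ:TemporalCorrelation}, which is exactly the statement that, for last passage percolation with $p\equiv 1$, the maximum of $G_p(x,y) - \E[G_p(x,y)]$ over $x$ in a box of transversal size of order $r^{2/3}$ near one end of a parallelogram of length $r$ and $y$ in the corresponding box near the other end has an upper tail bounded by $\bar c \exp(-c\min(\lambda^{3/2}, \lambda r^{1/3}))$ at scale $\lambda r^{1/3}$. Taking $r = 2^{-k/4}N^{3/2}$ gives $r^{1/3} = 2^{-k/12}N^{1/2}$ and $\min(\lambda^{3/2}, \lambda r^{1/3}) = \min(\lambda^{3/2}, \lambda\sqrt N 2^{-k/12})$, which is precisely the exponent claimed. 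The hypothesis $\limsup_{N\to\infty} k/(6\log_2 N) < 1$ is what ensures $r = 2^{-k/4}N^{3/2} \to \infty$ (indeed $r \geq N^{\varepsilon'}$ for some $\varepsilon'>0$), so that ``$N$ large enough'' in \cite{BGZ:TemporalCorrelation} can be met; one also needs $r^{1/3}$ large, which is the same condition, and $\lambda_0$ absorbs the threshold from that theorem.

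The only real work is bookkeeping: verifying that the rectangular sub-boxes $U_L, U_R$ defined by the $x_1$-coordinate cuts indeed have transversal extent comparable to $r^{2/3}$ and sit within the ``characteristic'' window so that $\E[G_p(x,y)]$ is well approximated by the shape function uniformly, and checking that the slope of every connecting pair lies in the admissible cone of \cite{BGZ:TemporalCorrelation} (here the $1/8$-fraction endpoint slabs plus the bound $2/3 \le 1 + (u_2-s_2)/(2^{-k/12}N^{1/2}) \le 3/2$ in \eqref{eq:SlopeConditionUniform} give the needed margin). I expect the main obstacle to be stating this rescaling cleanly enough that the cited theorem applies verbatim, in particular making sure the error in replacing $\E[G_p(x,y)]$ by the deterministic shape value over all such pairs is itself of smaller order than $2^{-k/12}\sqrt N$; this is handled by \eqref{eq:ExpectationShapeTheorem} in Lemma \ref{lem:ShapeTheorem}, which bounds that error by $C\,(2^{-k/4}N^{3/2})^{1/3} $ up to constants, so it can be absorbed into $\lambda_0$. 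Since we only claim the estimate for a single fixed $k=k(N)$ and the count $|\mathcal U_k| \le 16^{k+2}$ is not used inside this lemma, no union bound over parallelograms is needed here; that combinatorial cost is paid later in the proof of Lemma \ref{lem:LastPassagePercolationGeneral}.
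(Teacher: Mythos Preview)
Your proposal is correct and takes exactly the same approach as the paper: the paper simply states that this lemma ``is a reformulation of Theorem 4.2 (ii) of \cite{BGZ:TemporalCorrelation} for our setup'' and gives no further proof, so your rescaling argument with $r=2^{-k/4}N^{3/2}$ and the verification of the slope and width conditions is precisely the content that the paper leaves implicit.
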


\begin{proof}[Proof of Lemma \ref{lem:LastPassagePercolationGeneral}]

In the following, set \begin{equation}
K =K(N):= \lceil 5\log_2(N)  \rceil + 3 \, .
\end{equation} Consider a pair of sites $(x,y) \in \mathbb{B}(R)$ such that $|y_1-x_1| \geq N^{1/4}$. Our key observation is that for all $N$ sufficiently large and $k\in \{0,1,\dots,K\}$, for each pair $(x,y) \in \mathbb{B}(R)$ with
\begin{equation}
y_1-x_1 \in \Big[\frac{3}{4}2^{-k/4}N^{3/2},2^{-k/4}N^{3/2}\Big] \, ,
\end{equation} we find some $U \in \mathcal{U}_k$  such that $x\in L(U)$ and $y \in R(U)$ holds. %, i.e.\ let $s,u \in V_k$ be such that 
%\begin{equation}
% s \preceq x \preceq y \preceq u  \text{ and } \max(x_1-s_1,u_1-y_1) \leq   \text{ and } \max(x_1-s_1,u_1-y_1) \leq    \, ,
%\end{equation}
% and note that the slope condition in \eqref{eq:SlopeConditionUniform} holds for this choice of $u$ and $s$ when $N$ is sufficiently large.  \\
Next, we define for all $k\in \{0,1,\dots,K\}$ the events
\begin{equation}
\mathcal{E}_k := \left\{   \sup_{ U \in \mathcal{U}_k }\max_{ x \in U_L,y\in U_R } G_p(x,y) - \E[G_p(x,y)] \geq  \theta \sqrt{N} \right\} \, ,
\end{equation} and we let
\begin{equation}
\tilde{\mathcal{E}} := \left\{   \max_{ (x,y)\in \mathbb{B}(R) \colon |y-x|_1 \leq N^{\frac{1}{4}}  } G_p(x,y) - \E[G_p(x,y)] \geq  \theta \sqrt{N} \right\} \, .
\end{equation}
From Lemma \ref{lem:LingfusBound} for $\lambda=\theta 2^{k/12}$, we get that 
\begin{align}\label{eq:AppendixBound1}
\P\left( \bigcup_{k\in \{0,1,\dots,K\}} \mathcal{E}_k\right) &\leq \sum_{k\in \{0,1,\dots,K\}} 16^{k+2} \exp(-c_1 \min(2^{k/12}\theta^{\frac{3}{2}}, \theta \sqrt{N})) \nonumber \\
&\leq \exp(-c_2 \theta^{3/2})
\end{align} for some constants $c_1,c_2,\theta_0>0$, and all $\theta\geq \theta_0$, provided that $N$ is sufficiently large.  Moreover, using Lemma \ref{lem:ShapeTheorem} and a union bound, we get that
\begin{equation}\label{eq:AppendixBound2}
\P( \tilde{\mathcal{E}} ) \leq N^{5} \exp(-c_3 N^{\frac{1}{4}})
\end{equation} for some constant $c_3>0$, and $N$ sufficiently large.
Since for all $N$ large enough, each pair of points in $\mathbb{B}(R)$ is covered within the events $(\mathcal{E}_k)_{k \in \{0,1,\dots,K\}}$ and $\tilde{\mathcal{E}}$, we conclude by combining \eqref{eq:AppendixBound1} and \eqref{eq:AppendixBound2}.
\end{proof}

\end{document}